\documentclass[12pt,a4paper,reqno]{amsart}
\usepackage{amsfonts}
\usepackage{amsmath}
\usepackage{amssymb}
\usepackage{mathrsfs}
\usepackage{geometry}
\usepackage{graphicx}
\usepackage{subfigure}
\usepackage{hyperref}
\usepackage{microtype}
\usepackage{enumerate}
\usepackage{epsfig}
\usepackage{cancel}
\usepackage{color}
\usepackage{cite}
\usepackage{makecell}
\usepackage{multirow}
\usepackage{footmisc}
\usepackage{wrapfig}
\usepackage{array}
\usepackage{titletoc}
\usepackage{bm}
\usepackage{times}
\usepackage{tikz}
\usepackage{xcolor}
\usepackage{enumitem}

\usetikzlibrary{positioning}
\hypersetup{
colorlinks=true,
linkcolor=blue,
filecolor=blue,
urlcolor=blue,
citecolor=blue,}
\allowdisplaybreaks[3]

\newcommand{\pa}{\partial}
\newcommand{\e}{\varepsilon}
\newcommand{\om}{\omega}
\newcommand{\na}{\nabla}

\newtheorem{theorem}{Theorem}[section]

\newtheorem{lemma}[theorem]{Lemma}
\newtheorem{proposition}[theorem]{Proposition}

\newtheorem{Theorem}{Theorem}[section]

\newtheorem{Remark}[Theorem]{Remark}

\renewcommand{\div}{\mathop{\rm div}\nolimits}
\numberwithin{equation}{section}
\theoremstyle{remark}

\begin{document}

\title[]{Asymptotic stability of the 2D Boussinesq equations around Couette flow in infinite channel}
\author[]{Qionglei Chen}
\address[]{Institute of Applied Physics and Computational Mathematics, Beijing 100088, China} 
\email{chen\_qionglei@iapcm.ac.cn}

\author[]{Jiakun Jin}

\address[]{Graduate School of China Academy of Engineering Physics, Beijing 100088, China}
\email{jinjiakun24@gscaep.ac.cn}

\author[]{Zhen Li}

\address[]{School of Mathematics, Hefei University of Technology, Hefei 230009, China}
\email{2026800053@hfut.edu.cn}

\author[]{Changxing Miao}
\address[]{School of Mathematics and Physics, University of Science and Technology Beijing, Beijing 100083, China}
\email{miao\_changxing@ustb.edu.cn}

\keywords{Boussinesq equations, Couette flow, Infinite channel, Transition threshold}
\begin{abstract}
In this paper, we investigate  the stability threshold of the two-dimensional Boussinesq system around the Couette flow in an infinite channel $\mathbb{R}\times[-1,1]$ under Navier-slip boundary condition. To address nonlinear echo chains induced by temperature-vorticity coupling, we construct a two-level frequency-dependent temporal decomposition, with the two levels matched to the infinite and finite superposition principles for the equation, respectively. By fully exploiting the enhanced dissipation effect at each decomposition level, we prove that if the initial perturbation $(\omega^{in}, \theta^{in})$ around the Couette flow satisfies $\|\omega^{in}\|_{H^4_{x,y}\cap L^1_xH^4_y}\leq c\nu^{\frac{1}{3}}$, and $\|\theta^{in}\|_{H^5_{x,y}\cap L^1_xH^5_y}\leq c\nu^{\frac{2}{3}+}$, the Boussinesq system admits a globally asymptotically stable solution.  It should be emphasized that the method in this paper provides an effective approach for the hydrodynamic equations with coupling effects.
\end{abstract}
\maketitle

\section{Introduction}
The Boussinesq system plays a central role in the study of the laminar and turbulent regimes of Rayleigh--B\'enard convection~\cite{DR-04}. In this paper, we consider the two-dimensional incompressible Boussinesq equations in the domain $\mathbb{R}\times[-1,1]$:
\begin{align}\label{vrho-eq}
\begin{cases}
\pa_t v-\nu\Delta v+y\pa_x v+v\cdot\na v+\na p=\rho e_2, \\
\pa_t \rho-\mu\Delta \rho+y\pa_x \rho+v\cdot\na\rho=0, \\
\div v=0, \\
v|_{t=0}=v^{in},\quad \rho|_{t=0}=\rho^{in},
\end{cases}
\end{align}
where $v(t,x,y)$ is the velocity field, $\rho(t,x,y)$ is the temperature and $p$ is the pressure. The constants $\nu$ and $\mu$ denote the viscosity coefficient and the thermal diffusivity, respectively, and $e_2=(0,1)^T$ is the unit vector in the vertical direction.

We investigate the stability of the system \eqref{vrho-eq} around the Couette flow $(y,0)$ with constant background temperature $a\in\mathbb{R}$, under the Navier-slip boundary condition. To simplify, we focus on the case $\nu=\mu$.
Let
$$u=v-(y,0), \qquad \theta=\rho-a$$
be the velocity and temperature perturbations, respectively. Then the perturbed system reads:
\begin{align*}
\begin{cases}
\pa_t u-\nu\Delta u+y\pa_x u+u\cdot\na u+\na p=(\theta+a) e_2, \\
\pa_t \theta-\nu\Delta \theta+y\pa_x \theta+u\cdot\na\theta=0, \\
\pa_yu^{(1)}(t,x,\pm 1)=u^{(2)}(t,x,\pm 1)=\theta(t,x,\pm 1)=0, \\
u|_{t=0}=u^{in},\quad \theta|_{t=0}=\theta^{in}.
\end{cases}
\end{align*}
We introduce the vorticity $\omega=\pa_y u^{(1)}-\pa_x u^{(2)}$, and obtain the following coupled system:
\begin{align}\label{ot-eq}
\begin{cases}
\pa_t \omega-\nu\Delta \omega+y\pa_x \omega+u\cdot\na \omega=-\pa_x\theta, \\
\pa_t \theta-\nu\Delta \theta+y\pa_x \theta+u\cdot\na\theta=0, \\
\omega(t,x,\pm 1)=\theta(t,x,\pm 1)=0, \\
\omega|_{t=0}=\omega^{in},\quad \theta|_{t=0}=\theta^{in}.
\end{cases}
\end{align}

The transition from laminar to turbulent flow has been a classical problem in fluid dynamics since the experiments of Reynolds~\cite{R-83}. Later,  Trefethen et al. \cite{TTRD-93} first proposed the transition threshold problem. The mathematical version was introduced by Bedrossian-Germain-Masmoudi~\cite{BGM-19} for Navier-Stokes equations. Accordingly, we present a characterization of analogous transition thresholds for  Boussinesq system: Given two norms $\|\cdot\|_{X_1}$ and $\|\cdot\|_{X_2}$, determine the transition threshold exponents $\gamma_1=\gamma_1(X_1,X_2)$ and $\gamma_2=\gamma_2(X_1,X_2)$ such that
\begin{equation*}
\begin{aligned}
&\|\omega^{in}\|_{X_1}\leq \nu^{\gamma_1} \;\text{and}\; \|\theta^{in}\|_{X_2}\leq \nu^{\gamma_2} \;\Longrightarrow\; \text{stability},\\
&\|\omega^{in}\|_{X_1}\gg \nu^{\gamma_1} \;\text{or}\; \|\theta^{in}\|_{X_2}\gg \nu^{\gamma_2} \;\Longrightarrow\; \text{instability}.
\end{aligned}
\end{equation*}

We now briefly review recent progress on system \eqref{ot-eq} in the Sobolev space. For $\mathbb{T}\times\mathbb{R}$, the stability
threshold $(\nu^{\frac{1}{2}},\nu)$ was established in ~\cite{DWZ-21}. This result was later improved to
$(\nu^{\frac{1}{3}},\nu^{\frac{5}{6}})$ in ~\cite{ZZ-23},
and further sharpened to $(\nu^{\frac{1}{3}},\nu^{\frac{2}{3}})$ in ~\cite{NZ-25}. For $\mathbb{R}\times\mathbb{R}$, the threshold $(\nu^{\frac{3}{4}},\nu^{\frac{5}{4}})$ was obtained in \cite{WW-25}, then refined to $(\nu^{\frac{1}{3}+},\nu^{\frac{2}{3}+})$ in ~\cite{CWY-25}. Thresholds of $(\nu^{\frac{1}{2}},\nu^{\frac{11}{12}})$ for a finite channel with non-slip boundary conditions and $(\nu^{\frac{1}{2}},\nu^{\frac{5}{6}})$ for an infinite channel were derived in ~\cite{MZZ-23} and ~\cite{LWZ-25}, respectively. For the system \eqref{ot-eq} around Couette flow
\begin{align*}
\bar{v}=(y,0), \qquad \bar{\rho}=-\gamma^2y+1, \qquad \gamma^2\geq \tfrac{1}{4},
\end{align*} 
several stability thresholds have been established. On $\mathbb{T}\times\mathbb{R}$, the threshold $\nu^{\frac{1}{2}}$ was achieved in \cite{ZZ-23-2} for $\nu=\mu$, while the threshold $\min\{\nu,\mu\}^{\frac{1}{2}+}$ was given in \cite{LWZ-26} in the regime $\mu^3\leq\nu\leq\mu^{\frac13}$. If $\frac{\nu+\mu}{2\gamma\sqrt{\nu\mu}}<2-\e$, the threshold $\min\{\nu,\mu\}^{\frac{1}{2}}$ was obtained in \cite{RW-26-1} and then was improved to $\min\{\nu,\mu\}^{\frac{1}{3}}$  in \cite{RW-26-2} on the same domain. Work on $\mathbb{R}\times\mathbb{R}$ can be found in \cite{A-25}. For the studies related to instability are available in \cite{BBCD-23,Z-23} and so on.

Most recently, in the domain $\mathbb{R}\times[-1,1]$ under the Navier-slip boundary condition, Chen-Li-Miao~\cite{CLM-26} attained the threshold $\nu^{\frac{1}{3}}$ in Sobolev spaces for the Navier-Stokes equations. For the Boussinesq system \eqref{ot-eq}, the buoyancy coupling $\pa_x\theta$ introduces an additional energy transfer mechanism from temperature to vorticity. It is  natural to ask whether this coupling changes the transition threshold inherited from the Navier-Stokes equation. We show that the vorticity component preserves the threshold $\nu^{\frac{1}{3}}$, while the temperature component admits $\nu^{\frac{2}{3}+}$. 

 More precisely, the stability result is stated as follows.
\begin{theorem}\label{th-1.1}
  Let $\nu\in(0,1)$ and $\mathcal{M}=\mathcal{M}(D_x)$ be
  Fourier multiplier with  symbol given by
\begin{align*}
\mathcal{M}(k)=|k|^{\frac{1}{2}}+|k|.
\end{align*}
Assume that the initial data in \eqref{ot-eq} satisfy $\omega^{in}\in H^4\cap L^1_xH^4_y$ and $\theta^{in}\in H^5\cap L^1_xH^5_y$.  There exist harmless constants $c, C>0$ such that if
  \begin{align*}
  E_0=\|\omega^{in}\|_{H^4\cap L^1_xH^4_y}\leq& c\nu^{\frac{1}{3}}, \quad F_0=\|\theta^{in}\|_{H^5\cap L^1_xH^5_y}\leq c\nu^{\frac{2}{3}}|\ln\nu|^{-4},
  \end{align*}
  then the solution to \eqref{ot-eq} is global in time and satisfies
\begin{align*}
  \|\omega\|_{L^2}+\|\mathcal{M}\nabla\Delta^{-1}\omega\|_{L^2}+\nu^{-\frac13}|\ln\nu|^{4}\|\theta\|_{L^2} \leq& C\nu^{\frac{1}{3}}.
\end{align*}

Furthermore, for any $s\in[0,\frac{1}{16}]$, we derive the following weak enhanced dissipation and inviscid damping estimates,
\begin{align*}
& \|(1+\nu^{\frac{1}{3}}t\mathcal{M}_1)^{s}\omega\|_{L^2}\leq C\nu^{\frac{1}{3}},\\
& \|(1+\nu^{\frac{1}{3}}t\mathcal{M}_1)^{s}\theta\|_{L^2}\leq C\nu^{\frac{2}{3}}|\ln\nu|^{-4},\\
& \|(1+\nu^{\frac{1}{3}}t\mathcal{M}_1)^{s}\mathcal{M}\nabla\Delta^{-1}\omega\|_{L^2}\leq C\nu^{\frac{1}{3}},
\end{align*}
where $\mathcal{M}_1=\mathcal{M}_1(D_x)$ is a Fourier multiplier with symbol given by
\begin{align*}
\mathcal{M}_1(k)=|k|^{\frac{2}{3}}\chi+1-\chi,
\end{align*}
and $\chi\in C_c^\infty$ is a cut-off function defined by
\begin{align*}
\chi(k)=\begin{cases}
1, \quad \text{if} \,\, |k|\leq\frac{1}{2}, \\
0, \quad \text{if} \,\, |k|\geq 1.
\end{cases}
\end{align*}
\end{theorem}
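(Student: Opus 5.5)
The plan is a bootstrap (continuity) argument in which the solution is estimated frequency by frequency in $x$. Since the domain is $\mathbb{R}\times[-1,1]$, I will take the Fourier transform in $x$ and work with $\hat\omega(t,k,y)$, $\hat\theta(t,k,y)$. The linearized operator at frequency $k$ is $\mathcal{L}_k=-\nu(\pa_y^2-k^2)+iky$ with Dirichlet conditions on the vorticity and temperature, which Navier-slip allows. For this operator I will use the resolvent and semigroup estimates that underlie \cite{CLM-26}:
\begin{itemize}
\item enhanced dissipation at rate $\nu^{1/3}|k|^{2/3}$ for $|k|\lesssim 1$, which is where the symbol $\mathcal{M}_1$ comes from;
\item inviscid damping for $\pa_x\Delta^{-1}\omega$, captured by the weight $\mathcal{M}(k)=|k|^{1/2}+|k|$.
\end{itemize}
The $L^1_x$ assumption gives $\sup_k$ control of $\hat\omega^{in},\hat\theta^{in}$. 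This control is needed at low frequencies $|k|\to0$, where neither dissipation nor damping is uniform.

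The central step is the two-level time decomposition suggested in the abstract.
\begin{itemize}
\item Write $\theta$ through its Duhamel formula along the linear semigroup and feed it into the vorticity equation. The forcing $-\pa_x\theta$ then produces a growing contribution: for a passive scalar decaying like $e^{-c\nu^{1/3}|k|^{2/3}t}$, the time integral gains a factor of size $\nu^{-1/3}|k|^{1/3}$.
\item To avoid losing this factor repeatedly through nonlinear echo chains, split $[0,\infty)$ at frequency-dependent times $T_k\sim \nu^{-1/3}|k|^{-2/3}$. Since the paper's functional framework is not yet given, I will make the concrete choice of dyadic multiples of $T_k$ here.
\item On each slab, decompose $\omega=\omega_L+\omega_{\theta}+\omega_{NL}$. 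Here $\omega_L$ is the free evolution, $\omega_\theta$ solves the linear problem forced by $-\pa_x\theta$, and $\omega_{NL}$ is the nonlinear remainder.
\item At the first level (infinitely many slabs), enhanced dissipation on each slab gives geometric decay, so the infinite superposition converges. At the second level, inside a slab the coupling is iterated only finitely many times, to depth about $|\ln\nu|$, and each iteration costs a constant. This is the source of the $|\ln\nu|^{4}$ losses and of the requirement $F_0\lesssim\nu^{2/3}|\ln\nu|^{-4}$: one factor $\nu^{-1/3}$ from the coupling, times the $\nu^{1/3}$ target for $\omega$.
\end{itemize}

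The bootstrap norms will be
\begin{align*}
E(t)&=\|\omega\|_{L^\infty_tL^2}+\|\mathcal{M}\na\Delta^{-1}\omega\|_{L^2_tL^2}+\nu^{1/2}\|\na\omega\|_{L^2_tL^2},\\
F(t)&=\text{the analogous norm for }\theta,
\end{align*}
each with the time weight $(1+\nu^{1/3}t\mathcal{M}_1)^{s}$ and with higher $y$-regularity ($H^4$ for $\omega$, $H^5$ for $\theta$). I will assume $E\le 2C\nu^{1/3}$ and $F\le 2C\nu^{2/3}|\ln\nu|^{-4}$ and improve both bounds to $C$.

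The nonlinear terms $u\cdot\na\omega$ and $u\cdot\na\theta$ are handled by paraproduct splitting in $x$-frequency: low-high, high-low and the reaction term. The velocity is controlled through inviscid damping, $\|\mathcal{M}u\|_{L^2_tL^2}$, and the high-frequency factor through the enhanced dissipation bound. At the threshold this gives the standard estimate $\nu^{-1/3}E\cdot E\lesssim E$, because $E\lesssim\nu^{1/3}$. For $\theta$ the estimate is $\nu^{-1/3}E\cdot F\lesssim F$, which closes since no coupling feeds back into $\theta$. The extra derivative on $\theta$ ($H^5$ versus $H^4$) pays for the $\pa_x$ in the buoyancy term when the high-regularity vorticity bound is closed. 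The boundary terms vanish under the Dirichlet conditions, provided $y$-derivatives are taken in a way that preserves them; for example, use conormal weights or the even/odd extension that Navier-slip permits.

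The main obstacle will be the buoyancy term $\omega_\theta$ interacting with the nonlinearity, i.e.\ the temperature–vorticity echo chain. Here $\theta$ forces $\omega$, the resulting $u$ transports $\theta$ to a new frequency, which forces $\omega$ again, and so on. Each link costs $\nu^{-1/3}|k|^{1/3}$ in time, and at low frequencies there is no summability. My first attempt is to prove, on each slab of length $T_k$, a linear estimate for the forced problem in which $\omega_\theta$ gains the full decay factor $(1+\nu^{1/3}t\mathcal{M}_1)^{-s}$. The restriction $s\le 1/16$ leaves room to absorb polynomial losses. I would then sum the slabs using the decay $e^{-c2^j}$ and count the logarithmic number of effective slabs in the reaction term. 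If this counting fails, the fallback is to accept a further $|\ln\nu|$ power and carry it through the assumption on $F_0$.

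The weighted decay estimates are then read off from the closed weighted bootstrap norms, and global existence follows by local well-posedness and continuation.
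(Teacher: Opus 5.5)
\textbf{Gap 1: the nonlinear closure at the threshold.} The gap is at the step where you assert that the nonlinear terms of the full solution close as $\nu^{-\frac13}E\cdot E\lesssim E$. The reaction term $u^{(2)}\pa_y\omega$, and likewise $u^{(2)}\pa_y\theta$, grows linearly in time, because $\pa_y\omega\approx -t\pa_x\omega$ for sheared profiles. Nothing in your norms pays for this factor $t$ when both factors are generic nonlinear solutions. The $\pa_y$-divergence part of the nonlinearity costs $\nu^{-\frac12}\|u^{(2)}\omega\|_{L^2L^2}$ in Lemma~\ref{lem-3.4}, and by itself this closes only for $E\lesssim\nu^{\frac12}$.

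The paper never bootstraps the full solution. It splits $(\omega,\theta)=(\omega_L,\theta_L)+(\omega_e,\theta_e)$:
\begin{itemize}
\item $(\omega_L,\theta_L)$ solves \eqref{otL-eq}, which has $\nu(1-t^2\pa_x^2)$ in place of $-\nu\Delta$. It is therefore explicit, with $\omega_L=\alpha_L-t\pa_x\theta_L$, and it absorbs all of the $H^4$, $H^5$, $L^1_x$ regularity (Lemma~\ref{lem-2.3}, Propositions~\ref{prop-2.4}--\ref{prop-2.4-1}).
\item The error starts from zero, is measured only in low-order $L^2$-based norms, and is shown to be $O(\nu^{\frac23}|\ln\nu|^3)$.
\item The dangerous terms become $t u_e^{(2)}\pa_x\omega_L$ and $t u_e^{(2)}\pa_x\theta_L$. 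These are linear in the small error and are isolated in $(\omega_\mathfrak{E},\theta_\mathfrak{E})$.
\item The thresholds come from these linear terms, $\nu^{-\frac13}\mathcal{E}_2G_0$ and $\nu^{-\frac23}\mathcal{E}_2F_0$ (Propositions~\ref{prop-5.1}--\ref{prop-5.5}). The genuinely quadratic term $\nu^{-\frac59}\mathcal{E}_2^2$ is harmless.
\end{itemize}
Relatedly, your plan to propagate $H^4_y$ regularity of the nonlinear solution by odd/even extension breaks down. On $y=\pm1$ one has $\omega=\pa_y^2\omega=0$. However, the commutator $[\pa_y^2,y\pa_x]=2\pa_x\pa_y$ forces $\nu\pa_y^4\omega=2\pa_x\pa_y\omega\neq0$ there, so fourth-order energy estimates pick up boundary terms.

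\textbf{Gap 2: the buoyancy echo chain.} Your treatment of the echo chain misses its key idea.
\begin{itemize}
\item The linear coupling is one-way, so nothing is iterated ``to depth $|\ln\nu|$''.
\item Nor is $\theta$ free of feedback: it is transported by $u$. The quantity to control is $\nu^{-\frac13}\||\pa_x|^{\frac13}\theta_e\|_X$, because the buoyancy forcing enters $\omega_\text{b}$ through $\|\pa_x\theta_\text{b}\|_{L^1L^2}\lesssim\nu^{-\frac13}\||\pa_x|^{\frac13}g_2\|_{L^1L^2}+\cdots$, from \eqref{pxfX-es}.
\item For the part $\theta_r$ forced by $tu_e^{(2)}\pa_x\theta_L$, Duhamel plus enhanced dissipation on dyadic blocks only gives $\|\omega_r\|_X+\nu^{-\frac13}\||\pa_x|^{\frac13}\theta_r\|_X\lesssim\nu^{-\frac23}\||\pa_x|^{\frac13}\mathcal{M}u_e\|_{L^2L^2}F_0$. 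This is a power loss of $\nu^{-\frac16}$ in the range $1\le|k|\le\nu^{-\frac12}$, and no $|\ln\nu|$ fallback can absorb it.
\item Contrary to your diagnosis, the range $|k|\le 1$ is the harmless one, since $|k|^{\frac13}$ is then negligible.
\end{itemize}

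The missing ingredient is the non-resonance of the forcing. Since $F_{k,l}=ilt\,u^{(2)}_{e,k-l}\theta_{L,l}$ carries the phase $e^{-ilyt}$, the transport estimate \cite[Lemma 2.1]{CLM-26} gains $|k-l|^{-\frac12}\langle k-l\rangle^{-\frac12}$ at the price of one derivative of $e^{ilyt}F_{k,l}$. The paper transfers this gain to the viscous problem in two levels (Lemma~\ref{lem-3.7}):
\begin{enumerate}
\item Frequency-independent blocks $T_j=2^j\nu^{-\frac13}$. On these, $t\sim T_j$ and $\theta_{L,l}$ carries the factor $e^{-c2^j|l|^{2/3}}$, which yields summable factors $2^{-\frac38 j}$.
\item Within each block, the forcing is cut into $\mathfrak{m}_j\approx 2^j|k|^{\frac23}$ time pieces of length $\nu^{-\frac13}|k|^{-\frac23}$. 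This length is short enough that the viscous correction stays bounded. The pieces are resummed with the semigroup decay $e^{-c(n-1-m)}$ of Lemma~\ref{lem-3.5}.
\end{enumerate}
This splitting of the forcing, not an iteration of the coupling, is the ``finite superposition''.

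\textbf{Origin of the logarithm.} The $|\ln\nu|^{-4}$ in the assumption on $F_0$ is not an iteration count. It is forced by the short-time regime $t\le\nu^{-\frac16}$, which your plan omits. There the paper controls $\pa_x^2\theta_e$ in $L^\infty_tL^2$ and bounds $\|\theta_e\|_{L^\infty}$ via the frequency split at $|k|=\nu^{-\frac16}|\ln\nu|^2$ (Lemma~\ref{lem-3.1-1}). On $\mathbb{T}\times[-1,1]$ these logarithms disappear.
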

\begin{Remark}
 We emphasize that for the domain $\mathbb{T}\times[-1,1]$, the initial condition can be refined to $(\nu^{\frac{1}{3}},\nu^{\frac{2}{3}})$.
In contrast to $\mathbb{R}\times[-1,1]$, the key observation is that for the linear system
\begin{align*}
\begin{cases}
\pa_t \omega_L-\nu t^2\pa_x^2\omega_L+y\pa_x\omega_L+\pa_x\theta_L=0, \\
\pa_t \theta_L-\nu t^2\pa_x^2\theta_L+y\pa_x\theta_L=0, \\
\omega_L(t,x,\pm 1)=\theta_L(t,x,\pm 1)=0, \\
\omega_L|_{t=0}=\omega^{in}, \quad \theta_L|_{t=0}=\theta^{in},
\end{cases}
\end{align*}
there is the improved inviscid damping estimates
\begin{align*}
 &\|u^{(1)}_L\|_{L^\infty}+(1+t)\|(u^{(1)}_L-t u^{(2)}_L)\|_{L^\infty}+(1+t)\|u^{(2)}_{L}\|_{L^\infty} \lesssim (1+t)^{-1}e^{-c\nu^{\frac{1}{3}} t}G_0,
\end{align*}
than that given in Proposition \ref{prop-2.4}. In addition, we also show the refined estimates of $(\omega-\omega_L,\theta-\theta_L)$  accordingly. For more details see Section $\ref{pre-lem}$-$\ref{non-es-geq}$.
\end{Remark}

\textbf{The Main Ideas and Outline.} It is known that the quasi-linear approach is an effective way to obtain the optimal transition threshold, which has been used in literature \cite{CLM-26, CWZ-24, NZ-25, WZ-21, WZ-26}. Then we decompose $(\omega,\theta)=(\omega_L,\theta_L)+(\omega_e,\theta_e)$, where $(\omega_{L}, \theta_{L})$ satisfies the linear system:
\begin{align}\label{otL-eq}
\begin{cases}
\pa_t \omega_L+\nu(1-t^2\pa_x^2)\omega_L+y\pa_x\omega_L+\pa_x\theta_L=0, \\
\pa_t \theta_L+\nu(1-t^2\pa_x^2)\theta_L+y\pa_x\theta_L=0, \\
\omega_L(t,x,\pm 1)=\theta_L(t,x,\pm 1)=0, \\
\omega_L|_{t=0}=\omega^{in}, \quad \theta_L|_{t=0}=\theta^{in},
\end{cases}\tag{$\text{Bou}_{\text{L}}$}
\end{align}
and $(\omega_{e}, \theta_{e})$ satisfies the nonlinear system:
\begin{align}\label{ote-eq}
\begin{cases}
\pa_t \omega_e-\nu\Delta\omega_e+y\pa_x\omega_e+u\cdot\na \omega_e+u_e\cdot\na \omega_L+E_\mathfrak{L}+\pa_x\theta_e=0, \\
\pa_t \theta_e-\nu\Delta\theta_e+y\pa_x\theta_e+u\cdot\na \theta_e+u_e\cdot \na \theta_L+T_\mathfrak{L}=0, \\
\omega_e(t,x,\pm 1)=\theta_e(t,x,\pm 1)=0, \\
\omega_e|_{t=0}=\theta_e|_{t=0}=0,
\end{cases}\tag{$\text{Bou}_{\text{e}}$}
\end{align}
with
\begin{align*}
&E_\mathfrak{L}=E_{r_L}+u_L\cdot\na\omega_L,\quad E_{r_L}=\pa_t\omega_L-\nu\Delta\omega_L+y\pa_x\omega_L+\pa_x\theta_L,\\
&T_\mathfrak{L}=T_{r_L}+u_L\cdot\na\theta_L, \quad \,\, T_{r_L}=\pa_t\theta_L-\nu\Delta\theta_L+y\pa_x\theta_L.
\end{align*}

While relaxing the magnitude of initial perturbations as much as possible,  the coupling effect gives rise to new difficulties in both the linear system \eqref{otL-eq} and nonlinear system \eqref{ote-eq} in contrast to the Navier-Stokes system.

The linear system \eqref{otL-eq} admits the explicit solution
\begin{align*}
\begin{cases}
\omega_{L}=e^{-\nu t+\frac{1}{3}\nu  t^3\pa_x^2-yt\pa_x}(\omega^{in}-t\pa_x\theta^{in}), \\
\theta_{L}=e^{-\nu t+\frac{1}{3}\nu  t^3\pa_x^2-yt\pa_x}\theta^{in}.
\end{cases}
\end{align*}
It is obvious that the term $t\pa_x\theta^{in}$ leads to linear
transient growth, analogous to the lift-up effect. We introduce the modified vorticity,
\begin{align*}
\alpha_{L}=\omega_{L}+t\pa_x\theta_{L},
\end{align*}
 where $(\alpha_L,\theta_L)$ provide the weak enhanced dissipation and inviscid damping estimates.

Next, we highlight the main difficulties incurring in the nonlinear system \eqref{ote-eq}, dividing it into two time regimes:
$t\leq\nu^{-\frac{1}{6}}$ and $t>\nu^{-\frac{1}{6}}$.

\medskip\noindent\textbf{\emph{Short-time regime.}} For $t\leq\nu^{-\frac{1}{6}}$, we shall apply the energy method with a carefully chosen workspace. The main obstruction is to balance the interaction between the coupling effect
  and the nonlinear advection.  More precisely,
  \begin{itemize}
\item
  If we employ the rough tool to close the estimates: bounding $\partial_x\theta_e$ in the $L_t^\infty$ space and directly using viscous dissipation to control the term $\partial_x^2\theta_e$ arising from $\partial_x(u_e\cdot\nabla\theta_e)$, it will bring a $\nu^{-\frac{1}{2}}$ loss and impose the condition $|\theta^{\mathrm{in}}|\lesssim \nu^{\frac{5}{6}}$. However, from a dynamical point of view, such loss does not appear to be essential. The key point is to establish  $L_{t}^\infty$ estimate for $\partial_x^2\theta_e$, which allows us  to relax the restriction to $|\theta^{\mathrm{in}}|\lesssim \nu^{\frac{2}{3}+}$.

\item 
To relax the admissible scale of $\theta_e$ as much as possible, we require a more refined treatment of the transport term $u_e\cdot\na\theta_e$. To this end, we introduce a $\nu-$dependent frequency decomposition to reorganize energy allocation among different modes, which in particular yields a gain of $\nu^{\frac{1}{12}}|\ln\nu|^{-1}$ power for $\pa_x^2\theta_e$.
\end{itemize}

\medskip\noindent\textbf{\emph{Long-time regime.}}
For $t>\nu^{-\frac{1}{6}}$, we further decompose $(\omega_e,\theta_e)=(\omega_\text{b},\theta_\text{b})+(\omega_\mathfrak{E},\theta_\mathfrak{E})$,  where $(\omega_\text{b},\theta_\text{b})$ satisfy
\begin{align*}
\begin{cases}
\pa_t \omega_\text{b}-\nu\Delta\omega_\text{b}+y\pa_x\omega_\text{b}+u\cdot\na \omega_e+u_e^{(1)}\pa_x \omega_L+u_e^{(2)}(\pa_y+t\pa_x) \omega_L+E_\mathfrak{L}=-\pa_x\theta_\text{b}, \\
\pa_t \theta_\text{b}-\nu\Delta\theta_\text{b}+y\pa_x\theta_\text{b}+u\cdot\na \theta_e+u_e^{(1)}\pa_x \theta_L+u_e^{(2)}(\pa_y+t\pa_x) \theta_L+T_\mathfrak{L}=0, \\
\omega_\text{b}(t,x,\pm 1)=\theta_\text{b}(t,x,\pm 1)=0, \\
\omega_\text{b}|_{t=T_0}=\omega_e(T_0),\quad  \theta_\text{b}|_{t=T_0}=\theta_e(T_0),
\end{cases}
\end{align*}
and $(\omega_\mathfrak{E},\theta_\mathfrak{E})=(\omega_{r},\theta_{r})+(\omega_\mathfrak{o},0)$ satisfy
\begin{align*}
\begin{cases}
\pa_t \omega_{{r}}-\nu\Delta\omega_{{r}}+y\pa_x\omega_{{r}}=-\pa_x\theta_{r}, \\
\pa_t \theta_{r}-\nu\Delta\theta_{r}+y\pa_x\theta_{r}=tu_e^{(2)}\pa_x\theta_L, \\
\omega_{r}(t,x,\pm 1)=\theta_{r}(t,x,\pm 1)=0, \\
\omega_{r}|_{t=T_0}=\theta_{r}|_{t=T_0}=0,
\end{cases}
\end{align*}
and
\begin{align*}
\begin{cases}
\pa_t \omega_{\mathfrak{o}}-\nu\Delta\omega_\mathfrak{o}+y\pa_x\omega_\mathfrak{o}=tu_e^{(2)}\pa_x\omega_L, \\
\omega_\mathfrak{o}(t,x,\pm 1)=0, \quad \omega_\mathfrak{o}|_{t=T_0}=0,
\end{cases}
\end{align*}
with $T_0=\nu^{-\frac{1}{6}}$.

 To obtain the sharp stability threshold $(\omega,\theta)\sim (\nu^{\frac{1}{3}},\nu^{\frac{2}{3}+})$, the main difficulty for system \eqref{ote-eq} lies in the term  $|\pa_x|^{\frac{1}{3}}\theta_e$.  Physically, this quantity is associated with the buoyancy coupling: at the enhanced-dissipation time scale $(\nu k^2)^{-\frac{1}{3}}$,  it measures the vorticity generated by the horizontal buoyancy gradient through the baroclinic mechanism.

\vskip 0.03in

\textbf{\textrm{I}.} For $(\omega_\text{b}, \theta_\text{b})$,  $|\pa_x|^{\frac{1}{3}}$-derivative to the equation of $\theta_\text{b}$ destroys the transport structure $u_e\cdot\na\theta_e$. This generates a commutator and hence a loss of  $|\pa_x|^{\frac{1}{3}}$-derivative. Since a sufficiently high-order estimate for $u_e$ is unavailable, controlling this commutator costs an extra factor of $\nu^{-\frac{1}{6}}$. To get around this obstacle,  we adopt a $\nu$-dependent frequency decomposition of $\pa_x u_e$ analogous to that used for short-time regime, with different splitting points.

\vskip 0.03in

\textbf{\textrm{I\!I}.} For $\omega_{\mathfrak{o}}$, it can be treated by the same argument as the Navier-Stokes equations.

\vskip 0.03in

\textbf{\textrm{I\!I\!I}.} For $(\omega_{r}, \theta_{r})$, the trouble is to control nonlinear echo chains associated with the temperature-vorticity coupling.  The $|\partial_x|^{\frac{1}{3}}\theta_r$ produces an extra  $\frac{1}{3}$-order horizontal derivative in the reaction term $t u_e^{(2)}\partial_x\theta_L$. To overcome the increased difficulty arising from the growth factor $t$ and the additional $\frac{1}{3}$-order derivative loss, we develop a two-level frequency-dependent temporal decomposition technique,  with the two levels matched to the infinite and finite principles to the equation accordingly combined with some resolvent estimates. More precisely, 

\vskip 0.03in

\textbf{\textrm{I\!I\!I-1}.} For $|k|\geq \nu^{-\frac{1}{2}}$, viscous dissipation is sufficient to compensate for the additional streamwise loss. We shall utilize energy estimates combined with the suitable resolvent bounds to reduce $\nu^{-\frac{1}{6}}$ loss.

\vskip 0.03in

\textbf{\textrm{I\!I\!I-2}.} For $0<|k|\leq \nu^{-\frac{1}{2}}$, define $$\omega_{r,k}=\int_{\mathbb{R}}e^{-ikx}\omega_{r}dx, \quad \theta_{r,k}=\int_{\mathbb{R}}e^{-ikx}\theta_{r}dx.$$ 
   We first use the dyadic-decomposition on $(\nu^{-\frac{1}{6}},+\infty)$, and divide  
$(\omega_{r,k},\theta_{r,k})\!=\!\sum_{j\geq 1}\!(\omega_{r,k,j},\theta_{r,k,j})$, 
where
 \begin{align*}
\begin{cases}
\pa_t \omega_{r,k,j}-\nu(\pa_y^2-k^2)\omega_{r,k,j}+iky\omega_{r,k,j}=-ik\theta_{r,k,j}, \quad t>T_{j-1}\\
\pa_t \theta_{r,k,j}-\nu(\pa_y^2-k^2)\theta_{r,k,j}+iky\theta_{r,k,j}=\int_{\mathbb{R}}iltu_{e,k-l}^{(2)}\theta_{L,l}dl\chi_j, \\
\omega_{r,k,j}(t,k,\pm 1)=\theta_{r,k,j}(t,k,\pm 1)=0, \\
\omega_{r,k,j}|_{t=T_{j-1}}=\theta_{r,k,j}|_{t=T_{j-1}}=0,
\end{cases}
\end{align*}
with $T_j=2^{j}\nu^{-\frac{1}{3}}(j\geq 1)$ and 
\begin{align}\label{chi-t-j}
\chi_j=
\begin{cases}
1, \quad t\in(T_{j-1},T_j],\\
0, \quad t>T_j.
\end{cases}
\end{align}
We further clarify $(\omega_{r,k}(t),\theta_{r,k}(t))=\sum_{j'=1}^{j}(\omega_{r,k,j'}(t),\theta_{r,k,j'}(t))$ for $t\in I_j:=(T_{j-1},T_j]$, and $(\nu^{-\frac{1}{6}},+\infty)=\cup_jI_j$. Then we apply the infinite superposition principles on $I_j$.

\vskip 0.03in

\textbf{\textrm{I\!I\!I-2-1}.}  $|k|\leq 1$. The above process suffices to control the echo cascades since the factor $|k|^{\frac{1}{3}}$ is negligible. 

\vskip 0.03in

\textbf{\textrm{I\!I\!I-2-2}.} $1\leq |k|\leq \nu^{-\frac{1}{2}}$. 
By making use of the  dyadic decomposition and the above infinite superposition, our task can be reduced to the problem concerning convergence series. Exploiting enhanced dissipation on each dyadic interval, we obtain $$\|\omega_r\|_X+\nu^{-\frac{1}{3}}\||\pa_x|^{\frac{1}{3}}\theta_r\|_X\lesssim \nu^{-\frac{2}{3}}\||\pa_x|^{\frac{1}{3}}\mathcal{M}u_e\|_{L^2_tL^2_{x,y}}F_0,$$  but will incur a loss of $\nu^{-\frac{1}{6}}$. Hence, we fail to achieve the sharp threshold $(\omega^{in}, \theta^{in})\sim(\nu^{\frac{1}{3}}, \nu^{\frac{2}{3}})$.
To make up for this loss, the crucial novelty is constructing the second-level, frequency-dependent decomposition on $I_j$ and splitting the equation accordingly.  This together with the enhanced dissipation in each additive interval, provides us the gain of one streamwise derivative for the forcing term. Specifically, 
\begin{itemize}
\item   
     We employ the same-scale additive decomposition on $I_j$ to decompose $\theta_{r,k,j}$ as $$ \theta_{r,k,j}=\int_{l\in\mathbb{R}}\sum_{1\leq m \leq\mathfrak{m}_j}\theta_{r,k,j,l,m}dl$$ with  $\mathfrak{m}_j:={[2^{j}|k|^{\frac{2}{3}}]+1}$. For readability, we set $\tilde{\theta}_m=\theta_{r,k,j,l,m}$, where $\tilde{\theta}_m$ satisfies
\begin{align*}
\qquad \quad
\begin{cases}
\pa_t \tilde{\theta}_{m}-\nu(\pa_y^2-k^2)\tilde{\theta}_{m}+iky\tilde{\theta}_{m}=iltu_{e,k-l}^{(2)}\theta_{L,l}\tilde{\chi}_m, \quad t\in(\tau_{m-1},T_j],\\
\tilde{\theta}_{m}(\pm 1)=0, \qquad \tilde{\theta}_{m}|_{t=\tau_{m-1}}=0,
\end{cases}
\end{align*}
with $\tau_m=T_{j-1}+\nu^{-\frac{1}{3}}|k|^{-\frac{2}{3}}m$ and
\begin{align*}
\tilde{\chi}_{m}=
\begin{cases}
1, \quad t\in(\tau_{m-1},\tau_m],\\
0, \quad t\in(\tau_m,T_j].
\end{cases}
\end{align*}

 We further clarify  $\sum_{m'= 1}^ {\mathfrak{m}_j}\tilde{\theta}_{m'}(t)=\sum_{m'=1}^m\tilde{\theta}_{m'}(t)$ when $t\in (\tau_{m-1},\tau_m]$, and
 \begin{align*}
   I_j=&\mathop{\textstyle\bigcup}_{1\leq m\leq\mathfrak{m}_j}(\tau_{m-1},\tau_{m}]\backslash(T_j,\tau_{\mathfrak{m}_j}]:=\mathop{\textstyle\bigcup}_{1\leq m\leq\mathfrak{m}_j}\mathcal{I}_m.
 \end{align*}
Then we apply the finite superposition principle to the above equation on $\mathcal{I}_m$. We point out that the  interval length $\tau_m-\tau_{m-1}=\nu^{-\frac{1}{3}}|k|^{-\frac{2}{3}}$ helps us to reduce the horizontal derivatives  requirement on each sub-time interval via the frequency-derivative transformation. This process offset the $\nu^{-\frac{1}{6}}$-loss in each sub-time interval.

\item However, this compensation come at the cost of a polynomial rate growth when summing over the decomposed pieces.   Let $\tilde{\theta}=\sum_{m= 1}^ {\mathfrak{m}_j}\theta_{r,k,j,l,m}$. Roughly speaking,    \begin{align*}
 \qquad \qquad\ \, \|\tilde{\theta}\|_{(T_{j-1},T_j]}^2\!\leq \sum_{n=1}^{\mathfrak{m}_j}\|\tilde{\theta}\|_{(\tau_{n-1},\tau_n]}^2\!\lesssim\sum_{n=1}^{\mathfrak{m}_j} \Big\|\sum_{m=1}^n\tilde{\theta}_m\Big\|_{(\tau_{n-1},\tau_n]}^2\!\lesssim\sum_{n=1}^{\mathfrak{m}_j}n^2 \sum_{m=1}^n\|\tilde{\theta}_m\|_{(\tau_{n-1},\tau_n]}^2,
\end{align*}
where we omit the concrete definition of the norm here and below for simplicity. To overcome this difficulty, we establish the space-time estimate  
     \begin{align*}
\qquad \qquad \|\tilde{\theta}_m\|_{(\tau_{n-1},\tau_n]}\lesssim e^{-c\nu^{\frac{1}{3}}|k|^{\frac{2}{3}}(\tau_{n-1}-\tau_m)}\|\tilde{\theta}_m\|_{(\tau_{m-1},\tau_m]}\lesssim e^{-c(n-1-m)}\|\tilde{\theta}_m\|_{(\tau_{m-1},\tau_m]}
\end{align*}
for the dissipative equation,  rather than in a purely inviscid framework as shown in \cite{CLM-26, WZ-26}. The enhanced dissipation yields sufficient temporal decay to absorb this summation growth and close the estimates.
 
 \end{itemize}

\medskip
\noindent{\bf Notations.}
Throughout this paper, we write $A\lesssim B$ if $A\leq CB$ holds for a constant $C>0$ independent of all relevant parameters.  We define the Fourier transform in the $x$-variable by
\begin{align*}
f_k=\int_{\mathbb{R}}e^{-ikx}f(t,x,y)dx.
\end{align*}
For $t_0, t_1\in [0,+\infty)$, we also define the following norms:
\begin{align*}
&\|f\|_{W(t_0,t_1)}:=\|f\|_{L^\infty_t(t_0,t_1;L^2_{x,y})}+\nu^{\frac{1}{2}}\|\nabla f\|_{L^2_t(t_0,t_1;L^2_{x,y})},\\
&\|f\|_{X(t_0,t_1)}:=\|f\|_{L^\infty_t(t_0,t_1;L^2_{x,y})}+\nu^{\frac{1}{2}}\|\na f\|_{L^2_t(t_0,t_1;L^2_{x,y})}+\|\mathcal{M}\na \Delta^{-1}f\|_{L^2_t(t_0,t_1;L^2_{x,y})}\\
&\qquad \qquad \quad+\nu^{\frac{1}{2}}\|\pa_x f\|_{L^1_t(t_0,t_1;L^2_{x,y})},\\
&\|f\|_{Y(t_0,t_1)}:=\|f\|_{L^\infty_t(t_0,t_1;L^2_{x,y})}+\nu^{\frac{1}{2}}\|\na f\|_{L^2_t(t_0,t_1;L^2_{x,y})}+\|\mathcal{M}\na \Delta^{-1}f\|_{L^2_t(t_0,t_1;L^2_{x,y})},\\
&\|f\|_{\mathcal{Y}(t_0,t_1)}:=\|f\|_{L^\infty_t(t_0,t_1;L^2_y)}+\nu^{\frac{1}{2}}\|(\pa_y,k) f\|_{L^2_t (t_0,t_1;L^2_y)}\nonumber\\
&\qquad \qquad \quad+\|(|k|^{\frac{1}{2}}+|k|)(\pa_y,k)(\pa_y^2-k^2)^{-1}f\|_{L^2_t(t_0,t_1;L^2_y)},\\
&\|f\|_{\mathcal{Z}_m}:=\|f\|_{\mathcal{Y}(\tau_{m-1},\tau_m)}+\nu^{\frac{1}{6}}|k|^{\frac{1}{3}}\|f\|_{L^2_t(\tau_{m-1},\tau_m;L^2_y)}.
\end{align*}

To streamline the presentation, we set
\begin{align}\label{A-S-B-S}
 A_{s}=\sum_{j=0}^s\|(\pa_y,k)^{j}\omega_{k}^{in}\|_{L^2_y},  \quad B_{s}=\sum_{j=0}^s\|(\pa_y,k)^{j}\theta_{k}^{in}\|_{L^2_y}, \quad G_0=E_0+\nu^{-\frac{1}{3}}F_0.
 \end{align}

\section{\texorpdfstring{Estimates for the linear system \eqref{otL-eq}}%
  {Estimates for the linear system (BouL)}}\label{Linear-es}
In this section, we establish the enhanced dissipation and inviscid damping estimates for the linear system \eqref{otL-eq}.

 We start by applying the horizontal Fourier transform to \eqref{otL-eq},
\begin{align}\label{o-t-L-eq-k}
\begin{cases}
\pa_t \omega_{L,k}+\nu(1+k^2t^2)\omega_{L,k}+iky\omega_{L,k}+ik\theta_{L,k}=0, \\
\pa_t \theta_{L,k}+\nu(1+k^2t^2)\theta_{L,k}+iky\theta_{L,k}=0, \\
\omega_{L,k}(t,k,\pm 1)=\theta_{L,k}(t,k,\pm 1)=0, \\
\omega_{L,k}|_{t=0}=\omega^{in}_k, \quad \theta_{L,k}|_{t=0}=\theta^{in}_k.
\end{cases}
\end{align}
\begin{lemma}\label{lem-2.0}
Assume that $\phi_{L,k}=(\pa^2_y-k^2)^{-1}\omega_{L,k}$. Then the following estimates hold:
\begin{align}
&\|\pa_y(\pa_y+ikt)\phi_{L,k}\|_{L^2_y}\lesssim\Big(\frac{\textbf{\textup 1}_{|k|\leq 1}}{1+|k|t}+\frac{\textbf{\textup 1}_{|k|\geq 1}}{|k|^2(1+t)}\Big)\|(\pa_y,k)^2(e^{ikyt}\omega_{L,k})\|_{L^2_y}, \label{py-py-psi-es-2-k}\\
&\|\pa_y\phi_{L,k}\|_{L^2_y}
\lesssim\Big(\frac{\textbf{\textup 1}_{|k|\leq 1}}{1+|k|t}+\frac{\textbf{\textup 1}_{|k|\geq 1}}{|k|^2(1+t)}\Big)\|(\pa_y,k)(e^{ikyt}\omega_{L,k})\|_{L^2_y}, \label{py-psi-es-2-k}\\
&\|\phi_{L,k}\|_{L^2_y}
\lesssim\Big(\frac{\textbf{\textup 1}_{|k|\leq 1}}{(1+|k|t)^2}+\frac{\textbf{\textup 1}_{|k|\geq 1}}{|k|^4(1+t)^2}\Big)\|(\pa_y,k)^2(e^{ikyt}\omega_{L,k})\|_{L^2_y}. \label{psi-es-2-k}
\end{align}
\end{lemma}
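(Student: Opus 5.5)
The plan is to work with the profile $f_k:=e^{ikyt}\omega_{L,k}$ and to extract the decay from the oscillation of $e^{-ikyt}$ by integrating by parts in $y$. Two boundary facts are used throughout. First, $f_k(\pm1)=0$, since $\omega_{L,k}$ satisfies a Dirichlet condition in \eqref{o-t-L-eq-k}. Second, $\phi_{L,k}(\pm1)=0$, since $u^{(2)}$ vanishes at the boundary. The basic identity is $e^{-ikyt}=(-ikt)^{-1}\pa_y e^{-ikyt}$. For $t\le1$, or $|k|t\lesssim1$ when $|k|\le1$, every bound follows from standard elliptic estimates for $(\pa_y^2-k^2)^{-1}$ with Dirichlet data on $[-1,1]$. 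Indeed, $\|(\pa_y,k)^j(\pa_y^2-k^2)^{-1}g\|_{L^2_y}\lesssim \langle k\rangle^{j-2}\|g\|_{L^2_y}$ and $\|\omega_{L,k}\|=\|f_k\|\le \langle k\rangle^{-s}\|(\pa_y,k)^sf_k\|$. So I only treat $|k|t\gtrsim1$.

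\emph{Estimate \eqref{py-psi-es-2-k}.} Test the elliptic equation against $\phi_{L,k}$:
$$\|\pa_y\phi_{L,k}\|^2+k^2\|\phi_{L,k}\|^2=-\int e^{-ikyt}f_k\bar\phi_{L,k}\,dy.$$
Integrate by parts once; the boundary term vanishes because $f_k\bar\phi_{L,k}=0$ at $y=\pm1$. This gives a bound by $(|k|t)^{-1}\big(\|f_k'\|\|\phi_{L,k}\|+\|f_k\|\|\phi_{L,k}'\|\big)$. For $|k|\le1$, Poincaré's inequality yields $\|\pa_y\phi_{L,k}\|\lesssim(|k|t)^{-1}\|(\pa_y,k)f_k\|$. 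For $|k|\ge1$, use $\|\phi_{L,k}\|\le|k|^{-1}\|\na_k\phi_{L,k}\|$ and $\|f_k\|\le|k|^{-1}\|(\pa_y,k)f_k\|$. This gives $\|\pa_y\phi_{L,k}\|\lesssim k^{-2}t^{-1}\|(\pa_y,k)f_k\|$.

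\emph{Estimate \eqref{psi-es-2-k}.} I use duality rather than testing against $\phi_{L,k}$ itself. Testing against $\phi_{L,k}$ twice would produce the non-oscillatory term $|f_k|^2$ from $\bar\phi''$. Instead, let $\psi$ solve $(\pa_y^2-k^2)\psi=\phi_{L,k}$ with $\psi(\pm1)=0$. Then $\|\phi_{L,k}\|^2=\int e^{-ikyt}f_k\bar\psi$. Integrate by parts twice. The boundary terms are $f_k\bar\psi$ and $\pa_y(f_k\bar\psi)=f_k'\bar\psi+f_k\bar\psi'$ at $\pm1$, and both vanish. This leaves $(kt)^{-2}\int e^{-ikyt}(f_k''\bar\psi+2f_k'\bar\psi'+f_k\bar\psi'')$. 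Now $\psi''=\phi_{L,k}+k^2\psi$, so the elliptic bounds $\|(\pa_y,k)^j\psi\|\lesssim\langle k\rangle^{j-2}\|\phi_{L,k}\|$ apply. Distributing the powers of $k$ as above yields $(1+|k|t)^{-2}$ for $|k|\le1$ and $k^{-4}t^{-2}$ for $|k|\ge1$.

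\emph{Estimate \eqref{py-py-psi-es-2-k}, the main obstacle.} Set $h=(\pa_y+ikt)\phi_{L,k}$. Since $\pa_y+ikt$ commutes with $\pa_y^2-k^2$, we get $(\pa_y^2-k^2)h=(\pa_y+ikt)\omega_{L,k}=e^{-ikyt}f_k'$. However, $h(\pm1)=\pa_y\phi_{L,k}(\pm1)\neq0$, so I split $h=h_0+h_b$. Here $h_0$ has Dirichlet data, and $h_b$ is the combination of $\sinh(k(1\pm y))/\sinh(2k)$ carrying the boundary values.

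For $h_0$, I repeat the one-integration-by-parts argument of \eqref{py-psi-es-2-k} with $f_k$ replaced by $f_k'$. The boundary term $f_k'\bar h_0$ vanishes because $h_0(\pm1)=0$. This gives $\|\pa_yh_0\|$ bounded by the claimed weight times $\|(\pa_y,k)^2f_k\|$.

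For $h_b$, I write $\pa_y\phi_{L,k}(\pm1)=\langle\omega_{L,k},g_\pm\rangle$, where $g_\pm$ are the homogeneous solutions with unit flux at $\pm1$. I then integrate by parts once more; since $f_k(\pm1)=0$, there is no boundary term. This gives $|\pa_y\phi_{L,k}(\pm1)|\lesssim(|k|t)^{-1}\|(\pa_y,k)f_k\|\,\|(\pa_y,k)g_\pm\|$. Multiplying by $\|\pa_y h_b\|/|h_b(\pm1)|\sim\langle k\rangle^{1/2}$ is the delicate bookkeeping step. The $k$-powers from $\|(\pa_y,k)g_\pm\|\sim\langle k\rangle^{1/2}$ and from trading $\|f_k\|$ for $|k|^{-2}\|(\pa_y,k)^2f_k\|$ must combine into $k^{-2}$ for $|k|\ge1$. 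If the single integration by parts falls short, I would integrate by parts a second time in the representation of $\pa_y\phi_{L,k}(\pm1)$; the resulting boundary term $f_k'\bar g_\pm$ is then controlled by the explicit, exponentially localized form of $g_\pm$.
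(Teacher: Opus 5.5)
Your proof is correct, and it differs from the paper's in that it is self-contained. The paper disposes of Lemma~\ref{lem-2.0} in one line: it invokes \cite[Lemma A.4]{CLM-26} with $\psi,w$ replaced by $\phi_{L,k},\omega_{L,k}$, and uses the Poincar\'e inequality to adjust the weights for $|k|\le 1$. You instead derive all three bounds from scratch, in three steps.
\begin{enumerate}
\item You integrate by parts against the phase $e^{-ikyt}=(-ikt)^{-1}\partial_y e^{-ikyt}$, and every boundary term is killed by $f_k(\pm1)=0$ or by a Dirichlet condition on the test function.
\item You use a duality argument for $\|\phi_{L,k}\|$. This correctly avoids the non-oscillatory $|f_k|^2$ term that testing against $\phi_{L,k}$ twice would produce.
\item You split $h=(\partial_y+ikt)\phi_{L,k}$ as $h=h_0+h_b$. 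The nonvanishing trace $h(\pm1)=\partial_y\phi_{L,k}(\pm1)$ is shown to decay through Green's identity.
\end{enumerate}
The citation is shorter. Your route makes explicit that the only boundary information needed is the Dirichlet condition on $\omega_{L,k}$. It also shows that the boundary enters \eqref{py-py-psi-es-2-k} only through the two scalars $\partial_y\phi_{L,k}(\pm1)$.

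The step you left open closes with a single integration by parts, provided you keep the two products separate.
\begin{itemize}
\item Take $g_+=\sinh(k(1+y))/\sinh(2k)$ and $g_-=\sinh(k(1-y))/\sinh(2k)$. These are the homogeneous solutions with unit boundary \emph{value} at $\pm1$ and zero at $\mp1$, not unit flux. Since $\phi_{L,k}(\pm1)=0$, Green's identity gives $\int\omega_{L,k}g_\pm\,dy=\pm\partial_y\phi_{L,k}(\pm1)$.
\item One integration by parts then gives $|\partial_y\phi_{L,k}(\pm1)|\le(|k|t)^{-1}\big(\|f_k'\|\|g_\pm\|+\|f_k\|\|g_\pm'\|\big)$.
\item For $|k|\ge1$ you have $\|g_\pm\|\sim|k|^{-1/2}$ and $\|g_\pm'\|\sim|k|^{1/2}$. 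You also have $\|f_k'\|\le|k|^{-1}\|(\partial_y,k)^2f_k\|$ and $\|f_k\|\le|k|^{-2}\|(\partial_y,k)^2f_k\|$. So both products are $\lesssim|k|^{-3/2}\|(\partial_y,k)^2f_k\|$.
\item Multiplying by $\|g_\pm'\|\sim|k|^{1/2}$ gives exactly $\|\partial_y h_b\|\lesssim k^{-2}t^{-1}\|(\partial_y,k)^2f_k\|$.
\end{itemize}
Your compressed bound $(|k|t)^{-1}\|(\partial_y,k)f_k\|\,\|(\partial_y,k)g_\pm\|$ throws away a factor $|k|^{-1}$, since $\|f'\|\|g\|+\|f\|\|g'\|\lesssim|k|^{-1}\|(\partial_y,k)f\|\,\|(\partial_y,k)g\|$. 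Used on its own, it would leave you one power of $|k|$ short. For $|k|\le1$ the functions $g_\pm$ are uniformly bounded in $H^1$, and the bound follows immediately with Poincar\'e. Your fallback second integration by parts is therefore unnecessary.
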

\begin{proof}
The proof can be derived by replacing $\psi$ and $w$ with $\phi_{L,k}$ and $\omega_{L,k}$ respectively in \cite[Lemma A.4]{CLM-26}, together with the Poincar\'e inequality.
\end{proof}
\begin{lemma}
\cite[Lemma A.3]{CLM-26} Let $f$ satisfy the linear initial-boundary value problem
\begin{align*}
\begin{cases}
\pa_t f+\nu(1+k^2t^2)f+iky f=0, \\
f(t,k,\pm 1)=0, \qquad f|_{t=0}=f^{in}.
\end{cases}
\end{align*}
For $s\in \mathbb{N}$,  the following estimates hold:
\begin{align}
&\|(\pa_y,k)^s(e^{ikyt}f)\|_{L^2_y}\lesssim e^{-\nu t-\frac{1}{3}\nu k^2 t^3}\|f^{in}\|_{\dot{H}^s},\label{equ-linear-f-es-1}\\
&\|(\pa_y,k)^s(e^{ikyt}f)\|_{L^\infty_y}\lesssim \min
\{1,|k|^{-\frac{1}{2}}\}e^{-\nu t-\frac{1}{3}\nu k^2 t^3}\|f^{in}\|_{\dot{H}^{s+1}}.\label{equ-linear-f-es-2}
\end{align}
\end{lemma}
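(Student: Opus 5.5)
The statement is Lemma A.3 of \cite{CLM-26}, and I would reprove it directly from the explicit representation. Set $g:=e^{ikyt}f$. Since $\pa_t(e^{ikyt}f)=e^{ikyt}(\pa_t f+iky f)$, the equation for $f$ becomes
\begin{align*}
\pa_t g+\nu(1+k^2t^2)g=0,\qquad g|_{t=0}=f^{in}.
\end{align*}
Hence $g(t)=e^{-\nu t-\frac{1}{3}\nu k^2t^3}f^{in}$, and this is an ODE in $t$ for each fixed $(k,y)$. The boundary condition is preserved automatically: $f^{in}(\pm1)=0$ gives $g(t,\pm1)=0$. The key point is that $(\pa_y,k)$ acts on $g$ only through $f^{in}$, because the multiplier $e^{-\nu t-\frac13\nu k^2t^3}$ is independent of $y$.

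For \eqref{equ-linear-f-es-1}, I would write
\begin{align*}
(\pa_y,k)^s(e^{ikyt}f)=e^{-\nu t-\frac{1}{3}\nu k^2t^3}(\pa_y,k)^s f^{in}.
\end{align*}
Taking $L^2_y$ norms then gives $e^{-\nu t-\frac13\nu k^2t^3}\|(\pa_y,k)^sf^{in}\|_{L^2_y}$. Here $\|f^{in}\|_{\dot H^s}$ is understood, as in \cite{CLM-26}, as the $k$-localized homogeneous norm $\|(\pa_y,k)^sf^{in}\|_{L^2_y}$.

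For \eqref{equ-linear-f-es-2}, it suffices to bound $\|h\|_{L^\infty_y}$ with $h=(\pa_y,k)^sf^{in}$.
\begin{itemize}
\item For $|k|\le1$, I would use the one-dimensional Sobolev inequality on $[-1,1]$, $\|h\|_{L^\infty}\lesssim\|h\|_{L^2}+\|\pa_yh\|_{L^2}$. The $L^2$ term is absorbed into the higher norm by Poincaré. This is legitimate when $h$ vanishes at $y=\pm1$, which holds for even-order $\pa_y$ derivatives under the boundary conditions; otherwise one falls back on the inhomogeneous Sobolev bound, which is harmless since $|k|\le1$.
\item For $|k|\ge1$, I would use the Gagliardo--Nirenberg type inequality $\|h\|_{L^\infty}^2\lesssim\|h\|_{L^2}\|\pa_yh\|_{L^2}+\|h\|_{L^2}^2$. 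Then $|k|\|h\|_{L^\infty}^2\lesssim\||k|h\|_{L^2}\|\pa_yh\|_{L^2}+|k|^{-1}\||k|h\|_{L^2}^2\lesssim\|(\pa_y,k)^{s+1}f^{in}\|_{L^2}^2$. This yields the factor $|k|^{-\frac12}$.
\end{itemize}

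The only delicate step is justifying the boundary-trace issue in the Poincaré step. It is minor, since in the low-frequency regime an inhomogeneous bound suffices.
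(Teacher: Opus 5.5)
Your overall route is right; \cite{CLM-26} is only cited here, so there is no in-paper proof to compare against. Conjugating by $e^{ikyt}$ reduces the problem to the ODE $\pa_t g+\nu(1+k^2t^2)g=0$, so $(\pa_y,k)^s(e^{ikyt}f)=e^{-\nu t-\frac13\nu k^2t^3}(\pa_y,k)^sf^{in}$. This gives \eqref{equ-linear-f-es-1} as an identity. Your Gagliardo--Nirenberg step for $|k|\ge1$ is also correct and yields the factor $|k|^{-\frac12}$ with the homogeneous norm.

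The low-frequency step needs correcting.
\begin{itemize}
\item It is not true that even-order $\pa_y$-derivatives of $f^{in}$ vanish at $y=\pm1$. Only $f^{in}$ itself does, since no compatibility conditions are assumed.
\item What is available is narrower. The component $k^sf^{in}$ vanishes at the boundary, and $k^{s-1}\pa_yf^{in}$ has zero mean because $f^{in}(1)=f^{in}(-1)$. So Poincar\'e, resp.\ Poincar\'e--Wirtinger, gives the homogeneous bound only for components with at most one $y$-derivative, i.e.\ completely only for $s\le1$.
\item For $s\ge2$ and $|k|\le1$ the homogeneous form of \eqref{equ-linear-f-es-2} is false. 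Take $f^{in}=1-y^2$ and $s=2$. At $t=0$ the left side is at least $\|\pa_y^2f^{in}\|_{L^\infty_y}=2$, while $\|(\pa_y,k)^3f^{in}\|_{L^2_y}\lesssim|k|\to0$.
\end{itemize}

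Your fallback is therefore not harmless ``since $|k|\le1$'': smallness of $k$ does nothing to absorb $\|(\pa_y,k)^sf^{in}\|_{L^2_y}$. What it actually proves is the estimate with the inhomogeneous quantity $\sum_{j\le s+1}\|(\pa_y,k)^jf^{in}\|_{L^2_y}$ on the right. That is exactly how the lemma is used in Lemma \ref{lem-2.3}, through $A_{s+1}$ and $B_{s+1}$, so nothing downstream breaks. You should state the result in that inhomogeneous form rather than claim the homogeneous bound.
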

Now, we return to the estimates for the solution of the system \eqref{otL-eq}. 
\begin{lemma}\label{lem-2.3}
Let $(\omega_{L,k},\theta_{L,k})$ be the solution of \eqref{o-t-L-eq-k} and $u_{L,k}=(\pa_y, -ik)(\pa_y^2-k^2)^{-1} \omega_{L,k}$.  For $s\in \mathbb{N}$ and the constant $0<c<\frac{1}{3}$, there holds,
\begin{align}
&\|(\pa_y,k)^{s}(e^{ikyt}\omega_{L,k})\|_{L^2_y}\lesssim e^{-\nu t-c\nu k^2 t^3}(A_{s}+\nu^{-\frac{1}{3}}|k|^{\frac{1}{3}}B_{s}), \label{oL-es-2-k}\\
&\|(\pa_y,k)^{s}(e^{ikyt}\theta_{L,k})\|_{L^2_y}\lesssim e^{-\nu t-c\nu k^2 t^3}B_{s}, \label{tL-es-2-k}\\
&\|(\pa_y,k)^{s}(e^{ikyt}\omega_{L,k})\|_{L^\infty_y}\lesssim \min
\{1,|k|^{-\frac{1}{2}}\}e^{-\nu t-c\nu k^2 t^3}(A_{s+1}+\nu^{-\frac{1}{3}}|k|^{\frac{1}{3}}B_{s+1}), \label{oL-es-inf-k}\\
&\|(\pa_y,k)^{s}(e^{ikyt}\theta_{L,k})\|_{L^\infty_y}\lesssim \min
\{1,|k|^{-\frac{1}{2}}\}e^{-\nu t-c\nu k^2 t^3}B_{s+1}, \label{tL-es-inf-k}
\end{align}
and for $j=1,2,$
\begin{align}
\|e^{ikyt}u^{(j)}_{L,k}\|_{L^2_y}\lesssim&\Big(\frac{\textbf{\textup 1}_{|k|\leq 1}|k|^{j-1}}{(1+|k|t)^{j}}+\frac{\textbf{\textup 1}_{|k|\geq 1}}{|k|^{j+1}(1+t)^{j}}\Big) e^{-\nu t-c\nu k^2 t^3}(A_{j}+\nu^{-\frac{1}{3}}|k|^{\frac{1}{3}}B_{j}),\label{ujL-es-2-k}\\
\|e^{ikyt}u^{(j)}_{L,k}\|_{L^\infty_y}\lesssim &\Big(\frac{\textbf{\textup 1}_{|k|\leq 1}|k|^{j-1}}{(1+|k|t)^{j}}+\frac{\textbf{\textup 1}_{|k|\geq 1}}{|k|^{j+1}(1+t)^{j}}\Big) e^{-\nu t-c\nu k^2 t^3}(A_{j+1}+\nu^{-\frac{1}{3}}|k|^{\frac{1}{3}}B_{j+1}),\label{ujL-es-inf-k}
\end{align}
where $A_s,B_s,A_j,B_j$ are defined as in \eqref{A-S-B-S}.
\end{lemma}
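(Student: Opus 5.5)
The plan is to use the explicit structure of \eqref{o-t-L-eq-k}. Let $S_k(t)$ be the solution operator of the scalar problem in \cite[Lemma A.3]{CLM-26}. Then $\theta_{L,k}=S_k(t)\theta^{in}_k$. Since the operator $\pa_t+\nu(1+k^2t^2)+iky$ acts by multiplication in $y$, it commutes with multiplication by $t$ up to the term coming from $\pa_t$. A direct check shows that $\omega_{L,k}=S_k(t)(\omega^{in}_k-ikt\,\theta^{in}_k)$; the formula is valid because $\omega_{L,k}$ is given by multiplication by $e^{-\nu t-\frac13\nu k^2t^3-ikyt}$. Consequently $e^{ikyt}\omega_{L,k}=e^{-\nu t-\frac13\nu k^2t^3}(\omega^{in}_k-ikt\theta^{in}_k)$ and $e^{ikyt}\theta_{L,k}=e^{-\nu t-\frac13\nu k^2t^3}\theta^{in}_k$. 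The boundary conditions are inherited from the data.

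\textbf{Step 1} gives \eqref{tL-es-2-k} and \eqref{tL-es-inf-k}. These follow immediately from \eqref{equ-linear-f-es-1} and \eqref{equ-linear-f-es-2} with $f^{in}=\theta^{in}_k$. Summing the homogeneous norms up to order $s$ produces $B_s$ (respectively $B_{s+1}$), and the factor $\frac13$ in the exponent can be weakened to any $c<\frac13$.

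\textbf{Step 2} gives \eqref{oL-es-2-k} and \eqref{oL-es-inf-k}. By linearity, the $\omega^{in}_k$ part is bounded by $e^{-\nu t-\frac13\nu k^2t^3}A_s$. The only new point is the lift-up term $|k|t\,e^{-\frac13\nu k^2t^3}B_s$. For this I use the elementary bound
\[
|k|t\,e^{-(\frac13-c)\nu k^2t^3}=\nu^{-\frac13}|k|^{\frac13}\,(\nu^{\frac13}|k|^{\frac23}t)\,e^{-(\frac13-c)(\nu^{\frac13}|k|^{\frac23}t)^3}\lesssim \nu^{-\frac13}|k|^{\frac13},
\]
which holds because $xe^{-\delta x^3}$ is bounded. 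This absorbs the factor $t$ at the cost of reducing the exponent constant from $\frac13$ to $c$. The $L^\infty_y$ version follows in the same way from \eqref{equ-linear-f-es-2}.

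\textbf{Step 3} gives \eqref{ujL-es-2-k} and \eqref{ujL-es-inf-k}. We have $u^{(1)}_{L,k}=\pa_y\phi_{L,k}$ and $u^{(2)}_{L,k}=-ik\phi_{L,k}$, and $|e^{ikyt}|=1$. Applying \eqref{py-psi-es-2-k} to $u^{(1)}$ and \eqref{psi-es-2-k} multiplied by $|k|$ to $u^{(2)}$, and then inserting Step 2 with $s=1$ and $s=2$, gives the $L^2$ bounds with the stated weights. This includes the factor $|k|^{j-1}$ for $|k|\le1$; for $|k|\ge1$ the weight $|k|^{-4}\cdot|k|=|k|^{-3}$ matches $|k|^{-(j+1)}$ when $j=2$. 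For $L^\infty_y$, I use the one-dimensional Gagliardo--Nirenberg inequality $\|g\|_{L^\infty_y}\lesssim\|g\|_{L^2}^{1/2}\|\pa_yg\|_{L^2}^{1/2}$ together with the bound on $\pa_y(\pa_y+ikt)\phi_{L,k}$ in \eqref{py-py-psi-es-2-k}. This raises the regularity index by one, so $A_{j+1},B_{j+1}$ appear.

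The main obstacle is this last $L^\infty$ step. Plain $\pa_y$ of $\phi$ is not decaying in $t$, so one must interpolate using the twisted derivatives $(\pa_y+ikt)$ without losing time decay. If the direct interpolation loses a power of $t$, I would instead follow the approach of \cite[Lemma A.4]{CLM-26}. That means writing $e^{ikyt}\phi_{L,k}$ in terms of $(\pa_y+ikt)$, using $\|g\|_{L^\infty}\lesssim\|g\|_{H^1}$ on $[-1,1]$ for the twisted function, and applying the Lemma \ref{lem-2.0}-type bounds with one more derivative on the data.
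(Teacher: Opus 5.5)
Your Steps 1--2 and the $L^2$ velocity bounds \eqref{ujL-es-2-k} follow the paper's proof. Your explicit formula for $\omega_{L,k}$ is the paper's observation that $\alpha_{L,k}=\omega_{L,k}+ikt\theta_{L,k}$ solves the homogeneous problem with data $\omega^{in}_k$. The absorption $|k|t\,e^{-(\frac13-c)\nu k^2t^3}\lesssim\nu^{-\frac13}|k|^{\frac13}$ is the same, and your weight bookkeeping from Lemma \ref{lem-2.0} is right. Your twisted interpolation also gives \eqref{ujL-es-inf-k} for $j=1$. With $h:=e^{ikyt}\pa_y\phi_{L,k}$ one has $\pa_y h=e^{ikyt}\pa_y(\pa_y+ikt)\phi_{L,k}$, so \eqref{py-psi-es-2-k}, \eqref{py-py-psi-es-2-k} and $\|h\|_{L^\infty_y}\lesssim\|h\|_{L^2_y}+\|h\|_{L^2_y}^{1/2}\|\pa_yh\|_{L^2_y}^{1/2}$ suffice. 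Keep the lower-order term, since $h$ does not vanish at $y=\pm1$.

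The gap is \eqref{ujL-es-inf-k} for $j=2$. Interpolating $e^{ikyt}u^{(2)}_{L,k}=-ik\,e^{ikyt}\phi_{L,k}$ requires $\|(\pa_y+ikt)\phi_{L,k}\|_{L^2_y}$. For $|k|\le1$, Lemma \ref{lem-2.0} controls this only by $\|\pa_y\phi_{L,k}\|_{L^2_y}+|k|t\|\phi_{L,k}\|_{L^2_y}\lesssim(1+|k|t)^{-1}$. The outcome is $|k|(1+|k|t)^{-3/2}$, and the lost half power matters downstream: in \eqref{px-py-u2L-es-inf} it would give $t^{-3/2}$ instead of $t^{-2}$. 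Your fallback presupposes $\|(\pa_y+ikt)\phi_{L,k}\|_{L^2_y}\lesssim(1+|k|t)^{-2}$, which is not among the available bounds.

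The missing idea is one integration by parts against the oscillation; this is how the paper's terse ``interpolation'' has to be realized. Since $\phi_{L,k}(\pm1)=0$,
\[
\phi_{L,k}(y)=\int_{-1}^{y}e^{-iky't}h(y')\,dy'=\frac{i}{kt}\Big(e^{-ikyt}h(y)-e^{ikt}h(-1)\Big)-\frac{i}{kt}\int_{-1}^{y}e^{-iky't}\pa_{y'}h(y')\,dy',
\]
so $\|\phi_{L,k}\|_{L^\infty_y}\lesssim(|k|t)^{-1}\big(\|h\|_{L^\infty_y}+\|\pa_yh\|_{L^2_y}\big)$. The $j=1$ bounds then supply the extra factor $(|k|t)^{-1}$. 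For $|k|t\lesssim1$ (resp.\ $t\le1$ when $|k|\ge1$), use $\|\phi_{L,k}\|_{L^\infty_y}\lesssim\|\phi_{L,k}\|_{L^2_y}^{1/2}\|\pa_y\phi_{L,k}\|_{L^2_y}^{1/2}$. For $|k|\ge1$, trade one derivative via $\|(\pa_y,k)^m f\|_{L^2_y}\le|k|^{-1}\|(\pa_y,k)^{m+1}f\|_{L^2_y}$ to pass from $|k|^{-2}(1+t)^{-2}$ to the required $|k|^{-3}(1+t)^{-2}$. This is exactly where $A_3$ and $B_3$ enter, after inserting \eqref{oL-es-2-k} with $s\le3$.
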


\begin{proof}
For convenience, we define $\alpha_{L,k}:=\omega_{L,k}+ikt\theta_{L,k}$, satisfying 
\begin{align*}
\begin{cases}
\pa_t \alpha_{L,k}+\nu(1+k^2t^2)\alpha_{L,k}+iky\alpha_{L,k}=0, \\
\alpha_{L,k}(t,k,\pm 1)=0, \qquad \alpha_{L,k}|_{t=0}=\omega_k^{in}.
\end{cases}
\end{align*}
In view of \eqref{equ-linear-f-es-1} and \eqref{equ-linear-f-es-2}, we obtain
\begin{align}
&\|(\pa_y,k)^s(e^{ikyt}\alpha_{L,k})\|_{L^2_y}\lesssim e^{-\nu t-\frac{1}{3}\nu k^2 t^3}A_{s},\label{py-ks-al-es-2}\\
&\|(\pa_y,k)^s(e^{ikyt}\theta_{L,k})\|_{L^2_y}\lesssim e^{-\nu t-\frac{1}{3}\nu k^2 t^3}B_{s},\label{py-ks-th-es-2}\\
&\|(\pa_y,k)^s(e^{ikyt}\alpha_{L,k})\|_{L^\infty_y}\lesssim \min
\{1,|k|^{-\frac{1}{2}}\}e^{-\nu t-\frac{1}{3}\nu k^2 t^3}A_{s+1},\label{py-ks-al-es-inf}\\
&\|(\pa_y,k)^s(e^{ikyt}\theta_{L,k})\|_{L^\infty_y}\lesssim \min
\{1,|k|^{-\frac{1}{2}}\}e^{-\nu t-\frac{1}{3}\nu k^2 t^3}B_{s+1}.\label{py-ks-th-es-inf}
\end{align}
Letting $\omega_{L,k}=\alpha_{L,k}-ikt\theta_{L,k}$, together with \eqref{py-ks-al-es-2} and \eqref{py-ks-th-es-2}, we derive
\begin{align*}
&\|(\pa_y,k)^s(e^{ikyt}\omega_{L,k})\|_{L^2_y}\lesssim \|(\pa_y,k)^s(e^{ikyt}\alpha_{L,k})\|_{L^2_y}+|k|t\|(\pa_y,k)^s(e^{ikyt}\theta_{L,k}))\|_{L^2_y}\nonumber\\
&\lesssim e^{-\nu t-\frac{1}{3}\nu k^2 t^3}A_{s}+|k|te^{-\nu t-\frac{1}{3}\nu k^2 t^3}B_{s}
\lesssim e^{-\nu t-c\nu k^2 t^3}(A_{s}+\nu^{-\frac{1}{3}}|k|^{\frac{1}{3}}B_{s}).
\end{align*}
Similarly, by \eqref{py-ks-al-es-inf} and \eqref{py-ks-th-es-inf}, we have
\begin{align*}
\|(\pa_y,k)^s(e^{ikyt}\omega_{L,k})\|_{L^\infty_y}\lesssim \min
\{1,|k|^{-\frac{1}{2}}\}e^{-\nu t-c\nu k^2 t^3}(A_{s+1}+\nu^{-\frac{1}{3}}|k|^{\frac{1}{3}}B_{s+1}).
\end{align*}

For $j=1,2$, using \eqref{py-py-psi-es-2-k}--\eqref{psi-es-2-k}, and the interpolation inequality, we directly obtain \eqref{ujL-es-2-k}--\eqref{ujL-es-inf-k}. This completes the proof of Lemma~\ref{lem-2.3}.
\end{proof}

Then we have the following weak enhanced dissipation and inviscid damping estimates.

\begin{proposition}\label{prop-2.4}
Let $(\omega_{L},\theta_{L})$ be the solution of \eqref{otL-eq} and $u_{L}=(\pa_y, -\pa_x)\Delta^{-1} \omega_{L}$.
Define  
\begin{align*}
D_x^s=
\begin{cases}
\pa_x^s, \quad s\in \mathbb{N},\\
|\pa_x|^s, \quad s\in [0,+\infty)\backslash \mathbb{N}.
\end{cases}
\end{align*}
and $L^2:=L^2_{x,y},L^\infty:=L^\infty_{x,y}$. There hold:
\begin{itemize}
\item[1.] Weak enhanced dissipation estimates:
If $s_1\geq 0$, $s_2\in\mathbb{N}$, $s_1+s_2\leq 2$, we have
\begin{align}
&\|D_x^{s_1}(\pa_y+t\pa_x)^{s_2}\omega_{L}\|_{L^2}\lesssim (1+\nu t^3)^{-\frac{1}{4}-\frac{s_1}{2}}e^{-\nu t}G_0, \label{px-py-oL-es-2}\\
&\|D_x^{s_1}(\pa_y+t\pa_x)^{s_2}\omega_{L}\|_{L^\infty}\lesssim (1+\nu t^3)^{-\frac{1}{2}-\frac{s_1}{2}}e^{-\nu t}G_0. \label{px-py-oL-es-inf}
\end{align}
If $s_1\geq 0$, $s_2\in\mathbb{N}$, $s_1+s_2\leq 3$, we have
\begin{align}
&\|D_x^{s_1}(\pa_y+t\pa_x)^{s_2}\theta_{L}\|_{L^2}\lesssim (1+\nu t^3)^{-\frac{1}{4}-\frac{s_1}{2}}e^{-\nu t}F_0, \label{px-py-tL-es-2}\\
&\|D_x^{s_1}(\pa_y+t\pa_x)^{s_2}\theta_{L}\|_{L^\infty}\lesssim (1+\nu t^3)^{-\frac{1}{2}-\frac{s_1}{2}}e^{-\nu t}F_0. \label{px-py-tL-es-inf}
\end{align}
\item[2.] Inviscid damping estimates:
If $s_1\in[0,2]$, $s_2\in[0,1]$, we have
\begin{align}
&\|\mathcal{M}u^{(1)}_L\|_{L^2}\lesssim (1+t)^{-1}(1+\ln(1+t))e^{-\nu t}G_0,\label{px-py-u1L-es-2}\\
&\|\mathcal{M}u^{(2)}_{L}\|_{L^2}\lesssim (1+t)^{-2}(1+\ln(1+t))e^{-\nu t}G_0,\label{px-py-u2L-es-2}\\
& \|D_x^{s_1}u^{(1)}_L\|_{L^\infty}+\|D_x^{s_1}(u^{(1)}_L-t u^{(2)}_L)\|_{L^\infty}\lesssim (1+t)^{-1}e^{-\nu t}G_0,\label{px-py-u1L-es-inf}\\
&\|D_x^{s_2+1}u^{(2)}_{L}\|_{L^\infty}+(1+\ln(1+t))^{-1}\|u^{(2)}_{L}\|_{L^\infty}\lesssim (1+t)^{-2}e^{-\nu t}G_0,\label{px-py-u2L-es-inf}\\
&\|D_x^{s_2}(\pa_y+t\pa_x)u^{(1)}_L\|_{L^\infty}+\|D_x^{s_2}(\pa_y+t\pa_x)(u^{(1)}_L-t u^{(2)}_L)\|_{L^\infty}  \lesssim e^{-\nu t}G_0,\label{px-py-u1L-es-inf-2}\\
&\|D_x^{s_2}(\pa_y+t\pa_x)u^{(2)}_{L}\|_{L^\infty}\lesssim (1+t)^{-1}e^{-\nu t}G_0.\label{px-py-u2L-es-inf-2}
\end{align}
\end{itemize}
\end{proposition}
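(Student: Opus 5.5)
We will prove every estimate of Proposition \ref{prop-2.4} mode by mode in $k$, using the Fourier-side bounds of Lemma \ref{lem-2.3}, and then integrate in $k$ with Plancherel (for $L^2$) or $\|f\|_{L^\infty_{x,y}}\le\int_{\mathbb{R}}\|f_k\|_{L^\infty_y}dk$ (for $L^\infty$).

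The basic identity is $e^{ikyt}(\pa_y+ikt)f_k=\pa_y(e^{ikyt}f_k)$. Hence $(\pa_y+t\pa_x)^{s_2}$ acting on $\omega_L,\theta_L$ is bounded on the Fourier side by $\|(\pa_y,k)^{s_2}(e^{ikyt}\cdot)\|$, and $D_x^{s_1}$ contributes a factor $|k|^{s_1}$.

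\emph{Weak enhanced dissipation.} For \eqref{px-py-tL-es-2}, \eqref{tL-es-2-k} gives
\[
\|D_x^{s_1}(\pa_y+t\pa_x)^{s_2}\theta_L\|_{L^2}^2\lesssim e^{-2\nu t}\int |k|^{2s_1}e^{-2c\nu k^2t^3}B_{s_2}(k)^2\,dk .
\]
Split into $|k|\le1$ and $|k|\ge1$.
\begin{itemize}
\item For $|k|\le1$, we use $B_{s_2}(k)\le\|\theta^{in}_k\|_{H^{s_2}_y}\le\sup_k(\cdot)\lesssim F_0$ through the $L^1_xH^5_y$ norm. Then $\int_{|k|\le1}|k|^{2s_1}e^{-2c\nu k^2t^3}dk\lesssim(1+\nu t^3)^{-\frac12-s_1}$, which gives the decay $(1+\nu t^3)^{-\frac14-\frac{s_1}2}$.
\item For $|k|\ge1$, $e^{-c\nu k^2t^3}$ decays faster than any power of $(1+\nu t^3)$ once we pay powers of $|k|$ from the $H^5$ norm.
\end{itemize}
This is where the $L^1_x$ hypothesis is essential: on $\mathbb{R}$, low frequencies give only polynomial decay. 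The $L^\infty$ bound \eqref{px-py-tL-es-inf} follows in the same way from \eqref{tL-es-inf-k}. Now the integral is $\int_{|k|\le1}|k|^{s_1}e^{-c\nu k^2t^3}dk\sim(1+\nu t^3)^{-\frac12-\frac{s_1}2}$, using the $L^1_x$ sup-bound, while $|k|\ge1$ is handled by Cauchy--Schwarz with the extra derivative.

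For $\omega_L$ we use \eqref{oL-es-2-k}/\eqref{oL-es-inf-k}. The term $\nu^{-\frac13}|k|^{\frac13}B_s$ is admissible because $\nu^{-\frac13}F_0\le G_0$ and $|k|^{\frac13}\le\langle k\rangle$. The derivative budget $s_1+s_2\le2$ (resp.\ $3$ for $\theta$) matches the $H^4$ (resp.\ $H^5$) regularity after spending one derivative on $L^\infty_y$ and a few on high-frequency integrability.

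\emph{Inviscid damping.} We use \eqref{ujL-es-2-k}--\eqref{ujL-es-inf-k}. For $|k|\ge1$ the factor $|k|^{-j-1}(1+t)^{-j}$ gives $(1+t)^{-j}$ directly; multiplication by $\mathcal{M}(k)\lesssim|k|$ is absorbed. For $|k|\le1$ we must integrate $\int_{|k|\le1}\mathcal{M}(k)|k|^{j-1}(1+|k|t)^{-j}dk$ (squared for $L^2$).
\begin{itemize}
\item For $j=2$, in $L^\infty$, $\int_0^1(1+kt)^{-2}dk\sim t^{-1}$ only. Getting $(1+t)^{-2}$ in \eqref{px-py-u2L-es-inf} needs the refined structure, namely $u^{(2)}=-\pa_x\Delta^{-1}\omega$: the extra $|k|$ gives $\int k(1+kt)^{-2}dk\sim t^{-2}\ln t$. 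This is the logarithm in \eqref{px-py-u2L-es-inf} and \eqref{px-py-u2L-es-2}, while with $D_x^{s_2+1}$ we get a clean $t^{-2}$.
\item For $u^{(1)}_L-tu^{(2)}_L$, on the Fourier side $e^{ikyt}(u^{(1)}-tu^{(2)})_k$ involves $(\pa_y+ikt)\phi_{L,k}$. We bound it via $\pa_y(e^{ikyt}\phi)$ and \eqref{py-psi-es-2-k}, giving $(1+|k|t)^{-1}$ with the same integrability as $u^{(1)}$.
\item Estimates \eqref{px-py-u1L-es-inf-2}, \eqref{px-py-u2L-es-inf-2} with $(\pa_y+t\pa_x)$ lose one power of decay. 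They follow from \eqref{py-py-psi-es-2-k}, one order lower in $t$.
\end{itemize}

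The main obstacle is the low-frequency bookkeeping: getting exactly $(1+t)^{-1}$ for $u^{(1)}$ in $L^\infty$ from $\int_{|k|\le1}(1+|k|t)^{-1}dk\sim t^{-1}\ln t$. That naive bound carries a spurious logarithm, so we will use the sharper $L^\infty_y$ information: the $\min\{1,|k|^{-1/2}\}$ factor and an interpolation between \eqref{py-psi-es-2-k} and \eqref{psi-es-2-k}, giving $(1+|k|t)^{-3/2}$ for the $L^\infty_y$ norm of $\pa_y\phi$. This would remove the log in \eqref{px-py-u1L-es-inf}, while the $L^2$ bound \eqref{px-py-u1L-es-2} keeps it.
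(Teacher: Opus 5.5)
Your Fourier-side treatment matches the paper's for:
\begin{itemize}
\item the weak enhanced dissipation bounds;
\item the $L^2$ bounds \eqref{px-py-u1L-es-2}--\eqref{px-py-u2L-es-2};
\item the $L^\infty$ bounds on $u^{(2)}_L$ and the $(\pa_y+t\pa_x)$ bounds;
\item \eqref{px-py-u1L-es-inf} for $s_1>0$, where $\int_{|k|\le1}|k|^{s_1}(1+|k|t)^{-1}dk\lesssim t^{-1}$.
\end{itemize}

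\textbf{The gap.} It is the case $s_1=0$ of \eqref{px-py-u1L-es-inf}, i.e.\ the log-free bound $\|u^{(1)}_L\|_{L^\infty}+\|u^{(1)}_L-tu^{(2)}_L\|_{L^\infty}\lesssim(1+t)^{-1}e^{-\nu t}G_0$. The fix you propose does not work.
\begin{itemize}
\item Interpolating \eqref{py-psi-es-2-k} with \eqref{psi-es-2-k} gives $(1+|k|t)^{-3/2}$ for $\|\phi_{L,k}\|_{L^\infty_y}$, not for $\|\pa_y\phi_{L,k}\|_{L^\infty_y}$.
\item An $L^\infty_y$ bound on $\pa_y\phi_{L,k}$ needs $\pa_y^2\phi_{L,k}=\omega_{L,k}+k^2\phi_{L,k}$, which does not decay.
\item The factor $\min\{1,|k|^{-1/2}\}$ equals $1$ on $|k|\le1$, so it does not help there.
\end{itemize}
In fact no better per-mode rate holds for general data. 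With $\Phi=e^{ikyt}\phi_{L,k}$ one has $((\pa_y-ikt)^2-k^2)\Phi=e^{ikyt}\omega_{L,k}$. So to leading order $\Phi\approx-(kt)^{-2}e^{ikyt}\omega_{L,k}$, hence $\pa_y\phi_{L,k}\approx\frac{i}{kt}\,\omega_{L,k}$, of size $(|k|t)^{-1}$.

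Any bound of the form $\|u^{(1)}_L\|_{L^\infty}\le\int\|u^{(1)}_{L,k}\|_{L^\infty_y}dk$ therefore leaves $\int_{|k|\le1}(1+|k|t)^{-1}dk=\frac{2}{t}\ln(1+t)$. For $t\lesssim\nu^{-1/3}$ the factor $e^{-c\nu k^2t^3}$ is comparable to $1$ on $|k|\le1$, so it cannot absorb the logarithm. Your treatment of $u^{(1)}_L-tu^{(2)}_L$ has the same defect: it also yields only $(1+|k|t)^{-1}$ per mode.

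\textbf{The missing idea.} The cancellation lives in physical space and uses the $L^1_x$ hypothesis more fully than the $\sup_k$ control of $A_s,B_s$. Morally, the multiplier $(ikt)^{-1}$ in $x$ is a primitive, which is bounded for $L^1_x$ data, although its modulus is not integrable on the Fourier side. The paper implements this as follows.
\begin{itemize}
\item In the sheared variables $\tilde\omega_L(t,x,y)=\omega_L(t,x+ty,y)$ the linear system is a pure heat flow in $x$. Heat-kernel bounds then control $\|\tilde\omega_L\|_{L^1_xL^\infty_y}$ by $e^{-\nu t}$ times $L^1_x$ norms of $\omega^{in}$ and $\nu^{-1/3}|\pa_x|^{1/3}\theta^{in}$.
\item Split $\phi_L=\phi_1+(\phi_L-\phi_1)$ with $\pa_y^2\phi_1=\omega_L$ and $\phi_1(\pm1)=0$. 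Then $\|\pa_y\phi_1\|_{L^\infty_y}\le\|\omega_L\|_{L^1_y}=\int_{-1}^1|\tilde\omega_L(t,x-ty,y)|\,dy\le t^{-1}\|\tilde\omega_L\|_{L^1_xL^\infty_y}$, by the substitution $m=x-ty$.
\item The remainder satisfies $\pa_y^2(\phi_1-\phi_L)=\pa_x^2\phi_L$. It carries two extra powers of $k$, so the Fourier bounds give decay $(1+t)^{-2}$ for it.
\item Combining this with the $O(1)$ mode-wise bound for $t\lesssim1$ gives the log-free rate.
\end{itemize}
You need this argument, or an equivalent physical-space one, to close \eqref{px-py-u1L-es-inf} at $s_1=0$ for both $u^{(1)}_L$ and $u^{(1)}_L-tu^{(2)}_L$.
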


\begin{proof}

We divide the proof into two steps.

\textbf{Step 1: Weak enhanced dissipation estimates.} 
Due to
\begin{align}\label{iden}
(\pa_y+ikt)^{s_2}\omega_{L,k}=e^{-ikyt}\pa_y^{s_2}(e^{ikyt}\omega_{L,k}),
\end{align}
we use Plancherel's theorem, the condition $s_1+s_2\leq 2$, and \eqref{oL-es-2-k} with $s=s_2$ to obtain
\begin{align*}
&\|D_x^{s_1}(\pa_y+t\pa_x)^{s_2}\omega_L\|_{L^2}^2
\lesssim\int_{\mathbb{R}} |k|^{2s_1}\|\pa_y^{s_2}(e^{ikyt}\omega_{L,k})\|_{L^2_y}^2dk\nonumber\\
\lesssim&\int_{\mathbb{R}} |k|^{2s_1}e^{-2\nu t-2c\nu k^2 t^3}(A_{s_2}+\nu^{-\frac{1}{3}}|k|^{\frac{1}{3}}B_{s_2})^2dk\nonumber\\
\lesssim&e^{-2\nu t}\Big(\int_{|k|\leq 1} |k|^{2s_1}e^{-c\nu k^2 t^3}dk\|(A_{s_2},\nu^{-\frac{1}{3}}B_{s_2})\|_{L^\infty_k}^2+e^{-c\nu t^3}\|(A_{s_1+s_2},\nu^{-\frac{1}{3}}B_{s_1+s_2+1})\|_{L^2_k}^2\Big)\nonumber\\
\lesssim& e^{-2\nu t}(1+\nu t^3)^{-\frac{1}{2}-s_1}G_0^2,
\end{align*}
which concludes \eqref{px-py-oL-es-2}. 

Combining \eqref{iden} and \eqref{oL-es-inf-k} with $s=s_2$, we derive
\begin{align*}
&\|D_x^{s_1}(\pa_y+t\pa_x)^{s_2}\omega_L\|_{L^\infty}\lesssim\int_{\mathbb{R}} |k|^{s_1}\|\pa_y^{s_2}(e^{ikyt}\omega_{L,k})\|_{L^\infty_y}dk\nonumber\\
\lesssim&\int_{\mathbb{R}} |k|^{s_1}\min\{1,|k|^{-\frac{1}{2}}\}e^{-\nu t-c\nu k^2 t^3}(A_{s_2+1}+\nu^{-\frac{1}{3}}|k|^{\frac{1}{3}}B_{s_2+1})dk\nonumber\\
\lesssim&e^{-\nu t}\int_{\mathbb{R}} \big(|k|^{s_1}\textbf{1}_{|k|\leq 1}+|k|^{-\frac{7}{6}}\textbf{1}_{|k|\leq 1}\big)e^{-c\nu k^2 t^3}dk\|(A_{s_1+s_2+2},\nu^{-\frac{1}{3}}B_{s_1+s_2+2})\|_{L^\infty_k}\nonumber\\
\lesssim& e^{-\nu t}(1+\nu t^3)^{-\frac{1}{2}-\frac{s_1}{2}}G_0.
\end{align*}
This inequality implies \eqref{px-py-oL-es-inf}. 

By the same argument as deriving in \eqref{px-py-oL-es-2} and \eqref{px-py-oL-es-inf}, and noting $s_1+s_2\leq 3$ together with \eqref{tL-es-2-k} and \eqref{tL-es-inf-k}, we obtain \eqref{px-py-tL-es-2} and \eqref{px-py-tL-es-inf}.

\textbf{Step 2: Inviscid damping estimates.} 

\textbf{\emph{Step 2.1: Estimates of \eqref{px-py-u1L-es-2}--\eqref{px-py-u2L-es-2}.}} Let $\phi_{L,k}=(\pa_y^2-k^2)^{-1} \omega_{L,k}$. Then $u_{L,k}=(\pa_y,-ik) \phi_{L,k}$. Noting 
$$\int_{-1}^{1}\frac{|k|+|k|^2}{(1+|k|t)^2}\,dk \lesssim\int_{-1}^{1}\frac{|k|}{(1+|k|t)^2}\,dk \lesssim(1+t)^{-2}(1+\ln(1+t))$$
and \eqref{ujL-es-2-k}, we obtain
\begin{align*}
\|\mathcal{M}u^{(1)}_L\|_{L^2}^2\lesssim&\int_{\mathbb{R}} (|k|+|k|^2)\Big(\frac{\textbf{\textup 1}_{|k|\leq 1}}{(1+|k|t)^{2}}+\frac{\textbf{\textup 1}_{|k|\geq 1}}{|k|^{4}(1+t)^{2}}\Big)e^{-2\nu t-2c\nu k^2 t^3}(A_{1}+\nu^{-\frac{1}{3}}|k|^{\frac{1}{3}}B_{1})^2dk \nonumber\\
\lesssim&e^{-2\nu t}(1+t)^{-2}\Big((1+\ln(1+t))+\int_{|k|\geq 1} |k|^{-\frac{5}{3}}dk\Big)\|(A_{1},\nu^{-\frac{1}{3}}B_{1})\|_{L^\infty_k}^2\nonumber\\
\lesssim& e^{-2\nu t}(1+t)^{-2}(1+\ln(1+t))G_0^2.
\end{align*}
 Using \eqref{ujL-es-2-k} and $|k|(1+|k|t)^{-1}\lesssim(1+t)^{-1}$ for $|k|\leq 1$, we also have
\begin{equation*}
\begin{aligned}
\|\mathcal{M}u^{(2)}_L\|_{L^2}^2\lesssim& e^{-2\nu t}(1+t)^{-4}(1+\ln(1+t))G_0^2.
\end{aligned}
\end{equation*}

\textbf{\emph{Step 2.2: Estimates of \eqref{px-py-u1L-es-inf}--\eqref{px-py-u2L-es-inf}.}}
Let $\tilde{\omega}_L(t,x,y)=\omega_L(t,x+ty,y)$ and $\tilde{\theta}_L(t,x,y)=\theta_L(t,x+ty,y)$. It follows from \eqref{otL-eq} that
\begin{align*}
\begin{cases}
(\pa_t +\nu(1-t^2\pa_x^2))\tilde{\omega}_L=-\pa_x\tilde{\theta}_L, \\
(\pa_t +\nu(1-t^2\pa_x^2))\tilde{\theta}_L=0, \\
\tilde{\omega}_L(t,x\mp t,\pm 1)=\tilde{\theta}_L(t,x\mp t,\pm 1)=0, \\
\tilde{\omega}_L|_{t=0}=\omega^{in}, \quad \tilde{\theta}_L|_{t=0}=\theta^{in}.
\end{cases}
\end{align*}
A straightforward calculation yields
\begin{align*}
\begin{cases}
\tilde{\omega}_L=e^{-\nu t+\frac{1}{3}\nu t^3\pa_x^2}(\omega^{in}-t\pa_x\theta^{in}), \\
\tilde{\theta}_L=e^{-\nu t+\frac{1}{3}\nu t^3\pa_x^2}\theta^{in}.
\end{cases}
\end{align*}
By the heat-kernel estimate, we obtain
\begin{align}\label{omega-tilde-L}
\|\tilde{\omega}_L\|_{L^1_xL^\infty_y}\lesssim e^{-\nu t}(\|\omega^{in}\|_{L^1_xL^\infty_y}+\nu^{-\frac{1}{3}}\||\pa_x|^{\frac{1}{3}}\theta^{in}\|_{L^1_xL^\infty_y}).
\end{align}
We decompose $\phi_L=\phi_1+(\phi_L-\phi_1)$, where $\phi_1$ satisfies $\pa^2_y  \phi_1=\omega_{L}, 
\phi_1(\pm1)=0$.
By means of \eqref{omega-tilde-L}, we derive
\begin{align}\label{p-y-ph-1}
\|\pa_y\phi_1\|_{L^\infty_y}\leq& \|\omega_{L}\|_{L^1_y}= \int^{1}_{-1}|(\tilde{\omega}_{L}(t,x-ty,y))|dy\nonumber\\
\lesssim& t^{-1}\int_{\mathbb{R}}|\tilde{\omega}_{L}(t,m,y)|dm
 \lesssim t^{-1}e^{-\nu t}\big(\|\omega^{in}\|_{L^1_x L^\infty_y}+\nu^{-\frac{1}{3}}\||\pa_x|^{\frac{1}{3}}\theta^{in}\|_{L^1_xL^\infty_y}\big).
\end{align}
 Using \eqref{ujL-es-2-k} and $\pa^2_y (\phi_1-\phi_{L})=\omega_L-\pa_y^2\phi_L=\pa^2_x\phi_{L}$, we obtain
\begin{align*}
\|\pa_y(\phi_1-\phi_{L})\|_{L^\infty_y}\lesssim &\int^1_{-1}\Big|\int_{\mathbb{R}}e^{ikx} k^2\phi_{L,k} dk\Big|dy\lesssim \int_{\mathbb{R}}|k|\|e^{ikyt}u^{(2)}_{L,k}\|_{L^2_y}dk\\
\lesssim &e^{-\nu t}\int_{\mathbb{R}} \Big(\frac{|k|^2\textbf{1}_{|k|\leq 1}}{(1+|k|t)^2}+\frac{\textbf{1}_{|k|\geq 1}}{|k|^2(1+t)^2}\Big)\big(A_{2}+\nu^{-\frac{1}{3}}|k|^{\frac{1}{3}}B_{2}\big)dk\\
\lesssim &(1+t)^{-2}e^{-\nu t}\|(A_2,\nu^{-\frac{1}{3}}B_2)\|_{L^2_k},
\end{align*}
which together with \eqref{p-y-ph-1} implies
\begin{align}\label{p-y-inf-0}
\|\pa_y \phi_{L}\|_{L^\infty}\lesssim e^{-\nu t}t^{-1}G_0.
\end{align}
On the other hand, from \eqref{ujL-es-inf-k}, we have
\begin{equation*}
\begin{aligned}
&\|\pa_y\phi_{L}\|_{L^\infty}\lesssim \int_{\mathbb{R}}\|e^{ikyt}u^{(1)}_{L,k}\|_{L^\infty_y}dk\\
\lesssim &\int_{|k|\leq 1} \frac{e^{-\nu t}}{1+|k|t}\big(A_{2}+\nu^{-\frac{1}{3}}B_{2}\big)dk+\int_{|k|\geq 1}\frac{e^{-\nu t}}{|k|^2(1+t)}\big(A_{2}+\nu^{-\frac{1}{3}}|k|^{\frac{1}{3}}B_{2}\big)dk\\
\lesssim &e^{-\nu t}\|(A_2,\nu^{-\frac{1}{3}}B_2)\|_{L^2_k},
\end{aligned}
\end{equation*}
which together with \eqref{p-y-inf-0} yields
\begin{align*}
\|u^{(1)}_L\|_{L^\infty}=\|\pa_y \phi_{L}\|_{L^\infty}\lesssim e^{-\nu t}(1+t)^{-1}G_0.
\end{align*}
Similarly, we obtain
\begin{align*}
&\|u^{(1)}_L-tu^{(2)}_L\|_{L^\infty}\lesssim e^{-\nu t}(1+t)^{-1}G_0,\\
&\|u^{(2)}_L\|_{L^\infty}\lesssim e^{-\nu t}(1+t)^{-2}(1+\ln(1+t))G_0.
\end{align*}
Thanks to  \eqref{ujL-es-inf-k}, we directly compute
\begin{align}\label{p-x-s-1-u-1-L-e-s}
&\|D_x^{s_1}u^{(1)}_L\|_{L^\infty}
\lesssim \int_{\mathbb{R}} |k|^{s_1}\|e^{ikyt}u^{(1)}_{L,k}\|_{L^\infty_y}dk\nonumber\\
\lesssim&\int_{\mathbb{R}} |k|^{s_1}\big(\frac{\textbf{\textup 1}_{|k|\leq 1}}{(1+|k|t)}+\frac{\textbf{\textup 1}_{|k|\geq 1}}{|k|^{2}(1+t)}\big) e^{-\nu t-c\nu k^2 t^3}(A_{2}+\nu^{-\frac{1}{3}}|k|^{\frac{1}{3}}B_{2})dk\nonumber\\
\lesssim&\frac{e^{-\nu t}}{1+t}\Big(\int_{|k|\leq 1} |k|^{s_1-1}+\int_{|k|\geq 1} |k|^{-\frac{5}{3}}dk\Big)\|(A_{s_1+2},\nu^{-\frac{1}{3}}B_{s_1+2})\|_{L^\infty_k}\lesssim \frac{e^{-\nu t}}{1+t}G_0.
\end{align}
In view of \eqref{ujL-es-inf-k}, we also have
\begin{align}\label{p-x-s-1-u-2-L-e-s}
&\|D_x^{s_1}u^{(2)}_L\|_{L^\infty}\lesssim e^{-\nu t}(1+t)^{-2}G_0.
\end{align}

\textbf{\emph{Step 2.3: Estimates of \eqref{px-py-u1L-es-inf-2}--\eqref{px-py-u2L-es-inf-2}.}}
 Using $\div u_L=0$, we derive
\begin{equation*}\begin{aligned}
&(\pa_y+t\pa_x)u^{(1)}_L=\omega_L-\pa_xu^{(2)}_L+t\pa_xu^{(1)}_L,\nonumber\\&
(\pa_y+t\pa_x)(u^{(1)}_L-tu^{(2)}_L)=\omega_L-\pa_xu^{(2)}_L+2t\pa_xu^{(1)}_L-t^2\pa_xu^{(2)}_L. 
\end{aligned}\end{equation*}Then employing \eqref{px-py-oL-es-inf}, \eqref{p-x-s-1-u-1-L-e-s}--\eqref{p-x-s-1-u-2-L-e-s} yields
\begin{align*}
&\|D_x^{s_2}(\pa_y+t\pa_x)u^{(1)}_L\|_{L^\infty}
+\|D_x^{s_2}(\pa_y+t\pa_x)(u^{(1)}_L-tu^{(2)}_L)\|_{L^\infty} \nonumber\\
\lesssim&\|D_x^{s_2}\omega_L\|_{L^\infty}+t\|D_x^{s_2+1} u^{(1)}_L\|_{L^\infty}+(1+t^2)\|D_x^{s_2+1} u^{(2)}_L\|_{L^\infty}\lesssim e^{-\nu t}G_0.
\end{align*}
Similarly, we get
\begin{align*}
\|D_x^{s_2}(\pa_y+t\pa_x)u^{(2)}_{L}\|_{L^\infty}\lesssim&\|D_x^{s_2+1} u^{(1)}_L\|_{L^\infty}+t\|D_x^{s_2+1} u^{(2)}_L\|_{L^\infty}\lesssim (1+t)^{-1}e^{-\nu t}G_0.
\end{align*}
 This completes the proof.
\end{proof}

With the above preparations for $\omega_L$ and $\theta_L$, we have the estimates on the remainder terms.
\begin{proposition}\label{prop-2.4-1}
 Under the same assumption as Proposition \ref{prop-2.4}, for $s_1\in[0,2]$, $s_2\in[0,1]$, we have
\begin{align}
&\|D_x^{s_2}E_\mathfrak{L}\|_{L^2}\lesssim \Big((1+t)^{-1}(1+\nu t^3)^{-\frac{3}{4}}+(1+\ln(1+t))(1+t)^{-2}(1+\nu t^3)^{-\frac{1}{4}}\Big)e^{-\nu t}G_0^2\nonumber\\
&\qquad \qquad \qquad+\Big((1+\nu t^3)^{-\frac{1}{4}-\frac{s_2}{2}}+t(1+\nu t^3)^{-\frac{3}{4}-\frac{s_2}{2}}\Big)\nu e^{-\nu t}G_0,\label{Er-es-2}
\end{align}
and
\begin{align}
&\|D_x^{s_1}T_\mathfrak{L}\|_{L^2}\lesssim  \Big((1+t)^{-1}(1+\nu t^3)^{-\frac{3}{4}}+(1+\ln(1+t))(1+t)^{-2}(1+\nu t^3)^{-\frac{1}{4}}\Big)e^{-\nu t}G_0F_0\nonumber\\
&\qquad \qquad \qquad+\Big((1+\nu t^3)^{-\frac{1}{4}-\frac{s_1}{2}}+t(1+\nu t^3)^{-\frac{3}{4}-\frac{s_1}{2}}\Big)\nu e^{-\nu t}F_0,\label{Tr-es-2}
\end{align}
and
\begin{align}
&\|D_x^{s_2}(\pa_y+t\pa_x)T_\mathfrak{L}\|_{L^2}\lesssim\Big((1+\nu t^3)^{-\frac{3}{4}}+(1+t)^{-1}(1+\nu t^3)^{-\frac{1}{4}}\Big)e^{-\nu t}G_0F_0\nonumber\\
&\qquad \qquad \qquad \qquad \qquad+\Big((1+\nu t^3)^{-\frac{1}{4}-\frac{s_2}{2}}+t(1+\nu t^3)^{-\frac{3}{4}-\frac{s_2}{2}}\Big)\nu e^{-\nu t}F_0,\label{py-Tr-es-2}
\end{align}
where 
\begin{align*}
&E_\mathfrak{L}=E_{r_L}+u_L\cdot\na\omega_L,\quad E_{r_L}=\pa_t\omega_L-\nu\Delta\omega_L+y\pa_x\omega_L+\pa_x\theta_L,\\
&T_\mathfrak{L}=T_{r_L}+u_L\cdot\na\theta_L, \quad \,\, T_{r_L}=\pa_t\theta_L-\nu\Delta\theta_L+y\pa_x\theta_L. 
\end{align*}
\end{proposition}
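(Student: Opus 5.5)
We first compute the linear residuals explicitly. Since $(\omega_L,\theta_L)$ solves \eqref{otL-eq}, with $\nu(1-t^2\pa_x^2)$ in place of $-\nu\Delta$, we have
\begin{align*}
E_{r_L}=-\nu\Delta\omega_L-\nu(1-t^2\pa_x^2)\omega_L=-\nu\big(\pa_y^2-t^2\pa_x^2+1\big)\omega_L,
\end{align*}
and the same identity holds for $T_{r_L}$ with $\theta_L$. On the Fourier side we write
\begin{align*}
\pa_y^2+k^2t^2=(\pa_y+ikt)^2-2ikt(\pa_y+ikt).
\end{align*}
Hence
\begin{align*}
E_{r_L}=-\nu\big((\pa_y+t\pa_x)^2\omega_L-2t\pa_x(\pa_y+t\pa_x)\omega_L+\omega_L\big).
\end{align*}

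\textbf{Linear parts.} We apply \eqref{px-py-oL-es-2} to the three pieces of $D_x^{s_2}E_{r_L}$:
\begin{itemize}
\item $D_x^{s_2}(\pa_y+t\pa_x)^2\omega_L$ and $D_x^{s_2}\omega_L$ each give $\nu(1+\nu t^3)^{-\frac14-\frac{s_2}{2}}e^{-\nu t}G_0$;
\item $t D_x^{s_2+1}(\pa_y+t\pa_x)\omega_L$ gives $\nu t(1+\nu t^3)^{-\frac34-\frac{s_2}{2}}e^{-\nu t}G_0$.
\end{itemize}
This is where the index restriction enters. The first piece needs $s_2+2\le 2$, and the third needs $s_1+s_2=s_2+2\le 2$. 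So for $s_2\in(0,1]$ the first piece is not directly covered. In that case we reorganize: either go back to Lemma \ref{lem-2.3} with $s=2$ and put the extra $|k|^{s_2}$ on the data through the $H^4$ norm (the $L^2_k$ part of $A_s$), or accept that \eqref{px-py-oL-es-2} extends to $s_1+s_2\le 3$ at the cost of one more derivative of the data. Since $E_0$ and $F_0$ control $H^4$ and $H^5$ respectively, this is harmless.

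For $T_{r_L}$ the range $s_1+s_2\le 3$ in \eqref{px-py-tL-es-2} covers $s_1\in[0,2]$ directly. For $D_x^{s_2}(\pa_y+t\pa_x)T_{r_L}$ we commute $(\pa_y+t\pa_x)$ through. This produces $(\pa_y+t\pa_x)^3\theta_L$ and $tD_x^{s_2+1}(\pa_y+t\pa_x)^2\theta_L$, both handled by \eqref{px-py-tL-es-2} with one more derivative, again using the $H^5$ regularity of $\theta^{in}$.

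\textbf{Nonlinear parts.} We split
\begin{align*}
u_L\cdot\na\omega_L=u_L^{(1)}\pa_x\omega_L+u_L^{(2)}\pa_y\omega_L=(u_L^{(1)}-tu_L^{(2)})\pa_x\omega_L+u_L^{(2)}(\pa_y+t\pa_x)\omega_L.
\end{align*}
This is the standard trick: pair the undamped $\pa_y$ with the fast-decaying $u^{(2)}$. For $D_x^{s_2}$ acting on the products we use the fractional Leibniz rule, or simply distribute via $|k|^{s}\lesssim|k-l|^s+|l|^s$ on the Fourier side, and place the velocity factor in $L^\infty$:
\begin{itemize}
\item \eqref{px-py-u1L-es-inf} gives $(1+t)^{-1}$ for $u_L^{(1)}-tu_L^{(2)}$, while $\|D_x^{a}\pa_x\omega_L\|_{L^2}\lesssim(1+\nu t^3)^{-\frac34}$ by \eqref{px-py-oL-es-2} with $s_1\ge1$.
\item \eqref{px-py-u2L-es-inf} gives $(1+\ln(1+t))(1+t)^{-2}$ for $u_L^{(2)}$, with $D_x$ derivatives of $u^{(2)}$ decaying like $(1+t)^{-2}$, while $\|D_x^{a}(\pa_y+t\pa_x)\omega_L\|_{L^2}\lesssim(1+\nu t^3)^{-\frac14}$.
\end{itemize}
Together these produce the $G_0^2$ terms in \eqref{Er-es-2}. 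Replacing $\omega_L$ by $\theta_L$ and using \eqref{px-py-tL-es-2} gives the $G_0F_0$ terms in \eqref{Tr-es-2}.

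For \eqref{py-Tr-es-2}, applying $(\pa_y+t\pa_x)$ to the product falls either on the $\theta_L$ factor, which costs nothing in decay, or on the velocity factor, where we use \eqref{px-py-u1L-es-inf-2} and \eqref{px-py-u2L-es-inf-2}. Both lose exactly one power of $(1+t)$, consistent with the weaker rates $(1+\nu t^3)^{-\frac34}$ and $(1+t)^{-1}(1+\nu t^3)^{-\frac14}$ in \eqref{py-Tr-es-2}.

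\textbf{Main obstacle.} The difficulty is derivative bookkeeping, not decay. For $s_1$ up to $2$ in \eqref{Tr-es-2}, and for \eqref{py-Tr-es-2}, the terms $(\pa_y+t\pa_x)^2\theta_L$ and $tD_x^{s+1}(\pa_y+t\pa_x)\theta_L$ require about four derivatives. When a fractional $D_x^{s}$ with $s$ near $2$ falls on $u_L$, we need $D_x^{2}u_L$ in $L^\infty$, which is exactly the edge of \eqref{px-py-u1L-es-inf}. We would handle the worst configuration by moving derivatives onto $\theta_L$ via the Fourier splitting above, and verify in each case that the regularity used does not exceed $H^4$ for $\omega^{in}$ and $H^5$ for $\theta^{in}$.
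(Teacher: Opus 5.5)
Your proposal follows the paper's proof essentially step for step. For the nonlinear parts you use the same splitting $u_L\cdot\na=(u_L^{(1)}-tu_L^{(2)})\pa_x+u_L^{(2)}(\pa_y+t\pa_x)$ with the $L^\infty$ velocity bounds. For the linear residuals, the paper conjugates by $e^{ikyt}$ and bounds directly from \eqref{oL-es-2-k} and \eqref{tL-es-2-k}, splitting $|k|\le 1$ from $|k|\ge 1$; that is exactly your fallback for the index restrictions.

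Two small slips, neither affecting the result:
\begin{itemize}
\item You say \eqref{px-py-tL-es-2} covers $s_1\in[0,2]$ directly, but it does not reach $D_x^{s_1}(\pa_y+t\pa_x)^2\theta_L$ when $s_1>1$. The same Fourier-side fallback is needed there too, which your own ``Main obstacle'' remark already anticipates.
\item $-\nu\Delta\omega_L-\nu(1-t^2\pa_x^2)\omega_L$ also contains the term $-\nu\pa_x^2\omega_L$, which you dropped. It is harmless because it decays faster than the terms you kept.
\end{itemize}
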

\begin{proof}
Due to $u_L\cdot \na \omega_L=(u_L^{(1)}-tu_L^{(2)})\pa_x \omega_L+u_L^{(2)}(\pa_y+t\pa_x) \omega_L$, we
use \eqref{px-py-oL-es-2}, \eqref{px-py-u1L-es-inf} and
\eqref{px-py-u2L-es-inf} to obtain
\begin{align}\label{uL-oL-es-2}
&\|D_x^{s_2}(u_L\cdot \na \omega_L)\|_{L^2}\nonumber\\
\lesssim&\|D_x^{s_2}(u_L^{(1)}-tu_L^{(2)})\|_{L^\infty}\|\pa_x \omega_L\|_{L^2}+\|u_L^{(1)}-tu_L^{(2)}\|_{L^\infty}\|D_x^{s_2+1} \omega_L\|_{L^2}\nonumber\\
&+\|D_x^{s_2}u_L^{(2)}\|_{L^\infty}\|(\pa_y+t\pa_x) \omega_L\|_{L^2}+\|u_L^{(2)}\|_{L^\infty}\|D_x^{s_2}(\pa_y+t\pa_x) \omega_L\|_{L^2}\nonumber\\
\lesssim& \big((1+t)^{-1}(1+\nu t^3)^{-\frac{3}{4}}+(1+\ln(1+t))(1+t)^{-2}(1+\nu t^3)^{-\frac{1}{4}}\big)e^{-2\nu t}G_0^2.
\end{align}
Thanks to $$\big(-(1+t^2k^2)-(\pa_y^2-k^2)\big)\omega_{L,k}=e^{-ikyt}\big(\pa_y^2-2ikt\pa_y-k^2+1\big)(e^{ikyt}\omega_{L,k}),$$
then we have
\begin{align*}
\|\hat{E}_{r_L}\|_{L^2_y}\lesssim&\nu\|(\pa_y^2-2ikt\pa_y-k^2+1)(e^{ikyt}\omega_{L,k})\|_{L^2_y}\nonumber\\
\lesssim& \nu e^{-\nu t} e^{-c\nu k^2 t^3}\big((A_{2}+\nu^{-\frac{1}{3}}|k|^{\frac{1}{3}}B_{2})+ t |k|(A_{1}+\nu^{-\frac{1}{3}}|k|^{\frac{1}{3}}B_{1})\big).
\end{align*}
Then from \eqref{oL-es-2-k} and  $E_{r_L}=-\nu(1+t^2\pa_x^2)\omega_{L}-\nu \Delta\omega_{L}$, we derive
\begin{align}\label{ErL-es-2}
\|D_x^{s_2}E_{r_L}\|_{L^2}\lesssim&\nu e^{-\nu t}\Big(\!\int_{|k|\leq 1} |k|^{2s_2}(1+k^2t^2)e^{-c\nu k^2 t^3}dk+\!\int_{|k|\geq 1} |k|^{-\frac{4}{3}}(1+t^2)e^{-c\nu k^2 t^3}dk\Big)^{\frac{1}{2}}G_0\nonumber\\
\lesssim&\Big(\textbf{1}_{\nu t^3\leq 1} (1+t) +\textbf{1}_{\nu t^3\geq 1} \big((\nu t^3)^{-\frac{1}{4}-\frac{s_2}{2}}+(\nu t^3)^{-\frac{3}{4}-\frac{s_2}{2}}t\big)\Big)\nu e^{-\nu t}G_0\nonumber\\
\lesssim& \Big((1+\nu t^3)^{-\frac{1}{4}-\frac{s_2}{2}}+t(1+\nu t^3)^{-\frac{3}{4}-\frac{s_2}{2}}\Big)\nu e^{-\nu t}G_0.
\end{align}
This estimate together with \eqref{uL-oL-es-2} yields \eqref{Er-es-2}.

Collecting \eqref{px-py-oL-es-2}, \eqref{px-py-tL-es-2},
 \eqref{px-py-u1L-es-inf}--\eqref{px-py-u2L-es-inf-2} and by the same way as leading to \eqref{uL-oL-es-2}, we obtain
\begin{align}\label{uL-tL-es-2}
\|D_x^{s_1}(u_L\cdot \na \theta_L)\|_{L^2}
\lesssim& \Big(\frac{1}{(1+t)(1+\nu t^3)^{\frac{3}{4}}}+\frac{1+\ln(1+t)}{(1+t)^{2}(1+\nu t^3)^{\frac{1}{4}}}\Big)e^{-2\nu t}G_0F_0,
\end{align}
and
\begin{align}\label{py-uL-tL-es-2}
&\|D_x^{s_2}(\pa_y+t\pa_x)(u_L\cdot \na \theta_L)\|_{L^2}\lesssim \Big(\frac{1}{(1+\nu t^3)^{\frac{3}{4}}}+\frac{1}{(1+t)(1+\nu t^3)^{\frac{1}{4}}}\Big)e^{-2\nu t}G_0F_0.
\end{align}
From \eqref{tL-es-2-k} and $e^{ikyt}(\pa_y+t\pa_x)\theta_{L,k}=\pa_y(e^{ikyt}\theta_{L,k})$, by the same way as leading to
 \eqref{ErL-es-2}, we have
\begin{align}
&\|D_x^{s_1}T_{rL}\|_{L^2}\lesssim\big((1+\nu t^3)^{-\frac{1}{4}-\frac{s_1}{2}}+t(1+\nu t^3)^{-\frac{3}{4}-\frac{s_1}{2}}\big)\nu e^{-\nu t}F_0,\label{TrL-es-2}\\
&\|D_x^{s_2}(\pa_y+t\pa_x)T_{r_L}\|_{L^2}\lesssim \big((1+\nu t^3)^{-\frac{1}{4}-\frac{s_2}{2}}+t(1+\nu t^3)^{-\frac{3}{4}-\frac{s_2}{2}}\big)\nu e^{-\nu t}F_0.\label{py-TrL-es-2}
\end{align}
Combining \eqref{uL-tL-es-2} with \eqref{TrL-es-2}, and  \eqref{py-uL-tL-es-2} with \eqref{py-TrL-es-2}  yield \eqref{Tr-es-2} and \eqref{py-Tr-es-2}, respectively.
\end{proof}

\section{\texorpdfstring{Preliminaries for the linearized equation of the system \eqref{ote-eq}}%
  {Preliminaries for the linearized equation of the system (Boue)}}\label{pre-lem}

In this section, we present several lemmas that will facilitate the nonlinear estimates.

\begin{lemma}\label{lem-3.1}
Let $u_e = (\pa_y, -\pa_x)\Delta^{-1} \omega_e$. For $j=0,1$, the following estimates hold:
\begin{align}
&\|\pa_x^{j}u_e^{(1)}\|_{L^\infty_{x,y}}\lesssim(\nu^{\frac{1}{4}}\|\pa_x^{1+j}\omega_e\|_{L^2_{x,y}}^{\frac{1}{2}}
+|\ln\nu|\|\pa_x^j\omega_e\|_{L^2_{x,y}}^{\frac{1}{2}})
\|\pa_x^{1+j}u_e\|_{L^2_{x,y}}^{\frac{1}{2}}
+\|\pa_x^{j}\omega_e\|_{L^2_{x,y}}, \label{u1-e-inf-es}\\
&\|\pa_x^{j}u_e^{(2)}\|_{L^\infty_{x,y}}\lesssim\nu^{\frac{1}{4}}\|\pa_x^{2+j}u_e\|_{L^2_{x,y}}^{\frac{1}{2}}
\|\pa_x^{1+j}u_e\|_{L^2_{x,y}}^{\frac{1}{2}}+|\ln\nu|\|\pa_x^{1+j}u_e\|_{L^2_{x,y}}, \label{u2-e-inf-es}
\end{align}
and
\begin{align}
&\||\pa_x|^{\frac{1}{3}}u_e^{(1)}\|_{L^\infty_{x,y}}
\lesssim\nu^{\frac{1}{12}}\|\pa_xu_e\|_{L^2_{x,y}}^{\frac{1}{2}}
\|\pa_x\omega_e\|_{L^2_{x,y}}^{\frac{1}{2}}+\nu^{-\frac{1}{6}}\|\pa_xu_e\|_{L^2_{x,y}}^{\frac{1}{2}}\|\omega_e\|_{L^2_{x,y}}^{\frac{1}{2}}
+\|\mathcal{M}u_e\|_{L^2_{x,y}},
\label{px-13-ue-1-es}\\
&\||\pa_x|^{\frac{1}{3}}u_e^{(2)}\|_{L^\infty_{x,y}}\lesssim\nu^{\frac{1}{12}}\|\pa_xu_e\|_{L^2_{x,y}}^{\frac{1}{2}}
\|\pa_x\omega_e\|_{L^2_{x,y}}^{\frac{1}{2}}+\nu^{-\frac{1}{6}}\|\pa_xu_e\|_{L^2_{x,y}}
,\label{px-13-ue-2-es}\\
&\||\pa_x|^{\frac{1}{3}}u_e^{(1)}\|_{L^2_x L^\infty_y}\lesssim\|\mathcal{M}u_e\|_{L^2_{x,y}}^{\frac{1}{2}}\|\omega_e\|_{L^2_{x,y}}^{\frac{1}{2}}
,\label{px-13-ue-3-es}\\
&\||\pa_x|^{\frac{1}{3}}u_e^{(2)}\|_{L^2_x L^\infty_y}\lesssim\|\mathcal{M}u_e\|_{L^2_{x,y}}
.\label{px-13-ue-4-es}
\end{align}
\end{lemma}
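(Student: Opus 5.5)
We work frequency by frequency in $x$. With $\phi_k=(\pa_y^2-k^2)^{-1}\omega_{e,k}$ and $\phi_k(\pm1)=0$, we have $u^{(1)}_{e,k}=\pa_y\phi_k$ and $u^{(2)}_{e,k}=-ik\phi_k$, and the $L^\infty_{x,y}$ norm is controlled by $\int_{\mathbb{R}}\|\cdot_k\|_{L^\infty_y}\,dk$. Each $L^\infty_y$ norm is handled by the one-dimensional Gagliardo--Nirenberg inequality on $[-1,1]$. Since $\phi_k$ vanishes at $y=\pm1$, we have
\[
\|k\phi_k\|_{L^\infty_y}\lesssim \|k\phi_k\|_{L^2_y}^{1/2}\|k\pa_y\phi_k\|_{L^2_y}^{1/2}.
\]
For $\pa_y\phi_k$ there is no boundary vanishing, so we use
\[
\|\pa_y\phi_k\|_{L^\infty_y}\lesssim \|\pa_y\phi_k\|_{L^2_y}^{1/2}\|\pa_y^2\phi_k\|_{L^2_y}^{1/2}+\|\pa_y\phi_k\|_{L^2_y},
\]
together with $\|\pa_y^2\phi_k\|_{L^2_y}\le\|\omega_{e,k}\|_{L^2_y}$ and $\|(\pa_y,k)\phi_k\|_{L^2_y}^2\le\|\omega_{e,k}\|_{L^2_y}\|\phi_k\|_{L^2_y}$. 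This gives pointwise-in-$k$ bounds in terms of $\|(\pa_y,k)\phi_k\|_{L^2_y}=\|u_{e,k}\|_{L^2_y}$ and $\|\omega_{e,k}\|_{L^2_y}$.

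The $k$-integral is then split at $\nu$-dependent thresholds, with Cauchy--Schwarz applied in $k$ on each piece.

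\emph{High frequencies.} For $|k|\ge N$ we insert weights $|k|^{\pm}$. For instance, for \eqref{u2-e-inf-es},
\[
\int_{|k|\ge N}|k|^{1+j}\|\phi_k\|_{L^2_y}^{1/2}\|k\phi_k\|_{L^2_y}^{1/2}\cdot\frac{|k|^{1/2}}{|k|^{1/2}}\,dk
\lesssim \Big(\int_{|k|\ge N}|k|^{-2}\Big)^{1/2}\|\pa_x^{2+j}u_e\|^{1/2}\|\pa_x^{1+j}u_e\|^{1/2}.
\]
Choosing $N=\nu^{-1/2}$ produces the factor $\nu^{1/4}$.

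\emph{Intermediate frequencies.} For $1\le|k|\le \nu^{-1/2}$, Cauchy--Schwarz with weight $|k|^{-1}$ gives $\int_1^{\nu^{-1/2}}|k|^{-1}\,dk\sim|\ln\nu|$. This yields the $|\ln\nu|\,\|\pa_x^{1+j}u_e\|$ term, or its analogue with $\|\pa_x^j\omega_e\|^{1/2}\|\pa_x^{1+j}u_e\|^{1/2}$ for $u^{(1)}$.

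\emph{Low frequencies.} For $|k|\le1$ we use $\|u_{e,k}\|\lesssim\|\omega_{e,k}\|$ (Poincaré) together with $\int_{|k|\le1}dk<\infty$. This produces the additive $\|\pa_x^j\omega_e\|$ term, after absorbing the lower-order $\|\pa_y\phi_k\|$ part.

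For \eqref{px-13-ue-1-es}--\eqref{px-13-ue-2-es} the same scheme applies with the extra weight $|k|^{1/3}$, but the split points change. At high frequency $|k|\ge\nu^{-1/2}$, the $L^2$ weight $\int|k|^{2/3-2}\,dk$ evaluated at the cutoff gives $\nu^{1/12}$. In the middle range we bound $|k|^{1/3}\le\nu^{-1/6}$ instead of paying a logarithm. At $|k|\le1$ we use $|k|^{1/3}\lesssim|k|^{1/2}\le\mathcal{M}(k)$, which gives the $\|\mathcal{M}u_e\|$ term; here Cauchy--Schwarz over the bounded set $|k|\le1$ requires $\|\omega_{e,k}\|\lesssim$ something square-integrable. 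The $L^2_xL^\infty_y$ estimates \eqref{px-13-ue-3-es}--\eqref{px-13-ue-4-es} need no $k$-integration. They follow from Plancherel and the pointwise 1D Gagliardo--Nirenberg bound, using $|k|^{1/3}\lesssim\mathcal{M}(k)$ when $|k|\le1$. For $|k|\ge1$ we write $|k|^{2/3}\le|k|\cdot|k|^{-1/3}$ to place one factor in $\|\mathcal{M}u_e\|$ and the other in $\|\omega_e\|$. For $u^{(2)}$ we use $\|k\phi_k\|_{L^\infty_y}\lesssim\|k\phi_k\|^{1/2}\|k\pa_y\phi_k\|^{1/2}\le |k|^{1/2}\|u_{e,k}\|$, which bounds $|k|^{1/3}\|k\phi_k\|_{L^\infty_y}$ by $\mathcal{M}(k)\|u_{e,k}\|$.

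The main obstacle is bookkeeping the exponents at the split points so that exactly the stated powers $\nu^{1/4}$, $\nu^{1/12}$, $\nu^{-1/6}$ and $|\ln\nu|$ appear. A secondary issue is the low-frequency $u^{(1)}$ term, where $\pa_y\phi_k$ does not vanish at the boundary and $\int_{|k|\le1}\|\omega_{e,k}\|\,dk$ must be controlled by $\|\omega_e\|_{L^2}$ via Cauchy--Schwarz on the bounded set. If a stated term turns out not to match, I would first adjust the cutoff $N$ (trying $N=\nu^{-1/2}$ for all pieces) before changing the interpolation.
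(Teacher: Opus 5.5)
Your overall route is the paper's: split at $|k|=1$ and $|k|=\nu^{-1/2}$, use one-dimensional Gagliardo--Nirenberg in $y$ with $\|(\pa_y,k)\phi_k\|_{L^2_y}+\|\pa_y^2\phi_k\|_{L^2_y}\lesssim\|\omega_{e,k}\|_{L^2_y}$, and apply Cauchy--Schwarz in $k$. For \eqref{u1-e-inf-es}, \eqref{u2-e-inf-es} and \eqref{px-13-ue-4-es} this works as you describe, with two small corrections.
\begin{itemize}
\item Your high-frequency integrand for \eqref{u2-e-inf-es} should be $|k|^{-1}\|k^{1+j}u^{(2)}_{e,k}\|_{L^2_y}^{1/2}\|k^{2+j}u^{(1)}_{e,k}\|_{L^2_y}^{1/2}$. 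As written it contains no $\pa_y$.
\item For $u^{(2)}$, the low-frequency piece must be bounded by $\|\pa_x^{1+j}u_e\|_{L^2}$, not $\|\pa_x^j\omega_e\|$. Use Poincar\'e for $u^{(2)}_{e,k}$ and $\pa_yu^{(2)}_{e,k}=-iku^{(1)}_{e,k}$; no $\omega_e$-term appears on the right of \eqref{u2-e-inf-es}.
\end{itemize}

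The genuine gap is in the $|k|^{1/3}$-weighted bounds \eqref{px-13-ue-1-es}--\eqref{px-13-ue-2-es} on $|k|\le\nu^{-1/2}$. There are two problems.
\begin{itemize}
\item $|k|^{1/3}\lesssim|k|^{1/2}$ is false for $|k|\le1$; the inequality goes the other way. So $|k|^{1/3}$ is not pointwise dominated by $\mathcal{M}(k)$ near $k=0$. Also, \eqref{px-13-ue-2-es} has no $\|\mathcal{M}u_e\|$ term on its right-hand side, and you cannot trade one back since $\mathcal{M}(k)\gg|k|$ for small $k$.
\item Bounding $|k|^{1/3}\le\nu^{-1/6}$ pointwise on $1\le|k|\le\nu^{-1/2}$ leaves $\nu^{-1/6}\int|k|^{-1/2}\|ku^{(1)}_{e,k}\|^{1/2}\|\pa_yu^{(1)}_{e,k}\|^{1/2}dk$. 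Cauchy--Schwarz then costs $\big(\int_1^{\nu^{-1/2}}|k|^{-1}dk\big)^{1/2}\sim|\ln\nu|^{1/2}$, which is exactly the logarithm you wanted to avoid. You would only get $\nu^{-1/6}|\ln\nu|^{1/2}$.
\end{itemize}

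The missing step is to keep the weight inside Cauchy--Schwarz over the whole range $|k|\le\nu^{-1/2}$:
\[
\int_{|k|\le\nu^{-\frac12}}|k|^{\frac13}\|f_k\|_{L^2_y}^{\frac12}\|\pa_yf_k\|_{L^2_y}^{\frac12}\,dk
=\int_{|k|\le\nu^{-\frac12}}|k|^{-\frac16}\|kf_k\|_{L^2_y}^{\frac12}\|\pa_yf_k\|_{L^2_y}^{\frac12}\,dk
\lesssim\Big(\int_{|k|\le\nu^{-\frac12}}|k|^{-\frac13}dk\Big)^{\frac12}\|\pa_xf\|_{L^2}^{\frac12}\|\pa_yf\|_{L^2}^{\frac12}.
\]
This works because $|k|^{-1/3}$ is integrable at $k=0$ and $\int_{|k|\le\nu^{-1/2}}|k|^{-1/3}dk\lesssim\nu^{-1/3}$.
\begin{itemize}
\item With $f=u^{(1)}_e$ this gives $\nu^{-1/6}\|\pa_xu_e\|^{1/2}\|\omega_e\|^{1/2}$.
\item With $f=u^{(2)}_e$ and $\pa_yu^{(2)}_{e,k}=-iku^{(1)}_{e,k}$ it gives $\nu^{-1/6}\|\pa_xu_e\|$.
\end{itemize}
This handles low and middle frequencies at once with no logarithm, and it is what the paper does.

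The lower-order term $|k|^{1/3}\|u^{(1)}_{e,k}\|_{L^2_y}$ is written as $|k|^{-1/6}\||k|^{1/2}u^{(1)}_{e,k}\|$ for $|k|\le1$ and as $|k|^{-2/3}\|ku^{(1)}_{e,k}\|$ for $|k|\ge1$. Both give $\|\mathcal{M}u_e\|$.

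Similarly, in \eqref{px-13-ue-3-es} the inequality you need at $|k|\le1$ is $|k|^{2/3}\le|k|^{1/2}$, which is true. The weight multiplies the bilinear bound $\|u_{e,k}\|_{L^2_y}\|\omega_{e,k}\|_{L^2_y}$; your stated $|k|^{1/3}\lesssim\mathcal{M}(k)$ is false there.
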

\begin{proof}
Let $I_1=\{k:|k|\leq 1\}$, $I_2=\{k:1\leq |k|\leq \nu^{-\frac{1}{2}}\}$, $I_3=\{k:|k|\geq \nu^{-\frac{1}{2}}\}$. For \eqref{u1-e-inf-es}, 
\begin{align}\label{u1-e-inf-es-2-0}
\|\pa_x^{j}u_e^{(1)}\|_{L^\infty_{x,y}}\lesssim&\int_{I_1}|k|^{j}\|u_{e,k}^{(1)}\|_{L^\infty_y}dk+\int_{I_2\cup I_3}|k|^{j}\|u_{e,k}^{(1)}\|_{L^\infty_y}dk.
\end{align}
Using \eqref{lem-A.2-es} yields
\begin{align}\label{u1-e-inf-es-2-1}
\int_{I_1}|k|^{j}\|u_{e,k}^{(1)}\|_{L^\infty_y}dk\lesssim\int_{I_1}\|k^{j}\omega_{e,k}\|_{L^2_y}dk
\lesssim\|\pa_x^{j}\omega_e\|_{L^2_{x,y}}.
\end{align}
By the G-N inequality
 \begin{align}\label{GN ineq}
 \|u_{e,k}^{(1)}\|_{L^\infty_y}\lesssim\|\pa_yu_{e,k}^{(1)}\|_{L^2_y}^{\frac{1}{2}}\|u_{e,k}^{(1)}\|_{L^2_y}^{\frac{1}{2}}+\|u_{e,k}^{(1)}\|_{L^2_y},
 \end{align}
 and  \eqref{lem-A.2-es}, we derive
\begin{align}\label{u1-e-inf-es-2-2}
&\int_{I_2\cup I_3}|k|^{j}\|u_{e,k}^{(1)}\|_{L^\infty_y}dk\nonumber\\
\lesssim&\int_{I_2}|k|^{-\frac{1}{2}}\|k^{j}\pa_yu_{e,k}^{(1)}\|_{L^2_y}^{\frac{1}{2}}\|k^{1+j}u_{e,k}^{(1)}\|_{L^2_y}^{\frac{1}{2}}
+|k|^{-1}\|k^{1+j}u_{e,k}^{(1)}\|_{L^2_y}dk\nonumber\\
&+\int_{I_3}|k|^{-1}\|k^{1+j}\pa_yu_{e,k}^{(1)}\|_{L^2_y}^{\frac{1}{2}}\|k^{1+j}u_{e,k}^{(1)}\|_{L^2_y}^{\frac{1}{2}}
+|k|^{-1}\|k^{1+j}u_{e,k}^{(1)}\|_{L^2_y}dk\nonumber\\
\lesssim&\nu^{\frac{1}{4}}\|\pa_x^{1+j}\omega_e\|_{L^2_{x,y}}^{\frac{1}{2}}\|\pa_x^{1+j}u_e\|_{L^2_{x,y}}^{\frac{1}{2}}
+\|\pa_x^{j}\omega_e\|_{L^2_{x,y}}+|\ln\nu|\|\pa_x^j\omega_e\|_{L^2_{x,y}}^{\frac{1}{2}}\|\pa_x^{1+j}u_e\|_{L^2_{x,y}}^{\frac{1}{2}}.
\end{align}
Combining \eqref{u1-e-inf-es-2-0}, \eqref{u1-e-inf-es-2-1} and \eqref{u1-e-inf-es-2-2}, we have
\begin{align}\label{u1-e-inf-es-2}
\|\pa_x^{j}u_e^{(1)}\|_{L^\infty_{x,y}}\lesssim&(\nu^{\frac{1}{4}}\|\pa_x^{1+j}\omega_e\|_{L^2_{x,y}}^{\frac{1}{2}}+|\ln\nu|\|\pa_x^j\omega_e\|_{L^2_{x,y}}^{\frac{1}{2}})\|\pa_x^{1+j}u_e\|_{L^2_{x,y}}^{\frac{1}{2}}
+\|\pa_x^{j}\omega_e\|_{L^2_{x,y}}.
\end{align}
For \eqref{u2-e-inf-es}, by means of 
\begin{align}\label{interpo}
\|u_{e,k}^{(2)}\|_{L^\infty_y}\lesssim \|\pa_yu_{e,k}^{(2)}\|_{L^2_y}^{\frac{1}{2}}\|u_{e,k}^{(2)}\|_{L^2_y}^{\frac{1}{2}},
\end{align}
the Poincar\'e inequality, $\pa_yu_{e,k}^{(2)}=-iku_{e,k}^{(1)}$ and \eqref{lem-A.2-es} gives
\begin{align}\label{u2-e-inf-es-2}
&\|\pa_x^{j}u_e^{(2)}\|_{L^\infty_{x,y}}\lesssim\int_{\mathbb{R}}|k|^{j}\|u_{e,k}^{(2)}\|_{L^\infty_y}dk\nonumber\\
\lesssim
&\int_{I_1}\|k^{j}\pa_yu_{e,k}^{(2)}\|_{L^2_y}^{\frac{1}{2}}\|k^{j}u_{e,k}^{(2)}\|_{L^2_y}^{\frac{1}{2}}
dk+\int_{I_2}|k|^{-\frac{1}{2}}\|k^{j}\pa_yu_{e,k}^{(2)}\|_{L^2_y}^{\frac{1}{2}}\|k^{1+j}u_{e,k}^{(2)}\|_{L^2_y}^{\frac{1}{2}}
dk\nonumber\\
&+\int_{I_3}|k|^{-1}\|k^{1+j}\pa_yu_{e,k}^{(2)}\|_{L^2_y}^{\frac{1}{2}}\|k^{1+j}u_{e,k}^{(2)}\|_{L^2_y}^{\frac{1}{2}}
dk\nonumber\\
\lesssim&\nu^{\frac{1}{4}}\|\pa_x^{2+j}u_e\|_{L^2_{x,y}}^{\frac{1}{2}}\|\pa_x^{1+j}u_e\|_{L^2_{x,y}}^{\frac{1}{2}}
+|\ln\nu|\|\pa_x^{1+j}u_e\|_{L^2_{x,y}}.
\end{align}
Using \eqref{GN ineq}, we obtain
\begin{align}\label{px-13-ue-1-es-2}
&\||\pa_x|^{\frac{1}{3}}u_e^{(1)}\|_{L^\infty_{x,y}}
\lesssim\int_{\mathbb{R}}|k|^{\frac{1}{3}}\|u_{e,k}^{(1)}\|_{L^\infty_y}dk\nonumber\\
\lesssim&\int_{I_1\cup I_2}|k|^{-\frac{1}{6}}\|ku_{e,k}^{(1)}\|_{L^2_y}^{\frac{1}{2}}
\|\pa_yu_{e,k}^{(1)}\|_{L^2_y}^{\frac{1}{3}}\|\pa_yu_{e,k}^{(1)}\|_{L^2_y}^{\frac{1}{6}}
+|k|^{-\frac{1}{6}}\|k^{\frac{1}{2}}u_{e,k}^{(1)}\|_{L^2_y}dk\nonumber\\
&+\int_{I_3}|k|^{-\frac{2}{3}}\|ku_{e,k}^{(1)}\|_{L^2_y}^{\frac{1}{2}}
\|k\pa_yu_{e,k}^{(1)}\|_{L^2_y}^{\frac{1}{2}}
+|k|^{-\frac{2}{3}}\|ku_{e,k}^{(1)}\|_{L^2_y}dk\nonumber\\
\lesssim&\nu^{\frac{1}{12}}\|\pa_xu_e\|_{L^2_{x,y}}^{\frac{1}{2}}
\|\pa_x\omega_e\|_{L^2_{x,y}}^{\frac{1}{2}}+\nu^{-\frac{1}{6}}\|\pa_xu_e\|_{L^2_{x,y}}^{\frac{1}{2}}\|\omega_e\|_{L^2_{x,y}}^{\frac{1}{2}}+\|\mathcal{M}u_e\|_{L^2_{x,y}}.
\end{align}
It follows from \eqref{interpo} and  $\pa_yu_{e,k}^{(2)}=-ik u_{e,k}^{(1)}$ that
\begin{align}\label{px-13-ue-2-es-2}
&\||\pa_x|^{\frac{1}{3}}u_e^{(2)}\|_{L^\infty_{x,y}}\lesssim\int_{\mathbb{R}}|k|^{\frac{1}{3}}\|u_{e,k}^{(2)}\|_{L^\infty_y}dk\nonumber\\
\lesssim&\int_{I_1\cup I_2}|k|^{-\frac{1}{6}}\|ku_{e,k}^{(2)}\|_{L^2_y}^{\frac{1}{2}}
\|\pa_yu_{e,k}^{(2)}\|_{L^2_y}^{\frac{1}{3}}\|\pa_yu_{e,k}^{(2)}\|_{L^2_y}^{\frac{1}{6}}dk+\int_{I_3}|k|^{-\frac{2}{3}}\|ku_{e,k}^{(2)}\|_{L^2_y}^{\frac{1}{2}}
\|k\pa_yu_{e,k}^{(2)}\|_{L^2_y}^{\frac{1}{2}}dk\nonumber\\
\lesssim&\nu^{\frac{1}{12}}\|\pa_xu_e\|_{L^2_{x,y}}^{\frac{1}{2}}\|\pa_x\omega_e\|_{L^2_{x,y}}^{\frac{1}{2}}+\nu^{-\frac{1}{6}}\|\pa_xu_e\|_{L^2_{x,y}}
.
\end{align}
From \eqref{GN ineq}, \eqref{interpo} and \eqref{lem-A.2-es}, we obtain
\begin{align}\label{px-13-ue-1-es-2-inf}
&\||\pa_x|^{\frac{1}{3}}u_e^{(1)}\|_{L^2_x L^\infty_y}^2
\lesssim\int_{\mathbb{R}}|k|^{\frac{2}{3}}\|u_{e,k}^{(1)}\|_{L^\infty_y}^2dk\nonumber\\
\lesssim&\int_{I_1}|k|^{\frac{1}{6}}\|k^{\frac{1}{2}}u_{e,k}^{(1)}\|_{L^2_y}
\|\pa_yu_{e,k}^{(1)}\|_{L^2_y}
+|k|^{\frac{1}{6}}\|k^{\frac{1}{2}}u_{e,k}^{(1)}\|_{L^2_y}\|u_{e,k}^{(1)}\|_{L^2_y}dk\nonumber\\
&+\int_{I_2\cup I_3}|k|^{-\frac{1}{3}}\|ku_{e,k}^{(1)}\|_{L^2_y}
\|\pa_yu_{e,k}^{(1)}\|_{L^2_y}
+|k|^{-\frac{1}{3}}\|ku_{e,k}^{(1)}\|_{L^2_y}\|u_{e,k}^{(1)}\|_{L^2_y}dk\nonumber\\
\lesssim&\|\mathcal{M}u_e\|_{L^2_{x,y}}\|\omega_e\|_{L^2_{x,y}}.
\end{align}
and
\begin{align}\label{px-13-ue-1-es-2-inf-2}
\||\pa_x|^{\frac{1}{3}}u_e^{(2)}\|_{L^2_x L^\infty_y}^2\lesssim&\int_{I_1}|k|^{\frac{1}{6}}\|k^{\frac{1}{2}}u_{e,k}^{(2)}\|_{L^2_y}
\|\pa_yu_{e,k}^{(2)}\|_{L^2_y}dk+\int_{I_2\cup I_3}|k|^{-\frac{1}{3}}\|ku_{e,k}^{(2)}\|_{L^2_y}
\|\pa_yu_{e,k}^{(2)}\|_{L^2_y}dk\nonumber\\
\lesssim&\|\mathcal{M}u_e\|_{L^2_{x,y}}^2.
\end{align}
Combining \eqref{u1-e-inf-es-2}, \eqref{u2-e-inf-es-2}--\eqref{px-13-ue-1-es-2-inf-2}, one  completes the proof of Lemma~\ref{lem-3.1}.
\end{proof}
\begin{lemma}\label{lem-3.1-1}
Let   $\theta_e\in H^2_0$. The estimate holds:
\begin{align}
\|\theta_e\|_{L^\infty_{x,y}}\lesssim&(\nu^{-\frac{1}{12}}|\ln\nu|\| \theta_e\|_{L^2_{x,y}}^{\frac{1}{2}}+\nu^{\frac{1}{12}}|\ln\nu|^{-1}\|\pa_x^2 \theta_e\|_{L^2_{x,y}}^{\frac{1}{2}})\|\pa_y\theta_e\|_{L^2_{x,y}}^{\frac{1}{2}}.\label{th-e-inf-es}
\end{align}
\end{lemma}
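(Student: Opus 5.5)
The plan is to work frequency by frequency in $x$, use a one-dimensional Agmon inequality in $y$ that relies on the Dirichlet condition, and then split the $k$-integral at one $\nu$-dependent threshold. This is the same splitting idea as in the proof of Lemma \ref{lem-3.1}, but with a single cut point.

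\textbf{Step 1.} By Fourier inversion in $x$,
\begin{align*}
\|\theta_e\|_{L^\infty_{x,y}}\lesssim\int_{\mathbb{R}}\|\theta_{e,k}\|_{L^\infty_y}\,dk .
\end{align*}
Since $\theta_e\in H^2_0$, we have $\theta_{e,k}(\pm1)=0$ for every $k$. The one-dimensional inequality $|f(y)|^2\le 2\|f\|_{L^2_y}\|\pa_yf\|_{L^2_y}$ then holds for $f=\theta_{e,k}$, with no lower-order term, exactly as in \eqref{interpo}. Hence
\begin{align*}
\|\theta_{e,k}\|_{L^\infty_y}\lesssim\|\theta_{e,k}\|_{L^2_y}^{\frac12}\|\pa_y\theta_{e,k}\|_{L^2_y}^{\frac12}
\quad\text{for all } k.
\end{align*}

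\textbf{Step 2.} Fix a threshold $N>0$, to be chosen in Step 3, and split the $k$-integral into $|k|\le N$ and $|k|\ge N$.

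On the low frequencies, apply Cauchy--Schwarz in $k$ and then again to $\int\|\theta_{e,k}\|\|\pa_y\theta_{e,k}\|\,dk$. Together with Plancherel this gives
\begin{align*}
\int_{|k|\le N}\|\theta_{e,k}\|_{L^2_y}^{\frac12}\|\pa_y\theta_{e,k}\|_{L^2_y}^{\frac12}dk
\le (2N)^{\frac12}\Big(\int_{\mathbb{R}}\|\theta_{e,k}\|_{L^2_y}\|\pa_y\theta_{e,k}\|_{L^2_y}dk\Big)^{\frac12}
\lesssim N^{\frac12}\|\theta_e\|_{L^2}^{\frac12}\|\pa_y\theta_e\|_{L^2}^{\frac12}.
\end{align*}
On the high frequencies, write $\|\theta_{e,k}\|^{\frac12}=|k|^{-1}\|k^2\theta_{e,k}\|^{\frac12}$. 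Cauchy--Schwarz with the weight $|k|^{-1}$, using $\int_{|k|\ge N}|k|^{-2}dk\lesssim N^{-1}$, gives
\begin{align*}
\int_{|k|\ge N}|k|^{-1}\|k^2\theta_{e,k}\|_{L^2_y}^{\frac12}\|\pa_y\theta_{e,k}\|_{L^2_y}^{\frac12}dk
\lesssim N^{-\frac12}\|\pa_x^2\theta_e\|_{L^2}^{\frac12}\|\pa_y\theta_e\|_{L^2}^{\frac12}.
\end{align*}

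\textbf{Step 3.} Choose $N=\nu^{-\frac16}|\ln\nu|^{2}$. Then $N^{\frac12}=\nu^{-\frac1{12}}|\ln\nu|$ and $N^{-\frac12}=\nu^{\frac1{12}}|\ln\nu|^{-1}$, and adding the two pieces yields exactly \eqref{th-e-inf-es}.

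There is no genuine obstacle in this argument. The only points needing care are the following.
\begin{itemize}
\item The Agmon inequality in Step 1 must have no additive $\|\theta_{e,k}\|_{L^2_y}$ term. This is why the boundary condition $\theta_e\in H^2_0$ is used; otherwise a term $\int_{|k|\le N}\|\theta_{e,k}\|\,dk$ would appear, which is not of the stated product form.
\item The threshold is chosen non-dyadically so that the logarithmic weights balance and produce the specific $|\ln\nu|^{\pm1}$ factors. These factors are what later give the $\nu^{\frac1{12}}|\ln\nu|^{-1}$ gain on $\pa_x^2\theta_e$ mentioned in the introduction.
\end{itemize}
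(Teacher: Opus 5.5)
Your proposal is correct and follows essentially the same route as the paper: the same Dirichlet-based Agmon inequality $\|\theta_{e,k}\|_{L^\infty_y}\lesssim\|\theta_{e,k}\|_{L^2_y}^{\frac12}\|\pa_y\theta_{e,k}\|_{L^2_y}^{\frac12}$, the same single cut at $|k|=\nu^{-\frac16}|\ln\nu|^2$, and the same rewriting $\|\theta_{e,k}\|_{L^2_y}^{\frac12}=|k|^{-1}\|k^2\theta_{e,k}\|_{L^2_y}^{\frac12}$ on high frequencies. The only difference is that you write out the Cauchy--Schwarz steps giving the factors $N^{\pm\frac12}$, which the paper leaves implicit.
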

\begin{proof}
Let $I_4=\{k:|k|\leq \nu^{-\frac{1}{6}}|\ln\nu|^{2}\}$ and $I_5=\{k: |k|\geq \nu^{-\frac{1}{6}}|\ln\nu|^{2}\}$. Utilizing
\begin{align*}
\|\theta_{e,k}\|_{L^\infty_y}\lesssim \|\theta_{e,k}\|_{L^2_y}^{\frac{1}{2}}\|\pa_y\theta_{e,k}\|_{L^2_y}^{\frac{1}{2}},
\end{align*}
we get
\begin{align*}
\|\theta_e\|_{L^\infty_{x,y}}\lesssim&\int_{I_4}\|\theta_{e,k}\|_{L^2_y}^{\frac{1}{2}} \|\pa_y\theta_{e,k}\|_{L^2_y}^{\frac{1}{2}}dk+\int_{I_5}
|k|^{-1}\|k^2\theta_{e,k}\|_{L^2_y}^{\frac{1}{2}}\|\pa_y\theta_{e,k}\|_{L^2_y}^{\frac{1}{2}}dk\nonumber\\
\lesssim&\nu^{-\frac{1}{12}}|\ln\nu|\| \theta_e\|_{L^2_{x,y}}^{\frac{1}{2}}\|\pa_y\theta_e\|_{L^2_{x,y}}^{\frac{1}{2}}+\nu^{\frac{1}{12}}|\ln\nu|^{-1}\|\pa_x^2 \theta_e\|_{L^2_{x,y}}^{\frac{1}{2}}\|\pa_y\theta_e\|_{L^2_{x,y}}^{\frac{1}{2}}.
\end{align*}
This completes the proof of Lemma~\ref{lem-3.1-1}.
\end{proof}

\begin{Remark}\label{rm-3.1}
 We emphasize that for the domain $\mathbb{T}\times[-1,1]$, the frequencies can be separated into two regimes $|k|\leq \nu^{-\frac{1}{6}}$, and $|k|\geq \nu^{-\frac{1}{6}}$. There holds:
\begin{align*}
\|\theta_e\|_{L^\infty_{x,y}}\lesssim&(\nu^{-\frac{1}{12}}\| \theta_e\|_{L^2_{x,y}}^{\frac{1}{2}}+\nu^{\frac{1}{12}}\|\pa_x^2 \theta_e\|_{L^2_{x,y}}^{\frac{1}{2}})\|\pa_y\theta_e\|_{L^2_{x,y}}^{\frac{1}{2}}.
\end{align*}
\end{Remark}
Next, we provide the space-time estimates of the solution for the problem
\begin{align}\label{f-st-es}
\begin{cases}
\pa_t f-\nu\Delta f+y\pa_xf=g,\\
 f(t,x,\pm1)=0, \quad f|_{t=t_0}=f(t_0).
\end{cases}
\end{align}
\begin{lemma}\label{lem-3.2}
Let $f$ be the solution of \eqref{f-st-es}.
For any $t\in [t_0,t_1]$ with $t_0>\nu^{-\frac{1}{6}}$, we have
\begin{align}
\|f(t)\|_{X}\lesssim &\|f(t_0)\|_{L^2}+\| g\|_{L^1L^2}, \label{fX-es}\\
\nu^{\frac13}\||\pa_x|^{\frac{2}{3}} f\|_{L^1 L^2}\lesssim &|\ln\nu|\|f(t_0)\|_{L^2}+\|g\|_{L^1 L^2}. \label{pxfX-es}
\end{align}
\end{lemma}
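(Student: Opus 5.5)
This is a linear estimate, so I would work mode by mode in $k$ after taking the Fourier transform in $x$:
\[
\pa_t f_k-\nu(\pa_y^2-k^2)f_k+iky f_k=g_k,\qquad f_k(\pm1)=0 .
\]
By Duhamel's principle it suffices to treat two cases: the homogeneous problem with data $f_k(t_0)$, and the problem with zero data and a forcing localized in time. For the forcing, write $f=\int_{t_0}^{t}S(t,s)g(s)\,ds$. Minkowski's inequality then reduces the $L^1_tL^2$-forcing case to a homogeneous solution started at time $s$ with data $g(s)$. Since every component of the $X$ norm is an $L^\infty_t$, $L^2_t$ or $L^1_t$ norm over $(s,t_1)$, and each is subadditive under this superposition, (fX-es) follows from its homogeneous version.

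So the core is the homogeneous bound $\|f\|_{X(t_0,t_1)}\lesssim \|f(t_0)\|_{L^2}$. I would take it term by term.

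\emph{Energy terms.} The $L^\infty L^2$ norm and $\nu^{1/2}\|\nabla f\|_{L^2L^2}$ come from the basic energy identity, because $y\pa_x$ is skew-adjoint.

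\emph{Inviscid damping term.} The bound $\|\mathcal{M}\nabla\Delta^{-1}f\|_{L^2L^2}$ is the space-time inviscid damping estimate for the linearized Couette operator on the channel with Navier-slip (Dirichlet vorticity) boundary conditions. I would quote it from the resolvent/space-time estimates of \cite{CLM-26}, which give
\[
(|k|^{\frac12}+|k|)\,\|(\pa_y,k)(\pa_y^2-k^2)^{-1}f_k\|_{L^2_tL^2_y}\lesssim \|f_k(t_0)\|_{L^2_y}
\]
uniformly in $\nu$ and $k$.

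\emph{Enhanced dissipation term.} For $\nu^{1/2}\|\pa_x f\|_{L^1L^2}$ I would use the semigroup bound
\[
\|f_k(t)\|_{L^2}\lesssim e^{-c\nu^{1/3}|k|^{2/3}(t-t_0)}\|f_k(t_0)\|_{L^2},
\]
which follows from the resolvent bound $\|(L_k-i\lambda)^{-1}\|\lesssim \nu^{-1/3}|k|^{-2/3}$ and a Gearhart--Prüss type argument. Integrating in time gives
\[
\nu^{1/2}|k|\int e^{-c\nu^{1/3}|k|^{2/3}t}\,dt\sim \nu^{1/6}|k|^{1/3}.
\]
This is $\lesssim1$ only for $|k|\lesssim \nu^{-1/2}$. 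For $|k|\geq\nu^{-1/2}$ I would instead use the plain viscous decay $e^{-\nu k^2 (t-t_0)}$ coming from the $-\nu k^2$ term, which gives $\nu^{1/2}|k|/(\nu k^2)=\nu^{-1/2}|k|^{-1}\le 1$.

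There is a second problem: the $L^1_t$ norm sits outside the $k$-integral, $\int\|f_k\|_{L^2_k}\,dt$, so I cannot simply sum the bounds over $k$. The hypothesis $t_0>\nu^{-1/6}$ is where this is handled. I would split the time axis dyadically and use the fact that, by the start time, modes with $|k|$ small have already had time $\gtrsim\nu^{-1/6}$ to decay. I expect this to be the main obstacle: getting a clean $L^1_t$ bound over the continuum of $k$ without a logarithmic loss, which (fX-es) claims to avoid. If a direct argument fails, my fallback is to bound $\|\pa_xf(t)\|_{L^2}$ by interpolating between the energy bound and the dissipation of the derivative, and to use $t\geq t_0$ to control the low-frequency tail.

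For (pxfX-es) the same approach applies, with the logarithm now allowed. For each $k$,
\[
\nu^{1/3}|k|^{2/3}\int_{t_0}^\infty e^{-c\nu^{1/3}|k|^{2/3}(t-t_0)}\,dt\lesssim 1 .
\]
To pass the $L^1_t$ norm across the $k$-integral, decompose $k$ dyadically and note that the time integral localizes to $t-t_0\lesssim \nu^{-1/3}|k|^{-2/3}$. Summing gives $\|f(t)\|_{L^2}$ times the number of relevant dyadic blocks, which is $O(|\ln\nu|)$. Here the range $\nu^{1/2}\lesssim|k|\lesssim\nu^{-1/2}$ is the relevant one: lower frequencies are absorbed using $t\ge t_0$ and the $L^\infty$ bound, and higher frequencies by the viscous decay. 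The forcing term again follows from Duhamel's principle and Minkowski's inequality, and carries no logarithm.
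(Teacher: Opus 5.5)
Your Duhamel--Minkowski reduction, with the energy and inviscid-damping pieces quoted from \cite{CLM-26}, matches what the paper does: it simply cites \cite[Proposition 2.4]{CLM-26} for \eqref{fX-es}. Two steps of your plan fail, however.

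The first is that you never close the enhanced-dissipation piece $\nu^{\frac12}\|\pa_x f\|_{L^1L^2}$, and the mechanism you suggest for it cannot work. The operator $\pa_t-\nu\Delta+y\pa_x$ has time-independent coefficients, so $S(t,s)=S(t-s,0)$. Since $f(t_0)$ is arbitrary, nothing has ``already decayed'' at $t_0$, and the hypothesis $t_0>\nu^{-\frac16}$ gives you nothing here. The missing step is the frequency-dyadic decomposition that you only bring in later. Let $P_m$ project onto $2^m\le|k|<2^{m+1}$ and set $s=t-t_0$. The decay bounds you need are $e^{-c\nu^{\frac13}|k|^{\frac23}s}$ for $|k|\ge1$ (Lemma~\ref{lem-3.5}), $(1+\nu k^2s^3)^{-\frac12}$ for $|k|\le1$ (the bound behind \eqref{Sk-hk-12}), and $e^{-\nu k^2 s}$. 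They give $\nu^{\frac12}\|\pa_xS(\cdot,t_0)P_mf(t_0)\|_{L^1L^2}\lesssim\min\{\nu^{\frac16}2^{\frac m3},\nu^{-\frac12}2^{-m}\}\|f(t_0)\|_{L^2}$. This is a geometric series in $m$ peaking at $2^m\sim\nu^{-\frac12}$. Because the weight $|k|$ beats the rate $|k|^{\frac23}$, the sum is $O(1)$ and no logarithm appears.

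The second gap is in \eqref{pxfX-es}, and it cannot be repaired.

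\emph{The homogeneous count is fine.} There are about $|\ln\nu|$ blocks with $\nu\lesssim|k|\lesssim\nu^{-\frac12}$, each costing $O(1)$. Note that the blocks below $\nu^{\frac12}$ are not absorbed by any $L^\infty_t$ bound; only the Poincar\'e decay $e^{-\nu s}$ makes $|k|\lesssim\nu$ summable. This is the frequency-side version of the paper's computation $\int_{\nu^{-1/6}}^\infty\nu^{-\frac13}(1+t)^{-1}(1+\nu t)^{-\frac16}dt\lesssim\nu^{-\frac13}|\ln\nu|$.

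\emph{The forcing claim contradicts your own reduction.} Duhamel plus Minkowski treats each slice $g(s)$ as initial data at time $s$, and by autonomy it inherits the same constant. What you actually get is $|\ln\nu|\,(\|f(t_0)\|_{L^2}+\|g\|_{L^1L^2})$, not a log-free forcing term.

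\emph{No argument can do better.} Take $f(t_0)=0$ and $g=\epsilon^{-1}\mathbf{1}_{[t_0,t_0+\epsilon]}(t)\,h$; letting $\epsilon\to0$ turns the forcing bound into the homogeneous one. The homogeneous constant is unbounded in $\nu$. To see this, take $h_k=a(k)\phi(y)$ with $\phi$ fixed, $\phi(\pm1)=0$, and $|a(k)|^2\sim(|k|\,|\ln\nu|)^{-1}$ on $\nu\le|k|\le\nu^{-\frac12}$, so that $\|h\|_{L^2}\sim1$.
\begin{itemize}
\item Since $\pa_y^2f_k=0$ at $y=\pm1$, one has $\|\pa_yf_k(t)\|_{L^2_y}\le\|\pa_y\phi\|_{L^2_y}+|k|(t-t_0)\|\phi\|_{L^2_y}$.
\item The energy identity then keeps at least half of each mode's mass until $t-t_0\sim(\nu k^2)^{-\frac13}$.
\item Hence $\nu^{\frac13}\||\pa_x|^{\frac23}f(t)\|_{L^2}\gtrsim|\ln\nu|^{-\frac12}(t-t_0)^{-1}$ for $1\lesssim t-t_0\lesssim\nu^{-1}$, and the $L^1_t$ norm is $\gtrsim|\ln\nu|^{\frac12}$.
\end{itemize}

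The paper's own sketch computes only the homogeneous integral and defers the forcing part to \cite{CLM-26}, so it does not supply a log-free forcing bound either. What can actually be proved is \eqref{pxfX-es} with $|\ln\nu|$ in front of both terms.
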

\begin{proof}
The estimate \eqref{fX-es} is proved in \cite[Proposition 2.4]{CLM-26}. We use the same argument that leads to
 \cite[(2.22)]{CLM-26}, replacing  $\mathcal{M}$ and \cite[(2.28)]{CLM-26} by $|\pa_x|^{\frac{2}{3}}$ and
\begin{align*}
\int_{\nu^{-\frac{1}{6}}}^{\infty}\||\pa_x|^{\frac{2}{3}}S(t,\nu^{-\frac{1}{6}})f(t_0)\|_{L^2}dt\lesssim&\int_{\nu^{-\frac{1}{6}}}^{\infty}\nu^{-\frac{1}{3}}(1+t)^{-1}(1+\nu t)^{-\frac{1}{6}}dt\|f(t_0)\|_{L^2}\nonumber\\
\lesssim&\nu^{-\frac{1}{3}}|\ln\nu|\|f(t_0)\|_{L^2},
\end{align*}
where $S$ is the solution operator of \eqref{f-st-es}. Then we deduce \eqref{pxfX-es}.
\end{proof}

Next we consider the solution of
\begin{align}\label{f-ft-es}
\begin{cases}
\pa_t f-\nu(\pa_y^2-k^2) f+ikyf=g,\\
 f(t,k,\pm1)=0, \quad f|_{t=t_0}=f(t_0).
\end{cases}
\end{align}
In what follows, we denote
$\|f\|_{L^p_tL^q_y}:=\|f\|_{L^p_t(t_0,t_1;L^q_{y})}$, $\|f\|_{\mathcal{Y}}:=\|f\|_{\mathcal{Y}(t_0,t_1)}.$
\begin{lemma}\label{lem-3.3}
Let $f$ be the solution of \eqref{f-ft-es}, then for $t\in [t_0,t_1]$, the estimate holds:
\begin{align}\label{fX-es-k}
\|f\|_{\mathcal{Y}}+\nu^{\frac{1}{6}}|k|^{\frac{1}{3}}\|f(t)\|_{L^2_tL^2_y}+\nu^{\frac{1}{3}}|k|^{\frac{2}{3}}\|f(t)\|_{L^1_tL^2_y}
\lesssim &\|f(t_0)\|_{L^2_y}+\| g\|_{L^1_tL^2_y}.
\end{align}
\end{lemma}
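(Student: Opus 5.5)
By Duhamel's principle and linearity it suffices to treat two cases: $g=0$ with general data, and $f(t_0)=0$ with forcing. For the forced problem write $f(t)=\int_{t_0}^{t}S_k(t,s)g(s)\,ds$, where $S_k(t,s)$ is the solution operator of the homogeneous problem \eqref{f-ft-es}. Minkowski's inequality then bounds every norm on the left of \eqref{fX-es-k} by $\int_{t_0}^{t_1}\|S_k(\cdot,s)g(s)\|_{(s,t_1)}\,ds$, where $\|\cdot\|_{(s,t_1)}$ denotes the same combination of norms taken over $(s,t_1)$. Each such time integral is computed with the homogeneous solution started at time $s$. Everything therefore reduces to proving, for the homogeneous equation started at $t_0$ with data $f(t_0)$,
\[
\|f\|_{\mathcal{Y}}+\nu^{\frac16}|k|^{\frac13}\|f\|_{L^2_tL^2_y}+\nu^{\frac13}|k|^{\frac23}\|f\|_{L^1_tL^2_y}\lesssim \|f(t_0)\|_{L^2_y},
\]
with a constant independent of $t_0,t_1,k,\nu$. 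The equation is autonomous in time, so we may shift $t_0$ to $0$.

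The first pieces of $\|f\|_{\mathcal{Y}}$ come from the basic energy identity. Since $iky$ is skew-adjoint, $\frac{d}{dt}\|f\|_{L^2_y}^2+2\nu\|(\pa_y,k)f\|_{L^2_y}^2=0$. This gives $\|f\|_{L^\infty_tL^2_y}+\nu^{\frac12}\|(\pa_y,k)f\|_{L^2_tL^2_y}\lesssim\|f(t_0)\|_{L^2_y}$.

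The inviscid damping part $\|(|k|^{\frac12}+|k|)(\pa_y,k)(\pa_y^2-k^2)^{-1}f\|_{L^2_tL^2_y}$ is the frequency-localized version of the space-time bound \eqref{fX-es} in Lemma \ref{lem-3.2}, i.e. \cite[Proposition 2.4]{CLM-26}. That proof runs at fixed $k$, via the time-dependent multiplier / resolvent argument for the linearized Couette operator with Navier-slip boundary conditions, so I would invoke it mode by mode. One point needs checking: that proof must not use the restriction $t_0>\nu^{-\frac16}$. That restriction only enters the $L^1_t$ term of the $X$ norm, which is not part of $\mathcal{Y}$.

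The remaining terms come from enhanced dissipation. I would first prove the semigroup bound
\[
\|f(t)\|_{L^2_y}\lesssim e^{-c\nu^{\frac13}|k|^{\frac23}(t-t_0)}\|f(t_0)\|_{L^2_y}.
\]
This follows from the standard resolvent estimate for $-\nu(\pa_y^2-k^2)+iky$ on $[-1,1]$ with Dirichlet conditions: its pseudospectral bound is $\gtrsim \nu^{\frac13}|k|^{\frac23}$ for $|k|\gtrsim\nu$, and combined with a Gearhart–Prüss type theorem this gives the decay with a uniform constant. For small $|k|$ the heat part alone gives decay at rate $\nu(1+k^2)\gtrsim\nu$. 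Since $\nu\le \nu^{\frac13}|k|^{\frac23}$ exactly when $|k|\ge\nu$, the case $|k|\le\nu$ is where the rate could fail. There I would instead use the direct bound $\|f\|_{L^2_y}\le\|f(t_0)\|_{L^2_y}$ together with the Poincaré decay $e^{-\nu(t-t_0)}$. These give $\nu^{\frac13}|k|^{\frac23}\int e^{-\nu s}ds\lesssim\nu^{-\frac23}|k|^{\frac23}\le1$, so both time-integrated terms stay bounded. For $|k|\ge\nu$, integrating the exponential decay in time gives
\[
\nu^{\frac16}|k|^{\frac13}\|f\|_{L^2_tL^2_y}\lesssim\|f(t_0)\|_{L^2_y},\qquad \nu^{\frac13}|k|^{\frac23}\|f\|_{L^1_tL^2_y}\lesssim\|f(t_0)\|_{L^2_y}.
\]

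The main obstacle is proving the resolvent bound, and hence the decay, uniformly in $k\in\mathbb{R}$ under the Dirichlet condition. If it is not available in \cite{CLM-26}, I would prove it by the usual argument. One tests $(F-i\lambda)f=h$ against $f$ and against $\chi(y)f$, where $\chi$ is a cutoff localized near $\{y=\lambda/k\}$ at scale $\delta=(\nu/|k|)^{\frac13}$. The boundary layers are handled by the Dirichlet condition, which kills the boundary terms.
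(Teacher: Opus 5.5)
Your proof is correct. Its skeleton matches the paper's: Duhamel plus Minkowski reduces everything to the homogeneous problem, time-translation invariance lets you start at $0$, and the inviscid-damping part of $\|\cdot\|_{\mathcal{Y}}$ is cited from \cite{CLM-26}. The paper takes the whole $\mathcal{Y}$ bound, energy part included, directly from the mode-wise \cite[Lemma 2.2]{CLM-26}. So your worry about the restriction $t_0>\nu^{-\frac16}$ in \cite[Proposition 2.4]{CLM-26} never arises there.

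The real difference is the decay input for the two enhanced-dissipation terms.

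\emph{The paper's route.} It uses the polynomial bound $\|\tilde S(t,0)\tilde h\|_{L^2_y}\lesssim(1+\nu k^2t^3)^{-\frac12}\|\tilde h\|_{L^2_y}$ from \cite[Lemma 2.3]{CLM-26}. After the rescaling $\tau=(\nu k^2)^{\frac13}t$, its time integral is $\nu^{-\frac13}|k|^{-\frac23}\int_0^\infty(1+\tau^3)^{-\frac12}\,d\tau$, which gives the $L^1_t$ term. The $L^2_t$ term then follows by interpolating between $L^\infty_t$ and $L^1_t$. This needs no frequency splitting, since the weak polynomial decay is uniform in $k$ and already available from the infinite-channel linear theory.

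\emph{Your route.} You derive the exponential bound $e^{-c\nu^{\frac13}|k|^{\frac23}(t-t_0)}$ from a resolvent estimate and a quantitative Gearhart--Pr\"uss theorem with uniform constant, then integrate it (and its square) directly. For $|k|\ge1$ this decay is exactly Lemma~\ref{lem-3.5}, which the paper imports from \cite{CLWZ-20} and needs later anyway.

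Your route is more self-contained and yields stronger decay. Its cost is extending the resolvent bound to $\nu\le|k|<1$. That extension is legitimate: after dividing by $|k|$, the operator depends only on $\epsilon=\nu/|k|$, plus the harmless accretive term $\nu|k|$. The Dirichlet cutoff argument at scale $\epsilon^{\frac13}$ only requires $\epsilon\lesssim1$. Your separate Poincar\'e treatment of $|k|\le\nu$, using $\nu^{\frac13}|k|^{\frac23}\cdot\nu^{-1}\le1$, is also correct.
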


\begin{proof}
For $s\geq 0$, let $\tilde{S}(t,s)$  be the solution operator of
\begin{align*}
\begin{cases}
\pa_t \tilde{f}-\nu(\pa_y^2-k^2) \tilde{f}+iky \tilde{f}=0,\\
\tilde{f}(t,k,\pm1)=0,\quad  \tilde{f}|_{t=s}=\tilde{h}(k,y).
\end{cases}
\end{align*}
For $t\geq s$, we have $\tilde{f}(t,k,y)=\tilde{S}(t,s) \tilde{h}(k,y)$. From \cite[Lemma 2.2--2.3]{CLM-26}, we obtain
 \begin{align}
\|\tilde{S}(t,0)\tilde{h}(k,y)\|_{\mathcal{Y}} \lesssim&\|\tilde{h}\|_{L^2_y},\label{Sk-hk-Yk}\\
\int_{0}^{\infty}\|\tilde{S}(t,0)\tilde{h}(k,y)\|_{L^2_y}dt\lesssim&\int_{0}^{\infty}(1+\nu k^2t^3)^{-\frac{1}{2}}dt\|\tilde{h}\|_{L^2_y}\lesssim\nu^{-\frac{1}{3}}|k|^{-\frac{2}{3}}\|\tilde{h}\|_{L^2_y}.\label{Sk-hk-12}
\end{align}
The solution of \eqref{f-ft-es} can be expressed as
\begin{align}\label{solution of fk}
f(t)=\tilde{S}(t,t_0)f(t_0)+\int^t_{t_0}\tilde{S}(t,s)g(s)ds.
\end{align}
Thanks to \eqref{Sk-hk-Yk} and \eqref{solution of fk}, we deduce
\begin{align}\label{fk-Yk-es}
\|f(t)\|_{\mathcal{Y}}\lesssim\|f(t_0)\|_{L^2_y}+\| g\|_{L^1_tL^2_y}.
\end{align}
Similarly, using \eqref{Sk-hk-12} and \eqref{solution of fk}, we obtain
\begin{align}\label{fk-12-es}
\nu^{\frac{1}{3}}|k|^{\frac{2}{3}}\|f(t)\|_{L^1_tL^2_y}\lesssim \|f(t_0)\|_{L^2_y}+\| g\|_{L^1_tL^2_y}.
\end{align}
Collecting \eqref{fk-Yk-es} and \eqref{fk-12-es} with the interpolation inequality yields
\begin{align*}
\nu^{\frac{1}{6}}|k|^{\frac{1}{3}}\|f(t)\|_{L^2_tL^2_y}\lesssim \nu^{\frac{1}{6}}|k|^{\frac{1}{3}}\|f(t)\|_{L^\infty_tL^2_y}^{\frac{1}{2}}\|f(t)\|_{L^1_tL^2_y}^{\frac{1}{2}}
\lesssim \|f(t_0)\|_{L^2_y}+\| g\|_{L^1_tL^2_y},
\end{align*}
and so we complete the proof of Lemma~\ref{lem-3.3}.
\end{proof}
\begin{lemma}\cite[Propositions 6.4 and 6.5]{CLWZ-20}\label{lem-3.4}
Let $f$ be the solution of the problem
\begin{align*}
\begin{cases}
\pa_t f-\nu(\pa_y^2-k^2) f+ikyf=-ikg_{1}-\pa_yg_{2},\\
 f(t,k,\pm1)=0, \quad f|_{t=t_0}=f(t_0).
\end{cases}
\end{align*}
Then for $t\in [t_0,t_1]$, there holds
\begin{align}\label{fk-h1-h2-es}
\|f\|_{\mathcal{Y}}+\nu^{\frac{1}{6}}|k|^{\frac{1}{3}}\|f\|_{L^2_t L^2_y}\lesssim &\|f(t_0)\|_{L^2_y}+\min\{\nu^{-\frac{1}{6}}|k|^{\frac{2}{3}},\nu^{-\frac{1}{2}}\}\|g_{1}\|_{L^2_tL^2_y}+\nu^{-\frac{1}{2}} \|g_{2}\|_{L^2_tL^2_y}.
\end{align}
\end{lemma}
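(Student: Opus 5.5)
The statement is cited from \cite[Propositions 6.4 and 6.5]{CLWZ-20}, so the plan is to reduce it to the two estimates established there. Those are a space-time resolvent bound for the forced linearized Couette operator with Navier-slip/Dirichlet boundary data: one for forcing $-ikg_1$, one for forcing $-\pa_y g_2$. The proof proceeds in four steps.

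\textbf{Step 1: reduce to zero initial data.} By linearity we write $f=\tilde S(t,t_0)f(t_0)+f_1+f_2$. Here $f_1$ solves the equation with forcing $-ikg_1$ and $f_2$ with forcing $-\pa_yg_2$, both with zero initial data. The homogeneous part is handled exactly as in Lemma~\ref{lem-3.3}. Indeed, \eqref{Sk-hk-Yk} gives the $\mathcal{Y}$ bound, and the interpolation of \eqref{Sk-hk-12} with the $L^\infty_t$ bound gives the $\nu^{\frac16}|k|^{\frac13}L^2_tL^2_y$ term.

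\textbf{Step 2: the $g_2$ part.} For $f_2$ we use the energy identity. Pairing with $\bar f_2$ and integrating by parts, which is allowed since $f_2(\pm1)=0$, gives
\[
\tfrac12\tfrac{d}{dt}\|f_2\|_{L^2_y}^2+\nu\|(\pa_y,k)f_2\|_{L^2_y}^2=\mathrm{Re}\langle g_2,\pa_yf_2\rangle .
\]
Young's inequality then yields $\|f_2\|_{L^\infty L^2}+\nu^{\frac12}\|(\pa_y,k)f_2\|_{L^2L^2}\lesssim\nu^{-\frac12}\|g_2\|_{L^2L^2}$. The remaining pieces of the $\mathcal{Y}$ norm and the $\nu^{\frac16}|k|^{\frac13}$ term are different. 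These are the inviscid damping quantity $\|(|k|^{\frac12}+|k|)(\pa_y,k)(\pa_y^2-k^2)^{-1}f\|_{L^2_tL^2_y}$ and the enhanced dissipation term. For them we invoke the resolvent estimates $\|(i k(y-\lambda)-\nu(\pa_y^2-k^2)+\e)^{-1}\|$ of \cite{CLWZ-20}. We take the Fourier transform in time, so that the problem becomes a resolvent equation with spectral parameter $\lambda$. The $L^2_t$ bounds then follow from Plancherel together with uniform-in-$\lambda$ resolvent bounds of the form
\[
\nu^{\frac16}|k|^{\frac13}\|w\|_{L^2_y}+|k|^{\frac12}\|(\pa_y,k)(\pa_y^2-k^2)^{-1}w\|_{L^2_y}\lesssim\nu^{-\frac12}\|G_2\|_{L^2_y}
\]
for $w$ solving the resolvent equation with forcing $\pa_yG_2$.

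\textbf{Step 3: the $g_1$ part.} For $f_1$ the same Plancherel-in-time argument applies with forcing $ikG_1$. The resolvent bound now has two regimes. Using the $H^{-1}$ resolvent estimate gives a factor $\nu^{-\frac12}$ (cost $|k|\cdot\nu^{-\frac12}|k|^{-1}$). Using the $L^2\to L^2$ estimate $\|w\|\lesssim\nu^{-\frac13}|k|^{-\frac23}\|F\|$ with $F=ikG_1$ gives a factor $\nu^{-\frac13}|k|^{\frac13}$ for the $L^2$ part. Multiplying by $\nu^{\frac16}|k|^{\frac13}$ produces the factor $\nu^{-\frac16}|k|^{\frac23}$. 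Taking the better of the two regimes gives the $\min\{\nu^{-\frac16}|k|^{\frac23},\nu^{-\frac12}\}$ in \eqref{fk-h1-h2-es}.

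\textbf{Main obstacle.} The main obstacle is the boundary-layer analysis in the resolvent estimates. This is the case where the critical point $y=\lambda$ is near $\pm1$: the Dirichlet condition breaks the simple multiplier argument and requires the boundary-layer corrector construction of \cite{CLWZ-20}. Since the problem here is identical to theirs, that estimate is taken from the cited propositions rather than reproved.
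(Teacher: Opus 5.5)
Your proposal matches the paper, which gives no argument of its own and simply imports \eqref{fk-h1-h2-es} from \cite[Propositions 6.4 and 6.5]{CLWZ-20}; the zero-data reduction, the energy identity for the $g_2$ part, and the Plancherel-in-time use of resolvent bounds are the standard mechanism behind those propositions.

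As a heuristic, though, your sketch is looser than the statement in a few places. First, the inviscid-damping weight in $\mathcal{Y}$ is $|k|^{\frac12}+|k|$, not $|k|^{\frac12}$, so your Step 2 resolvent bound does not cover $|k|\geq 1$. Second, Plancherel only yields $L^2_t$ quantities, so the $L^\infty_tL^2_y$ and $\nu^{\frac12}\|(\pa_y,k)f\|_{L^2_tL^2_y}$ parts of the $g_1$ bound are not reached by it. They should come from $\|f\|_{L^\infty_tL^2_y}^2+\nu\|(\pa_y,k)f\|_{L^2_tL^2_y}^2\lesssim|k|\|g_1\|_{L^2_tL^2_y}\|f\|_{L^2_tL^2_y}$ combined with $\|f\|_{L^2_tL^2_y}\lesssim\nu^{-\frac13}|k|^{\frac13}\|g_1\|_{L^2_tL^2_y}$. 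Third, your ``main obstacle'' is misplaced: with a Dirichlet condition on the vorticity all boundary terms in the multiplier arguments vanish, and boundary-layer correctors are a feature of the non-slip problem. Finally, \cite{CLWZ-20} works with integer $k\neq 0$, so for $0<|k|<1$ (where $\mathcal{Y}$ carries the weight $|k|^{\frac12}$) the problem is not literally identical to theirs. There one is tacitly carrying those arguments over to real frequencies, just as the paper does.
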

\begin{lemma} \cite[Lemma 6.3]{CLWZ-20}\label{lem-3.5}
Let $f\in H^2_y\cap H^1_0(-1,1)$ be the solution of \eqref{f-ft-es} with $g=0$. 
 Then for any $|k|\geq 1$, there exist constants $C,c>0$ such that
\begin{align}\label{semigroup}
\|e^{-(t-t_0)L_k}f_{0}\|_{L^2_y}\leq C e^{-c\nu^{\frac{1}{3}}|k|^{\frac{2}{3}}(t-t_0)-\nu (t-t_0)}\|f_{0}\|_{L^2_y}.
\end{align}
\end{lemma}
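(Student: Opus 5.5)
This is the enhanced-dissipation semigroup bound for $L_k=-\nu(\pa_y^2-k^2)+iky$ with Dirichlet conditions and $|k|\geq 1$. I would prove it through a resolvent estimate combined with the Gearhart--Pr\"uss theorem in its quantitative form.

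\textbf{Step 1: dissipative decay $e^{-\nu(t-t_0)}$.} Since $L_k$ is accretive, testing $\pa_t f+L_kf=0$ against $\bar f$ gives
\begin{align*}
\frac12\frac{d}{dt}\|f\|_{L^2_y}^2+\nu\|\pa_y f\|_{L^2_y}^2+\nu k^2\|f\|_{L^2_y}^2=0 .
\end{align*}
This already yields the factor $e^{-\nu k^2(t-t_0)}\leq e^{-\nu(t-t_0)}$. To keep both exponentials, I will use the shifted operator $L_k-\nu k^2=-\nu\pa_y^2+iky$, which commutes with the scalar shift. This reduces matters to proving $\|e^{-tH}\|\leq Ce^{-c\nu^{1/3}|k|^{2/3}t}$ for $H=-\nu\pa_y^2+iky$. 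Note that $e^{-\nu k^2 t}\leq e^{-\nu t}$ when $|k|\geq1$.

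\textbf{Step 2: the key resolvent bound.} I need
\begin{align*}
\Psi(H):=\inf_{\lambda\in\mathbb R}\inf_{\|f\|=1}\|(H-i\lambda)f\|_{L^2_y}\gtrsim \nu^{1/3}|k|^{2/3}
\end{align*}
uniformly, with Dirichlet conditions at $y=\pm1$. Write $F=(H-i\lambda)f$ and split according to the critical point $y_0=\lambda/k$. There are two pieces of information.

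First, the real part: $\nu\|f'\|^2=\mathrm{Re}\langle F,f\rangle\leq \|F\|\|f\|$.

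Second, the imaginary part: $\mathrm{Im}\langle F,\chi f\rangle$ with a sign-type weight $\chi\approx \mathrm{sgn}(y-y_0)$ smoothed on a scale $\delta$. This gives
\begin{align*}
|k|\int_{|y-y_0|\geq\delta}|y-y_0||f|^2 \lesssim \|F\|\|f\|+\nu\delta^{-1}\|f'\|\|f\| .
\end{align*}
On the remaining set, $\|f\|_{L^2(|y-y_0|<\delta)}^2\lesssim \delta\|f\|_{L^\infty}^2\lesssim\delta\|f\|\|f'\|$. The region $|y-y_0|\geq\delta$ has $|y-y_0|^{-1}\leq\delta^{-1}$, so its contribution is controlled by the previous display.

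Choosing $\delta=(\nu/|k|)^{1/3}$ and combining these yields $\|f\|^2\lesssim(\nu|k|^2)^{-1/3}\|F\|\|f\|$.

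The case $y_0\notin[-1,1]$ is easier, since $|y-y_0|$ is then bounded below away from the boundary. The Dirichlet boundary causes no boundary terms in the integrations by parts, because $f(\pm1)=0$. This is the standard argument of \cite{CLWZ-20}, and I expect Step 2, in particular the bookkeeping of the boundary and commutator terms $\nu\langle f',\chi' f\rangle$, to be the main technical point.

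\textbf{Step 3: resolvent to semigroup.} $H$ is m-accretive, so the Gearhart--Pr\"uss-type theorem of Wei (\emph{$\|e^{-tH}\|\leq e^{-t\Psi(H)+\pi/2}$}) converts Step 2 directly into
\begin{align*}
\|e^{-tH}\|\leq e^{\pi/2}e^{-c\nu^{1/3}|k|^{2/3}t}.
\end{align*}
Restoring the shift $\nu k^2$ from Step 1, translating time by $t_0$, and using $\nu k^2\geq\nu$ gives \eqref{semigroup} with $C=e^{\pi/2}$.
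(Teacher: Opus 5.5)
Your proposal is correct and follows essentially the same route as the source: the paper gives no proof here and imports the bound from \cite{CLWZ-20}, where the Dirichlet semigroup decay comes from the resolvent bound $\|(L_k-i\lambda)f\|_{L^2_y}\gtrsim \nu^{\frac13}|k|^{\frac23}\|f\|_{L^2_y}$ combined with Wei's Gearhart--Pr\"uss-type estimate $\|e^{-tA}\|\leq e^{-t\Psi(A)+\pi/2}$, exactly as you do. Your Step 2 closes as claimed: with $\delta=(\nu/|k|)^{\frac13}$, absorbing $\|f'\|$ via $\nu\|f'\|^2\leq\|F\|\|f\|$ gives $\|f\|\lesssim \nu^{-\frac13}|k|^{-\frac23}\|F\|$, and the shift by the scalar $\nu k^2$ yields the factor $e^{-\nu(t-t_0)}$ with coefficient one, using $|k|\geq 1$.
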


Classifying frequencies in the horizontal direction, we establish the space-time estimates for solutions of \eqref{f-ft-es}: Lemma~\ref{lem-3.6} for $|k|\leq 1$ and Lemma~\ref{lem-3.7} for  $1 \leq |k| \leq \nu^{-\frac{1}{2}}$.

\begin{lemma}\label{lem-3.6}
Let $f$ be the solution of \eqref{f-ft-es} on $[t_0,t_1]$ with the initial condition $f(t_0)=0$, where $t_0 > \nu^{-\frac{1}{6}}$. 
Assume $k,l\in\mathbb{R}$, $k\neq l$, and $|k|\leq 1$, there holds
\begin{align*}
&\|f\|_{\mathcal{Y}}^2+\nu^{\frac{1}{3}}|k|^{\frac{2}{3}}\|f\|_{L^2_t L^2_y}^2\lesssim\langle\nu^{\frac{1}{3}}t_1\rangle^3(1+|l|^2) |k-l|^{-1}\langle k-l\rangle^{-1}\|(\pa_y,\langle k-l\rangle)(e^{ilyt}g)\|_{L^2_tL^2_y}^2.
\end{align*}
\end{lemma}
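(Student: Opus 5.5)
The plan is to exploit the phase mismatch between the forcing $g$, which oscillates like $e^{-ilyt}$, and the free transport $e^{-ikyt}$ of the equation \eqref{f-ft-es}. Because $k\neq l$, the forcing is non-resonant. We convert it into a $\pa_y$-derivative plus a $k$-derivative, which can be fed into Lemma~\ref{lem-3.4}, together with a lower-order remainder handled by Lemma~\ref{lem-3.3}. Throughout we write $h:=e^{ilyt}g$.

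\emph{Step 1 (moving frame and integration by parts in $y$).} Set $F=e^{ikyt}f$. Then $F$ solves
\[
\pa_tF-\nu(\pa_y-ikt)^2F+\nu k^2F=e^{i(k-l)yt}h .
\]
For $t\geq t_0>\nu^{-\frac16}$ we use $e^{i(k-l)yt}=\frac{1}{i(k-l)t}\pa_y e^{i(k-l)yt}$ and undo the frame with $e^{-ikyt}\pa_y=(\pa_y+ikt)e^{-ikyt}$. This gives the decomposition
\[
g=-\pa_y g_2-ikg_1-g_3,\qquad g_2=-\frac{e^{-ilyt}h}{i(k-l)t},\quad g_1=t\,g_2,\quad g_3=\frac{e^{-ilyt}\pa_yh}{i(k-l)t}.
\]
When $|k-l|\le 1$ we do not integrate by parts in the $g_3$-part. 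Instead we replace $\pa_y h$ by $\langle k-l\rangle h$ where convenient, which is why the final weight is $\|(\pa_y,\langle k-l\rangle)h\|$.

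\emph{Step 2 (estimates of the three pieces).} Split $f=f_1+f_2$ by linearity, both with zero data at $t_0$.
\begin{itemize}
\item The part of $f_1$ driven by $-\pa_y g_2$ is bounded by Lemma~\ref{lem-3.4} by $\nu^{-\frac12}\|g_2\|_{L^2_tL^2_y}\lesssim \nu^{-\frac12}t_0^{-1}|k-l|^{-1}\|h\|_{L^2_tL^2_y}$. Here $t_0^{-1}\le\nu^{\frac16}$ recovers part of the loss.
\item The part of $f_1$ driven by $-ikg_1$ costs $\min\{\nu^{-\frac16}|k|^{\frac23},\nu^{-\frac12}\}\,|k-l|^{-1}\|h\|_{L^2_tL^2_y}$, and $|k|\le1$ keeps $|k|^{\frac23}\le1$.
\item For $f_2$, driven by $g_3$, we use Lemma~\ref{lem-3.3} together with Cauchy--Schwarz in time on $[t_0,t_1]$:
\[
\|g_3\|_{L^1_tL^2_y}\lesssim |k-l|^{-1}\|t^{-1}\|_{L^2_t}\|\pa_yh\|_{L^2_tL^2_y}\lesssim |k-l|^{-1}t_0^{-\frac12}\|\pa_y h\|_{L^2_tL^2_y}.
\]
\end{itemize}
The factor $(1+|l|^2)$ enters through $\pa_y g_2$: moving a $\pa_y$ across $e^{-ilyt}$ produces $lt$, which we convert using $t\le t_1$. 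We expect this conversion also to generate powers of $\nu^{\frac13}t_1$, collected as $\langle\nu^{\frac13}t_1\rangle^3$.

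\emph{Step 3 (main obstacle).} The hard part is bookkeeping the $\nu$-powers so that the residual $\nu^{-\frac16}$ and $\nu^{-\frac13}$ factors from Lemma~\ref{lem-3.4} are absorbed. The tools are $t\ge t_0$, the restriction $|k|\le 1$, and the explicit $\langle\nu^{\frac13}t_1\rangle^3$ allowance.
\begin{itemize}
\item First, we would trade each factor $t^{-1}$ against $\nu^{\frac13}\langle\nu^{\frac13}t\rangle^{-1}$-type weights by splitting $[t_0,t_1]$ at $t\sim\nu^{-\frac13}$.
\item Second, for the small-$k$ regime we would use the weaker dissipation weight $\nu^{\frac13}|k|^{\frac23}$ in the left-hand side, instead of the full $\nu^{\frac16}|k|^{\frac13}$ normalization.
\end{itemize}
If the bare Duhamel approach loses a power of $\nu$, the fallback is a direct energy estimate for $F$. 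We would pair $F$ with the integrated-by-parts forcing and absorb $\pa_y F$ into $\nu\|(\pa_y-ikt)F\|^2$. This is where the distinction between $|k-l|\le1$ and $|k-l|\ge1$, hence the weight $|k-l|^{-1}\langle k-l\rangle^{-1}$, would be settled.
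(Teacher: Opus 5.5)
There is a genuine gap. Your route, one integration by parts in $y$ followed by Lemma~\ref{lem-3.4}, pays a viscous price that the allowance $\langle\nu^{\frac{1}{3}}t_1\rangle^3(1+|l|^2)$ cannot absorb, and none of the repairs in your Step~3 removes it.

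Take $k=1$, $l=3$, $t_0=2\nu^{-\frac{1}{6}}$, $t_1=4\nu^{-\frac{1}{6}}$. Then $\langle\nu^{\frac{1}{3}}t_1\rangle\approx 1$, and the right-hand side of the lemma is $\lesssim\|(\pa_y,1)h\|_{L^2_tL^2_y}^2$ with a $\nu$-independent constant. Your pieces cost more:
\begin{itemize}
\item In your decomposition $g_1=tg_2=-e^{-ilyt}h/(i(k-l))$ carries no factor $t^{-1}$. So for the $-ikg_1$ piece Lemma~\ref{lem-3.4} charges $\nu^{-\frac{1}{6}}|k|^{\frac{2}{3}}|k-l|^{-1}\|h\|_{L^2_tL^2_y}\sim\nu^{-\frac{1}{6}}\|h\|_{L^2_tL^2_y}$.
\item For the $-\pa_y g_2$ piece it charges $\nu^{-\frac{1}{2}}t_0^{-1}|k-l|^{-1}\|h\|_{L^2_tL^2_y}\sim\nu^{-\frac{1}{3}}\|h\|_{L^2_tL^2_y}$.
\end{itemize}
Your Step~3 repairs do not help:
\begin{itemize}
\item The trade $t^{-1}\lesssim\nu^{\frac{1}{3}}\langle\nu^{\frac{1}{3}}t\rangle^{-1}$ is false for $\nu^{-\frac{1}{6}}\le t\ll\nu^{-\frac{1}{3}}$.
\item The ``weaker weight'' $\nu^{\frac{1}{3}}|k|^{\frac{2}{3}}$ is just the square of $\nu^{\frac{1}{6}}|k|^{\frac{1}{3}}$, so it changes nothing.
\item The energy-estimate fallback has the same defect. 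Absorbing $\pa_yF$ into $\nu\|(\pa_y-ikt)F\|_{L^2_y}^2$ costs $\nu^{-1}(|k-l|t)^{-2}\sim\nu^{-\frac{2}{3}}$ in the squared bound at $t\sim\nu^{-\frac{1}{6}}$.
\end{itemize}
Moreover, for $|k-l|<1$ the factor $((k-l)t)^{-1}$ produces $|k-l|^{-1}$ rather than the claimed $|k-l|^{-\frac{1}{2}}$ (unsquared), and your description of that case does not address this. The underlying problem is that the bounds of Lemma~\ref{lem-3.4} depend only on $\|g_1\|_{L^2_tL^2_y}$ and $\|g_2\|_{L^2_tL^2_y}$. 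These norms are blind to the unimodular factor $e^{-ilyt}$, so the non-resonance still present in $g_1,g_2$ is discarded and only viscosity is used.

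The missing idea is to extract the non-resonance from the \emph{inviscid} dynamics and treat viscosity perturbatively, which is what the paper does. Split $f=f_1+f_2$, where $\pa_tf_1+ikyf_1=g$, $f_1(t_0)=0$ (so $e^{ilyt}f_1(t)=\int_{t_0}^te^{-i(k-l)y(t-s)}h(s)\,ds$), and $f_2$ is the viscous correction forced by $\nu(\pa_y^2-k^2)f_1$ with zero data. The inviscid estimate \cite[Lemma 2.1]{CLM-26} gives, with no loss in $\nu$,
\begin{align*}
\|f_1\|_{L^\infty_tL^2_y}^2+(|k|+|k|^2)\|(\pa_y,k)(\pa_y^2-k^2)^{-1}f_1\|_{L^2_tL^2_y}^2\lesssim|k-l|^{-1}\langle k-l\rangle^{-1}\|(\pa_y,\langle k-l\rangle)h\|_{L^2_tL^2_y}^2 .
\end{align*}
This weight is uniform in $t$ and only $|k-l|^{-1}$ as $k-l\to0$: a time-oscillation gain that a single integration by parts, with its factor $((k-l)t)^{-1}$, does not reproduce.

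The dissipative parts of the norm of $f_1$ are then bounded crudely by time growth. Differentiating the transport equation for $e^{ilyt}f_1$ in $y$ gives
\begin{align*}
\|\pa_y(e^{ilyt}f_1)(t)\|_{L^2_y}\lesssim|k-l|t\|f_1\|_{L^\infty_tL^2_y}+t^{\frac{1}{2}}\|\pa_yh\|_{L^2_tL^2_y}.
\end{align*}
Combined with $\pa_yf_1=e^{-ilyt}\pa_y(e^{ilyt}f_1)-iltf_1$, this yields
\begin{align*}
\nu\|(\pa_y,k)f_1\|_{L^2_tL^2_y}^2\lesssim\nu t_1^3(1+|k-l|^2+|l|^2)\|f_1\|_{L^\infty_tL^2_y}^2+\nu t_1^2\|\pa_yh\|_{L^2_tL^2_y}^2,
\end{align*}
together with $\nu^{\frac{1}{3}}|k|^{\frac{2}{3}}\|f_1\|_{L^2_tL^2_y}^2\le\nu^{\frac{1}{3}}t_1\|f_1\|_{L^\infty_tL^2_y}^2$. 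This is where $\langle\nu^{\frac{1}{3}}t_1\rangle^3$ and $1+|l|^2$ really come from, not from $\pa_yg_2$, which Lemma~\ref{lem-3.4} never differentiates.

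Only afterwards is Lemma~\ref{lem-3.4} applied, to $f_2$ with $g_1=-i\nu kf_1$ and $g_2=-\nu\pa_yf_1$. It gives
\begin{align*}
\|f_2\|_{\mathcal{Y}}+\nu^{\frac{1}{6}}|k|^{\frac{1}{3}}\|f_2\|_{L^2_tL^2_y}\lesssim\nu^{\frac{5}{6}}|k|^{\frac{5}{3}}\|f_1\|_{L^2_tL^2_y}+\nu^{\frac{1}{2}}\|\pa_yf_1\|_{L^2_tL^2_y},
\end{align*}
which is controlled by the $f_1$ bounds above. Because this forcing already carries a factor $\nu$, the viscous cost of Lemma~\ref{lem-3.4} is now harmless.
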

\begin{proof}
We first decompose $f =f_1+f_2$, where $(f_1, f_2)$ solves
\begin{align*}
\begin{cases}
\pa_t f_1+ikyf_{1}=g, \quad t\in (t_0,t_1],\\
\pa_t f_2-\nu(\pa_y^2-k^2) f_2+ikyf_{2}=\nu (\pa_y^2-k^2) f_1, \\
f_1(t,k,\pm 1)=f_2(t,k,\pm 1)=0, \quad f_1|_{t=t_0}=f_2|_{t=t_0}=0.
\end{cases}
\end{align*}
In view of \eqref{fk-h1-h2-es}, together with the condition $|k|\leq 1$, we obtain
\begin{align}\label{fk-2-leq-es}
\|f_2\|_{\mathcal{Y}}+\nu^{\frac{1}{6}}|k|^{\frac{1}{3}}\|f_2\|_{L^2_t L^2_y}\lesssim& \nu^{\frac{5}{6}}|k|^{\frac{5}{3}}\|f_1\|_{L^2_tL^2_y}+\nu^{\frac{1}{2}}\|\pa_yf_{1}\|_{L^2_tL^2_y}\nonumber\\
\lesssim&\|f_1\|_{\mathcal{Y}}+\nu^{\frac{1}{6}}|k|^{\frac{1}{3}}\|f_1\|_{L^2_t L^2_y}.
\end{align}
 It follows from \cite[Lemma 2.1]{CLM-26} that
\begin{align}\label{fk-1-leq-es-inf}
&\|f_1\|_{L^\infty_t L^2_{y}}^2+(|k|+|k|^2)\|(\pa_y,k)(\pa_y^2-k^2)^{-1}f_1\|_{L^2_tL^2_{y}}^2\nonumber\\
\lesssim& |k-l|^{-1}\langle k-l\rangle^{-1}\|(\pa_y,\langle k-l\rangle)(e^{ilyt}g)\|_{L^2_tL^2_y}^2,
\end{align}
and
\begin{align*}
\nu\|(\pa_y,k)f_1\|_{L^2_t L^2_y}^2\lesssim& \langle\nu^{\frac{1}{3}}t_1\rangle^3(|k-l|^2+|l|^2) |k-l|^{-1}\langle k-l\rangle^{-1}\|(\pa_y,\langle k-l\rangle)(e^{ilyt}g)\|_{L^2_tL^2_y}^2\nonumber\\
\lesssim& \langle\nu^{\frac{1}{3}}t_1\rangle^3(1+|l|^2) |k-l|^{-1}\langle k-l\rangle^{-1}\|(\pa_y,\langle k-l\rangle)(e^{ilyt}g)\|_{L^2_tL^2_y}^2.
\end{align*}
Using \eqref{fk-1-leq-es-inf}, we deduce
\begin{align}\label{fk-1-leq-es-23}
\nu^{\frac{1}{3}}|k|^{\frac{2}{3}}\|f_1\|_{L^2_t L^2_y}^2\lesssim&\nu^{\frac{1}{3}}|k|^{\frac{2}{3}}t_1\|f_1\|_{L^\infty_t L^2_y}^2\nonumber\\
\lesssim&\langle\nu^{\frac{1}{3}}t_1\rangle^3|k-l|^{-1}\langle k-l\rangle^{-1}\|(\pa_y,\langle k-l\rangle)(e^{ilyt}g)\|_{L^2_tL^2_y}^2.
\end{align}
Combining \eqref{fk-2-leq-es}--\eqref{fk-1-leq-es-23}, one completes the proof of Lemma~\ref{lem-3.6}.
\end{proof}
 Compared with the inviscid equations \cite[Lemma 2.1]{CLM-26}, the estimates for dissipative equation require one lower derivative estimate on the external force term. Specifically,
\begin{lemma}\label{lem-3.7}
Let $f$ be the solution of \eqref{f-ft-es} on $[t_0,t_1]$ with the initial condition $f(t_0) = 0$, where $t_0 > \nu^{-\frac{1}{6}}$. Suppose $k,l \in \mathbb{R}$, $k \neq l$ and $1 \leq |k| \leq \nu^{-\frac{1}{2}}$. Then, we have
\begin{align*}
&\|f\|_{\mathcal{Y}}^2+\nu^{\frac{1}{3}}|k|^{\frac{2}{3}}\|f\|_{L^2_t L^2_y}^2\nonumber\\
\lesssim&\big((\langle\nu^{\frac{1}{3}}t_1\rangle^3|l|^2+1) |k-l|^{-1}\langle k-l\rangle^{-1}+\nu^{\frac{1}{3}}|k|^{-\frac{4}{3}}\big)\|(\pa_y,\langle k-l\rangle)(e^{ilyt}g)\|_{L^2_tL^2_y}^2.
\end{align*}
\end{lemma}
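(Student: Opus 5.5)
We will follow the structure of the proof of Lemma~\ref{lem-3.6} and decompose $f=f_1+f_2$. Here $f_1$ solves the inviscid transport problem $\pa_t f_1+ikyf_1=g$ with $f_1(t_0)=0$ and $f_1(\pm1)=0$. The remainder $f_2$ solves
\begin{align*}
\pa_t f_2-\nu(\pa_y^2-k^2)f_2+ikyf_2=\nu(\pa_y^2-k^2)f_1
\end{align*}
with zero data. The inviscid part is handled by \cite[Lemma 2.1]{CLM-26} exactly as in \eqref{fk-1-leq-es-inf}. That result bounds $\|f_1\|_{L^\infty_tL^2_y}^2$ and $(|k|+|k|^2)\|(\pa_y,k)(\pa_y^2-k^2)^{-1}f_1\|_{L^2_tL^2_y}^2$ by $|k-l|^{-1}\langle k-l\rangle^{-1}\|(\pa_y,\langle k-l\rangle)(e^{ilyt}g)\|_{L^2_tL^2_y}^2$. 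It also gives the dissipative bound $\nu\|(\pa_y,k)f_1\|_{L^2_tL^2_y}^2\lesssim \langle\nu^{\frac13}t_1\rangle^3(|k-l|^2+|l|^2)|k-l|^{-1}\langle k-l\rangle^{-1}\|\cdots\|^2$. The difference from the case $|k|\le1$ is that we can no longer absorb $|k-l|^2$ into $1+|l|^2$. Nor can we simply bound $\nu^{\frac13}|k|^{\frac23}\|f_1\|_{L^2_tL^2_y}^2$ by $t_1\|f_1\|_{L^\infty_t L^2_y}^2$. The extra term $\nu^{\frac13}|k|^{-\frac43}$ in the statement must come from treating this part differently.

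The key step is to avoid putting two derivatives on $f_1$ when handling $f_2$. The source term $\nu(\pa_y^2-k^2)f_1$ has the divergence form $-ik g_1-\pa_y g_2$ with $g_1=i\nu k f_1$ and $g_2=-\nu\pa_y f_1$. Lemma~\ref{lem-3.4}, using $\min\{\nu^{-\frac16}|k|^{\frac23},\nu^{-\frac12}\}\le \nu^{-\frac12}$, then gives
\begin{align*}
\|f_2\|_{\mathcal Y}+\nu^{\frac16}|k|^{\frac13}\|f_2\|_{L^2_tL^2_y}\lesssim \nu^{\frac12}\|(\pa_y,k)f_1\|_{L^2_tL^2_y}.
\end{align*}
This is the "one lower derivative" mentioned before the lemma. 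The $\mathcal Y$-norm pieces of $f_1$ are already controlled, so everything reduces to bounding $\nu\|(\pa_y,k)f_1\|_{L^2_tL^2_y}^2$ and $\nu^{\frac13}|k|^{\frac23}\|f_1\|_{L^2_tL^2_y}^2$.

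For these two quantities we split according to the size of $|k-l|$ relative to $|l|$. When $|k-l|\lesssim|l|$, i.e.\ $|k|\lesssim |l|$, the factor $|k-l|^2+|l|^2\lesssim|l|^2$ yields the term $\langle\nu^{\frac13}t_1\rangle^3|l|^2|k-l|^{-1}\langle k-l\rangle^{-1}$. For $\nu^{\frac13}|k|^{\frac23}t_1\|f_1\|_{L^\infty_t}^2$ we use $|k|^{\frac23}\lesssim|l|^{\frac23}$ and $\nu^{\frac13}t_1\le\langle\nu^{\frac13}t_1\rangle^3$. When $|k-l|\gg|l|$, so that $|k-l|\sim|k|\ge1$, we should not use the lossy \cite[Lemma 2.1]{CLM-26} derivative bound. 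Instead we estimate $f_1$ directly through $e^{ikyt}f_1=\int_{t_0}^t e^{iky s}g(s)\,ds$. Writing $e^{ikys}g=e^{i(k-l)ys}(e^{ilys}g)$ and integrating by parts in $s$, via $e^{i(k-l)ys}=\pa_s e^{i(k-l)ys}/(i(k-l)y)$ or in $y$ via the $\pa_y$ on $e^{ilys}g$, we expect $\|(\pa_y,k)f_1\|_{L^2_y}$ to be controlled by $|k-l|^{-1}$ times $\|(\pa_y,\langle k-l\rangle)(e^{ilyt}g)\|_{L^2_y}$, uniformly in the time length. This would give $\nu\|(\pa_y,k)f_1\|^2_{L^2_tL^2_y}\lesssim \nu|k|^{-2}\cdots$ and $\nu^{\frac13}|k|^{\frac23}\|f_1\|^2_{L^2_tL^2_y}\lesssim\nu^{\frac13}|k|^{\frac23}|k|^{-2}\cdots=\nu^{\frac13}|k|^{-\frac43}\|\cdots\|^2$, matching the last term. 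Here $\nu|k|^{-2}\le\nu^{\frac13}|k|^{-\frac43}$, since $|k|\le\nu^{-\frac12}$.

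The main obstacle is this high relative-frequency regime $|k-l|\gg|l|$. There we need an $L^2_t$ bound on $f_1$ with a clean $|k|^{-2}$ gain and no $t_1$-growth, obtained from time-oscillation and a Schur/$TT^*$ type argument in $s$. If the direct integration by parts does not close, we would instead apply Lemma~\ref{lem-3.4} to $f$ itself, writing $g$ as a $k$-derivative of a smoother quantity. The factor $\min\{\nu^{-\frac16}|k|^{\frac23},\cdot\}$ times $|k-l|^{-1}$ then produces precisely the power $\nu^{-\frac13}|k|^{\frac43}|k|^{-2}$. After multiplying by $\nu^{\frac13}|k|^{\frac23}$ from the left side, this recovers the term $\nu^{\frac13}|k|^{-\frac43}$ up to bookkeeping.
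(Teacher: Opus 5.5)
Your case $|k|\lesssim|l|$ is fine, and your Lemma~\ref{lem-3.4} bound for the correction $f_2$ is exactly the step the paper uses. The gap is in the regime $|k-l|\gg|l|$, and it is structural rather than technical. There is no bound on the inviscid piece $f_1$ that is "uniform in the time length".

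Take $l=0$, which is allowed since $|k|\ge1$. Let $g=\mathbf 1_{[t_0,t_0+1]}(t)\varphi(y)$ with $\varphi\in C_c^\infty(-1,1)$. For $t>t_0+1$ one has $f_1(t)=e^{-iky(t-t_0-1)}F(y)$, where $F=f_1(t_0+1)\neq0$. Hence $\|f_1(t)\|_{L^2_y}=\|F\|_{L^2_y}$ is constant, and $\|\pa_yf_1(t)\|_{L^2_y}\ge|k|(t-t_0-1)\|F\|_{L^2_y}-\|F'\|_{L^2_y}$. Consequently $\nu^{\frac13}|k|^{\frac23}\|f_1\|_{L^2_tL^2_y}^2$ grows like $t_1$ and $\nu\|\pa_yf_1\|_{L^2_tL^2_y}^2$ grows like $\nu|k|^2t_1^3$. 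Meanwhile the right-hand side of the lemma with $l=0$ is $(|k|^{-1}\langle k\rangle^{-1}+\nu^{\frac13}|k|^{-\frac43})\|(\pa_y,\langle k\rangle)g\|^2_{L^2_tL^2_y}$, which does not depend on $t_1$.

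No integration by parts or $TT^*$ argument can help, because transport conserves $\|f_1\|_{L^2_y}$ once the forcing is off. Moreover $f$ itself decays by Lemma~\ref{lem-3.5}, so $f_2=f-f_1\approx-f_1$ for large $t$. Estimating $f$ through $\|f_1\|+\|f_2\|$ on the whole interval therefore cannot close whenever $t_1\gg\nu^{-\frac13}|k|^{-\frac23}$, and in the application $t_1=T_j=2^j\nu^{-\frac13}$.

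Your fallback also fails. Lemma~\ref{lem-3.4} applied to $f$ with $g=-ik(ig/k)$ gives $\|f\|_{\mathcal Y}^2+\nu^{\frac13}|k|^{\frac23}\|f\|^2_{L^2_tL^2_y}\lesssim\nu^{-\frac13}|k|^{-\frac23}\|g\|^2_{L^2_tL^2_y}\lesssim\nu^{-\frac13}|k|^{-\frac83}\|(\pa_y,\langle k-l\rangle)(e^{ilyt}g)\|^2_{L^2_tL^2_y}$. This exceeds $|k|^{-2}+\nu^{\frac13}|k|^{-\frac43}$ by the factor $\nu^{-\frac13}|k|^{-\frac23}$, which is as large as $\nu^{-\frac13}$ at $|k|\sim1$. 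Your step "multiply by $\nu^{\frac13}|k|^{\frac23}$" double-counts, since that weight already sits on the left side of Lemma~\ref{lem-3.4}.

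The missing idea is to localize the inviscid approximation in time at the enhanced-dissipation scale $\tilde t=\nu^{-\frac13}|k|^{-\frac23}$.

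\begin{itemize}
\item The paper sets $\tau_m=t_0+m\tilde t$ and writes $f=\sum_mf_m$. Each $f_m$ solves the full viscous problem with forcing $g\tilde\chi_m$ and zero data at $\tau_{m-1}$.
\item Lemma~\ref{lem-3.3} and Lemma~\ref{lem-3.5} give $\|f_m\|_{\mathcal Z_n}\lesssim e^{-c(n-1-m)}\|f_m\|_{\mathcal Z_m}$ for $n>m$. This is the decay your $f_1$ lacks, and Cauchy--Schwarz then reduces matters to $\sum_m\|f_m\|_{\mathcal Z_m}^2$.
\item Only on each short interval is $f_m$ split into $f_{m,1}+f_{m,2}$.
\item For $f_{m,1}$, integrating the $\pa_y$-differentiated transport equation over a window of length $\tilde t$ gives $\|\pa_y(e^{ilyt}f_{m,1})\|_{L^2_y}\le|k-l|\tilde t\|f_{m,1}\|_{L^\infty_tL^2_y}+\tilde t^{\frac12}\|\pa_y(e^{ilyt}g\tilde\chi_m)\|_{L^2_tL^2_y}$.
\item The resulting weights are harmless: $\nu|k-l|^2\tilde t^3\lesssim1+\nu|l|^2\tau_m^3$, $\nu|l|^2\tau_m^2\tilde t\le\nu|l|^2\tau_m^3$, and $\nu^{\frac13}|k|^{\frac23}\tilde t=1$.
\item The remaining term $\nu\tilde t^2=\nu^{\frac13}|k|^{-\frac43}$ is the actual origin of the last weight in the lemma. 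It does not come from a $|k|^{-2}$ gain on $f_1$; your matching of exponents there is a coincidence.
\end{itemize}
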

\begin{proof}
For $m=1,2,...,\tilde{m}$ with $\tilde{m}=[\nu^{\frac{1}{3}}|k|^{\frac{2}{3}}(t_1-t_0)]+1$, define $\tau_{m}=t_0+\nu^{-\frac{1}{3}}|k|^{-\frac{2}{3}}m$, and
\begin{align*}
\mathcal{I}_m=[\tau_{m-1},\tau_{m}]\backslash(t_1,\tau_{\tilde{m}}].
\end{align*}
Recall
\begin{align*}
 \|f\|^2_{\mathcal{Z}_m}=\|f\|_{\mathcal{Y}(\mathcal{I}_m; L^2_y)}^2+\nu^{\frac{1}{3}}|k|^{\frac{2}{3}}\|f\|_{L^2_t(\mathcal{I}_m;L^2_y)}^2.
 \end{align*}
Decompose $f = \sum_{m=1}^{\tilde{m}} f_m$, where
 \begin{align*}
\begin{cases}
\pa_t f_{m}-\nu(\pa_y^2-k^2)f_{m}+ikyf_{m}=g \tilde{\chi}_{m}, \quad t\in(\tau_{m-1},t_1],\\
f_m(t,k,\pm 1)=0, \quad f_m |_{t=\tau_{m-1}}=0,
\end{cases}
\end{align*}
with
\begin{align*}
\tilde{\chi}_{m}=
\begin{cases}
1, \quad t\in(\tau_{m-1},\tau_m],\\
0, \quad t\in(\tau_m,t_1].
\end{cases}
\end{align*}
Thanks to \eqref{fX-es-k} and \eqref{semigroup}, we obtain for all $n>m$
\begin{align*}
\|f_m\|_{\mathcal{Z}_n}\leq\|f_m(\tau_{n-1})\|_{L^2_y}\lesssim e^{-c\nu^{\frac{1}{3}}|k|^{\frac{2}{3}}(\tau_{n-1}-\tau_m)}\|f_m(\tau_m)\|_{L^2_y}\lesssim e^{-c(n-1-m)}\|f_m\|_{\mathcal{Z}_m}.
\end{align*}
In light of the Cauchy--Schwarz inequality, we derive
\begin{align*}
\Big(\sum_{1\leq m\leq n}\|f_m\|_{\mathcal{Z}_n}\Big)^2\lesssim \sum_{1\leq m\leq n-1}e^{c(n-1-m)}\|f_m\|_{\mathcal{Z}_n}^2+\|f_n\|_{\mathcal{Z}_n}^2\lesssim \sum_{1\leq m\leq n} e^{-c(n-m)}\|f_m\|_{\mathcal{Z}_m}^2.
\end{align*}
The Minkowski inequality gives
\begin{align}\label{f-k-m-es}
&\|f\|_{\mathcal{Y}}^2+\nu^{\frac{1}{3}}|k|^{\frac{2}{3}}\|f\|_{L^2_t L^2_y}^2\leq \sum_{1\leq n\leq \tilde{m}}\|f\|^2_{\mathcal{Z}_n}\leq \sum_{1\leq n\leq\tilde{m}}\Big\|\sum_{1\leq m\leq n}f_m\Big\|_{\mathcal{Z}_n}^2\nonumber\\
\lesssim& \sum_{1\leq n\leq\tilde{m}} \sum_{1\leq m\leq n} e^{-c(n-m)}\|f_m\|_{\mathcal{Z}_m}^2\lesssim\sum_{1\leq m\leq\tilde{m}} \|f_m\|_{\mathcal{Z}_m}^2.
\end{align}
We further decompose $f_m = f_{m,1} + f_{m,2}$, where $f_{m,1}$ and $f_{m,2}$ solve
\begin{align}\label{fm-12-eq}
\begin{cases}
\pa_t f_{m,1}+ikyf_{m,1}=g\tilde{\chi}_m, \quad t\in(\tau_{m-1},t_1], \\
\pa_t f_{m,2}-\nu(\pa_y^2-k^2) f_{m,2}+ikyf_{m,2}=\nu (\pa_y^2-k^2) f_{m,1}, \\
f_{m,1}(t,k,\pm 1)=f_{m,2}(t,k,\pm 1)=0, \\
f_{m,1}|_{t=\tau_{m-1}}=f_{m,2}|_{t=\tau_{m-1}}=0.
\end{cases}
\end{align}
For $f_{m,2}$, using \eqref{fk-h1-h2-es} with $\nu |k|\min\{\nu^{-\frac16}|k|^{\frac23}, \nu^{-\frac12}\}\leq \nu^{\frac{5}{6}}|k|^{\frac{5}{3}}$ we obtain
\begin{align}\label{fm-2-es}
\|f_{m,2}\|_{\mathcal{Z}_m}\lesssim \nu^{\frac{5}{6}}|k|^{\frac{5}{3}}\|f_{m,1}\|_{L^2_tL^2_y}+\nu^{\frac{1}{2}}\|\pa_yf_{m,1}\|_{L^2_tL^2_y}\lesssim&\|f_{m,1}\|_{\mathcal{Z}_m}.
\end{align}
For $f_{m,1}$, by \cite[Lemma 2.1]{CLM-26}, we have
\begin{align}\label{fm-1-es}
&\|f_{m,1}\|_{L^\infty(\mathcal{I}_m;L^2_y)}+(|k|^{\frac{1}{2}}+|k|)\|(\pa_y,k)(\pa_y^2-k^2)^{-1}f_{m,1}\|_{L^2(\mathcal{I}_m;L^2_y)}\nonumber\\
\lesssim & |k-l|^{-\frac{1}{2}}\langle k-l\rangle^{-\frac{1}{2}}\|(\pa_y,\langle k-l\rangle)(e^{ilyt}g\chi_m)\|_{L^2_tL^2_y}.
\end{align}
The first equation $\eqref{fm-12-eq}_1$ in $\eqref{fm-12-eq}$ has the equivalent form
\begin{align*}
\pa_t(e^{ilyt}f_{m,1})+i(k-l)y(e^{ilyt}f_{m,1})=e^{ilyt}g\chi_m, \quad e^{ily\tau_{m-1}}f_{m,1}(\tau_{m-1})=0.
\end{align*}
Differentiating with respect to $y$ yields
\begin{align*}
&(\pa_t+i(k-l)y)\pa_y(e^{ilyt}f_{m,1})+i(k-l)(e^{ilyt}f_{m,1})=\pa_y(e^{ilyt}g\chi_m), \,\, \nonumber\\
&\pa_y(e^{ily\tau_{m-1}}f_{m,1}(\tau_{m-1}))=0,
\end{align*}
which implies
\begin{align}\label{fm-1-k-l-es}
e^{i(k-l)yt}\pa_y(e^{ilyt}f_{m,1})+\int_{\tau_{m-1}}^t i(k-l)e^{ikyt'} f_{m,1}
-e^{i(k-l)yt'}\pa_y(e^{ilyt'} g(t') \chi_m)dt'=0.
\end{align}
Let $\tilde{t}= \nu^{-\frac{1}{3}}|k|^{-\frac{2}{3}}$. For $t\in \mathcal{I}_m$, taking the $L^2_y$ norm of \eqref{fm-1-k-l-es}, we obtain
\begin{align*}
\|\pa_y(e^{ilyt}f_{m,1})\|_{L^2_y}
\leq& |k-l|\tilde{t}\|f_{m,1}\|_{L^\infty_t L^2_y}+\tilde{t}^{\frac{1}{2}}\|\pa_y(e^{ilyt} g \chi_m)\|_{L^2_t L^2_y}.
\end{align*}
Due to $\pa_y f_{m,1}(t)= e^{-ilyt}\pa_y(e^{ilyt}f_{m,1}) - ilt f_{m,1}$, we have
\begin{align}\label{fm-1-k-l-es-2}
&\|\pa_y f_{m,1}\|_{L^2_t(\mathcal{I}_m;L^2_y)}\leq\|\pa_y(e^{ilyt}f_{m,1})\|_{L^2_t(\mathcal{I}_m;L^2_y)}+\||l| tf_{m,1}\|_{L^2_t(\mathcal{I}_m;L^2_y)}\nonumber\\
\lesssim&\||k-l|\tilde{t}+|l|t\|_{L^2_t(\mathcal{I}_m)}\|f_{m,1}\|_{L^\infty_t(\mathcal{I}_m;L^2_y)}+\tilde{t}\|\pa_y(e^{ilyt} g \chi_{m})\|_{L^2_t(\mathcal{I}_m;L^2_y)}.
\end{align}
Using the condition $\nu k^2\leq1$ and \eqref{fm-1-k-l-es-2} , we deduce
\begin{align}\label{nufm1L2L2}
&\nu\|(\pa_y,k) f_{m,1}\|_{L^2_t(\mathcal{I}_m;L^2_y)}^2+\nu^{\frac{1}{3}}|k|^{\frac{2}{3}}\|f_{m,1}\|_{L^2_t(\mathcal{I}_m;L^2_y)}^2\nonumber\\
\lesssim&\nu\|\pa_y f_{m,1}\|_{L^2_t(\mathcal{I}_m;L^2_y)}^2+\nu^{\frac{1}{3}}|k|^{\frac{2}{3}}\| f_{m,1}\|_{L^2_t(\mathcal{I}_m;L^2_y)}^2\nonumber\\
\lesssim&(\nu|k-l|^2\tilde{t}^3+\nu|l|^2\tilde{t}_m^2\tilde{t}
+\nu^{\frac{1}{3}}|k|^{\frac{2}{3}}\tilde{t})\|f_{m,1}\|_{L^\infty_t(\mathcal{I}_m;L^2_y)}^2 +\nu\tilde{t}^2\|\pa_y(e^{ilyt} g
\chi_m)\|_{L^2_t(\mathcal{I}_m;L^2_y)}^2.
\end{align}
Noting $\nu^{-\frac13}|k|^{-\frac23}=\tilde{t}\leq \tau_m$, we get
\begin{align*}
\nu|k-l|^2\tilde{t}^3+\nu|l|^2 \tau_m^2\tilde{t}+\nu^{\frac{1}{3}}|k|^{\frac{2}{3}}\tilde{t}\leq \nu |k|^2\tilde{t}^3+\nu |l|^2 \tau_m^3+\nu^{\frac{1}{3}}|k|^{\frac{2}{3}}\tilde{t}\lesssim \langle\nu^{\frac{1}{3}} \tau_m\rangle^3|l|^2+1,
\end{align*}
which combined with \eqref{fm-1-es} and \eqref{nufm1L2L2} implies
\begin{align*}
&\nu\|(\pa_y,k) f_{m,1}\|_{L^2_t(\mathcal{I}_m;L^2_y)}^2+\nu^{\frac{1}{3}}|k|^{\frac{2}{3}}\|f_{m,1}\|_{L^2_t(\mathcal{I}_m;L^2_y)}^2\nonumber\\
\lesssim&\Big((\langle\nu^{\frac{1}{3}}\tau_m\rangle^3|l|^2+1) |k-l|^{-1}\langle k-l\rangle^{-1}+\nu^{\frac{1}{3}}|k|^{-\frac{4}{3}}\Big)\|(\pa_y,\langle k-l\rangle)(e^{ilyt}g\chi_m)\|_{L^2_tL^2_y}^2.
\end{align*}
This inequality along with \eqref{fm-1-es} gives
\begin{align*}
\|f_{m,1}\|_{\mathcal{Z}_m}^2\leq&\Big((\langle\nu^{\frac{1}{3}}\tau_m\rangle^3|l|^2+1) |k-l|^{-1}\langle k-l\rangle^{-1}+\nu^{\frac{1}{3}}|k|^{-\frac{4}{3}}\Big)\|(\pa_y,\langle k-l\rangle)(e^{ilyt}g\chi_{m})\|_{L^2_tL^2_y}^2,
\end{align*}
which, together with \eqref{f-k-m-es} and \eqref{fm-2-es}, yields
\begin{align*}
&\|f\|_{\mathcal{Y}}^2+\nu^{\frac{1}{3}}|k|^{\frac{2}{3}}\|f\|_{L^2_t L^2_y}^2\lesssim\sum_{1\leq m\leq\tilde{m}} \big(\|f_{m,1}\|_{\mathcal{Z}_m}^2+\|f_{m,2}\|_{\mathcal{Z}_m}^2\big)\nonumber\\
\lesssim&\sum_{1\leq m\leq \tilde{m}}\Big((\langle\nu^{\frac{1}{3}}\tau_m\rangle^3|l|^2+1) |k-l|^{-1}\langle k-l\rangle^{-1}+\nu^{\frac{1}{3}}|k|^{-\frac{4}{3}}\Big)\|(\pa_y,\langle k-l\rangle)(e^{ilyt}g\chi_m)\|_{L^2_tL^2_y}^2\nonumber\\
\lesssim&\Big((\langle\nu^{\frac{1}{3}}t_1\rangle^3|l|^2+1) |k-l|^{-1}\langle k-l\rangle^{-1}+\nu^{\frac{1}{3}}|k|^{-\frac{4}{3}}\Big)\|(\pa_y,\langle k-l\rangle)(e^{ilyt}g)\|_{L^2_tL^2_y}^2.
\end{align*}
Thus we complete the proof of Lemma~\ref{lem-3.7}.
\end{proof}

\section{\texorpdfstring%
  {Estimates for the system \eqref{ote-eq} in the case $t\leq\nu^{-\frac{1}{6}}$}%
  {Estimates for the system (Boue) in the case t <= nu\textasciicircum(-1/6)}}\label{non-es-leq}

In this section, we focus on the space-time estimates for $(\omega_e, \theta_e)$ in the short-time regime, which relies on the energy method.  For convenience, we denote
$\|f\|_{L^pL^q}:=\|f\|_{L^p_t(0,\nu^{-\frac{1}{6}};L^q_{x,y})}$, $\|f\|_{L^p_t}:=\|f\|_{L^p_t(0,\nu^{-\frac{1}{6}})}$, $\|f\|_{W}:=\|f\|_{W(0,\nu^{-\frac{1}{6}})}$
and
\begin{align}\label{E1-df}
\mathcal{E}_1=&\|\omega_e\|_{W}+\nu^{-\frac{1}{3}}|\ln\nu|^4\|\theta_e\|_{W}+\nu^{-
\frac{1}{6}}\|\pa_x\theta_e\|_{W}+\|\pa_y\theta_e\|_{W}\nonumber\\
&+\nu^{\frac{1}{6}}\|\pa_x\omega_e\|_{W}+\|\pa_x^2\theta_e\|_{W}+\nu^{\frac{1}{6}}\|\pa_x\pa_y\theta_e\|_{W}.
\end{align}

We rewrite the error equation $(\omega_e,\theta_e)$:
\begin{align}\label{oe-te-eq-leq}
\begin{cases}
\pa_t \omega_e-\nu\Delta\omega_e+y\pa_x\omega_e+u_e\cdot\na \omega_L+u_L\cdot\na \omega_e+u_e\cdot\na \omega_e+E_\mathfrak{L}+\pa_x\theta_e=0, \\
\pa_t \theta_e-\nu\Delta\theta_e+y\pa_x\theta_e+u_e\cdot\na \theta_L+u_L\cdot \na \theta_e+u_e\cdot \na \theta_e+T_\mathfrak{L}=0, \\
\omega_e(t,x,\pm 1)=\theta_e(t,x,\pm 1)=0, \\
\omega_e|_{t=0}=\theta_e|_{t=0}=0.
\end{cases}
\end{align}

\begin{proposition}\label{prop-4.1}
Let $(\omega_{e},\theta_{e})$ be the solution of \eqref{oe-te-eq-leq} and $u_{e}=(\pa_y, -\pa_x)\Delta^{-1} \omega_{e}$. For $t\leq\nu^{-\frac{1}{6}}$, the following weak enhanced dissipation estimates hold:
\begin{align}\label{oe-geq-es}
\|\omega_{e}\|_{L^2_{x,y}}\lesssim \nu^{\frac{1}{3}}|\ln\nu|(E_0+\nu^{-\frac{1}{3}}|\ln\nu|^4F_0), \qquad \|\theta_{e}\|_{L^2_{x,y}} \lesssim\nu^{\frac{1}{3}}|\ln\nu|F_0,
\end{align}
as well as the inviscid damping estimate
\begin{align}\label{ue-geq-es}
&\|\mathcal{M}u_e\|_{L^2 L^2}\lesssim \nu^{\frac{1}{4}}|\ln\nu|(E_0+\nu^{-\frac{1}{3}}|\ln\nu|^4F_0).
\end{align}
\end{proposition}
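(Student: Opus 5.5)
The plan is a bootstrap on the energy functional $\mathcal{E}_1$ from \eqref{E1-df} over $[0,\nu^{-\frac16}]$. Set $\widetilde G:=E_0+\nu^{-\frac13}|\ln\nu|^4F_0$ and make the bootstrap hypothesis $\mathcal{E}_1\le 2C_1\nu^{\frac13}|\ln\nu|\widetilde G$ on $[0,T^*]$ with $T^*\le\nu^{-\frac16}$. I will then show that each of the seven pieces of $\mathcal{E}_1$ is bounded by $C_1\nu^{\frac13}|\ln\nu|\widetilde G$ plus terms that are quadratic in $\mathcal{E}_1$. Since $\widetilde G\lesssim c\,\nu^{\frac13}|\ln\nu|^{-0}$ under the smallness hypotheses of Theorem~\ref{th-1.1}, the quadratic terms are absorbed and continuity closes the bootstrap.

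Once $\mathcal{E}_1$ is bounded, the statement follows.
\begin{itemize}
\item The bound on $\|\omega_e\|_{L^2}$ in \eqref{oe-geq-es} is just the $L^\infty_tL^2$ part of $\|\omega_e\|_W$.
\item For $\theta_e$ I will redo the $L^2$ energy estimate of the $\theta_e$ equation in \eqref{oe-te-eq-leq} directly, with the sharper forcing count. By \eqref{Tr-es-2} with $s_1=0$, and since $\nu t^3\le\nu^{\frac12}$ on this interval, $\int_0^{\nu^{-1/6}}\|T_\mathfrak{L}\|_{L^2}dt\lesssim|\ln\nu|G_0F_0+\nu^{\frac23}F_0\lesssim\nu^{\frac13}|\ln\nu|F_0$, using $G_0\lesssim\nu^{\frac13}$. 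The term $u_e\cdot\na\theta_L$ is handled with $\|u_e\|_{L^\infty}$ from Lemma~\ref{lem-3.1} and $\|\na\theta_L\|_{L^\infty}$ from \eqref{px-py-tL-es-inf}, which gives another $\nu^{\frac13}|\ln\nu|F_0$. The transport terms $u_L\cdot\na\theta_e$ and $u_e\cdot\na\theta_e$ vanish in the $L^2$ pairing by incompressibility.
\item For \eqref{ue-geq-es} I will use $\|\mathcal{M}u_e\|_{L^2}\lesssim\|\omega_e\|_{L^2}$, a Fourier-side bound in which the weight $|k|^{\frac12}+|k|$ is controlled by $(\pa_y,k)(\pa_y^2-k^2)^{-1}$ together with Poincar\'e. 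Integrating in $L^2_t$ over an interval of length $\nu^{-\frac16}$ costs $\nu^{-\frac1{12}}$, which turns $\nu^{\frac13}$ into $\nu^{\frac14}$.
\end{itemize}

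\textbf{Energy estimates for each level.} Since $\pa_x$ commutes with $y\pa_x$, with $-\nu\Delta$, and with the Dirichlet boundary condition, the estimates for $\pa_x\omega_e$, $\pa_x\theta_e$ and $\pa_x^2\theta_e$ are plain $L^2$ energy estimates of the differentiated equations. For $\pa_y\theta_e$ and $\pa_x\pa_y\theta_e$ the commutator $[\pa_y,y\pa_x]=\pa_x$ produces $\pa_x\theta_e$ and $\pa_x^2\theta_e$, which are already in $\mathcal{E}_1$. The boundary terms from $\pa_y$ are handled by evaluating the equation at $y=\pm1$: there $\theta_e=0$ and $u^{(2)}=0$, so $\nu\pa_y^2\theta_e=(\text{forcing})|_{y=\pm1}$, and this is treated with a trace inequality.

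\textbf{Coupling term.} The coupling $\pa_x\theta_e$ in the $\omega_e$ equation is put in $L^1_tL^2$ over the short interval. This gives $\nu^{-\frac16}\|\pa_x\theta_e\|_{L^\infty L^2}\le\mathcal{E}_1$, and likewise $\nu^{\frac16}\cdot\nu^{-\frac16}\|\pa_x^2\theta_e\|_{L^\infty L^2}\le\mathcal{E}_1$ at the $\pa_x\omega_e$ level. This is why the weights in $\mathcal{E}_1$ were chosen as they are.

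\textbf{Linear and $L$-forcing terms.} The terms $u_e\cdot\na\omega_L$, $u_L\cdot\na\omega_e$, $E_\mathfrak{L}$, and their $\theta$ analogues are bounded with Proposition~\ref{prop-2.4} and Proposition~\ref{prop-2.4-1}. On this interval $\nu t^3\le\nu^{\frac12}$, so the integrable decay comes only from inviscid damping, namely $(1+t)^{-1}$ and $(1+t)^{-2}\ln(1+t)$. This produces the factor $|\ln\nu|$. For $u_L\cdot\na f$ with $f$ a derivative of $\omega_e$ or $\theta_e$, I will write it as $(u_L^{(1)}-tu_L^{(2)})\pa_xf+u_L^{(2)}(\pa_y+t\pa_x)f$. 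This avoids losing a factor of $t$, because the commutator $[\pa,u_L\cdot\na]$ only involves $\pa u_L$, which is bounded by \eqref{px-py-u1L-es-inf-2}--\eqref{px-py-u2L-es-inf-2}.

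\textbf{Nonlinear terms.} For $u_e\cdot\na\omega_e$ and $u_e\cdot\na\theta_e$ the top-order part cancels, and the commutators are bounded with $\|\pa_x^ju_e\|_{L^\infty}$ from \eqref{u1-e-inf-es}--\eqref{u2-e-inf-es} and with $\|\theta_e\|_{L^\infty}$ from Lemma~\ref{lem-3.1-1}.

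\textbf{Main obstacle.} I expect the hardest step to be the $L^\infty_t$ estimate for $\pa_x^2\theta_e$, and specifically the commutator $\pa_x^2u_e\cdot\na\theta_e$ together with $\pa_xu_e\cdot\na\pa_x\theta_e$. The naive route is to put $\pa_x^2u_e$ in $L^2$ and use dissipation for $\na\theta_e$. That costs $\nu^{-\frac12}$ and would force $F_0\lesssim\nu^{\frac56}$.

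Instead I would move derivatives onto $\theta_e$ by integrating by parts. I would then use the frequency split $I_4/I_5$ at $|k|\sim\nu^{-\frac16}|\ln\nu|^2$, exactly as in Lemma~\ref{lem-3.1-1}:
\begin{itemize}
\item Low frequencies are paid with $\nu^{-\frac1{12}}|\ln\nu|\|\theta_e\|^{\frac12}$. This is acceptable because $\|\theta_e\|_W$ carries the large weight $\nu^{-\frac13}|\ln\nu|^4$.
\item High frequencies gain $\nu^{\frac1{12}}|\ln\nu|^{-1}$ against $\|\pa_x^2\theta_e\|^{\frac12}$.
\end{itemize}
Combining this with the $\nu^{\frac12}\|\na\cdot\|_{L^2L^2}$ dissipation in $W$ and with $T^*\le\nu^{-\frac16}$, the product should come out as $\mathcal{E}_1^2\,\nu^{-\frac13}|\ln\nu|^{-?}$. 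The power of $|\ln\nu|$ has to be matched against the $|\ln\nu|^4$ weight in $\mathcal{E}_1$, and this bookkeeping is exactly what dictates the $|\ln\nu|^{-4}$ in the smallness condition on $F_0$. I would check it first, since if it fails the split point of $I_4/I_5$ has to be retuned.
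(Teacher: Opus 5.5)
Your plan follows the paper's proof. The paper likewise closes a continuity argument for $\mathcal{E}_1$ on $[0,\nu^{-1/6}]$ from energy estimates at the seven derivative levels. It uses Lemma~\ref{lem-3.1-1} for $\|\theta_e\|_{L^2L^\infty}$ after integrating by parts in $\langle\pa_x^2u_e\cdot\na\theta_e,\pa_x^2\theta_e\rangle$. It then finishes with a separate $L^2$ estimate for $\theta_e$ and $\|\mathcal{M}u_e\|_{L^2L^2}\lesssim\nu^{-1/12}\|\omega_e\|_{L^\infty L^2}$. However, two steps fail as you wrote them.

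\textbf{Coupling terms.} The following terms sit exactly at the scaling of $\mathcal{E}_1$: $\pa_x\theta_e$ in the $\omega_e$ equation, $\pa_x^2\theta_e$ in the $\pa_x\omega_e$ equation, and the commutator terms $\pa_x\theta_e$, $\pa_x^2\theta_e$ at the $\pa_y\theta_e$ and $\pa_x\pa_y\theta_e$ levels. Bounding them by $\mathcal{E}_1$, as you propose, only gives $\mathcal{E}_1\le C\mathcal{E}_1+\cdots$ with an $O(1)$ constant, so your bootstrap cannot improve. What makes it close is a triangular ordering. The $\theta_e$ equation has no coupling, so one first proves \eqref{px-te-es-leq} and \eqref{px2-te-es-leq}. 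After weighting, $\mathcal{E}_1$ enters these only linearly with coefficient $\nu^{-1/6}G_0\lesssim c\nu^{1/6}$, or quadratically. One then substitutes them into the four coupled estimates; this is Step~4 of the paper.

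\textbf{The term $u_e\cdot\na\theta_L$.} On $\mathbb{R}\times[-1,1]$ the product $\|u_e\|_{L^\infty}\|\na\theta_L\|_{L^\infty}$ does not control an $L^2$ norm. Pairing the $L^\infty$ bound of Lemma~\ref{lem-3.1} with $\|\na\theta_L\|_{L^2}$ instead costs an extra $|\ln\nu|$ and misses the stated $\theta_e$ bound. Put $u_e$ in $L^\infty_tL^2$, or use the splitting $(u_e^{(1)}-tu_e^{(2)})\pa_x\theta_L+u_e^{(2)}(\pa_y+t\pa_x)\theta_L$, against $\|\na\theta_L\|_{L^1L^\infty}\lesssim\nu^{-1/3}F_0$. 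This gives $\nu^{-1/3}\mathcal{E}_1F_0\lesssim c\nu^{1/3}|\ln\nu|F_0$, as in \eqref{te-es-leq}.

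\textbf{The count you left open.} The split at $\nu^{-1/6}|\ln\nu|^2$, combined with the weight $\nu^{-1/3}|\ln\nu|^4$, gives $\|\theta_e\|_{L^2L^\infty}\lesssim|\ln\nu|^{-1}\mathcal{E}_1$. Hence $\|\pa_x^2u_e\|_{L^\infty L^2}\|\theta_e\|_{L^2L^\infty}\|\na\pa_x^2\theta_e\|_{L^2L^2}$ contributes $\nu^{-2/3}|\ln\nu|^{-1}\mathcal{E}_1^2$ to $\|\pa_x^2\theta_e\|_W$, not $\nu^{-1/3}$. Against $\mathcal{E}_1\lesssim c\nu^{2/3}|\ln\nu|$ this is exactly critical and closes only through the small constant $c$. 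So no retuning of the split is needed, but there is no room to spare.

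Your treatment of the boundary terms at the $\pa_y$ levels, where $\nu\pa_y^2\theta_e=-\nu\pa_y^2\theta_L$ on $y=\pm1$, is something the paper leaves implicit. It is harmless.
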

\begin{proof}
To prove \eqref{oe-geq-es} and \eqref{ue-geq-es}, we claim
\begin{align}\label{claim}
\mathcal{E}_1\lesssim \nu^{\frac{1}{3}}|\ln\nu|(E_0+\nu^{-\frac{1}{3}}|\ln\nu|^4F_0).
\end{align}
\textbf{Step 1: Estimates of $\|\omega_e\|_{W}$ and $\|\pa_x\omega_e\|_{W}$.}
By $\eqref{oe-te-eq-leq}_1$, we have
\begin{equation}\label{oe-px-oe-eq}
\left\{ 
\begin{aligned}
&(\pa_t-\nu\Delta+y\pa_x) \omega_e+u_e\cdot \na \omega_L+u_L\cdot \na \omega_e+u_e\cdot \na \omega_e+E_\mathfrak{L}+\pa_x\theta_e=0,\\
&(\pa_t-\nu\Delta+y\pa_x) \pa_x\omega_e+\pa_x(u_e\cdot \na \omega_L+u_L\cdot \na \omega_e+u_e\cdot \na \omega_e)+\pa_xE_\mathfrak{L}+\pa_x^2\theta_e=0.
\end{aligned}
\right. 
\end{equation}
Taking the inner product of $\eqref{oe-px-oe-eq}$ with $(\omega_e, \pa_x\omega_e)$, and integrating in time, we obtain
\begin{align}\label{oe-px-oe-es}
\|\omega_e\|_{W}\lesssim&\|u_e\cdot\na \omega_L\|_{L^1 L^2}+\|E_\mathfrak{L}\|_{L^1 L^2}+\|\pa_x\theta_e\|_{L^1 L^2}\nonumber\\
\lesssim&\|u_e\cdot\na \omega_L\|_{L^1 L^2}+\|E_\mathfrak{L}\|_{L^1 L^2}+\nu^{-\frac{1}{6}}\|\pa_x\theta_e\|_{L^\infty L^2}\nonumber\\
:=&I_{1,eL}+I_{1,r}+\nu^{-\frac{1}{6}}\|\pa_x\theta_e\|_{W},
\end{align}
and
\begin{align}\label{px-oe-px-oe-es}
\|\pa_x\omega_e\|_{W}\lesssim&\|\pa_x (u_e\cdot\na \omega_L)\|_{L^1 L^2}
+\|\pa_x u_L\cdot\na \omega_e\|_{L^1 L^2}+\|\pa_x u_e\cdot\na \omega_e\|_{L^1 L^2}\nonumber\\
&+\|\pa_x E_\mathfrak{L}\|_{L^1 L^2}+\nu^{-\frac{1}{6}}\|\pa_x^2\theta_e\|_{L^\infty L^2}\nonumber\\
:=&I_{2,eL}+I_{2,Le}+I_{2,ee}+I_{2,r}+\nu^{-\frac{1}{6}}\|\pa_x^2\theta_e\|_{W}.
\end{align}
\textbf{\emph{Step 1.1: Estimates of $I_{1,eL}$ and $I_{2,eL}$.}}  Using \eqref{es-u-p-x-u-2}, \eqref{px-py-oL-es-inf} and \eqref{E1-df}, we obtain
\begin{align}\label{I1-eL-es}
I_{1,eL}
\lesssim&\|u_e^{(1)}-tu_e^{(2)}\|_{L^\infty L^2} \|\pa_x\omega_L\|_{L^1 L^\infty}+\|u_e^{(2)}\|_{L^\infty L^2} \|(\pa_y+t\pa_x)\omega_L\|_{L^1 L^\infty}\nonumber\\
\lesssim&\nu^{-
\frac{1}{3}}\|\omega_e\|_{W}G_0\lesssim\nu^{-
\frac{1}{3}}\mathcal{E}_1G_0,
\end{align}
and
\begin{align}\label{I2-eL-es}
I_{2,eL}\lesssim&\|\pa_x u_e\cdot\na \omega_L\|_{L^1 L^2}+\| u_e\cdot\na \pa_x \omega_L\|_{L^1 L^2}\nonumber\\
\lesssim&\|\pa_xu_e^{(1)}-t\pa_xu_e^{(2)}\|_{L^\infty L^2} \|\pa_x\omega_L\|_{L^1 L^\infty}+\|\pa_xu_e^{(2)}\|_{L^\infty L^2} \|(\pa_y+t\pa_x)\omega_L\|_{L^1 L^\infty}\nonumber\\
&+\|u_e^{(1)}-tu_e^{(2)}\|_{L^\infty L^2} \|\pa_x^2\omega_L\|_{L^1 L^\infty}+\|u_e^{(2)}\|_{L^\infty L^2} \|\pa_x(\pa_y+t\pa_x)\omega_L\|_{L^1 L^\infty}\nonumber\\
\lesssim&\nu^{-
\frac{1}{3}}\|\omega_e\|_{W}G_0\lesssim\nu^{-\frac{1}{3}}\mathcal{E}_1G_0.
\end{align}
\textbf{\emph{Step 1.2: Estimate of $I_{2,Le}$.}} Noting that \eqref{px-py-u1L-es-inf}, \eqref{px-py-u2L-es-inf} and \eqref{E1-df}, we derive
\begin{align}\label{I2-Le-es}
I_{2,Le}\lesssim&\|\pa_xu_L^{(1)}\|_{L^1 L^\infty} \|\pa_x\omega_e\|_{L^\infty L^2}+\|\pa_x u_L^{(2)}\|_{L^2 L^\infty} \|\pa_y\omega_e\|_{L^2 L^2}\nonumber\\
\lesssim&\|(1+t)^{-1}\|_{L^1_t}G_0\|\pa_x\omega_e\|_{L^\infty L^2}+\|(1+t)^{-2}\|_{L^2_t}G_0\|\nabla\omega_e\|_{L^2 L^2}\nonumber\\
\lesssim&|\ln\nu|G_0\|\pa_x\omega_e\|_{W}+\nu^{-\frac{1}{2}}G_0\|\omega_e\|_{W}
\lesssim\nu^{-\frac{1}{2}}\mathcal{E}_1G_0.
\end{align}
\textbf{\emph{Step 1.3: Estimate of $I_{2,ee}$.}} Thanks to \eqref{u1-e-inf-es}, \eqref{u2-e-inf-es} and \eqref{E1-df}, we obtain
\begin{align}\label{p-x-u-e-1-2-2-inf}
\|\pa_xu_e\|_{L^2 L^\infty} \lesssim&\nu^{\frac{1}{4}}\|1\|_{L^2_t}^{\frac{1}{2}} \|\pa_x^{2}\omega_e\|_{L^2L^2}^{\frac{1}{2}}
\|\pa_x\omega_e\|_{L^\infty L^2}^{\frac{1}{2}}+|\ln\nu|\|1\|_{L^2_t}\|\pa_x\omega_e\|_{L^\infty L^2}\nonumber\\
\lesssim&\nu^{-\frac{1}{12}}|\ln\nu|\|\pa_x\omega_e\|_{W}\lesssim \nu^{-\frac{1}{4}}|\ln\nu|\mathcal{E}_1.
\end{align}
It follows from \eqref{p-x-u-e-1-2-2-inf} that
\begin{align}\label{I2-ee-es}
I_{2,ee}\leq &\|\pa_xu_e^{(1)}\pa_x\omega_e\|_{L^1 L^2} +\|\pa_x u_e^{(2)}\pa_y\omega_e\|_{L^1 L^2}\nonumber\\
\lesssim&\|1\|_{L^2_t}\|\pa_xu_e^{(1)}\|_{L^2 L^\infty} \|\pa_x\omega_e\|_{L^\infty L^2}+\|\pa_x u_e^{(2)}\|_{L^2 L^\infty} \|\pa_y\omega_e\|_{L^2 L^2}\nonumber\\
\lesssim& \nu^{-\frac{3}{4}}|\ln\nu|\mathcal{E}_1^2.
\end{align}
\textbf{\emph{Step 1.4: Estimates of $I_{1,r}$ and $I_{2,r}$.}}  From \eqref{Er-es-2} and  $G_0\leq c\nu^{\frac{1}{3}}$, we deduce
\begin{align}\label{Ir-es}
I_{1,r}+I_{2,r}=& \| E_\mathfrak{L}\|_{L^1 L^2}+\|\pa_x E_\mathfrak{L}\|_{L^1 L^2}\nonumber\\
\lesssim& \big\|\big((1+t)^{-1}+(1+t)^{-2}(1+\ln(1+t))\big)e^{-\nu t}G_0^2+(1+t)\nu e^{-\nu t}G_0\big\|_{L^1_t}\nonumber\\
\lesssim&c\nu^{\frac{1}{3}}|\ln\nu|G_0.
\end{align}
\textbf{\emph{Step 1.5: Estimates of $\|\omega_e\|_{W}$ and $\|\pa_x\omega_e\|_{W}$.}} Thanks to \eqref{oe-px-oe-es}, \eqref{I1-eL-es} and \eqref{Ir-es}, we get
\begin{align}\label{oe-es-leq}
\|\omega_e\|_{W}
\lesssim&\nu^{-
\frac{1}{3}}\mathcal{E}_1G_0+c\nu^{\frac{1}{3}}|\ln\nu|G_0 +\nu^{-\frac{1}{6}}\|\pa_x\theta_e\|_{W}.
\end{align}
Combining \eqref{px-oe-px-oe-es} with  \eqref{I2-eL-es}--\eqref{Ir-es}, we have
\begin{align}\label{px-oe-es-leq}
\|\pa_x\omega_e\|_{W}\lesssim& \nu^{-\frac{1}{2}}\mathcal{E}_1G_0+\nu^{-\frac{3}{4}}|\ln\nu|\mathcal{E}_1^2+c\nu^{\frac{1}{3}} |\ln\nu|G_0.
\end{align}

\textbf{Step 2: Estimates of $\|\theta_e\|_{W}$, $\|\pa_x\theta_e\|_{W}$ and $\|\pa_x^2\theta_e\|_{W}$.}
By $\eqref{oe-te-eq-leq}_2$, we have
\begin{align}\label{te-px-te-eq}
\begin{cases}
\pa_t \theta_e-\nu\Delta\theta_e+y\pa_x\theta_e+u_e\cdot \na \theta_L+u_L\cdot \na \theta_e+u_e\cdot \na \theta_e+T_\mathfrak{L}=0,\\
\pa_t \pa_x\theta_e-\nu\Delta\pa_x\theta_e+y\pa_x^2\theta_e+\pa_x(u_e\cdot \na \theta_L+u_L\cdot \na \theta_e+u_e\cdot \na \theta_e)+\pa_xT_\mathfrak{L}=0,\\
\pa_t \pa_x^2\theta_e-\nu\Delta\pa_x^2\theta_e+y\pa_x^3\theta_e+\pa_x^2(u_e\cdot \na \theta_L+u_L\cdot \na \theta_e+u_e\cdot \na \theta_e)+\pa_x^2T_\mathfrak{L}=0.
\end{cases}
\end{align}
Taking the inner product of $\eqref{te-px-te-eq}$  with $(\theta_e, \pa_x\theta_e, \pa_x^2\theta_e)$ and integrating in time yield
\begin{align}
\|\theta_e\|_{W}\lesssim&I_{3,eL}+I_{3,r},\label{te-px-te-es}\\
\|\pa_x\theta_e\|_{W}^2
\lesssim&(I_{4,eL}+I_{4,Le}+I_{4,r})\|\pa_x\theta_e\|_{{W}}+I_{4,ee},\label{px-te-px-te-es}\\
\|\pa_x^2\theta_e\|_{W}^2
\lesssim&(I_{5,eL}+I_{5,Le}+I_{5,r})\|\pa_x^2\theta_e\|_{{W}}+I_{5,ee},\label{px2-te-px-te-es}
\end{align}
where
\begin{align*}
&I_{3,eL}:=\|u_e\cdot\na \theta_L\|_{L^1 L^2}, \quad\quad\quad I_{3,r}:=\|T_\mathfrak{L}\|_{L^1 L^2},
\end{align*}
and
 \begin{align*}
I_{4,eL}&:=\| \pa_x(u_e\cdot\na \theta_L)\|_{L^1 L^2}, & I_{4,Le}&:=\| \pa_xu_L\cdot\na \theta_e\|_{L^1 L^2},\\
I_{4,r}&:=\| \pa_xT_\mathfrak{L}\|_{L^1 L^2}, & I_{4,ee}&:=\|\langle\pa_xu_e\cdot\na \theta_e,\pa_x \theta_e\rangle\|_{L^1_t},
\end{align*}
 and
\begin{align*}
I_{5,eL}&:=\| \pa_x^2(u_e\cdot\na \theta_L)\|_{L^1 L^2}, & I_{5,Le}&:=\| \pa_x(\pa_xu_L\cdot\na \theta_e)\|_{L^1 L^2},\\
I_{5,r}&:=\| \pa_x^2T_\mathfrak{L}\|_{L^1 L^2}, & I_{5,ee}&:=\|\langle \pa_x(\pa_x u_e\cdot\na \theta_e),\pa_x^2 \theta_e\rangle\|_{L^1_t}.
\end{align*}
\textbf{\emph{Step 2.1: Estimates of $I_{3,eL}$, $I_{4,eL}$ and $I_{5,eL}$.}}
By making use of \eqref{es-u-p-x-u-2}, \eqref{px-py-tL-es-inf} and \eqref{E1-df}, we derive
\begin{align}\label{I3-eL-es}
I_{3,eL}\lesssim&\|u_e^{(1)}-tu_e^{(2)}\|_{L^\infty L^2} \|\pa_x\theta_L\|_{L^1 L^\infty}+\|u_e^{(2)}\|_{L^\infty L^2} \|(\pa_y+t\pa_x)\theta_L\|_{L^1 L^\infty}\nonumber\\
\lesssim&\nu^{-
\frac{1}{3}}\|\omega_e\|_{W}F_0\lesssim\nu^{-\frac{1}{3}}\mathcal{E}_1F_0.
\end{align}
Similarly, we have
\begin{align*}
&\|u_e\cdot\na \pa_x\theta_L\|_{L^1 L^2}+\|\pa_x u_e\cdot\na \theta_L\|_{L^1 L^2}\lesssim\nu^{-
\frac{1}{3}}\|\omega_e\|_{W} F_0,\\
&\|u_e\cdot\na \pa_x^2\theta_L\|_{L^1 L^2}+\|\pa_x u_e\cdot\na \pa_x \theta_L\|_{L^1 L^2}\lesssim\nu^{-
\frac{1}{3}}\|\omega_e\|_{W} F_0,\\
&\|\pa_x^2u_e\cdot\na \pa_x\theta_L\|_{L^1 L^2}\lesssim\nu^{-
\frac{1}{3}}\|\pa_x\omega_e\|_{W} F_0.
\end{align*}
These estimates imply that
\begin{align}
&I_{4,eL}\lesssim\nu^{-\frac{1}{3}}\|\omega_e\|_{W}F_0\lesssim\nu^{-\frac{1}{3}}\mathcal{E}_1F_0, \label{I4-eL-es}\\
&I_{5,eL}\lesssim\nu^{-\frac{1}{3}}(\|\omega_e\|_{W}+\|\pa_x\omega_e\|_W)F_0\lesssim \nu^{-\frac{1}{2}}\mathcal{E}_1 F_0.\label{I5-eL-es}
\end{align}
\textbf{\emph{Step 2.2: Estimates of $I_{4,Le}$ and $I_{5,Le}$.}} Thanks to \eqref{px-py-u1L-es-inf} and \eqref{px-py-u2L-es-inf}, we obtain
\begin{align}\label{I4-Le-es}
I_{4,Le}\lesssim&\|\pa_x u_L^{(1)}\|_{L^1 L^\infty} \|\pa_x\theta_e\|_{L^\infty L^2}+\|\pa_x u_L^{(2)}\|_{L^1 L^\infty} \|\pa_y\theta_e\|_{L^\infty L^2}\nonumber\\
\lesssim&(\|(1+t)^{-1}\|_{L^1_t} \|\pa_x\theta_e\|_{L^\infty L^2}+\|(1+t)^{-2}\|_{L^1_t} \|\pa_y\theta_e\|_{L^\infty L^2})G_0
\nonumber\\
\lesssim&|\ln\nu|\|\pa_x\theta_e\|_{W}G_0
+\|\pa_y\theta_e\|_{W}G_0\lesssim\mathcal{E}_1G_0.
\end{align}
In a similar way, we get
\begin{align}\label{I5-Le-es}
I_{5,Le}\lesssim&|\ln\nu|(\|\pa_x\theta_e\|_{W}+\|\pa_x^2\theta_e\|_{W})G_0+(\|\pa_y\theta_e\|_{W} +\|\pa_x\pa_y\theta_e\|_{W} )G_0\lesssim \nu^{-\frac{1}{6}}\mathcal{E}_1G_0.
\end{align}
\textbf{\emph{Step 2.3: Estimates of $I_{4,ee}$ and $I_{5,ee}$.}} We decompose
\begin{align*}
I_{5,ee}\lesssim&\|\langle\pa_x^2u_e\cdot\na \theta_e,\pa_x^2 \theta_e\rangle\|_{L^1_t}+\|\langle\pa_xu_e\cdot\na\pa_x \theta_e,\pa_x^2 \theta_e\rangle\|_{L^1_t}\\
:=&I_{5,ee,1}+I_{5,ee,2}.
\end{align*}
The conditions $\theta_e|_{y=\pm 1}=0$ and $\operatorname{div} u_{e}=0$ imply
\begin{align*}
I_{4,ee}
=&\|\langle\pa_xu_e^{(1)}\theta_e,\pa_x^2 \theta_e\rangle+\langle\pa_xu_e^{(2)} \theta_e,\pa_x\pa_y \theta_e\rangle\|_{L^1_t},\\
I_{5,ee,1}
=&\|\langle\pa_x^2u_e^{(1)}\theta_e,\pa_x^3 \theta_e\rangle+\langle\pa_x^2u_e^{(2)} \theta_e,\pa_x^2\pa_y \theta_e\rangle\|_{L^1_t},\\
I_{5,ee,2}
=&\|\langle\pa_xu_e^{(1)}\pa_x^2\theta_e,\pa_x^2 \theta_e\rangle+\langle\pa_xu_e^{(2)} \pa_x\pa_y\theta_e,\pa_x^2 \theta_e\rangle\|_{L^1_t}.
\end{align*}
\textbf{\emph{Step 2.3.1: Estimates of $I_{4,ee}$ and $I_{5,ee,1}$.}} It follows from \eqref{th-e-inf-es} that
\begin{align}\label{theta_2_inf_es}
\|\theta_e\|_{L^2 L^\infty}\lesssim&(\nu^{-\frac{1}{6}}|\ln\nu|\| \theta_e\|_{L^\infty L^2}^{\frac{1}{2}}+|\ln\nu|^{-1}\|\pa_x^2 \theta_e\|_{L^\infty L^2}^{\frac{1}{2}})\|\pa_y\theta_e\|_{L^\infty L^2}^{\frac{1}{2}}\lesssim|\ln\nu|^{-1}\mathcal{E}_1.
\end{align}
This inequality, together with $\|\pa_xu_e\|_{L^\infty L^2}\leq \|\om_e\|_{W}$ and \eqref{E1-df} gives
\begin{align}\label{I4-ee-es}
I_{4,ee}\lesssim \|\pa_xu_e\|_{L^\infty L^2}\|\theta_e\|_{L^2 L^\infty}\|\pa_x\na \theta_e\|_{L^2 L^2}
\lesssim\nu^{-\frac{1}{2}}|\ln\nu|^{-1}\mathcal{E}_1^2\|\pa_x\theta_e\|_{W}.
\end{align}
Similarly, we have
\begin{align}\label{I5-ee1-es}
I_{5,ee,1}\lesssim&\|\pa_x^2u_e\|_{L^\infty L^2}\|\theta_e\|_{L^2 L^\infty}\|\pa_x^2\na \theta_e\|_{L^2 L^2}\lesssim\nu^{-\frac{2}{3}}|\ln\nu|^{-1}\mathcal{E}_1^2\|\pa_x^2\theta_e\|_{W}.
\end{align}
\textbf{\emph{Step 2.3.2: Estimate of $I_{5,ee,2}$.}} Making use of \eqref{E1-df} and \eqref{p-x-u-e-1-2-2-inf}, we achieve
\begin{align}\label{I5-ee2-es}
I_{5,ee,2}\lesssim&\|\langle\pa_xu_e^{(1)}\pa_x^2 \theta_e,\pa_x^2 \theta_e\rangle\|_{L^1_t}+\|\langle\pa_xu_e^{(2)}\pa_x\pa_y \theta_e,\pa_x^2 \theta_e\rangle\|_{L^1_t}\nonumber\\
\lesssim&\nu^{-\frac{1}{12}}\|\pa_xu_e^{(1)}\|_{L^2 L^\infty}\|\pa_x^2\theta_e\|_{L^\infty L^2}^2+\nu^{-\frac{1}{12}}\|\pa_xu_e^{(2)}\|_{L^2 L^\infty}\|\pa_x\pa_y\theta_e\|_{L^\infty L^2}\|\pa_x^2\theta_e\|_{L^\infty L^2}\nonumber\\
\lesssim &\nu^{-\frac{1}{2}}|\ln\nu|\mathcal{E}_1^2\|\pa_x^2\theta_e\|_{W}.
\end{align}
\textbf{\emph{Step 2.4: Estimates of $I_{3,r}$, $I_{4,r}$ and $I_{5,r}$.}}  From \eqref{Tr-es-2} and  $G_0\leq c\nu^{\frac{1}{3}}$, we get
\begin{align}\label{Ir-es-3}
I_{3,r}+I_{4,r}+I_{5,r}\lesssim& \big\|\big((1+t)^{-1}+(1+t)^{-2}(1+\ln(1+t))\big)e^{-\nu t}G_0F_0\nonumber\\
&\quad +(1+t)\nu e^{-\nu t}F_0\big\|_{L^1_t}
\lesssim c\nu^{\frac{1}{3}}|\ln\nu|F_0.
\end{align}
\textbf{\emph{Step 2.5: Estimates of $\|\theta_e\|_{W}$, $\|\pa_x\theta_e\|_{W}$ and $\|\pa_x^2\theta_e\|_W$.}} Putting \eqref{I3-eL-es} and \eqref{Ir-es-3} into \eqref{te-px-te-es} yields
\begin{align}\label{te-es-leq}
\|\theta_e\|_{W}
\lesssim \nu^{-
\frac{1}{3}}\mathcal{E}_1F_0+c\nu^{\frac{1}{3}}|\ln\nu|F_0.
\end{align}
Substituting  \eqref{I4-eL-es}, \eqref{I4-Le-es}, \eqref{I4-ee-es} and \eqref{Ir-es-3} into \eqref{px-te-px-te-es}, we obtain
\begin{align}\label{px-te-es-leq}
\|\pa_x\theta_e\|_{W}
\lesssim\mathcal{E}_1G_0+\nu^{-\frac{1}{2}}|\ln\nu|^{-1}\mathcal{E}_1^2
+c\nu^{\frac{1}{3}}|\ln\nu|F_0.
\end{align}
Inserting \eqref{I5-eL-es}, \eqref{I5-Le-es}, \eqref{I5-ee1-es}--\eqref{Ir-es-3} into \eqref{px2-te-px-te-es}, we derive
\begin{align}\label{px2-te-es-leq}
\|\pa_x^2\theta_e\|_{W}
\lesssim\nu^{-\frac{1}{6}}\mathcal{E}_1G_0+\nu^{-\frac{2}{3}}|\ln\nu|^{-1}\mathcal{E}_1^2 +c\nu^{\frac{1}{3}}|\ln\nu|F_0.
\end{align}
\textbf{Step 3: Estimates of $\|\pa_y\theta_e\|_{W}$ and $\|\pa_x\pa_y\theta_e\|_{W}$.} By $\eqref{oe-te-eq-leq}_2$, we have
\begin{equation}\label{te-py-te-eq}
\left\{ 
\begin{aligned}
&(\pa_t-\nu\Delta+y\pa_x) \pa_y\theta_e+\pa_y(u_e\cdot \na \theta_L+u_L\cdot \na \theta_e+u_e\cdot \na \theta_e+T_\mathfrak{L})=-\pa_x\theta_e,\\
&(\pa_t-\nu\Delta+y\pa_x) \pa_x\pa_y\theta_e+\pa_x\pa_y(u_e\cdot \na \theta_L+u_L\cdot \na \theta_e+u_e\cdot \na \theta_e+T_\mathfrak{L})=-\pa_x^2\theta_e.
\end{aligned}
\right. 
\end{equation}
Taking the inner product of $\eqref{te-py-te-eq}$ with $(\pa_y\theta_e, \pa_x\pa_y\theta_e)$ and integrating in time produce
\begin{align}
\|\pa_y\theta_e\|_{W}^2
\lesssim&(I_{6,eL}+I_{6,Le}+I_{6,r}+\nu^{-\frac{1}{6}}\|\pa_x\theta_e\|_{W})\|\pa_y\theta_e\|_{W}+I_{6,ee},\label{te-py-te-es}\\
\|\pa_x\pa_y\theta_e\|_{W}^2
\lesssim&(I_{7,eL}+I_{7,Le}+I_{7,r}
+\nu^{-\frac{1}{6}}\|\pa_x^2\theta_e\|_{W})\|\pa_x\pa_y\theta_e\|_{W}+I_{7,ee}.\label{px-te-py-te-es}
\end{align}
where
\begin{align*}
I_{6,eL}&:=\| \pa_y(u_e\cdot\na \theta_L)\|_{L^1 L^2}, & I_{6,Le}&:=\| \pa_yu_L\cdot\na \theta_e\|_{L^1 L^2},\\
I_{6,r}&:=\| \pa_yT_\mathfrak{L}\|_{L^1 L^2}, & I_{6,ee}&:=\|\langle\pa_yu_e\cdot\na \theta_e,\pa_y \theta_e\rangle\|_{L^1_t},
\end{align*}
 and
\begin{align*}
I_{7,eL}&:=\| \pa_x\pa_y(u_e\cdot\na \theta_L)\|_{L^1 L^2}, & I_{7,Le}&:=\| \pa_x(\pa_yu_L\cdot\na \theta_e)\|_{L^1 L^2},\\
I_{7,r}&:=\| \pa_x\pa_yT_\mathfrak{L}\|_{L^1 L^2}, & I_{7,ee}&:=\|\langle \pa_x(\pa_y u_e\cdot\na \theta_e)+\pa_x u_e\cdot\na \pa_y\theta_e,\pa_x\pa_y \theta_e\rangle\|_{L^1_t}.
\end{align*}
\textbf{\emph{Step 3.1: Estimates of $I_{6,eL}$ and $I_{7,eL}$.}}
Using  \eqref{px-py-tL-es-inf}, we derive
\begin{align*}
 &\|\pa_x\pa_y\theta_L\|_{L^1 L^\infty}+ \|\pa_y(\pa_y+t\pa_x)\theta_L\|_{L^\infty L^\infty}\nonumber\\
\lesssim&\nu^{-
\frac{1}{6}} \|\pa_x(\pa_y+t\pa_x)\theta_L\|_{L^\infty L^\infty}+\nu^{-\frac{1}{3}}\|\pa_x^2\theta_L\|_{L^\infty L^\infty}\nonumber\\
&+ \|(\pa_y+t\pa_x)^2\theta_L\|_{L^\infty L^\infty}+\nu^{-\frac{1}{6}}\|\pa_x(\pa_y+t\pa_x)\theta_L\|_{L^\infty L^\infty}\lesssim\nu^{-
\frac{1}{3}}F_0,
\end{align*}
which together with \eqref{es-u-p-x-u-2} implies
\begin{align}\label{I6-eL-es-1}
\|u_e\cdot\na \pa_y\theta_L\|_{L^1 L^2}\lesssim&\|u_e^{(1)}\!-tu_e^{(2)}\|_{L^\infty L^2} \|\pa_x\pa_y\theta_L\|_{L^1 L^\infty}+\|u_e^{(2)}\|_{L^1 L^2} \|\pa_y(\pa_y+t\pa_x)\theta_L\|_{L^\infty L^\infty}\nonumber\\
\lesssim&\nu^{-
\frac{1}{2}}\|\omega_e\|_{W}F_0.
\end{align}
On the other hand, we obtain by \eqref{px-py-tL-es-inf} and \eqref{es-u-p-x-u-2},
\begin{align}\label{I6-eL-es-2}
\|\pa_y u_e\cdot\na \theta_L\|_{L^1 L^2}\lesssim&\|\pa_yu_e^{(1)}\!-t\pa_yu_e^{(2)}\|_{L^\infty L^2} \|\pa_x\theta_L\|_{L^1 L^\infty}+\|\pa_yu_e^{(2)}\|_{L^\infty L^2} \|(\pa_y+t\pa_x)\theta_L\|_{L^1 L^\infty}\nonumber\\
\lesssim&\|\omega_e\|_{L^\infty L^2} (\nu^{-
\frac{1}{3}}\|\pa_x\theta_L\|_{L^\infty L^\infty}
+\nu^{-
\frac{1}{6}} \|(\pa_y+t\pa_x)\theta_L\|_{L^\infty L^\infty})\nonumber\\
\lesssim&\nu^{-
\frac{1}{3}}\|\omega_e\|_{W}F_0.
\end{align}
Combining \eqref{I6-eL-es-1} and \eqref{I6-eL-es-2}, we get
\begin{align}\label{I6-eL-es}
I_{6,eL}\lesssim&\nu^{-
\frac{1}{2}}\|\omega_e\|_{W}F_0\lesssim \nu^{-
\frac{1}{2}}\mathcal{E}_1 F_0.
\end{align}
Similarly, we obtain
\begin{align*}
&\|u_e\cdot\na \pa_x\pa_y\theta_L\|_{L^1 L^2}+\|\pa_xu_e\cdot\na \pa_y\theta_L\|_{L^1 L^2}\lesssim \nu^{-
\frac{1}{2}}\|\omega_e\|_{{W}}F_0,\\
&\|\pa_y u_e\cdot\na \pa_x\theta_L\|_{L^1 L^2}+\|\pa_x\pa_y u_e\cdot\na \theta_L\|_{L^1 L^2}\lesssim \nu^{-
\frac{1}{3}}\|\omega_e\|_{{W}}F_0.
\end{align*}
Hence, we have
\begin{align}\label{I7-eL-es}
I_{7,eL}\lesssim&\nu^{-
\frac{1}{2}}\|\omega_e\|_{{W}}F_0+\nu^{-
\frac{1}{3}}\|\pa_x\omega_e\|_{{W}}F_0\lesssim\nu^{-
\frac{1}{2}}\mathcal{E}_1F_0.
\end{align}
\textbf{\emph{Step 3.2: Estimates of $I_{6,Le}$ and $I_{7,Le}$.}} The divergence free condition, \eqref{px-py-oL-es-inf}, \eqref{px-py-u1L-es-inf}, \eqref{px-py-u2L-es-inf} and \eqref{E1-df} give
\begin{align}\label{I6-Le-es}
I_{6,Le}
\lesssim&\|\pa_y u_L^{(1)}\|_{L^1 L^\infty} \|\pa_x\theta_e\|_{L^\infty L^2}+\|\pa_y u_L^{(2)}\|_{L^1 L^\infty} \|\pa_y\theta_e\|_{L^\infty L^2}\nonumber\\
\leq &(\|\omega_L\|_{L^1 L^\infty}+\|\pa_x u_L^{(2)}\|_{L^1 L^\infty}) \|\pa_x\theta_e\|_{L^\infty L^2}+\|\pa_x u_L^{(1)}\|_{L^1 L^\infty} \|\pa_y\theta_e\|_{L^\infty L^2}\nonumber\\
\lesssim&\nu^{-\frac{1}{6}}\|\pa_x\theta_e\|_{W}G_0+|\ln\nu|\|\pa_y\theta_e\|_{W}G_0\lesssim|\ln\nu|\mathcal{E}_1G_0.
\end{align}
Similarly, we have
\begin{align*}
\|\pa_xu_L\cdot\na \pa_y\theta_e\|_{L^1 L^2}\lesssim&\|\pa_x u_L^{(1)}\|_{L^1 L^\infty} \|\pa_x\pa_y\theta_e\|_{L^\infty L^2}+\|\pa_x u_L^{(2)}\|_{L^2 L^\infty} \|\pa_y^2\theta_e\|_{L^2 L^2}\nonumber\\
\lesssim&|\ln\nu|\|\pa_x\pa_y\theta_e\|_{W}G_0+\nu^{-\frac{1}{2}}\|\pa_y\theta_e\|_{W}G_0\lesssim\nu^{-\frac{1}{2}}\mathcal{E}_1G_0,
\end{align*}
and
\begin{align*}
\| \pa_yu_L\cdot\na \pa_x\theta_e\|_{L^1 L^2}
\lesssim&\nu^{-\frac{1}{6}}\|\pa_x^2\theta_e\|_{W}G_0
+|\ln\nu|\|\pa_x\pa_y\theta_e\|_{W}G_0\lesssim\nu^{-\frac{1}{6}}|\ln\nu|\mathcal{E}_1G_0,\\
\| \pa_x\pa_yu_L\cdot\na  \theta_e\|_{L^1 L^2}\lesssim&\nu^{-\frac{1}{6}}\|\pa_x\theta_e\|_{W}G_0+|\ln\nu|\|\pa_y\theta_e\|_{W}G_0\lesssim|\ln\nu|\mathcal{E}_1G_0.
\end{align*}
These estimates imply that
\begin{align}\label{I7-Le-es}
I_{7,Le}\lesssim&\nu^{-\frac{1}{2}}\mathcal{E}_1G_0.
\end{align}
\textbf{\emph{Step 3.3: Estimates of $I_{6,ee}$ and $I_{7,ee}$.}} Noting that
\begin{align*}
&\langle u_e^{(1)}\pa_x\pa_y\theta_e,\pa_y \theta_e\rangle=-\langle \pa_yu_e^{(1)}\pa_x\theta_e,\pa_y \theta_e\rangle-\langle u_e^{(1)}\pa_x\theta_e,\pa_y^2 \theta_e\rangle,
\end{align*}
we deduce
\begin{align}\label{I6-ee-ts}
I_{6,ee}=&\|\langle\pa_yu_e^{(1)}\pa_x\theta_e-\pa_xu_e^{(1)}\pa_y \theta_e,\pa_y \theta_e\rangle\|_{L^1_t}\nonumber\\
=&\|\langle\pa_yu_e^{(1)}\pa_x\theta_e,\pa_y \theta_e\rangle+2\langle u_e^{(1)}\pa_x\pa_y\theta_e,\pa_y \theta_e\rangle\|_{L^1_t}\nonumber\\
=&\|-\langle\pa_yu_e^{(1)}\pa_x\theta_e,\pa_y \theta_e\rangle-2\langle u_e^{(1)}\pa_x\theta_e,\pa_y^2 \theta_e\rangle\|_{L^1_t}\nonumber\\
\lesssim&\|\langle\pa_yu_e^{(1)}\pa_x\theta_e,\pa_y \theta_e\rangle\|_{L^1_t}+\|\langle u_e^{(1)}\pa_x\theta_e,\pa_y^2 \theta_e\rangle\|_{L^1_t}:=I_{6,ee,1}+I_{6,ee,2}.
\end{align}
By the expression of $I_{7,ee}$, we have
\begin{align}\label{I7-ee-ts}
I_{7,ee}\lesssim&\|\langle \pa_xu_e\cdot\na \pa_y\theta_e,\pa_x\pa_y \theta_e\rangle\|_{L^1_t}+\|\langle \pa_yu_e\cdot\na \pa_x\theta_e,\pa_x\pa_y \theta_e\rangle\|_{L^1_t}\nonumber\\
&+\|\langle \pa_x\pa_yu_e\cdot\na  \theta_e,\pa_x\pa_y \theta_e\rangle\|_{L^1_t}\nonumber\\
:=&I_{7,ee,1}+I_{7,ee,2}+I_{7,ee,3}.
\end{align}
\textbf{\emph{Step 3.3.1: Estimate of $I_{6,ee}$.}} Using 
$\|f\|_{L^4L^4}\lesssim\|f\|_{L^\infty L^2}^{\frac{1}{2}}\|\na f\|_{L^2L^2}^{\frac{1}{2}}$ when $f|_{\pm 1}=0$ and \eqref{E1-df}, we derive
\begin{align}\label{I6-ee1-es}
I_{6,ee,1}\lesssim&\|1\|_{L^2_t}\|\pa_y u_e^{(1)}\|_{L^4 L^4} \|\pa_x\theta_e\|_{L^4 L^4}\|\pa_y \theta_e\|_{L^\infty L^2}\nonumber\\
\lesssim&\nu^{-\frac{1}{12}}\|\omega_e\|_{L^\infty L^2}^\frac{1}{2} \|\na \omega_e\|_{L^2 L^2}^\frac{1}{2} \|\pa_x\theta_e\|_{L^\infty L^2}^\frac{1}{2} \|\partial_x\na \theta_e\|_{L^2 L^2}^\frac{1}{2} \|\pa_y \theta_e\|_{L^\infty L^2} \nonumber\\
\lesssim&\nu^{-
\frac{7}{12}}\|\omega_e\|_{W}\|\pa_x\theta_e\|_{W}\|\pa_y\theta_e\|_{W}\lesssim\nu^{-
\frac{5}{12}}\mathcal{E}_1^2\|\pa_y\theta_e\|_{W}.
\end{align}
Thanks to \eqref{u1-e-inf-es}, \eqref{u2-e-inf-es} and \eqref{E1-df}, we obtain
\begin{align}\label{u-e-1-2-2-inf}
\|u_e\|_{L^2 L^\infty} \lesssim&\nu^{\frac{1}{4}}\|1\|_{L^2_t}^{\frac{1}{2}} \|\pa_x\omega_e\|_{L^2L^2}^{\frac{1}{2}}
\|\omega_e\|_{L^\infty L^2}^{\frac{1}{2}}+|\ln\nu|\|1\|_{L^2_t}\|\omega_e\|_{L^\infty L^2}\nonumber\\
\lesssim&\nu^{-\frac{1}{12}}|\ln\nu|\|\omega_e\|_{W}\lesssim \nu^{-\frac{1}{12}}|\ln\nu|\mathcal{E}_1.
\end{align}
It follows from \eqref{u-e-1-2-2-inf} that
\begin{align}\label{I6-ee2-es}
I_{6,ee,2}\lesssim&\|u_e^{(1)}\|_{L^2 L^\infty} \|\pa_y^2\theta_e\|_{L^2 L^2}\|\pa_x \theta_e\|_{L^\infty L^2}\lesssim\nu^{-
\frac{5}{12}}|\ln\nu|\mathcal{E}_1^2\|\pa_y\theta_e\|_{W}.
\end{align}
Putting \eqref{I6-ee1-es} and \eqref{I6-ee2-es} into \eqref{I6-ee-ts}, we get
\begin{align}\label{I6-ee-es}
I_{6,ee}\lesssim&\nu^{-
\frac{5}{12}}|\ln\nu|\mathcal{E}_1^2\|\pa_y\theta_e\|_{W}.
\end{align}
\textbf{\emph{Step 3.3.2: Estimate of $I_{7,ee}$.}} By \eqref{E1-df} and \eqref{p-x-u-e-1-2-2-inf}, we obtain
\begin{align*}
I_{7,ee,1}=&\|\langle\pa_xu_e^{(1)}\pa_x\pa_y\theta_e,\pa_x\pa_y \theta_e\rangle+\langle\pa_xu_e^{(2)} \pa_y^2\theta_e,\pa_x\pa_y \theta_e\rangle\|_{L^1_t}\nonumber\\
\lesssim&\|1\|_{L^2_t}\|\pa_xu_e^{(1)}\|_{L^2 L^\infty}\|\pa_x\pa_y\theta_e\|_{L^\infty L^2}^2+\|\pa_xu_e^{(2)}\|_{L^2 L^\infty}\|\pa_y^2\theta_e\|_{L^2 L^2}\|\pa_x\pa_y\theta_e\|_{L^\infty L^2}\nonumber\\
\lesssim&\nu^{-
\frac{3}{4}}|\ln\nu|\mathcal{E}_1^2\|\pa_x\pa_y\theta_e\|_{W}.
\end{align*}
By the same way as in the division \eqref{I6-ee-ts} and the estimates \eqref{I6-ee1-es}-\eqref{I6-ee2-es}, we conclude
\begin{align*}
I_{7,ee,2}\lesssim&\nu^{-
\frac{7}{12}}\|\omega_e\|_{W}\|\pa_x^2\theta_e\|_W\|\pa_x\pa_y\theta_e\|_W+\|u_e^{(1)}\|_{L^2 L^\infty} \|\pa_x\pa_y^2\theta_e\|_{L^2 L^2}\|\pa_x^2 \theta_e\|_{L^\infty L^2}\nonumber\\
\lesssim&\nu^{-
\frac{7}{12}}|\ln\nu|\mathcal{E}_1^2\|\pa_x\pa_y\theta_e\|_{W}.
\end{align*}
Utilizing \eqref{theta_2_inf_es}, we have
\begin{align*}
I_{7,ee,3}
=&\|\langle\pa_x\pa_yu_e^{(1)}\theta_e,\pa_x^2\pa_y \theta_e\rangle+\langle\pa_x\pa_yu_e^{(2)} \theta_e,\pa_x\pa_y^2 \theta_e\rangle\|_{L^1_t}\nonumber\\
\lesssim&\|\pa_x\omega_e\|_{L^\infty L^2}\|\theta_e\|_{L^2 L^\infty}\|\pa_x\pa_y\na \theta_e\|_{L^2 L^2}\lesssim\nu^{-\frac{2}{3}}|\ln\nu|^{-1}\mathcal{E}_1^2\|\pa_x\pa_y\theta_e\|_{W}.
\end{align*}
Putting the above three estimates into \eqref{I7-ee-ts}, we get
\begin{align}\label{I7-ee-es}
I_{7,ee}\lesssim&\nu^{-\frac{3}{4}}|\ln\nu|\mathcal{E}_1^2\|\pa_x\pa_y\theta_e\|_{W}.
\end{align}
\textbf{\emph{Step 3.4: Estimates of $I_{6,r}$ and $I_{7,r}$.}} From \eqref{py-Tr-es-2}, $G_0\leq c\nu^{\frac{1}{3}}$ and $$\|\pa_y T_\mathfrak{L}\|_{L^2}\lesssim\|(\pa_y+t\pa_x)T_\mathfrak{L}\|_{L^2}+t\|\pa_xT_\mathfrak{L}\|_{L^2},$$ we obtain
\begin{align}\label{Ir-es-6-7}
I_{6,r}+I_{7,r}\lesssim& \big\|\big((1+\nu t^3)^{-\frac{3}{4}}+(1+t)^{-1}(1+\nu t^3)^{-\frac{1}{4}}\big)e^{-\nu t}G_0F_0\nonumber\\
&+\big((1+\nu t^3)^{-\frac{1}{4}}+(1+\nu t^3)^{-\frac{5}{4}}t^2\big)\nu (1+t)e^{-\nu t}F_0\big\|_{L^1_t}\lesssim c\nu^{\frac{1}{6}}F_0.
\end{align}
\textbf{\emph{Step 3.5: Estimates of $\|\pa_y\theta_e\|_{W}$ and $\|\pa_x\pa_y\theta_e\|_W$.}} Substituting \eqref{I6-eL-es}, \eqref{I6-Le-es}, \eqref{I6-ee-es} and \eqref{Ir-es-6-7} into \eqref{te-py-te-es}, we deduce
\begin{align}\label{py-te-es-leq}
\|\pa_y\theta_e\|_{W}\lesssim&\nu^{-\frac{1}{6}}\mathcal{E}_1G_0+\nu^{-
\frac{5}{12}}|\ln\nu|\mathcal{E}_1^2+c\nu^{\frac{1}{6}}F_0+\nu^{-\frac{1}{6}}\|\pa_x\theta_e\|_{W}.
\end{align}
Plugging \eqref{I7-eL-es}, \eqref{I7-Le-es}, \eqref{I7-ee-es}--\eqref{Ir-es-6-7} into \eqref{px-te-py-te-es}, we derive
\begin{align}\label{py2-te-es-leq}
&\|\pa_x\pa_y\theta_e\|_{W}
\lesssim\nu^{-\frac{1}{2}}\mathcal{E}_1G_0+\nu^{-\frac{3}{4}}|\ln\nu|\mathcal{E}_1^2 +c\nu^{\frac{1}{6}}F_0+\nu^{-\frac{1}{6}}\|\pa_x^2\theta_e\|_{W}.
\end{align}
\textbf{Step 4: Proof of Proposition \ref{prop-4.1}.} 
Inserting the estimates \eqref{oe-es-leq}--\eqref{px-oe-es-leq}, \eqref{te-es-leq}, \eqref{py-te-es-leq}--\eqref{py2-te-es-leq}
 into \eqref{E1-df}, and noting that $$G_0=E_0+\nu^{-\frac{1}{3}}F_0\leq c\nu^{\frac{1}{3}},\quad F_0\leq  c\nu^{\frac{2}{3}}|\ln\nu|^{-4},$$ 
 we obtain
\begin{align*}
\mathcal{E}_1
\lesssim&\nu^{-\frac{1}{3}}\mathcal{E}_1(E_0+\nu^{-\frac{1}{3}}|\ln\nu|^4F_0)
+\nu^{-\frac{7}{12}}|\ln\nu|\mathcal{E}_1^2\nonumber\\
&+c\nu^{\frac{1}{3}}|\ln\nu|(E_0+\nu^{-\frac{1}{3}}|\ln\nu|^4F_0)+\nu^{-\frac{1}{6}}\|\pa_x\theta_e\|_{W}+\|\pa_x^2\theta_e\|_{W}.
\end{align*}
Putting \eqref{px-te-es-leq} and \eqref{px2-te-es-leq} into the above estimate, we arrive at
\begin{align*}
\mathcal{E}_1
\lesssim&\nu^{-\frac{1}{3}}\mathcal{E}_1(E_0+\nu^{-\frac{1}{3}}|\ln\nu|^4F_0)+\nu^{-\frac{2}{3}}|\ln\nu|^{-1}\mathcal{E}_1^2 +c\nu^{\frac{1}{3}}|\ln\nu|(E_0+\nu^{-\frac{1}{3}}|\ln\nu|^4F_0).
\end{align*}
Thus the claim \eqref{claim} is concluded. Combining \eqref{claim} and \eqref{te-es-leq}, Proposition~\ref{prop-4.1} is completed.
\end{proof}

\begin{proposition}\label{prop-4.2}
Let $(\omega_{e}, \theta_{e})$ be the solution of \eqref{oe-te-eq-leq}. For each $t\in(0,\nu^{-\frac{1}{6}})$, we have
\begin{align*}
&\||\pa_x|^{\frac{1}{3}}\theta_e(t)\|_{L^2_{x,y}}\lesssim \nu^{\frac{2}{3}}|\ln\nu|^2(E_0+\nu^{-\frac{1}{3}}|\ln\nu|^{4}F_0).
\end{align*}
\end{proposition}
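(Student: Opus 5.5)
The bound follows from the energy estimates of Proposition \ref{prop-4.1}, more precisely the claim \eqref{claim} for $\mathcal{E}_1$, by splitting frequencies in $x$. I will not run a new energy estimate on $|\pa_x|^{\frac13}\theta_e$. Set $\mathcal{B}:=\nu^{\frac13}|\ln\nu|(E_0+\nu^{-\frac13}|\ln\nu|^4F_0)$, so that \eqref{claim} gives
\begin{align*}
\|\theta_e\|_{W}\lesssim \nu^{\frac13}|\ln\nu|^{-4}\mathcal{B},\qquad \|\pa_x\theta_e\|_{W}\lesssim \nu^{\frac16}\mathcal{B},\qquad \|\pa_x^2\theta_e\|_W\lesssim \mathcal{B}.
\end{align*}
These are $L^\infty_tL^2_{x,y}$ bounds on $(0,\nu^{-\frac16})$, so they hold pointwise in $t$.

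\textbf{Step 1 (interpolation).} By Plancherel in $x$, $\||\pa_x|^{\frac13}\theta_e(t)\|_{L^2}\le \|\theta_e(t)\|_{L^2}^{\frac23}\|\pa_x\theta_e(t)\|_{L^2}^{\frac13}$. Inserting the bounds above gives
\begin{align*}
\||\pa_x|^{\frac13}\theta_e(t)\|_{L^2}\lesssim \nu^{\frac29}|\ln\nu|^{-\frac83}\nu^{\frac1{18}}\mathcal{B}=\nu^{\frac{5}{18}}|\ln\nu|^{-\frac83}\mathcal{B}.
\end{align*}
Since $\mathcal{B}=\nu^{\frac13}|\ln\nu|(\cdots)$, this is $\nu^{\frac{11}{18}}|\ln\nu|^{-\frac53}(E_0+\nu^{-\frac13}|\ln\nu|^4F_0)$. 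This misses the target $\nu^{\frac23}|\ln\nu|^2$ by $\nu^{-\frac1{18}}$, so the interpolation alone is too weak.

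\textbf{Step 2 (use the sharper bound on $\theta_e$).} The sharper input is the second estimate in \eqref{oe-geq-es}, $\|\theta_e\|_{L^2}\lesssim \nu^{\frac13}|\ln\nu|F_0$. This follows from \eqref{te-es-leq} combined with \eqref{claim}: one gets $\|\theta_e\|_W\lesssim \nu^{-\frac13}\mathcal{E}_1F_0+\nu^{\frac13}|\ln\nu|F_0\lesssim \nu^{\frac13}|\ln\nu|F_0$, because $\nu^{-\frac13}\mathcal{E}_1\lesssim 1$.

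\textbf{Step 3 (frequency split).} Split at $|k|=N$. On $|k|\le N$ use $|k|^{\frac13}\le N^{\frac13}$ with $\|\theta_e\|_{L^2}$. On $|k|\ge N$ use $|k|^{\frac13}\le N^{-\frac53}|k|^2$ with $\|\pa_x^2\theta_e\|_{L^2}\lesssim\mathcal{B}$. This gives
\begin{align*}
\||\pa_x|^{\frac13}\theta_e\|_{L^2}\lesssim N^{\frac13}\nu^{\frac13}|\ln\nu|F_0+N^{-\frac53}\mathcal{B}.
\end{align*}
Take $N=\nu^{-\frac15}$, or equivalently interpolate between $L^2$ and $\pa_x^2$ with weights $\frac56,\frac16$. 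The first term is then $\nu^{\frac{4}{15}}|\ln\nu|F_0\le\nu^{\frac{3}{5}}|\ln\nu|(\nu^{-\frac13}F_0)$. The second term is $\nu^{\frac13}\mathcal{B}=\nu^{\frac23}|\ln\nu|(\cdots)$, which is admissible.

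The first term is the main obstacle: it is $\nu^{\frac35}$ rather than $\nu^{\frac23}$ times $\nu^{-\frac13}F_0$. I would use $\|\pa_x\theta_e\|_W$ instead of $\|\pa_x^2\theta_e\|_W$ at high frequency, but this still loses a power. The fix I would try first is to sharpen the intermediate derivative. Re-examining \eqref{px-te-es-leq} with Step 2 inserted, every source term is proportional to $F_0$: $I_{4,eL},I_{4,r}\lesssim \nu^{\frac13}|\ln\nu|F_0$, while $I_{4,Le}$ and $I_{4,ee}$ are absorbed. This should give $\|\pa_x\theta_e\|_W\lesssim \nu^{\frac13}|\ln\nu|^{2}F_0+\mathcal{E}_1G_0$. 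Interpolating this improved bound with Step 2 yields
\begin{align*}
\||\pa_x|^{\frac13}\theta_e\|_{L^2}\lesssim \nu^{\frac13}|\ln\nu|^2F_0+\nu^{\frac1{3}}\mathcal{B}^{\frac13}(\cdots),
\end{align*}
which is $\lesssim \nu^{\frac23}|\ln\nu|^2(E_0+\nu^{-\frac13}|\ln\nu|^4F_0)$ as claimed. The remaining check is that the $\mathcal{E}_1G_0$ contribution, of size $\nu^{\frac13}\mathcal{B}\cdot\nu^{\frac13}$, also fits; it does, since $G_0\le c\nu^{\frac13}$.
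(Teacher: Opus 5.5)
\paragraph{Verdict.} Your route differs from the paper's, and as written it has a gap at the decisive step. The paper runs a fresh energy estimate on $|\pa_x|^{\frac13}\theta_e$. Its key nonlinear term is bounded in \eqref{p-x-1-3-u-e-t-e} by $\nu^{-\frac16}\||\pa_x|^{\frac13}u_e\|_{L^\infty L^\infty}\|\na\theta_e\|_{L^\infty L^2}$, using \eqref{px-13-ue-1-es}--\eqref{px-13-ue-2-es}. You instead interpolate $\theta_e$ against an improved bound on $\pa_x\theta_e$. That idea can work, but the improved bound is not justified.

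\paragraph{The gap: $I_{4,ee}$ is not absorbed.} In \eqref{px-te-px-te-es} the left side is quadratic in $\|\pa_x\theta_e\|_W$, while \eqref{I4-ee-es} is only linear in it. So $I_{4,ee}$ adds the term $\nu^{-\frac12}|\ln\nu|^{-1}\mathcal{E}_1^2$ to $\|\pa_x\theta_e\|_W$; it is not a small multiple of $\|\pa_x\theta_e\|_W$. Set $Q:=E_0+\nu^{-\frac13}|\ln\nu|^4F_0\le c\nu^{\frac13}$ and use $\mathcal{E}_1\lesssim\nu^{\frac13}|\ln\nu|Q$. Then this term is $\nu^{\frac16}|\ln\nu|Q^2\lesssim c\,\nu^{\frac12}|\ln\nu|Q$, which is the same size as the input $\nu^{\frac16}\mathcal{B}$ of your Step 1. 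Interpolating it against $\|\theta_e\|\lesssim\nu^{\frac23}|\ln\nu|^{-3}Q$ gives back $\nu^{\frac{11}{18}}$, the same $\nu^{-\frac1{18}}$ loss.

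Two further remarks. First, in the worst case $E_0=0$ your Step 2 bound is no better than the $\theta_e$ input of Step 1, so the whole gain must come from $\pa_x\theta_e$. Second, the justification in Step 2 is off: $\nu^{-\frac13}\mathcal{E}_1\lesssim 1$ only gives $\|\theta_e\|_W\lesssim F_0$. What you need, and what \eqref{claim} gives, is $\nu^{-\frac13}\mathcal{E}_1\lesssim c\,\nu^{\frac13}|\ln\nu|$.

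\paragraph{A repair.} Estimate $I_{4,ee}$ as the paper does in \eqref{p-x-1-3-u-e-t-e}: velocity in $L^1_tL^\infty$ and $\na\theta_e$ in $L^\infty_tL^2$. Using \eqref{p-x-u-e-1-2-2-inf} and H\"older on $(0,\nu^{-\frac16})$,
\begin{align*}
I_{4,ee}\le\|\pa_xu_e\|_{L^1L^\infty}\|\na\theta_e\|_{L^\infty L^2}\|\pa_x\theta_e\|_{L^\infty L^2}\lesssim\nu^{-\frac13}|\ln\nu|\,\mathcal{E}_1^2\,\|\pa_x\theta_e\|_W .
\end{align*}
The additive contribution is then $\nu^{-\frac13}|\ln\nu|\mathcal{E}_1^2\lesssim\nu^{\frac13}|\ln\nu|^3Q^2$. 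After interpolation it becomes $\nu^{\frac59}|\ln\nu|^{-1}Q^{\frac43}\lesssim\nu^{\frac23}|\ln\nu|^{-1}Q$, since $Q^{\frac13}\lesssim\nu^{\frac19}$.

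Your remaining contributions, $\nu^{\frac13}|\ln\nu|F_0$ from $I_{4,eL}$ and $I_{4,r}$ and $\mathcal{E}_1G_0$ from $I_{4,Le}$, do fit after interpolation. With this change your argument closes. It even avoids the fractional product and commutator step the paper needs. Without it, the proposal does not prove the statement.
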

\begin{proof}

Using the energy method for $\eqref{oe-te-eq-leq}_{2}$, together with $$\||\pa_x|^{\frac{1}{3}}(fg)\|_{L^2_y} \lesssim \|(|\pa_x|^{\frac{1}{3}}f)\,g\|_{L^2_y} + \|f\,(|\pa_x|^{\frac{1}{3}}g)\|_{L^2_y},$$ we derive
\begin{align}
&\| |\pa_x|^{\frac{1}{3}}(u_e\cdot\na \theta_L)\|_{L^1 L^2}
\lesssim\nu^{-\frac{1}{3}}\|\omega_e\|_{W}F_0\lesssim\nu^{\frac{1}{3}}|\ln\nu|F_0,\label{p-x-1-3-u-e-t-L}\\
&\| |\pa_x|^{\frac{1}{3}}u_L\cdot\na \theta_e\|_{L^1 L^2}\lesssim|\ln\nu|\|\pa_x\theta_e\|_{W}G_0 +\|\pa_y\theta_e\|_{W}G_0 \lesssim\nu^{\frac{2}{3}}|\ln\nu|G_0,\label{p-x-1-3-u-L-t-e}\\
&\| |\pa_x|^{\frac{1}{3}}T_\mathfrak{L}\|_{L^1 L^2}\lesssim c\nu^{\frac{1}{3}}|\ln\nu|F_0.\label{p-x-1-3-T-L}
\end{align}
By \eqref{px-13-ue-1-es} and \eqref{px-13-ue-2-es}, we obtain
\begin{align}\label{p-x-1-3-u-e-inf-inf}
\||\pa_x|^{\frac{1}{3}}u_e\|_{L^\infty L^\infty}
\lesssim&\nu^{\frac{1}{12}}\|\pa_x\omega_e\|_{W}^{\frac{1}{2}}\|\omega_e\|_{W}^{\frac{1}{2}}
+\nu^{-\frac{1}{6}}\|\omega_e\|_{W}\nonumber\\
\lesssim&\nu^{\frac{1}{6}}|\ln\nu|(E_0+\nu^{-\frac{1}{3}}|\ln\nu|^4F_0).
\end{align}
From \eqref{p-x-1-3-u-e-inf-inf}, we have
\begin{align}\label{p-x-1-3-u-e-t-e}
&\| |\pa_x|^{\frac{1}{3}}u_e\cdot\na \theta_e\|_{L^1 L^2}\nonumber\\
\lesssim&\nu^{-\frac{1}{6}}\||\pa_x|^{\frac{1}{3}}u_e^{(1)}\|_{L^\infty L^\infty}\|\pa_x\theta_e\|_{L^\infty L^2}+\nu^{-\frac{1}{6}}\||\pa_x|^{\frac{1}{3}}u_e^{(2)}\|_{L^\infty L^\infty}\|\pa_y\theta_e\|_{L^\infty L^2}\nonumber\\
\lesssim&\nu^{\frac{1}{3}}|\ln\nu|^{2}(E_0+\nu^{-\frac{1}{3}}|\ln\nu|^4F_0)^2\lesssim\nu^{\frac{2}{3}}|\ln\nu|^{2}(E_0+\nu^{-\frac{1}{3}}|\ln\nu|^4F_0).
\end{align}
It follows from \eqref{p-x-1-3-u-e-t-L}--\eqref{p-x-1-3-T-L} and \eqref{p-x-1-3-u-e-t-e} that 
\begin{align*}
\||\pa_x|^{\frac{1}{3}}\theta_e\|_{W}\lesssim&\| |\pa_x|^{\frac{1}{3}}(u_e\cdot\na \theta_L)\|_{L^1 L^2}+\| |\pa_x|^{\frac{1}{3}}u_L\cdot\na \theta_e\|_{L^1 L^2}\nonumber\\
&+\| |\pa_x|^{\frac{1}{3}}u_e\cdot\na \theta_e\|_{L^1 L^2}+\| |\pa_x|^{\frac{1}{3}}T_\mathfrak{L}\|_{L^1 L^2}\nonumber\\
\lesssim&\nu^{\frac{2}{3}}|\ln\nu|^2(E_0+\nu^{-\frac{1}{3}}|\ln\nu|^{4}F_0).
\end{align*}
Thus we complete the proof of Proposition~\ref{prop-4.2}.
\end{proof}

\begin{Remark}\label{rem-4.1}
For the domain $\mathbb{T}\times [-1,1]$, set 
\begin{align*}
\mathcal{E}_1=&\|\omega_e\|_{W}+\nu^{-\frac{1}{3}}\|\theta_e\|_{W}+\nu^{-
\frac{1}{6}}\|\pa_x\theta_e\|_{W}+\|\pa_y\theta_e\|_{W}\nonumber\\
&+\nu^{\frac{1}{6}}\|\pa_x\omega_e\|_{W}+\|\pa_x^2\theta_e\|_{W}+\nu^{\frac{1}{6}}\|\pa_x\pa_y\theta_e\|_{W}.
\end{align*}
 Substituting the estimate of Remark \ref{rm-3.1} into \eqref{theta_2_inf_es}, and by  analogous argument to Propositions \ref{prop-4.1} and \ref{prop-4.2}, we obtain for $t\in(0,\nu^{-\frac{1}{6}})$,
 $$\|\omega_e\|_{L^2_{x,y}}+\nu^{-\frac{1}{3}}\|\theta_e\|_{L^2_{x,y}}+\nu^{-\frac{1}{3}}\||\pa_x|^{\frac{1}{3}}\theta_e\|_{L^2_{x,y}}\lesssim\nu^\frac{1}{3}(E_0+\nu^{-\frac{1}{3}}F_0).$$
\end{Remark}

\section{\texorpdfstring%
  {Estimates for the system \eqref{ote-eq} in the case $t>\nu^{-\frac{1}{6}}$}%
  {Estimates for the system (Boue) in the case t > nu\textasciicircum(-1/6)}}\label{non-es-geq}

In this section, we provide the nonlinear estimates for $t> T_0=\nu^{-\frac{1}{6}}$. We  decompose $\omega_e=\omega_\text{b}+\omega_\mathfrak{E}$, $\theta_e=\theta_\text{b}+\theta_\mathfrak{E}$ such that
\begin{align}\label{o-1-t-1-eq}
\begin{cases}
\pa_t \omega_\text{b}-\nu\Delta\omega_\text{b}+y\pa_x\omega_\text{b}+u\cdot\na \omega_e+u_e^{(1)}\pa_x \omega_L+u_e^{(2)}(\pa_y+t\pa_x) \omega_L+E_\mathfrak{L}=-\pa_x\theta_\text{b}, \\
\pa_t \theta_\text{b}-\nu\Delta\theta_\text{b}+y\pa_x\theta_\text{b}+u\cdot\na \theta_e+u_e^{(1)}\pa_x \theta_L+u_e^{(2)}(\pa_y+t\pa_x) \theta_L+T_\mathfrak{L}=0, \\
\omega_\text{b}(t,x,\pm 1)=\theta_\text{b}(t,x,\pm 1)=0, \\
\omega_\text{b}|_{t=T_0}=\omega_e(T_0),\quad  \theta_\text{b}|_{t=T_0}=\theta_e(T_0),
\end{cases}
\end{align}
and
\begin{align}\label{o-2-t-2-eq}
\begin{cases}
\pa_t \omega_\mathfrak{E}-\nu\Delta\omega_\mathfrak{E}+y\pa_x\omega_\mathfrak{E}=tu_e^{(2)}\pa_x\omega_L-\pa_x\theta_\mathfrak{E}, \\
\pa_t \theta_\mathfrak{E}-\nu\Delta\theta_\mathfrak{E}+y\pa_x\theta_\mathfrak{E}=tu_e^{(2)}\pa_x\theta_L, \\
\omega_\mathfrak{E}(t,x,\pm 1)=\theta_\mathfrak{E}(t,x,\pm 1)=0, \\
\omega_\mathfrak{E}|_{t=T_0}=\theta_\mathfrak{E}|_{t=T_0}=0.
\end{cases}
\end{align}

Next, we state the space-time estimates for $(\omega_\text{b}, \theta_\text{b})$  and $(\omega_\mathfrak{E}, \theta_\mathfrak{E})$. Finally, we derive the estimate for $(\omega_e, \theta_e)$ and complete the proof of Theorem \ref{th-1.1}. For convenience, we define
\begin{align*}
\mathcal{E}_{2}:=&\|\omega_{e}\|_{X(T_0,T)}+\nu^{-\frac{1}{3}}|\ln\nu|^4\|\theta_{e}\|_{X(T_0,T)}
+\nu^{-\frac{1}{3}}\||\pa_x|^{\frac{1}{3}}\theta_{e}\|_{X(T_0,T)},\nonumber\\
\mathcal{E}_{2,0}:=&\nu^{\frac{1}{3}}|\ln\nu|^3(E_0+\nu^{-\frac{1}{3}}|\ln\nu|^{4}F_0).
\end{align*}
and
$$\|f\|_{L^pL^q}:=\|f\|_{L^p_t(T_0,T;L^q_{x,y})}, \quad \|f\|_{L^p_t}:=\|f\|_{L^p_t(T_0,T)}, \quad \|f\|_{\Box}:=\|f\|_{\Box(T_0,T)}, $$where $\Box$ can be denoted as $X$, $Y$ or $\mathcal{Y}$. 
\subsection{Nonlinear estimates of  \texorpdfstring{$(\omega_\text{b}$, $\theta_\text{b})$}{omega\_b, theta\_b}}
\begin{lemma}\label{lem-5.1}
Let $g_1$ and $g_2$ be 
\begin{equation*}
\begin{aligned}
g_1&=(u_e^{(1)}\pa_x \omega_L+u_e^{(2)}(\pa_y+t\pa_x) \omega_L)+u_L\cdot\na\omega_e+u_e\cdot\na\omega_e+E_\mathfrak{L},\\
g_2&=(u_e^{(1)}\pa_x \theta_L+u_e^{(2)}(\pa_y+t\pa_x) \theta_L)+u_L\cdot\na\theta_e+u_e\cdot\na\theta_e+T_\mathfrak{L}.
\end{aligned}
\end{equation*}
 We have the following estimates:
\begin{align}
\|g_1\|_{L^1L^2}
\lesssim&\nu^{-\frac{1}{3}}\mathcal{E}_2G_0+\nu^{-\frac{1}{2}}|\ln\nu|\mathcal{E}_2^2 +\mathcal{E}_{2,0},\label{g-1-g-2-px-g-2-leq-1}\\
\|g_2\|_{L^1L^2}\lesssim&\nu^{-\frac{1}{3}}\mathcal{E}_2F_0+\nu^{-\frac{1}{6}}|\ln\nu|^{-3}\mathcal{E}_2^2
+\nu^{\frac{1}{3}}|\ln\nu|^{-6}\mathcal{E}_{2,0},\label{g-1-g-2-px-g-2-leq-2}\\
\||\pa_x|^{\frac{1}{3}}g_2\|_{L^1L^2}\lesssim&\mathcal{E}_2G_0 +\nu^{-\frac{2}{9}}\mathcal{E}_2^2+\nu^{\frac{1}{3}}|\ln\nu|^{-6}\mathcal{E}_{2,0}.\label{g-1-g-2-px-g-2-leq}
\end{align}
\end{lemma}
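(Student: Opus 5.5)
The plan is to estimate $g_1,g_2$ term by term in $L^1_t(T_0,T;L^2)$. The weights come from Proposition \ref{prop-2.4}, Proposition \ref{prop-2.4-1} and Lemma \ref{lem-3.1}. Throughout I use the components of $\mathcal{E}_2$: the $X$-norm gives $L^\infty L^2$, $\nu^{\frac12}L^2$-gradient, $L^2_t$ inviscid damping $\|\mathcal{M}\na\Delta^{-1}\cdot\|_{L^2L^2}$ and $\nu^{\frac12}\|\pa_x\cdot\|_{L^1L^2}$ control.

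\textbf{Linear--error and remainder terms.} For the linear--error terms $u_e^{(1)}\pa_x\omega_L+u_e^{(2)}(\pa_y+t\pa_x)\omega_L$ I put $u_e$ in $L^2_tL^2$ through $\|\mathcal{M}u_e\|_{L^2L^2}\le\|\omega_e\|_X$. The $\omega_L$ factors go in $L^2_tL^\infty$ via \eqref{px-py-oL-es-inf}; on $t>T_0$ we have $\|(1+\nu t^3)^{-\frac12}e^{-\nu t}\|_{L^2_t}\lesssim\nu^{-\frac13}$. The low-frequency loss from $\mathcal{M}$ versus $u_e$ is absorbed by keeping one $D_x^{s_1}$ on $\omega_L$. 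This gives $\nu^{-\frac13}\mathcal{E}_2G_0$, and the $\theta_L$ analogue gives $\nu^{-\frac13}\mathcal{E}_2F_0$. For $u_L\cdot\na\omega_e$ I write $(u_L^{(1)}-tu_L^{(2)})\pa_x\omega_e+u_L^{(2)}(\pa_y+t\pa_x)\omega_e$ and use the $L^\infty$ decay \eqref{px-py-u1L-es-inf}--\eqref{px-py-u2L-es-inf}. Against $\nu^{\frac12}\|\na\omega_e\|_{L^2L^2}$ and $\nu^{\frac12}\|\pa_x\omega_e\|_{L^1L^2}$ this yields $\nu^{-\frac12}G_0\mathcal{E}_2\lesssim\nu^{-\frac13}\mathcal{E}_2G_0$ after using $\|(1+t)^{-1}\|_{L^2(T_0,\infty)}\lesssim\nu^{\frac1{12}}$. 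The remainders $E_\mathfrak{L},T_\mathfrak{L}$ are integrated directly from \eqref{Er-es-2}, \eqref{Tr-es-2} over $t>T_0$. Using $G_0\le c\nu^{\frac13}$ they are bounded by $\mathcal{E}_{2,0}$ and by $\nu^{\frac13}|\ln\nu|^{-6}\mathcal{E}_{2,0}$ respectively, the latter because $F_0\le\nu^{\frac23}|\ln\nu|^{-4}$-type bookkeeping turns $\nu^{\frac13}|\ln\nu|F_0$ into that form.

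\textbf{Nonlinear terms in $g_1,g_2$.} For $u_e\cdot\na\omega_e$ I bound $\|u_e\|_{L^2L^\infty}\|\na\omega_e\|_{L^2L^2}$. Lemma \ref{lem-3.1} with $j=0$ and $L^2_t$ integration gives $\|u_e\|_{L^2L^\infty}\lesssim|\ln\nu|\mathcal{E}_2$, and combined with $\nu^{-\frac12}\mathcal{E}_2$ this yields $\nu^{-\frac12}|\ln\nu|\mathcal{E}_2^2$. For $u_e\cdot\na\theta_e$ the same estimate gives $\nu^{-\frac12}|\ln\nu|\mathcal{E}_2\cdot\nu^{\frac13}|\ln\nu|^{-4}\mathcal{E}_2$, which matches $\nu^{-\frac16}|\ln\nu|^{-3}\mathcal{E}_2^2$.

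\textbf{The $|\pa_x|^{\frac13}g_2$ bound.} This is the main obstacle, and I split the fractional derivative with the product rule quoted in Proposition \ref{prop-4.2}. The linear pieces put $|\pa_x|^{\frac13}$ on $\theta_L$ or $u_L$ and lose nothing, giving $\mathcal{E}_2G_0$; there I use $\nu^{-\frac13}$ from the time integral against $\nu^{\frac13}$ from $|\pa_x|^{\frac13}\theta_e\in X$. For $|\pa_x|^{\frac13}T_\mathfrak{L}$ I use \eqref{Tr-es-2}. The hard piece is $(|\pa_x|^{\frac13}u_e)\cdot\na\theta_e$ together with $u_e\cdot\na|\pa_x|^{\frac13}\theta_e$. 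The latter is bounded by $\|u_e\|_{L^2L^\infty}\nu^{-\frac12}\cdot\nu^{\frac13}\mathcal{E}_2$, giving $\nu^{-\frac16}|\ln\nu|\mathcal{E}_2^2$. For the commutator-type term, Lemma \ref{lem-3.1}, \eqref{px-13-ue-1-es}--\eqref{px-13-ue-2-es}, costs $\nu^{-\frac16}$. To avoid this I would redo the frequency split of $u_e$ with a $\nu$-dependent splitting point $N$. Low frequencies are paired via \eqref{px-13-ue-3-es}--\eqref{px-13-ue-4-es} in $L^2_xL^\infty_y$ against $\na\theta_e$ in $L^\infty_xL^2_y$. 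High frequencies use the $\nu^{\frac12}\|\pa_x\omega_e\|$ dissipation. I then choose $N$ to balance the two, for instance $N\sim\nu^{-\frac19}$. This optimization is what should produce the exponent $\nu^{-\frac29}\mathcal{E}_2^2$, and it is the step most likely to need adjustment.
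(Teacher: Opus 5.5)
Your proposal has two genuine gaps. The first recurs in every nonlinear term. The bound $\|u_e\|_{L^2L^\infty}\lesssim|\ln\nu|\mathcal{E}_2$, which you use for $u_e\cdot\na\omega_e$, $u_e\cdot\na\theta_e$ and $u_e\cdot\na|\pa_x|^{\frac13}\theta_e$, is false for $u_e^{(1)}$ on $(T_0,T)$. Estimate \eqref{u1-e-inf-es} with $j=0$ contains the term $\|\omega_e\|_{L^2}$, coming from the frequencies $|k|\le1$, and the $X$-norm controls this term only in $L^\infty_t$. Forcing it into $L^2_t$ through Poincar\'e costs $\nu^{-\frac12}$. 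That would give $\nu^{-1}\mathcal{E}_2^2$ in \eqref{g-1-g-2-px-g-2-leq-1}, and the bootstrap does not close. (The short-time bound \eqref{u-e-1-2-2-inf} worked only because $\|1\|_{L^2_t(0,\nu^{-1/6})}$ is finite.) The paper's remedy, in \eqref{ue-1-px-t-1-12}, uses the fact that $u_e^{(1)}$ always multiplies a horizontal derivative. The $\|\omega_e\|_{L^\infty L^2}$ piece is paired with the $L^1_t$ component of the $X$-norm, e.g. $\||\pa_x|^{\frac43}\theta_e\|_{L^1L^2}\le\nu^{-\frac16}\mathcal{E}_2$ or $\|\pa_x\omega_e\|_{L^1L^2}\le\nu^{-\frac12}\mathcal{E}_2$. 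The $|\ln\nu|$ piece is handled by interpolating the derivative factor between $L^1_t$ and $L^2_t$. Only $u_e^{(2)}$ genuinely lies in $L^2_tL^\infty$, by \eqref{u2-e-inf-es}.

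The same low-frequency issue breaks your treatment of $u_e^{(1)}\pa_x\omega_L$. $\|\mathcal{M}u_e\|_{L^2L^2}$ does not control $u_e^{(1)}$ near $k=0$, and a negative derivative cannot be moved across a product. Instead, use $\|u_e\|_{L^\infty L^2}\lesssim\|\omega_e\|_X$ against $\|(1+\nu t^3)^{-\frac12}\|_{L^1_t}\lesssim\nu^{-\frac13}$. Likewise, for $u_L\cdot\na\omega_e$ the factor $\|(1+t)^{-1}\|_{L^2(T_0,\infty)}\lesssim\nu^{\frac1{12}}$ only yields $\nu^{-\frac5{12}}$. You need $\|u_L^{(1)}\|_{L^\infty}\lesssim(1+t)^{-1}G_0\le\nu^{\frac16}G_0$ for $t>T_0$, paired against $\|\pa_x\omega_e\|_{L^1L^2}$.

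The second gap is the crux. The decisive term $|\pa_x|^{\frac13}u_e\cdot\na\theta_e$ is left as a plan, and the pairing you propose is not available. Putting $\na\theta_e$ in $L^\infty_xL^2_y$ requires more than half a horizontal derivative on $\na\theta_e$. $\mathcal{E}_2$ only controls $|\pa_x|^{\frac13}\na\theta_e$, and only in $L^2_tL^2$ at a cost of $\nu^{-\frac12}$. The choice $N\sim\nu^{-\frac19}$ is not supported by any computation. The missing idea has three parts:
\begin{enumerate}
\item Apply H\"older in $x$ as $L^3_xL^\infty_y\times L^6_xL^2_y$.
\item Use the one-dimensional embedding $\dot H^{\frac13}_x\hookrightarrow L^6_x$, so that $\|\pa_x\theta_e\|_{L^6_xL^2_y}\lesssim\||\pa_x|^{\frac43}\theta_e\|_{L^2}$.
\item Interpolate $\|f\|_{L^3_x}\le\|f\|_{L^2_x}^{\frac23}\|f\|_{L^\infty_x}^{\frac13}$ for $f=|\pa_x|^{\frac13}u_e^{(1)}$. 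Bound the first factor by \eqref{px-13-ue-3-es}, which has no $\nu$-loss, and the second by \eqref{px-13-ue-1-es}.
\end{enumerate}
The $\nu^{-\frac16}$ loss then enters only to the power $\frac13$. H\"older in time as $L^6_t\times L^{\frac65}_t$ gives $\nu^{-\frac59}\|\omega_e\|_X\||\pa_x|^{\frac13}\theta_e\|_X\le\nu^{-\frac29}\mathcal{E}_2^2$, as in \eqref{px-ue-1-t-1-12}. The $u_e^{(2)}\pa_y\theta_e$ piece is identical, using \eqref{px-13-ue-2-es} and \eqref{px-13-ue-4-es} with all three factors in $L^2_t$. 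This is exactly where the exponent $-\frac29=-\frac1{18}-\frac12+\frac13$ comes from; without this step your argument does not reach the stated bound.
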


 \begin{proof}
 We only present the proof of \eqref{g-1-g-2-px-g-2-leq} here,  since the estimates \eqref{g-1-g-2-px-g-2-leq-1}--\eqref{g-1-g-2-px-g-2-leq-2} can be obtained in a similar manner. For convenience, we denote
 $$J_{eL}:= (u_e^{(1)}\pa_x \theta_L+u_e^{(2)}(\pa_y+t\pa_x) \theta_L), \quad  J_{Le}:=u_L\cdot\na\theta_e, \quad J_{ee}:=u_e\cdot\na\theta_e.$$
 
\textbf{Step 1: Estimate of $|\pa_x|^{\frac{1}{3}}J_{eL}$}. Using \eqref{px-py-oL-es-inf} and \eqref{es-u-p-x-u-2} produces
\begin{align*}
\||\pa_x|^{\frac{1}{3}}J_{eL}\|_{L^1L^2}
\lesssim&\||\pa_x|^{\frac{1}{3}}u_e^{(1)}\pa_x\theta_L\|_{L^1L^2}
+\||\pa_x|^{\frac{1}{3}}u_e^{(2)}(\pa_y+t\pa_x)\theta_L\|_{L^1L^2} \nonumber\\
&+\|u_e^{(1)}|\pa_x|^{\frac{4}{3}}\theta_L\|_{L^1L^2}
+\|u_e^{(2)}|\pa_x|^{\frac{1}{3}}(\pa_y+t\pa_x)\theta_L\|_{L^1L^2}\nonumber\\
\lesssim&\|\omega_e\|_{L^\infty L^2}\big(\|\pa_x\theta_L\|_{L^1 L^\infty}
+\|(\pa_y+t\pa_x)\theta_L\|_{L^1 L^\infty}\nonumber\\
&+\||\pa_x|^{\frac{4}{3}}\theta_L\|_{L^1 L^\infty}
+\||\pa_x|^{\frac{1}{3}}(\pa_y+t\pa_x)\theta_L\|_{L^1 L^\infty}\big)\nonumber\\
\lesssim&\|\omega_e\|_{X}\|(1+\nu t^3)^{-\frac{1}{2}}\|_{L^1_t}F_0\lesssim\nu^{-\frac{1}{3}}\mathcal{E}_2F_0.
\end{align*}

\textbf{Step 2: Estimate of $|\pa_x|^{\frac{1}{3}}J_{Le}$}. In light of \eqref{px-py-u1L-es-inf}--\eqref{px-py-u2L-es-inf}, we obtain
\begin{align*}
\||\pa_x|^{\frac{1}{3}}J_{Le}\|_{L^1L^2}\lesssim&\||\pa_x|^{\frac{1}{3}}u_L^{(1)}\|_{L^\infty L^\infty}\|\pa_x\theta_e    \|_{L^1L^2}+\||\pa_x|^{\frac{1}{3}}u_L^{(2)}\|_{L^2 L^\infty}\|\pa_y\theta_e\|_{L^2L^2}\nonumber\\
&+\|u_L^{(1)}\|_{L^\infty L^\infty}\||\pa_x|^{\frac{4}{3}}\theta_e    \|_{L^1L^2}+\|u_L^{(2)}\|_{L^2 L^\infty}\||\pa_x|^{\frac{1}{3}}\pa_y\theta_e\|_{L^2L^2}\nonumber\\
\lesssim&\nu^{-\frac{1}{6}}(\|(1+t)^{-1}\|_{L^\infty_t}+\|(1+t)^{-2}(1+\ln(1+t))\|_{L^2_t})\mathcal{E}_2G_0\lesssim\mathcal{E}_{2}G_0.
\end{align*}

\textbf{Step 3: Estimate of $|\pa_x|^{\frac{1}{3}}J_{ee}$}.  Thanks to \eqref{u1-e-inf-es}--\eqref{u2-e-inf-es}, we have
\begin{align}\label{ue-1-px-t-1-12}
&\|u_e^{(1)}|\pa_x|^{\frac{4}{3}}\theta_e\|_{L^1L^2}\lesssim\big\|\|u_e^{(1)}\|_{L^\infty}\||\pa_x|^{\frac{4}{3}}\theta_e\|_{L^2}
\big\|_{L^1_t}\nonumber\\
\lesssim&\nu^{\frac{1}{4}}\|\pa_x\omega_e\|_{L^2L^2}^{\frac{1}{2}}\|\pa_x u_e\|_{L^2L^2}^{\frac{1}{2}}
\||\pa_x|^{\frac{4}{3}}\theta_e\|_{L^2L^2}+\|\omega_e\|_{L^\infty L^2}\||\pa_x|^{\frac{4}{3}}\theta_e\|_{L^1L^2}\nonumber\\
&+|\ln\nu|\|\omega_e\|_{L^\infty L^2}^{\frac{1}{2}}\|\pa_x u_e\|_{L^2L^2}^{\frac{1}{2}}\||\pa_x|^{\frac{4}{3}}\theta_e\|_{L^1L^2}^{\frac{1}{2}}\||\pa_x|^{\frac{4}{3}}\theta_e\|_{L^2L^2}^{\frac{1}{2}}\nonumber\\
\lesssim&\nu^{-\frac{1}{2}}|\ln\nu|\|\omega_e\|_{X}\||\pa_x|^{\frac{1}{3}}\theta_e\|_{X}\lesssim\nu^{-\frac{1}{6}}|\ln\nu|\mathcal{E}_2^2,
\end{align}
and
\begin{align}
&\|u_e^{(2)}|\pa_x|^{\frac{1}{3}}\pa_y\theta_e\|_{L^1L^2}\lesssim\big\|\|u_e^{(2)}\|_{L^\infty}\||\pa_x|^{\frac{1}{3}}\pa_y\theta_e\|_{L^2}
\big\|_{L^1_t}\nonumber\\
\lesssim&\nu^{\frac{1}{4}}\|\pa_x\omega_e\|_{L^2L^2}^{\frac{1}{2}}\|\pa_x u_e\|_{L^2L^2}^{\frac{1}{2}}
\||\pa_x|^{\frac{1}{3}}\pa_y\theta_e\|_{L^2L^2}+|\ln\nu|\|\pa_x u_e\|_{L^2L^2}\||\pa_x|^{\frac{1}{3}}\pa_y\theta_e\|_{L^1L^2}\nonumber\\
\lesssim&\nu^{-\frac{1}{2}}|\ln\nu|\|\omega_e\|_{X}\||\pa_x|^{\frac{1}{3}}\theta_e\|_{X}\lesssim\nu^{-\frac{1}{6}}|\ln\nu|\mathcal{E}_2^2.
\end{align}
Due to \eqref{px-13-ue-1-es}, \eqref{px-13-ue-3-es}, we obtain
\begin{align*}
\Big\|\||\pa_x|^{\frac{1}{3}}u_e^{(1)}\|_{L^2_xL^\infty_y}^{\frac{2}{3}}\Big\|_{L^6_t}
\lesssim&\|\mathcal{M}u_e\|_{L^2L^2}^{\frac{1}{3}}\|\omega_e\|_{L^\infty L^2}^{\frac{1}{3}}
\lesssim\|\omega_e\|_{X}^{\frac{2}{3}}\lesssim\mathcal{E}_2^{\frac{2}{3}},
\end{align*}
and
\begin{align*}
&\Big\|
\||\pa_x|^{\frac{1}{3}}u_e^{(1)}\|_{L^\infty_xL^\infty_y}^{\frac{1}{3}}
\||\pa_x|^{\frac{4}{3}}\theta_e\|_{L^2_xL^2_y}\Big\|_{L^{\frac{6}{5}}_t}\nonumber\\
\lesssim&\big(\nu^{\frac{1}{36}}\|\pa_x\omega_e\|_{L^2L^2}^{\frac{1}{6}}\|\pa_xu_e\|_{L^2L^2}^{\frac{1}{6}}
+\|\mathcal{M}u_e\|_{L^2L^2}^{\frac{1}{3}}\big)\||\pa_x|^{\frac{4}{3}}\theta_e\|_{L^1L^2}^{\frac{1}{3}}
\||\pa_x|^{\frac{4}{3}}\theta_e\|_{L^2L^2}^{\frac{2}{3}}\nonumber\\
&+\nu^{-\frac{1}{18}}\|\omega_e\|_{L^\infty L^2}^{\frac{1}{6}}\|\pa_x u_e\|_{L^2L^2}^{\frac{1}{6}}\||\pa_x|^{\frac{4}{3}}\theta_e\|_{L^1L^2}^{\frac{1}{2}}
\||\pa_x|^{\frac{4}{3}}\theta_e\|_{L^2L^2}^{\frac{1}{2}}\nonumber\\
\lesssim&\nu^{-\frac{5}{9}}\|\omega_e\|_{X}^{\frac{1}{3}}\||\pa_x|^{\frac{1}{3}}\theta_e\|_{X}\lesssim\nu^{-\frac{2}{9}}\mathcal{E}_2^{\frac{4}{3}},
\end{align*}
which implies
\begin{align}\label{px-ue-1-t-1-12}
&\||\pa_x|^{\frac{1}{3}}u_e^{(1)}\pa_x\theta_e\|_{L^1L^2}\nonumber\\
\lesssim&\Big\|\||\pa_x|^{\frac{1}{3}}u_e^{(1)}\|_{ L^2_xL^\infty_y}^{\frac{2}{3}}\Big\|_{L^6_t}\Big\|
\||\pa_x|^{\frac{1}{3}}u_e^{(1)}\|_{L^\infty_xL^\infty_y}^{\frac{1}{3}}
\||\pa_x|^{\frac{4}{3}}\theta_e\|_{L^2_xL^2_y}\Big\|_{L^{\frac{6}{5}}_t}\lesssim\nu^{-\frac{2}{9}}\mathcal{E}_2^2.
\end{align}
 By means of \eqref{px-13-ue-2-es}, \eqref{px-13-ue-4-es}, we deduce
\begin{align}\label{px-ue-1-t-1-12-2}
&\||\pa_x|^{\frac{1}{3}}u_e^{(2)}\pa_y\theta_e\|_{L^1L^2}\nonumber\\
\lesssim&\||\pa_x|^{\frac{1}{3}}u_e^{(2)}\|_{L^2_tL^2_xL^\infty_y}^{\frac{2}{3}}
\||\pa_x|^{\frac{1}{3}}u_e^{(2)}\|_{L^2_tL^\infty_xL^\infty_y}^{\frac{1}{3}}
\||\pa_x|^{\frac{1}{3}}\pa_y\theta_e\|_{L^2_tL^2_xL^2_y}\nonumber\\
\lesssim&\|\mathcal{M}u_e\|_{L^2L^2}^{\frac{2}{3}}\big(\nu^{\frac{1}{36}}\|\pa_x\omega_e\|_{L^2L^2}^{\frac{1}{6}}
\|\pa_x u_e\|_{L^2L^2}^{\frac{1}{6}}+\nu^{-\frac{1}{18}}\|\pa_x u_e\|_{L^2L^2}^{\frac{1}{3}}\big)
\||\pa_x|^{\frac{1}{3}}\pa_y\theta_e\|_{L^2L^2}\nonumber\\
\lesssim&\nu^{-\frac{5}{9}}\|\omega_e\|_{X}\||\pa_x|^{\frac{1}{3}}\theta_e\|_{X}\lesssim\nu^{-\frac{2}{9}}\mathcal{E}_2^2.
\end{align}
Combining \eqref{ue-1-px-t-1-12}--\eqref{px-ue-1-t-1-12-2}, we deduce that
\begin{align*}
&\||\pa_x|^{\frac{1}{3}}J_{ee}\|_{L^1L^2}\lesssim\||\pa_x|^{\frac{1}{3}}(u_e^{(1)}\pa_x\theta_e)\|_{L^1L^2}
+\||\pa_x|^{\frac{1}{3}}(u_e^{(2)}\pa_y\theta_e)\|_{L^1L^2}\nonumber\\
\lesssim&\|u_e^{(1)}|\pa_x|^{\frac{4}{3}}\theta_e\|_{L^1L^2}
+\|u_e^{(2)}|\pa_x|^{\frac{1}{3}}\pa_y\theta_e\|_{L^1L^2}+\||\pa_x|^{\frac{1}{3}}u_e^{(1)}\pa_x\theta_e\|_{L^1L^2}
+\||\pa_x|^{\frac{1}{3}}u_e^{(2)}\pa_y\theta_e\|_{L^1L^2}\nonumber\\
\lesssim&\nu^{-\frac{2}{9}}\mathcal{E}_2^2.
\end{align*}

\textbf{Step 4: Estimate of $|\pa_x|^{\frac{1}{3}}T_\mathfrak{L} $}. Using \eqref{Tr-es-2} and $G_0\lesssim\nu^{\frac{1}{3}}$, we obtain
\begin{align*}
\||\pa_x|^{\frac{1}{3}}T_\mathfrak{L} \|_{L^1L^2}\lesssim& \Big\|\Big(\frac{1}{(1+t)(1+\nu t^3)^{\frac{3}{4}}}+\frac{1+\ln(1+t)}{(1+t)^{2}(1+\nu t^3)^{\frac{1}{4}}}\Big)e^{-\nu t}\Big\|_{L^1_t}G_0F_0\nonumber\\
&+\big\|\big((1+\nu t^3)^{-\frac{1}{4}}+(1+\nu t^3)^{-\frac{3}{4}}t\big)\nu e^{-\nu t}\big\|_{L^1_t}F_0\lesssim \nu^{\frac{1}{3}}|\ln \nu|F_0.
\end{align*}
Thus we complete the proof of Lemma~\ref{lem-5.1}.
\end{proof}

\begin{proposition}\label{prop-5.2}
Let $(\omega_\text{b}$, $\theta_\text{b})$ be the solution of \eqref{o-1-t-1-eq}. For $T>T_0=\nu^{-\frac{1}{6}}$, it follows
\begin{align*}
&\|\omega_\text{b}\|_{X}+\nu^{-\frac{1}{3}}|\ln\nu|^4\|\theta_\text{b}\|_{X}
+\nu^{-\frac{1}{3}}\||\pa_x|^{\frac{1}{3}}\theta_\text{b}\|_{X}\\
\lesssim&\mathcal{E}_{2,0}+\nu^{-\frac{1}{3}}\mathcal{E}_2(E_0+\nu^{-\frac{1}{3}}|\ln\nu|^4F_0)
+\nu^{-\frac{5}{9}}\mathcal{E}_2^2.
\end{align*}
\end{proposition}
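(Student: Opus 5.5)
The plan is to treat \eqref{o-1-t-1-eq} as two forced linear problems of type \eqref{f-st-es} and apply Lemma~\ref{lem-3.2} to each, with the forcings supplied by Lemma~\ref{lem-5.1}. The $\theta_\text{b}$ equation is uncoupled, and its right-hand side is exactly $-g_2$. So \eqref{fX-es} gives
\[
\|\theta_\text{b}\|_X\lesssim \|\theta_e(T_0)\|_{L^2}+\|g_2\|_{L^1L^2}.
\]
Applying $|\pa_x|^{\frac13}$, which commutes with $-\nu\Delta+y\pa_x$ and preserves the boundary condition, gives
\[
\||\pa_x|^{\frac13}\theta_\text{b}\|_X\lesssim \||\pa_x|^{\frac13}\theta_e(T_0)\|_{L^2}+\||\pa_x|^{\frac13}g_2\|_{L^1L^2}.
\]

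The initial data are controlled by Proposition~\ref{prop-4.1}, which gives $\|\theta_e(T_0)\|_{L^2}\lesssim\nu^{\frac13}|\ln\nu|F_0$, and by Proposition~\ref{prop-4.2}, which gives $\||\pa_x|^{\frac13}\theta_e(T_0)\|_{L^2}\lesssim\nu^{\frac23}|\ln\nu|^2(E_0+\nu^{-\frac13}|\ln\nu|^4F_0)$. After multiplying by $\nu^{-\frac13}|\ln\nu|^4$ and $\nu^{-\frac13}$ respectively, both are bounded by $\mathcal{E}_{2,0}$. The forcing terms are handled by \eqref{g-1-g-2-px-g-2-leq-2} and \eqref{g-1-g-2-px-g-2-leq}. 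For example, $\nu^{-\frac13}|\ln\nu|^4\cdot\nu^{-\frac13}\mathcal{E}_2F_0\le\nu^{-\frac13}\mathcal{E}_2\cdot\nu^{-\frac13}|\ln\nu|^4F_0$. Likewise $\nu^{-\frac13}\cdot\nu^{-\frac29}\mathcal{E}_2^2=\nu^{-\frac59}\mathcal{E}_2^2$, and $\nu^{-\frac13}|\ln\nu|^4\nu^{-\frac16}|\ln\nu|^{-3}\mathcal{E}_2^2\lesssim\nu^{-\frac59}\mathcal{E}_2^2$. The $\mathcal{E}_{2,0}$ pieces carry a spare factor $\nu^{\frac13}|\ln\nu|^{-6}$, so they are harmless. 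This is where the conditions are tightest: the $|\pa_x|^{\frac13}$ quadratic term produces exactly the $\nu^{-\frac59}$ loss in the statement, and the logarithmic weights must be matched.

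The $\omega_\text{b}$ equation carries the buoyancy forcing $-\pa_x\theta_\text{b}$ in addition to $-g_1$. Estimating it directly in $L^1L^2$ would cost $\nu^{-\frac12}\|\pa_x\theta_\text{b}\|_{L^1L^2}$, which comes from the last component of the $X$ norm, so I would avoid this. Instead I would use the good unknown
\[
\alpha_\text{b}=\omega_\text{b}+t\pa_x\theta_\text{b},
\]
as in Lemma~\ref{lem-2.3}. However, the commutator $[t\pa_x,-\nu\Delta+y\pa_x]$ vanishes except through the time derivative, and that derivative simply gives back $\pa_x\theta_\text{b}$; a growing $t$ factor also enters.

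A cleaner route is to split $\pa_x\theta_\text{b}=|\pa_x|^{\frac23}\cdot\pa_x|\pa_x|^{-\frac23}\theta_\text{b}$ and use \eqref{pxfX-es}-type enhanced dissipation. Concretely, I would decompose $\pa_x\theta_\text{b}=\mathcal{H}(|\pa_x|^{\frac23}\cdot|\pa_x|^{\frac13}\theta_\text{b})$, where $\mathcal{H}$ is the Hilbert transform in $x$. Since $|\pa_x|^{\frac13}\theta_\text{b}$ solves the $\theta$-type equation with forcing $|\pa_x|^{\frac13}g_2$, \eqref{pxfX-es} gives
\[
\|\pa_x\theta_\text{b}\|_{L^1L^2}\lesssim\nu^{-\frac13}\big(|\ln\nu|\,\||\pa_x|^{\frac13}\theta_e(T_0)\|_{L^2}+\||\pa_x|^{\frac13}g_2\|_{L^1L^2}\big).
\]
Feeding this into \eqref{fX-es} for $\omega_\text{b}$, together with $\|\omega_e(T_0)\|_{L^2}$ from Proposition~\ref{prop-4.1} and \eqref{g-1-g-2-px-g-2-leq-1}, yields a bound of the form $\mathcal{E}_{2,0}+\nu^{-\frac13}\mathcal{E}_2(E_0+\nu^{-\frac13}|\ln\nu|^4F_0)+\nu^{-\frac59}\mathcal{E}_2^2$. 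The remaining check is that the one extra logarithm from \eqref{pxfX-es} fits within the $|\ln\nu|^3$ in $\mathcal{E}_{2,0}$, using the $|\ln\nu|^2$ from Proposition~\ref{prop-4.2}.

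Summing the three estimates gives the proposition. I expect the main obstacle to be this coupling term $\pa_x\theta_\text{b}$: the $\nu^{-\frac13}$ must be gained through the $\nu^{\frac13}\||\pa_x|^{\frac23}f\|_{L^1L^2}$ bound, not through $\nu^{\frac12}\|\pa_x f\|_{L^1L^2}$, since the latter would only give a $\nu^{-\frac12}$ loss.
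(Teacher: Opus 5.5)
Your proposal is correct and follows the paper's proof. You apply \eqref{fX-es} to $\theta_\text{b}$, $|\pa_x|^{\frac13}\theta_\text{b}$ and $\omega_\text{b}$, and you control the buoyancy term $\|\pa_x\theta_\text{b}\|_{L^1L^2}$ by applying \eqref{pxfX-es} to $|\pa_x|^{\frac13}\theta_\text{b}$, which is exactly the paper's \eqref{p-x-t-1-1-2}; you then insert Lemma~\ref{lem-5.1} and the data bounds of Propositions~\ref{prop-4.1}--\ref{prop-4.2}, with your bookkeeping of powers of $\nu$ and $|\ln\nu|$ checking out. The good-unknown $\alpha_\text{b}$ detour is unnecessary and can be dropped.
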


\begin{proof}  In view of \eqref{fX-es}, we obtain
\begin{align}
\|\omega_\text{b}\|_{X}
\lesssim&\|\omega_\text{b}(T_0)\|_{L^2}+\|g_1\|_{L^1L^2}+\|\pa_x\theta_\text{b}\|_{L^1L^2},\label{o-1-geq-es-X-0}\\
\|\theta_\text{b}\|_{X}\lesssim&\|\theta_\text{b}(T_0)\|_{L^2}+\|g_2\|_{L^1L^2},\label{t-1-geq-es-X}\\
\||\pa_x|^{\frac{1}{3}}\theta_\text{b}\|_{X}
\lesssim&\||\pa_x|^{\frac{1}{3}}\theta_\text{b}(T_0)\|_{L^2}+\||\pa_x|^{\frac{1}{3}}g_2\|_{L^1L^2}. \label{px-t-1-geq-es-X}
\end{align}
Replacing $f$ in \eqref{pxfX-es} by $|\pa_x|^{\frac{1}{3}}\theta_\text{b}$, we derive
\begin{align}\label{p-x-t-1-1-2}
\|\pa_x\theta_\text{b}\|_{L^1L^2}\lesssim\nu^{-\frac{1}{3}}|\ln \nu|\||\pa_x|^{\frac{1}{3}}\theta_\text{b}(T_0)\|_{L^2}+\nu^{-\frac{1}{3}}\||\pa_x|^{\frac{1}{3}}g_2\|_{L^1L^2}.
\end{align}
Collecting \eqref{o-1-geq-es-X-0} and \eqref{p-x-t-1-1-2} yields
\begin{align}\label{o-1-geq-es-X}
\|\omega_\text{b}\|_{X}
\lesssim&\|\omega_\text{b}(T_0)\|_{L^2}+\|g_1\|_{L^1L^2}+\nu^{-\frac{1}{3}}|\ln\nu|\||\pa_x|^{\frac{1}{3}}\theta_\text{b}(T_0)\|_{L^2} +\nu^{-\frac{1}{3}}\||\pa_x|^{\frac{1}{3}}g_2\|_{L^1L^2}.
\end{align}
Inserting the estimates in Lemma \ref{lem-5.1} into \eqref{t-1-geq-es-X}, \eqref{px-t-1-geq-es-X} and \eqref{o-1-geq-es-X}, we complete the proof of Proposition \ref{prop-5.2}.
\end{proof}

\subsection{Nonlinear estimates of \texorpdfstring{$(\omega_\mathfrak{E}$, $\theta_\mathfrak{E})$}{omega\_E, theta\_E}}

We further decompose the variable $(\omega_\mathfrak{E},\theta_\mathfrak{E})$ in \eqref{o-2-t-2-eq} as $(\omega_\mathfrak{E},\theta_\mathfrak{E})=(\omega_{r},\theta_{r})+(\omega_\mathfrak{o},0)$, where
\begin{align}\label{o-2-1-o-2-2-eq-1}
\begin{cases}
\pa_t \omega_{{r}}-\nu\Delta\omega_{{r}}+y\pa_x\omega_{{r}}=-\pa_x\theta_{r}, \\
\pa_t \theta_{r}-\nu\Delta\theta_{r}+y\pa_x\theta_{r}=tu_e^{(2)}\pa_x\theta_L, \\
\omega_{r}(t,x,\pm 1)=\theta_{r}(t,x,\pm 1)=0, \\
\omega_{r}|_{t=T_0}=\theta_{r}|_{t=T_0}=0,
\end{cases}
\end{align}
and
\begin{align}\label{o-2-1-o-2-2-eq-2}
\begin{cases}
\pa_t \omega_{\mathfrak{o}}-\nu\Delta\omega_\mathfrak{o}+y\pa_x\omega_\mathfrak{o}=tu_e^{(2)}\pa_x\omega_L, \\
\omega_\mathfrak{o}(t,x,\pm 1)=0, \quad \omega_\mathfrak{o}|_{t=T_0}=0.
\end{cases}
\end{align}
Applying  \cite[Proposition 3.6]{CLM-26} to $\eqref{o-2-1-o-2-2-eq-2}$ and $\eqref{o-2-1-o-2-2-eq-1}_2$ respectively, we obtain 
\begin{proposition}\label{prop-5.1}
Let $\omega_{\mathfrak{o}}$, $\theta_r$ be the solutions of $\eqref{o-2-1-o-2-2-eq-2}$ and $\eqref{o-2-1-o-2-2-eq-1}_2$, respectively. Then
\begin{align*}
&\|\omega_{\mathfrak{o}}\|_{X}\lesssim \nu^{-\frac{1}{3}}\mathcal{E}_2G_0,\\
&\|\theta_r\|_{X}\lesssim \nu^{-\frac{1}{3}}\mathcal{E}_2F_0.
\end{align*}
\end{proposition}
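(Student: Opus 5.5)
We treat both bounds as instances of the Navier--Stokes estimate \cite[Proposition 3.6]{CLM-26}, which controls in the $X(T_0,T)$ norm the solution of
$$\pa_t f-\nu\Delta f+y\pa_x f=t\,u_e^{(2)}\pa_x h_L,\qquad f|_{y=\pm1}=0,\qquad f(T_0)=0,$$
where $h_L$ solves a linear transport-diffusion equation with the decay of Proposition \ref{prop-2.4}. The key structural point is that the equation for $\omega_{\mathfrak o}$ in \eqref{o-2-1-o-2-2-eq-2} is exactly of this type with $h_L=\omega_L$. Likewise, $\eqref{o-2-1-o-2-2-eq-1}_2$ is decoupled from $\omega_r$: the buoyancy coupling only feeds $\theta_r$ into $\omega_r$, not back. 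So $\theta_r$ solves the same scalar problem with $h_L=\theta_L$, and no coupling estimate is needed here.

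We proceed in three steps. First, we verify that the inputs used in \cite[Proposition 3.6]{CLM-26} for $\omega_L$ are available here with $G_0$ in place of the Navier--Stokes data size. These are the $L^\infty$ and $L^2$ bounds on $D_x^{s_1}(\pa_y+t\pa_x)^{s_2}\omega_L$ with weight $(1+\nu t^3)^{-\frac12-\frac{s_1}{2}}e^{-\nu t}$, from \eqref{px-py-oL-es-inf}, \eqref{px-py-oL-es-2}. At the Fourier level they are the per-mode bounds \eqref{oL-es-2-k}, \eqref{oL-es-inf-k}, with $A_s+\nu^{-\frac13}|k|^{\frac13}B_s$ bounded by $G_0$ once the extra $k$-derivatives of $\theta^{in}$ are absorbed; this is why $\theta^{in}$ is taken in $H^5$. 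For $\theta_L$ we use \eqref{tL-es-2-k}, \eqref{tL-es-inf-k}, \eqref{px-py-tL-es-inf} with $F_0$.

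Second, on the $u_e$ side, \cite[Proposition 3.6]{CLM-26} uses only $\|\omega_e\|_{X(T_0,T)}$ through $\|\mathcal M u_e\|_{L^2L^2}$ and $\|\omega_e\|_{L^\infty L^2}$. Both are dominated by $\mathcal E_2$, since the $X$-norm contains $\|\mathcal M\na\Delta^{-1}\omega_e\|_{L^2L^2}$.

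Third, we feed these into the argument. Per frequency $k$, the forcing $\int il\,t\,u^{(2)}_{e,k-l}h_{L,l}\,dl$ is handled by the inviscid-damping/resolvent estimate (\cite[Lemma 2.1]{CLM-26}, the analogue of Lemmas \ref{lem-3.6}--\ref{lem-3.7}). The growth factor $t$ is offset by the decay $t^{-1}$ of $u_e^{(2)}$ relative to $\mathcal M u_e$ together with the enhanced dissipation of $h_L$. This yields $\|f\|_X\lesssim \nu^{-\frac13}\|\omega_e\|_X\cdot(\text{size of }h_L)$, that is, $\nu^{-\frac13}\mathcal E_2G_0$ and $\nu^{-\frac13}\mathcal E_2F_0$ respectively.

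The main obstacle is the first step: making sure that no hidden dependence on $\omega^{in}$ alone in \cite[Proposition 3.6]{CLM-26} is broken by the lift-up term $-t\pa_x\theta^{in}$ in $\omega_L$. In particular, the extra $|k|^{\frac13}\nu^{-\frac13}$ factor must not destroy any $k$-integrability used there. We would check this by writing $\omega_L=\alpha_L-t\pa_x\theta_L$ as in Lemma \ref{lem-2.3}. The $\alpha_L$ part is literally the Navier--Stokes case. For the $t\pa_x\theta_L$ part, we trade $t|k|$ against $e^{-c\nu k^2t^3}$, which costs $\nu^{-\frac13}|k|^{\frac13}$; for $|k|\ge1$ this is absorbed by one additional $y,k$-derivative on $\theta^{in}$.
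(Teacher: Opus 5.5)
Your proposal is correct and matches the paper. The paper's entire proof is the citation of \cite[Proposition 3.6]{CLM-26}, applied to \eqref{o-2-1-o-2-2-eq-2} and to the decoupled equation $\eqref{o-2-1-o-2-2-eq-1}_2$; your check that the lift-up part $t\pa_x\theta_L$ of $\omega_L$ costs only $\nu^{-\frac13}|k|^{\frac13}$, absorbed into $G_0$ by the extra derivative on $\theta^{in}$, is the justification the paper leaves implicit. Don't take the heuristic in your third step literally, though: $\mathcal{E}_2$ gives only the $X$-norm quantities ($L^\infty_tL^2$, dissipation, and $\|\mathcal{M}\na\Delta^{-1}\omega_e\|_{L^2L^2}$), not a pointwise $t^{-1}$ decay of $u_e^{(2)}$, so the factor $t$ is paid for by the enhanced dissipation of $\omega_L,\theta_L$ combined with that space--time bound (compare the localization $t\le 2^j\nu^{-\frac13}$ in the proof of Proposition \ref{prop-5.5}).
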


To close the estimates for the system \eqref{ote-eq}, we provide the following proposition to bound $\omega_r$ and $|\pa_x|^{\frac{1}{3}}\theta_r$.

\begin{proposition}\label{prop-5.5}
Let $\omega_r$, $\theta_r$ be the solutions of $\eqref{o-2-1-o-2-2-eq-1}$. For $T> T_0=\nu^{-\frac{1}{6}}$, we have
\begin{align*}
\|\omega_r\|_{X}+\nu^{-\frac{1}{3}}\||\pa_x|^{\frac{1}{3}}\theta_r\|_{X}
\lesssim\nu^{-\frac{2}{3}}\mathcal{E}_2F_0.
\end{align*}
\end{proposition}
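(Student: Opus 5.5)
We follow the strategy sketched in \textbf{I\!I\!I} of the introduction. Everything is done on the Fourier side in $x$: $\omega_{r,k}$ and $\theta_{r,k}$ satisfy \eqref{f-ft-es} with zero data at $T_0$. The forcing of $\theta_{r,k}$ is
\begin{align*}
g_k=\int_{\mathbb{R}} il\,t\,u^{(2)}_{e,k-l}\theta_{L,l}\,dl,
\end{align*}
and the forcing of $\omega_{r,k}$ is $-ik\theta_{r,k}$. The $\omega_r$ bound is then a consequence of the $|\pa_x|^{\frac13}\theta_r$ bound. By \eqref{fX-es-k} and \eqref{pxfX-es},
\begin{align*}
\|\omega_{r}\|_X\lesssim \|k\theta_{r,k}\|_{L^1_tL^2_y}\lesssim\nu^{-\frac13}\big\||k|^{\frac13}\theta_{r,k}\big\|_{\text{dual}},
\end{align*}
in the same way as \eqref{p-x-t-1-1-2}. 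The $\pa_x\theta_r$ term is handled exactly as in Proposition~\ref{prop-5.2}. It therefore suffices to prove
\begin{align*}
\||\pa_x|^{\frac13}\theta_r\|_X\lesssim \nu^{-\frac13}\mathcal{E}_2F_0,
\end{align*}
in a form that also controls $\nu^{\frac13}|k|^{\frac23}\|\cdot\|_{L^1_t}$ frequency by frequency. We split the frequencies into three ranges: $|k|\ge\nu^{-\frac12}$, $|k|\le1$, and $1\le|k|\le\nu^{-\frac12}$.

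\emph{High frequencies, $|k|\ge\nu^{-\frac12}$.} Here $\nu k^2\ge1$, so viscosity dominates. Write
\begin{align*}
t\,u^{(2)}_e\pa_x\theta_L=u^{(2)}_e\,(\pa_y+t\pa_x)\theta_L-u^{(2)}_e\pa_y\theta_L .
\end{align*}
The $\pa_y$ piece is put in divergence form as $\pa_y(u_e^{(2)}\theta_L)+u^{(2)}_e$-terms, using $\pa_yu_e^{(2)}=-\pa_xu_e^{(1)}$. We then apply Lemma~\ref{lem-3.4} with the $\min\{\nu^{-\frac16}|k|^{\frac23},\nu^{-\frac12}\}$ weights. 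The extra $|k|^{\frac13}$ costs at most $\nu^{-\frac16}$, and this is absorbed by the resolvent gain $\nu^{\frac16}|k|^{\frac13}\|f\|_{L^2_tL^2_y}$. The remaining factors are bounded by \eqref{px-py-tL-es-inf} together with $\|\mathcal{M}u_e\|_{L^2L^2}\le\mathcal{E}_2$.

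\emph{Low frequencies.} For $0<|k|\le\nu^{-\frac12}$ we apply the dyadic splitting $\theta_{r,k}=\sum_j\theta_{r,k,j}$ on $I_j=(T_{j-1},T_j]$. By Lemma~\ref{lem-3.5}, the piece started on $I_j$ decays like $e^{-c\nu^{\frac13}|k|^{\frac23}(t-T_j)}$ afterwards. Summing over $j$ with Cauchy--Schwarz, as in \eqref{f-k-m-es}, reduces the problem to estimating each $\theta_{r,k,j}$ on its own interval. For $|k|\le1$ we use Lemma~\ref{lem-3.6} with $t_1=T_j$ for each fixed $l$, then integrate in $l$ by Minkowski.
\begin{itemize}
\item The factor $\langle\nu^{\frac13}T_j\rangle^3\sim2^{3j}$ is compensated by $\|e^{ilyt}\theta_{L,l}\|\lesssim e^{-c\nu l^2t^3}$ for $|l|\gtrsim1$, and by the weight $(1+\nu t^3)^{-\frac12}$ coming from Proposition~\ref{prop-2.4}. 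We use that $u^{(2)}_{e}$ carries $|k-l|^{\frac12}\langle k-l\rangle^{\frac12}$ through $\mathcal{M}$.
\item The factor $t$ in the forcing is paired with the $2^{-j}$ decay of $\theta_L$.
\item The bound $|k|^{\frac13}\le1$ is harmless.
\end{itemize}

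\emph{Middle frequencies, $1\le|k|\le\nu^{-\frac12}$; this is the main obstacle.} Lemma~\ref{lem-3.6} would lose $\nu^{-\frac16}$ here. We therefore use Lemma~\ref{lem-3.7} on each $I_j$. It already contains the second-level splitting into intervals of length $\nu^{-\frac13}|k|^{-\frac23}$, and it gives
\begin{align*}
\|\theta_{r,k,j}\|_{\mathcal{Y}}^2\lesssim\Big((2^{3j}|l|^2+1)|k-l|^{-1}\langle k-l\rangle^{-1}+\nu^{\frac13}|k|^{-\frac43}\Big)\big\|(\pa_y,\langle k-l\rangle)(e^{ilyt}g_{k,l})\big\|^2_{L^2_tL^2_y}.
\end{align*}
We multiply by $|k|^{\frac23}$. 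In the term $\nu^{\frac13}|k|^{-\frac23}$ the derivative gain has already been taken, which gives exactly $\nu^{-\frac13}$-level control. In the first term we put $|k|^{\frac13}\lesssim|k-l|^{\frac13}+|l|^{\frac13}$ onto $u_e$ (via $\mathcal{M}$) or onto $\theta_L$. For $\theta_L$ we use the $|l|^{s}$ decay \eqref{tL-es-2-k}, which is available up to three derivatives.

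The delicate points are checking that the $t$-weight, $2^{3j}$, and $|l|^2$ are all absorbed by $e^{-c\nu l^2t^3}$ with $t\sim 2^j\nu^{-\frac13}$, and that the resulting series in $j$ converges. Summing in $j$ and integrating in $k$ then gives $\nu^{-\frac13}\|\mathcal{M}u_e\|_{L^2L^2}F_0$ for $\||\pa_x|^{\frac13}\theta_r\|_X$. Inserting this into the $\omega_r$ bound from the first step yields the claimed $\nu^{-\frac23}\mathcal{E}_2F_0$.
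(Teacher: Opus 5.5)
\paragraph{The gap is in your first step.} Using the Duhamel bound \eqref{fX-es-k} for $\omega_{r,k}$ gives $\|\omega_{r,k}\|_{\mathcal{Y}}\lesssim|k|\,\|\theta_{r,k}\|_{L^1_tL^2_y}$. You would then need $\nu^{\frac13}|k|^{\frac23}\||k|^{\frac13}\theta_{r,k}\|_{L^1_tL^2_y}\lesssim\nu^{-\frac13}\mathcal{E}_2F_0$ in $L^2_k$. For $\nu k^2\ge1$ this is harmless, because $\nu^{-\frac13}|k|^{\frac13}\le\nu^{-\frac16}|k|^{\frac23}$.

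For $\nu k^2\le1$, nothing later in your argument supplies it. Lemmas~\ref{lem-3.6} and \ref{lem-3.7} control only $\|f\|_{\mathcal{Y}}+\nu^{\frac16}|k|^{\frac13}\|f\|_{L^2_tL^2_y}$. On $I_j$, the best conversion to $L^1_t$ is $\nu^{\frac13}|k|^{\frac23}\|f\|_{L^1(I_j;L^2_y)}\lesssim 2^{\frac j2}|k|^{\frac13}\cdot\nu^{\frac16}|k|^{\frac13}\|f\|_{L^2(I_j;L^2_y)}$. The factor $|k|^{\frac13}$ reaches $\nu^{-\frac16}$ at $|k|\sim\nu^{-\frac12}$, and $2^{\frac j2}$ swallows the $2^{-\frac38 j}$ margin that makes the $j$-series converge.

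Your fallback, handling $\pa_x\theta_r$ ``exactly as in Proposition~\ref{prop-5.2}'', gives $\|\pa_x\theta_r\|_{L^1L^2}\lesssim\nu^{-\frac13}\||\pa_x|^{\frac13}(tu_e^{(2)}\pa_x\theta_L)\|_{L^1L^2}\lesssim\nu^{-\frac56}\mathcal{E}_2F_0$. This is exactly the $\nu^{-\frac16}$ loss that forced $tu_e^{(2)}\pa_x\theta_L$ out of $g_2$ in the first place, and it does not close the bootstrap.

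The loss is real, not an artifact of the estimates. Suppose $\theta_{r,k}$ has size $A$ over a time span $\sim\nu^{-\frac13}$; this is generic, since $u_e$ is only in $L^2_t$ and $\theta_L$ persists until $t\sim\nu^{-\frac13}$. Then $|k|\|\theta_{r,k}\|_{L^1_t}\sim\nu^{-\frac13}|k|A$, whereas $\nu^{-\frac16}|k|^{\frac23}\|\theta_{r,k}\|_{L^2_t}\sim\nu^{-\frac13}|k|^{\frac23}A$.

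\paragraph{The missing idea.} Treat $-ik\theta_{r,k}$ as a forcing of the form $-ikg_1$ with $g_1=\theta_{r,k}$ in Lemma~\ref{lem-3.4}. This gives $\|\omega_{r,k}\|_{\mathcal{Y}}\lesssim\min\{\nu^{-\frac16}|k|^{\frac23},\nu^{-\frac12}\}\|\theta_{r,k}\|_{L^2_tL^2_y}$.

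For $\nu k^2\le1$ the right side equals $\nu^{-\frac13}|k|^{\frac13}\cdot\nu^{\frac16}|k|^{\frac13}\|\theta_{r,k}\|_{L^2_tL^2_y}$. That is precisely the $L^2_t$ enhanced-dissipation quantity Lemmas~\ref{lem-3.6}--\ref{lem-3.7} deliver on each $I_j$, with the part after $T_j$ handled by Lemma~\ref{lem-3.3}. For $\nu k^2\ge1$, use $\nu^{-\frac12}\le\nu^{\frac13}|k|^{\frac53}$ together with Lemma~\ref{lem-3.2} applied to $|\pa_x|^{\frac23}\theta_r$.

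Only the $\nu^{\frac12}\|\pa_x\omega_r\|_{L^1L^2}$ part of the $X$-norm should go through \eqref{pxfX-es}. Apply it first to $|\pa_x|^{\frac13}\omega_r$ and then to $|\pa_x|^{\frac23}\theta_r$; the extra weight $\nu^{\frac12}$ is what makes the $L^1_t$ route affordable there.

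\paragraph{Two smaller points.}
\begin{itemize}
\item Lemma~\ref{lem-3.5} holds only for $|k|\ge1$. For $|k|\le1$ the dyadic blocks are not matched to the decay time $\nu^{-\frac13}|k|^{-\frac23}$, so the Cauchy--Schwarz summation of \eqref{f-k-m-es} has no decay to exploit across the $I_j$. Sum in $\ell^1_j$ using the $2^{-\frac38 j}$ gain instead, as the paper does.
\item In your high-frequency paragraph, the term $\nu^{\frac16}|k|^{\frac13}\|f\|_{L^2_tL^2_y}$ sits on the left of Lemma~\ref{lem-3.4} and cannot absorb a right-hand-side loss. It is simpler to use Lemma~\ref{lem-3.3} and then $\nu^{-\frac16}\||\pa_x|^{\frac23}(tu_e^{(2)}\pa_x\theta_L)\|_{L^1L^2}\lesssim\nu^{-\frac23}\mathcal{E}_2F_0$, which follows from $\|t(1+\nu t^3)^{-1}\|_{L^2_t}\lesssim\nu^{-\frac12}$.
\end{itemize}
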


\begin{proof}
We first take the horizontal Fourier transform to \eqref{o-2-1-o-2-2-eq-1}:
 \begin{align}\label{equ: om2,the2k}
\begin{cases}
\pa_t \omega_{r,k}-\nu(\pa_y^2-k^2)\omega_{r,k}+iky\omega_{r,k}=-ik\theta_{r,k}, \\
\pa_t \theta_{r,k}-\nu(\pa_y^2-k^2)\theta_{r,k}+iky\theta_{r,k}=\widehat{tu_e^{(2)}\pa_x\theta_L}, \\
\omega_{r,k}(t,k,\pm 1)=\theta_{r,k}(t,k,\pm 1)=0, \\
\omega_{r,k}|_{t=T_0}=\theta_{r,k}|_{t=T_0}=0.
\end{cases}
\end{align}
It follows from the definition of $X$-norm and $Y$-norm that
\begin{align*}
\|\omega_r\|_{X}=&\|\omega_r\|_{Y}+\nu^{\frac{1}{2}}\|\pa_x\omega_r\|_{L^1L^2}\lesssim\|\omega_{r,k}\|_{L^2_k\mathcal{Y}}+\nu^{\frac{1}{2}}\|\pa_x\omega_r\|_{L^1L^2}\nonumber\\
\lesssim&\|\textbf{1}_{\nu k^2\geq 1}\omega_{r,k}\|_{L^2_k\mathcal{Y}}
+\|\textbf{1}_{\nu k^2\leq 1}\omega_{r,k}\|_{L^2_k\mathcal{Y}}
+\nu^{\frac{1}{2}}\|\pa_x\omega_r\|_{L^1L^2}\nonumber\\
:=&K_1+K_2+K_3,
\end{align*}
and
\begin{align*}
\nu^{-\frac{1}{3}}\||\pa_x|^{\frac{1}{3}}\theta_r\|_{X}\lesssim&\nu^{-\frac{1}{3}}\|\textbf{1}_{\nu k^2\geq 1}
|k|^{\frac{1}{3}}\theta_{r,k}\|_{L^2_k\mathcal{Y}}+\nu^{-\frac{1}{3}}\|\textbf{1}_{\nu k^2\leq 1}|k|^{\frac{1}{3}}\theta_{r,k}\|_{L^2_k\mathcal{Y}}+\nu^{\frac{1}{6}}\||\pa_x|^{\frac{4}{3}}\theta_r\|_{L^1L^2}\nonumber\\
:=&K_4+K_5+K_6.
\end{align*}
It suffices to prove
\begin{align*}
K_1+K_2+K_3+K_4+K_5+K_6\lesssim \nu^{-\frac{2}{3}}\mathcal{E}_2F_0.
\end{align*}

For $K_1$, using \eqref{fk-h1-h2-es} with $f=\omega_{r,k}$, and $\textbf{1}_{\nu k^2\geq 1} \nu^{-\frac{1}{2}}\leq \textbf{1}_{\nu k^2\geq 1}\nu^{\frac{1}{3}}|k|^{\frac{5}{3}}$, we obtain
\begin{align}\label{J-1-es}
K_1\lesssim \|\textbf{1}_{\nu k^2\geq 1}\nu^{\frac{1}{3}}|k|^{\frac{5}{3}}\|\theta_{r,k}\|_{L^2_tL^2_y}\|_{L^2_k}
\lesssim \nu^{\frac{1}{3}}\||\pa_x|^{\frac{5}{3}}\theta_r\|_{L^2_tL^2_{x,y}}.
\end{align}

For $K_3$, we utilize \eqref{pxfX-es}  with $f=|\pa_x|^{\frac{1}{3}}\omega_r$ to derive
\begin{align}\label{J-5-es}
K_3\lesssim \nu^{\frac{1}{6}}\||\pa_x|^{\frac{4}{3}}\theta_r\|_{L^1_tL^2_{x,y}}.
\end{align}
Thanks to Lemma \ref{lem-3.2} with $f=|\pa_x|^{\frac{2}{3}}\theta_r$, we achieve
\begin{align*}
\nu^{\frac{1}{3}}\||\pa_x|^{\frac{5}{3}}\theta_r\|_{L^2_tL^2_{x,y}}
+\nu^{\frac{1}{6}}\||\pa_x|^{\frac{4}{3}}\theta_r\|_{L^1_tL^2_{x,y}}
\lesssim\nu^{-\frac{1}{6}}\||\pa_x|^{\frac{2}{3}}(tu_e^{(2)}\pa_x\theta_L)\|_{L^1_tL^2_{x,y}}.
\end{align*}
This estimate, together with \eqref{J-1-es} and \eqref{J-5-es}, yields
\begin{align}\label{J-1-5-6-es}
K_1+K_3+K_6
\lesssim&\nu^{\frac{1}{3}}\||\pa_x|^{\frac{5}{3}}\theta_r\|_{L^2_tL^2_{x,y}}
+\nu^{\frac{1}{6}}\||\pa_x|^{\frac{4}{3}}\theta_r\|_{L^1_tL^2_{x,y}}\nonumber\\
\lesssim&\nu^{-\frac{1}{6}}\||\pa_x|^{\frac{2}{3}}(tu_e^{(2)}\pa_x\theta_L)\|_{L^1_tL^2_{x,y}}.
\end{align}

For $K_4$, using \eqref{fX-es-k} with $f=\theta_{r,k}$ and $\textbf{1}_{\nu k^2\geq 1}\nu^{-\frac{1}{3}}|k|^{\frac{1}{3}}\leq \textbf{1}_{\nu k^2\geq 1}
\nu^{-\frac{1}{6}}|k|^{\frac{2}{3}}$,  we have
\begin{align}
K_4\lesssim\nu^{-\frac{1}{6}}\||k|^{\frac{2}{3}}\theta_{r,k}\|_{L^2_k\mathcal{Y}}
\lesssim&\nu^{-\frac{1}{6}}\||k|^{\frac{2}{3}}(\widehat{tu_e^{(2)}\pa_x\theta_L})\|_{L^2_kL^1_tL^2_y} \nonumber\\
\lesssim&\nu^{-\frac{1}{6}}\||\pa_x|^{\frac{2}{3}}(tu_e^{(2)}\pa_x\theta_L)\|_{L^1_tL^2_{x,y}}.\label{J-3-es}
\end{align}
In view of \eqref{es-u-2-p-x-u-2} and \eqref{px-py-tL-es-inf}, we obtain
\begin{align}\label{J-1-3-5-6-es-0}
\||\pa_x|^{\frac{2}{3}}(tu_e^{(2)}\pa_x\theta_L)\|_{L^1_tL^2_{x,y}}
\lesssim&\|t|\pa_x|^{\frac{2}{3}}u_e^{(2)}\pa_x\theta_L\|_{L^1_tL^2_{x,y}}
+\|tu_e^{(2)}|\pa_x|^{\frac{5}{3}}\theta_L\|_{L^1_tL^2_{x,y}}\nonumber\\
\lesssim&\|\pa_x u_e\|_{L^2_tL^2_{x,y}}\big(\|t\pa_x\theta_L\|_{L^2_tL^\infty_{x,y}}
+\|t|\pa_x|^{\frac{5}{3}}\theta_L\|_{L^2_tL^\infty_{x,y}}\big)\nonumber\\
\lesssim&\|\omega_e\|_{X}\|t(1+\nu t^3)^{-1}\|_{L^2_t}F_0\lesssim\nu^{-\frac{1}{2}}\|\omega_e\|_{X}F_0.
\end{align}
Combining \eqref{J-1-5-6-es}--\eqref{J-1-3-5-6-es-0}, we deduce that
\begin{align}\label{J-1-3-5-6-es}
K_1+K_3+K_4+K_6\lesssim\nu^{-\frac{2}{3}}\|\omega_e\|_{X}F_0.
\end{align}

It remains to estimate $K_2+K_5$. Let $T_{j}=2^j\nu^{-\frac{1}{3}}$ with $j\geq 1$, and
\begin{align*}
K_{2,k}=\mathbf{1}_{\nu k^2\leq 1}\|\omega_{r,k}\|_{\mathcal{Y}}, \qquad  K_{5,k}=\nu^{-\frac{1}{3}}\mathbf{1}_{\nu k^2\leq 1}\||k|^{\frac{1}{3}}\theta_{r,k}\|_{\mathcal{Y}}.
\end{align*}
We decompose $\omega_{r,k}$ and $\theta_{r,k}$ as
$\omega_{r,k}=\sum_{j\geq 1}\omega_{r,k,j}$, $\theta_{r,k}=\sum_{j\geq 1}\theta_{r,k,j}$,
where
 \begin{align*}
\begin{cases}
\pa_t \omega_{r,k,j}-\nu(\pa_y^2-k^2)\omega_{r,k,j}+iky\omega_{r,k,j}=-ik\theta_{r,k,j}, t>T_{j-1}\\
\pa_t \theta_{r,k,j}-\nu(\pa_y^2-k^2)\theta_{r,k,j}+iky\theta_{r,k,j}=\widehat{tu_e^{(2)}\pa_x\theta_L}\chi_j, \\
\omega_{r,k,j}(t,k,\pm 1)=\theta_{r,k,j}(t,k,\pm 1)=0, \\
\omega_{r,k,j}|_{t=T_{j-1}}=\theta_{r,k,j}|_{t=T_{j-1}}=0,
\end{cases}
\end{align*}
with $\chi_j$ defined in \eqref{chi-t-j}. Using Lemma \ref{lem-3.3} with $f=\theta_{r,k,j}$, we obtain
\begin{align}\label{t-2-k-j-0}
\|\theta_{r,k,j}\|_{\mathcal{Y}(T_j,T)}+\nu^{\frac{1}{6}}|k|^{\frac{1}{3}}\|\theta_{r,k,j}\|_{L^2_t(T_j,T;L^2_y)}
\lesssim\|\theta_{r,k,j}(T_j)\|_{L^2_y}.
\end{align}
Applying Lemma \ref{lem-3.4} and equation $\eqref{equ: om2,the2k}_1$ with $f=\omega_{r,k}$, we have
\begin{align}\label{omega21k}
K_{2,k}=\mathbf{1}_{\nu k^2\leq 1}\|\omega_{r,k}\|_{\mathcal{Y}}\lesssim & \textbf{1}_{\nu k^2\leq 1}\nu^{-\frac{1}{6}}|k|^{\frac{2}{3}}\|\theta_{r,k}\|_{L^2_t(T_0,T;L^2_y)}.
\end{align}
According to the decomposition of $\theta_{r,k}$  and  \eqref{t-2-k-j-0}, we deduce
\begin{align}\label{theta2k}
\|\theta_{r,k}\|_{L^2_t(T_0,T;L^2_y)}
\lesssim&\Big\|\sum_{j\geq 1}\theta_{r,k,j}\Big\|_{L^2_t(T_0,T;L^2_y)}
\lesssim \sum_{j\geq 1}\|\theta_{r,k,j}\|_{L^2_t(T_{j-1},T;L^2_y)}\nonumber\\
\lesssim &\sum_{j\geq 1}\big(\|\theta_{r,k,j}\|_{L^2_t(T_{j-1},T_j;L^2_y)}+\|\theta_{r,k,j}\|_{L^2_t(T_{j},T;L^2_y)}\big)\nonumber\\
\lesssim &\sum_{j\geq 1}\big(\|\theta_{r,k,j}\|_{L^2_t(T_{j-1},T_j;L^2_y)}+\|\theta_{r,k,j}(T_j)\|_{L^2_y}\big).
\end{align}
Inserting \eqref{theta2k} into \eqref{omega21k}, we have
\begin{align}\label{k-geq-1-om-2-es}
K_{2,k}\lesssim &\textbf{1}_{\nu k^2\leq 1}\sum_{j\geq 1}\big(\nu^{-\frac{1}{6}}|k|^{\frac{2}{3}}\|\theta_{r,k,j}\|_{L^2_t(T_{j-1},T_j;L^2_y)}
+\nu^{-\frac{1}{3}}|k|^{\frac{1}{3}}\|\theta_{r,k,j}(T_j)\|_{L^2_y}\big)\nonumber\\
\lesssim &\textbf{1}_{\nu k^2\leq 1}\sum_{j\geq 1}\nu^{-\frac{1}{3}}|k|^{\frac{1}{3}}\Big(\nu^{\frac{1}{6}}|k|^{\frac{1}{3}}\|\theta_{r,k,j}\|_{L^2_t(T_{j-1},T_j;L^2_y)}
+\|\theta_{r,k,j}\|_{L^\infty_t(T_{j-1},T_j;L^2_y)}\Big).
\end{align}
By \eqref{t-2-k-j-0}, we derive
\begin{align*}
\|\theta_{r,k}\|_{\mathcal{Y}}
\lesssim &\sum_{j\geq 1}\big(\nu^{-\frac{1}{3}}|k|^{\frac{1}{3}}\|\theta_{r,k,j}\|_{\mathcal{Y}(T_{j-1},T_j)}
+\nu^{-\frac{1}{3}}|k|^{\frac{1}{3}}\|\theta_{r,k,j}\|_{\mathcal{Y}(T_{j},T)}\big)\nonumber\\
\lesssim &\sum_{j\geq 1}\big(\nu^{-\frac{1}{3}}|k|^{\frac{1}{3}}\|\theta_{r,k,j}\|_{\mathcal{Y}(T_{j-1},T_j)}
+\nu^{-\frac{1}{3}}|k|^{\frac{1}{3}}\|\theta_{r,k,j}(T_j)\|_{L^2_y}\big)\nonumber\\
\lesssim &\sum_{j\geq 1}\nu^{-\frac{1}{3}}|k|^{\frac{1}{3}}\|\theta_{r,k,j}\|_{\mathcal{Y}(T_{j-1},T_j)}.
\end{align*}
This inequality implies
\begin{align}\label{k-geq-1-t-2-es}
K_{5,k}=\textbf{1}_{\nu k^2\leq 1}\nu^{-\frac{1}{3}}|k|^{\frac{1}{3}}\|\theta_{r,k}\|_{\mathcal{Y}}
\lesssim \textbf{1}_{\nu k^2\leq 1}\sum_{j\geq 1}\nu^{-\frac{1}{3}}|k|^{\frac{1}{3}}\|\theta_{r,k,j}\|_{\mathcal{Y}(T_{j-1},T_j)}.
\end{align}
Define
\begin{align*}
\widehat{tu_e^{(2)}\pa_x\theta_L}=\int_{\mathbb{R}}iltu_{e,k-l}^{(2)}\theta_{L,l}dl:=\int_{\mathbb{R}} F_{k,l} dl.
\end{align*}
We decompose $\theta_{r,k,j}$ as $\theta_{r,k,j}=\int_{\mathbb{R}}\theta_{r,k,j,l}\,dl$, where
 \begin{align*}
\begin{cases}
\pa_t \theta_{r,k,j,l}-\nu(\pa_y^2-k^2)\theta_{r,k,j,l}+iky\theta_{r,k,j,l}=F_{k,l}\chi_j, \\
\theta_{r,k,j,l}(\pm 1)=0, \qquad \theta_{r,k,j,l}|_{t=T_{j-1}}=0.
\end{cases}
\end{align*}
Using Lemma~\ref{lem-3.6} and Lemma~\ref{lem-3.7} with $f=\theta_{r,k,j,l}$, we derive
\begin{align}
&\nu^{\frac{1}{6}}|k|^{\frac{1}{3}}\|\theta_{r,k,j,l}\|_{L^2_t(T_{j-1},T_j;L^2_y)}
+\|\theta_{r,k,j,l}\|_{\mathcal{Y}(T_{j-1},T_j)}\label{1-6-1-3-es}\\
\lesssim &\Big(2^{\frac{3}{2}j}(|l|+1) |k-l|^{-\frac{1}{2}}\langle k-l\rangle^{-\frac{1}{2}}+\textbf{1}_{|k|\geq 1}\nu^{\frac{1}{6}}|k|^{-\frac{2}{3}}\Big)
\|(\pa_y,\langle k-l\rangle)(e^{ilyt}F_{k,l})\|_{L^2_t(T_{j-1},T_j;L^2_y)}.\nonumber
\end{align}
Noting that
\begin{align}\label{K-2-k-K-4-k-0}
&\nu^{\frac{1}{6}}|k|^{\frac{1}{3}}\|\theta_{r,k,j}\|_{L^2_t(T_{j-1},T_j;L^2_y)}
+\|\theta_{r,k,j}\|_{\mathcal{Y}(T_{j-1},T_j)}\nonumber\\
\lesssim&\int_{\mathbb{R}}(\nu^{\frac{1}{6}}|k|^{\frac{1}{3}}\|\theta_{r,k,j,l}\|_{L^2_t(T_{j-1},T_j;L^2_y)}
+\|\theta_{r,k,j,l}\|_{\mathcal{Y}(T_{j-1},T_j)})dl
\end{align}
Combining \eqref{k-geq-1-om-2-es}--\eqref{K-2-k-K-4-k-0} produces
\begin{align}\label{K-2-k-K-4-k}
K_{2,k}+K_{5,k}
\lesssim &\textbf{1}_{\nu k^2\leq 1}\sum_{j\geq 1}\nu^{-\frac{1}{3}}|k|^{\frac{1}{3}}\int_{\mathbb{R}}\Big(2^{\frac{3}{2}j}(|l|+1) |k-l|^{-\frac{1}{2}}\langle k-l\rangle^{-\frac{1}{2}}+\textbf{1}_{|k|\geq 1}\nu^{\frac{1}{6}}|k|^{-\frac{2}{3}}\Big)\times\nonumber\\
&\qquad \qquad\qquad \qquad\|(\pa_y,\langle k-l\rangle)(e^{ilyt}F_{k,l})\|_{L^2_t(T_{j-1},T_j;L^2_y)}dl.
\end{align}
By H\"older's inequality, we have
\begin{align}
&\|(\pa_y,\langle k-l\rangle)(e^{ilyt}F_{k,l})\|_{L^2_t(T_{j-1},T_j;L^2_y)} \nonumber \\
\leq & 2^{j}\nu^{-\frac{1}{3}}\|(\pa_y, \langle k-l\rangle)u_{e,k-l}^{(2)}\|_{L^2_t(T_{j-1},T_j)L^2_y}\|(\pa_y, 1)l(e^{ilyt}\theta_{L,l})\|_{L^\infty_t(T_{j-1},T_j)L^\infty_y}.\label{esti: Fkl}
\end{align}
We use \eqref{tL-es-inf-k} with $s=1$ to obtain
\begin{align}\label{thetaL1}
\|(\pa_y, 1)l(e^{ilyt}\theta_{L,l})\|_{L^\infty_t(T_{j-1},T_j)L^\infty_y}\lesssim \min\{|l|,|l|^{\frac{1}{2}}\}e^{-c2^{j}|l|^{\frac{2}{3}}}B_{2}\lesssim \min\{|l|,|l|^{-\frac{1}{2}}\}B_{3}.
\end{align}
For $|k-l|\geq 1$, inserting \eqref{thetaL1} into \eqref{esti: Fkl}, we deduce
\begin{align}\label{py-k-l-F-k-l-es-1}
&\|(\pa_y, k-l)(e^{ilyt}F_{k,l})\|_{L^2_t(T_{j-1},T_j)L^2_y}\nonumber\\
\lesssim& 2^{j}\nu^{-\frac{1}{3}}\|(\pa_y, k-l)u_{e,k-l}^{(2)}\|_{L^2_t(T_{j-1},T_j)L^2_y}\min\{|l|,|l|^{-\frac{1}{2}}\}e^{-c2^{j}|l|^{\frac{2}{3}}}B_{3}\nonumber\\
\lesssim& 2^{-\frac{15}{8}j}\nu^{-\frac{1}{3}}\|(\pa_y, k-l)u_{e,k-l}^{(2)}\|_{L^2_t(T_{j-1},T_j)L^2_y}\min\{|l|^{-\frac{11}{12}},|l|^{-\frac{29}{12}}\}e^{-\frac{c}{2} 2^{j}|l|^{\frac{2}{3}}}B_{3},
\end{align}
where we used that
$$ 2^{\frac{23}{8}j}|l|^{\frac{23}{12}} e^{-\frac{c}{2} 2^{j}|l|^{\frac{2}{3}}}\leq C. $$
Due to $|k-l|\geq 1$, we have
\begin{align}\label{py-k-l-F-k-l-es-2}
&\nu^{-\frac{1}{3}}|k|^{\frac{1}{3}}\Big(2^{\frac{3}{2}j}(|l|+1) |k-l|^{-\frac{1}{2}}\langle k-l\rangle^{-\frac{1}{2}}+\textbf{1}_{|k|\geq 1}\nu^{\frac{1}{6}}|k|^{-\frac{2}{3}}\Big)\nonumber\\
\lesssim&\nu^{-\frac{1}{3}}2^{\frac{3}{2}j}(|l|^{\frac{1}{3}}+|k-l|^{\frac{1}{3}})(|l|+1)|k-l|^{-1} +\nu^{-\frac{1}{6}}
\lesssim \nu^{-\frac{1}{3}}2^{\frac{3}{2}j}(|l|^{\frac{4}{3}}+1).
\end{align}
For $|k-l|\leq 1$, by the Poincar\'e inequality $$\|u_{e,k-l}^{(2)}\|_{L^2_t(T_{j-1},T_j)L^2_y}\lesssim
\|\pa_y u_{e,k-l}^{(2)}\|_{L^2_t(T_{j-1},T_j)L^2_y}$$ and the divergence-free condition, we get
\begin{align*}
\|(\pa_y, 1)u_{e,k-l}^{(2)}\|_{L^2_y}\lesssim \|\pa_y u_{e,k-l}^{(2)}\|_{L^2_y}=\|(k-l) u_{e,k-l}^{(1)}\|_{L^2_y}.
\end{align*}
Putting this estimate and \eqref{thetaL1} into \eqref{esti: Fkl}, we yield
\begin{align}\label{py-1-F-k-l-es-1}
&\|(\pa_y, 1)(e^{ilyt}F_{k,l})\|_{L^2_t(T_{j-1},T_j)L^2_y}\nonumber\\
\lesssim& 2^{j}\nu^{-\frac{1}{3}}|k-l|^{\frac{1}{2}}\||k-l|^{\frac{1}{2}}u_{e,k-l}^{(1)}\|_{L^2_t(T_{j-1},T_j)L^2_y}
\min\{|l|,|l|^{-\frac{1}{2}}\}e^{-c2^{j}|l|^{\frac{2}{3}}}B_{3}\nonumber\\
\lesssim& 2^{-\frac{15}{8}j}\nu^{-\frac{1}{3}}|k-l|^{\frac{1}{2}}\||k-l|^{\frac{1}{2}}u_{e,k-l}^{(1)}\|_{L^2_t(T_{j-1},T_j)L^2_y}
\min\{|l|^{-\frac{11}{12}},|l|^{-\frac{29}{12}}\}e^{-\frac{c}{2} 2^{j}|l|^{\frac{2}{3}}}B_{3}.
\end{align}
Thanks to $|k-l|\leq 1$, we also get
\begin{align}\label{py-1-F-k-l-es-2}
&|k|^{\frac{1}{3}}\Big(2^{\frac{3}{2}j}(|l|+1) |k-l|^{-\frac{1}{2}}\langle k-l\rangle^{-\frac{1}{2}}+\textbf{1}_{|k|\geq 1}\nu^{\frac{1}{6}}|k|^{-\frac{2}{3}}\Big)\nonumber\\
\lesssim&2^{\frac{3}{2}j}(|l|^{\frac{1}{3}}+|k-l|^{\frac{1}{3}})(|l|+1) |k-l|^{-\frac{1}{2}}+\nu^{\frac{1}{6}}\nonumber\\
\lesssim&2^{\frac{3}{2}j}(|l|^{\frac{1}{3}}+1)(|l|+1)|k-l|^{-\frac{1}{2}}+\nu^{\frac{1}{6}}\nonumber\\
\lesssim&2^{\frac{3}{2}j}(|l|^{\frac{4}{3}}+1)|k-l|^{-\frac{1}{2}}.
\end{align}
Define $$D_{k-l}:=\|(|k-l|^{\frac{1}{2}}+|k-l|)u_{e,k-l}\|_{L^2_t(T_0,T)L^2_y}.$$
Inserting \eqref{py-k-l-F-k-l-es-1}--\eqref{py-1-F-k-l-es-2} into \eqref{K-2-k-K-4-k} and using $$(|l|^{\frac{4}{3}}+1)\min\{|l|^{-\frac{11}{12}},|l|^{-\frac{29}{12}}\}\lesssim\min\{|l|^{-\frac{11}{12}},|l|^{-\frac{13}{12}}\},$$
we derive
\begin{align}\label{K-2-k-K-5-k}
K_{2,k}+K_{5,k}
\lesssim&\textbf{1}_{\nu k^2\leq 1}\sum_{j\geq 1}\nu^{-\frac{2}{3}}2^{-\frac{3}{8}j}
\int_{\mathbb{R}}D_{k-l}(|l|^{\frac{4}{3}}+1)\min\{|l|^{-\frac{11}{12}},|l|^{-\frac{29}{12}}\}B_{3}dl\nonumber\\
\lesssim&\textbf{1}_{\nu k^2\leq 1}\sum_{j\geq 1}\nu^{-\frac{2}{3}}2^{-\frac{3}{8}j}\int_{\mathbb{R}}D_{k-l}\min\{|l|^{-\frac{11}{12}},|l|^{-\frac{13}{12}}\}B_{3}dl.
\end{align}
Integrating in $k$ on both side of \eqref{K-2-k-K-5-k},  we obtain 
\begin{align}\label{J-2-4-es}
K_2+K_5\lesssim&\nu^{-\frac{2}{3}}\|D_k\|_{L^2_k}\Big(\int_{|l|\leq 1}|l|^{-\frac{11}{12}}B_{3}dl+\int_{|l|\geq 1}|l|^{-\frac{13}{12}}B_{3}dl\Big)\nonumber\\
\lesssim&\nu^{-\frac{2}{3}}\|D_k\|_{L^2_k}F_0\lesssim \nu^{-\frac{2}{3}}\|\omega_e\|_{X}F_0.
\end{align}
Combining \eqref{J-1-3-5-6-es} and \eqref{J-2-4-es}, we complete the proof of Proposition~\ref{prop-5.5}.
\end{proof}

\subsection{Nonlinear estimates of \texorpdfstring{$(\omega_{e}, \theta_{e})$}{omega\_e, theta\_e}}
In what follows, we use the bootstrap argument to establish the estimates for $\omega_e$ and $\theta_e$, which, combined with Proposition~\ref{prop-2.4}, completes the proof of Theorem~\ref{th-1.1}.

\begin{proposition}\label{prop-5.6}
Let $(\omega_{e}, \theta_{e})$ be the solution of \eqref{ote-eq}. For $t> T_0=\nu^{-\frac{1}{6}}$, we have
\begin{align*}
\|\omega_{e}\|_{X}\lesssim \nu^{\frac{2}{3}}|\ln\nu|^{3}, \qquad \|\theta_{e}\|_{X}\lesssim \nu|\ln\nu|^{-1}.
\end{align*}
\end{proposition}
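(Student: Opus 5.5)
Since $\omega_e=\omega_{\rm b}+\omega_{\mathfrak o}+\omega_r$ and $\theta_e=\theta_{\rm b}+\theta_r$ on $(T_0,T)$, I will run a continuity (bootstrap) argument on $\mathcal{E}_2=\mathcal{E}_2(T)$. The function $T\mapsto\mathcal{E}_2(T)$ is continuous and nondecreasing. At $T=T_0$ it is controlled by the short-time bounds: Proposition~\ref{prop-4.1} gives $\|\omega_e(T_0)\|_{L^2}+\nu^{-\frac13}|\ln\nu|^4\|\theta_e(T_0)\|_{L^2}$, and Proposition~\ref{prop-4.2} gives $\nu^{-\frac13}\||\pa_x|^{\frac13}\theta_e(T_0)\|_{L^2}$. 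Both are bounded by a constant multiple of $\mathcal{E}_{2,0}$. I make the bootstrap hypothesis $\mathcal{E}_2\le 2C_*\mathcal{E}_{2,0}$ on $(T_0,T)$ and aim to improve it to $\mathcal{E}_2\le C_*\mathcal{E}_{2,0}$.

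\emph{Summing the pieces.} By the triangle inequality in $X$,
\begin{align*}
\mathcal{E}_2\le{}& \|\omega_{\rm b}\|_X+\nu^{-\frac13}|\ln\nu|^4\|\theta_{\rm b}\|_X+\nu^{-\frac13}\||\pa_x|^{\frac13}\theta_{\rm b}\|_X+\|\omega_{\mathfrak o}\|_X\\
&+\|\omega_r\|_X+\nu^{-\frac13}|\ln\nu|^4\|\theta_r\|_X+\nu^{-\frac13}\||\pa_x|^{\frac13}\theta_r\|_X .
\end{align*}
I bound the terms as follows.
\begin{itemize}
\item The first line uses Proposition~\ref{prop-5.2}, giving $\mathcal{E}_{2,0}+\nu^{-\frac13}\mathcal{E}_2(E_0+\nu^{-\frac13}|\ln\nu|^4F_0)+\nu^{-\frac59}\mathcal{E}_2^2$, together with Proposition~\ref{prop-5.1}, giving $\|\omega_{\mathfrak o}\|_X\lesssim\nu^{-\frac13}\mathcal{E}_2G_0$.
\item For $\theta_r$, Proposition~\ref{prop-5.1} gives $\nu^{-\frac13}|\ln\nu|^4\|\theta_r\|_X\lesssim\nu^{-\frac23}|\ln\nu|^4\mathcal{E}_2F_0$.
\item For $\omega_r$ and $|\pa_x|^{\frac13}\theta_r$, Proposition~\ref{prop-5.5} gives $\nu^{-\frac23}\mathcal{E}_2F_0$.
\end{itemize}

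\emph{Closing.} Altogether,
\[
\mathcal{E}_2\le C\Big(\mathcal{E}_{2,0}+\big(\nu^{-\frac13}E_0+\nu^{-\frac23}|\ln\nu|^4F_0\big)\mathcal{E}_2+\nu^{-\frac59}\mathcal{E}_2^2\Big).
\]
Under the smallness hypotheses $E_0\le c\nu^{\frac13}$ and $F_0\le c\nu^{\frac23}|\ln\nu|^{-4}$, the linear coefficient is at most $2Cc$, so it can be absorbed into the left side. For the quadratic term, the bootstrap hypothesis gives $\nu^{-\frac59}\mathcal{E}_2\lesssim\nu^{-\frac59}\mathcal{E}_{2,0}$. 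Using $E_0+\nu^{-\frac13}|\ln\nu|^4F_0\lesssim c\nu^{\frac13}$, we have $\mathcal{E}_{2,0}\lesssim c\nu^{\frac23}|\ln\nu|^3$, and hence $\nu^{-\frac59}\mathcal{E}_2\lesssim c\nu^{\frac19}|\ln\nu|^3\ll1$, so this term is absorbed too. This yields $\mathcal{E}_2\le C_*\mathcal{E}_{2,0}$, and the continuity argument extends the bound to all $T>T_0$.

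\emph{Reading off the stated bounds.} From $\mathcal{E}_{2,0}\lesssim\nu^{\frac23}|\ln\nu|^3$ we get $\|\omega_e\|_X\lesssim\nu^{\frac23}|\ln\nu|^3$ directly. For $\theta_e$ we use the weight in $\mathcal{E}_2$: $\|\theta_e\|_X\le\nu^{\frac13}|\ln\nu|^{-4}\mathcal{E}_2\lesssim\nu|\ln\nu|^{-1}$, which is exactly the claim.

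The main obstacle is the bookkeeping that keeps every term linear in $\mathcal{E}_2$ with a small coefficient. The dangerous factor is $\nu^{-\frac23}$ in Proposition~\ref{prop-5.5}, which sits in front of $F_0$ and is only barely compensated. It is essential that that proposition produces $\mathcal{E}_2F_0$ and not $\mathcal{E}_2G_0$, because $\nu^{-\frac23}G_0$ would not be small. The $|\ln\nu|^4$ weight on $\|\theta_r\|_X$ is harmless only because the assumption on $F_0$ carries exactly $|\ln\nu|^{-4}$. I would also double-check that Lemma~\ref{lem-5.1} and Proposition~\ref{prop-5.2} were derived under the same bootstrap regime, so that their powers combine consistently with this choice of $\mathcal{E}_{2,0}$.
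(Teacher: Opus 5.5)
Your proposal is correct and is essentially the paper's proof. The paper also adds Propositions~\ref{prop-5.2}, \ref{prop-5.1} and \ref{prop-5.5} to obtain $\mathcal{E}_2\le C\big(\mathcal{E}_{2,0}+\nu^{-\frac13}\mathcal{E}_2(E_0+\nu^{-\frac13}|\ln\nu|^4F_0)+\nu^{-\frac59}\mathcal{E}_2^2\big)$ and closes by a bootstrap; the only cosmetic difference is that it bootstraps on the threshold $c^{\frac12}\nu^{\frac23}|\ln\nu|^3$ rather than on your $2C_*\mathcal{E}_{2,0}$. Two steps the paper leaves implicit are checked explicitly in your version: that the data at $T_0$ are bounded by a multiple of $\mathcal{E}_{2,0}$ (via Propositions~\ref{prop-4.1}--\ref{prop-4.2}), and that the bound on $\theta_e$ follows from $\|\theta_e\|_X\le\nu^{\frac13}|\ln\nu|^{-4}\mathcal{E}_2$.
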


\begin{proof}
By Propositions \ref{prop-5.2}--\ref{prop-5.5}, we obtain
\begin{align}\label{E-2-j-es}
\mathcal{E}_{2}\lesssim&\|\omega_\text{b}\|_{X}+\nu^{-\frac{1}{3}}|\ln\nu|^4\|\theta_\text{b}\|_{X}
+\nu^{-\frac{1}{3}}\||\pa_x|^{\frac{1}{3}}\theta_\text{b}\|_{X}\nonumber\\
&+\|\omega_r\|_{X}+\|\omega_{\mathfrak{o}}\|_{X}+\nu^{-\frac{1}{3}}|\ln\nu|^4\|\theta_r\|_{X}
+\nu^{-\frac{1}{3}}\||\pa_x|^{\frac{1}{3}}\theta_r\|_{X}\nonumber\\
\leq&C\Big(\mathcal{E}_{2,0}+\nu^{-\frac{1}{3}}\mathcal{E}_{2}(E_0+\nu^{-\frac{1}{3}}|\ln\nu|^4F_0)
+\nu^{-\frac{5}{9}}\mathcal{E}_{2}^2\Big).
\end{align}
\textbf{Bootstrap argument:}
 Let $T$ be the maximal time such that
\begin{align}\label{es-E2}
\mathcal{E}_{2}=\|\omega_{e}\|_{X(T_0,T)}+\nu^{-\frac{1}{3}}|\ln\nu|^4\|\theta_{e}\|_{X(T_0,T)}
+\nu^{-\frac{1}{3}}\||\pa_x|^{\frac{1}{3}}\theta_{e}\|_{X(T_0,T)}\leq c^{\frac{1}{2}}\nu^{\frac{2}{3}}|\ln\nu|^{3},
\end{align}
where  $0<c<1$ satisfies $Cc^{\frac{1}{2}}\leq \frac{1}{4}$.
In light of \eqref{E-2-j-es}, the definition of $\mathcal{E}_{2,0}$ and  
$$E_0+\nu^{-\frac{1}{3}}|\ln\nu|^4F_0\leq c\nu^{\frac{1}{3}},$$ we obtain
\begin{align*}
 \mathcal{E}_{2}\leq& C\nu^{-\frac{1}{3}}\mathcal{E}_{2}(E_0+\nu^{-\frac{1}{3}}|\ln\nu|^4F_0)+C\nu^{-\frac{5}{9}}\mathcal{E}_{2}^2
 +C\nu^{\frac{1}{3}}|\ln\nu|^{3}(E_0+\nu^{-\frac{1}{3}}|\ln\nu|^4F_0)\nonumber\\
 \leq& Cc\mathcal{E}_{2}+Cc^{\frac{1}{2}}\nu^{\frac{1}{9}}|\ln\nu|^{3}\mathcal{E}_{2}+Cc\nu^{\frac{2}{3}}|\ln\nu|^{3}.
\end{align*}
Inserting \eqref{es-E2} into the above estimate, we obtain
\begin{align*}
 \mathcal{E}_{2} \leq& \frac{1}{2}c^{\frac{1}{2}}\nu^{\frac{2}{3}}|\ln\nu|^{3}.
\end{align*}
This contradicts the definition of $T$. Hence $T=+\infty$, which completes the proof of Proposition~\ref{prop-5.6}.
\end{proof}

Combining  Proposition~\ref{prop-2.4}, Proposition~\ref{prop-4.1} with Proposition~\ref{prop-5.6}, we conclude Theorem \ref{th-1.1}.

\begin{Remark}
For the domain $\mathbb{T}\times[-1,1]$, let
\begin{align*}
\mathcal{E}_{2}=\|\omega_{e}\|_{X(T_0,T)}+\nu^{-\frac{1}{3}}\|\theta_{e}\|_{X(T_0,T)}
+\nu^{-\frac{1}{3}}\||\pa_x|^{\frac{1}{3}}\theta_{e}\|_{X(T_0,T)}.
\end{align*}
Analogue to the proof of Propositions \ref{prop-5.2}--\ref{prop-5.6}, we obtain
\begin{align*}
 \mathcal{E}_{2} \lesssim& \nu^{\frac{2}{3}}|\ln\nu|.
\end{align*}
Combining with Remark \ref{rem-4.1}, we conclude
\begin{align*}
  \|\omega\|_{L^2}+\|\mathcal{M}\nabla\Delta^{-1}\omega\|_{L^2}+\nu^{-\frac13}\|\theta\|_{L^2} \leq& C\nu^{\frac{1}{3}}.
\end{align*}
\end{Remark}

\appendix
\section{elliptic estimate}
\begin{lemma}\label{lem-A.2}
Let $u_k=(\pa_y,-ik)(\pa_y^2-k^2)^{-1}\omega_k$ with $\omega_k|_{\pm 1} = 0$, then
\begin{align}\label{lem-A.2-es}
\|u_k\|_{L^\infty_y}+\|u_k\|_{H^1_y}\lesssim\|\omega_k\|_{L^2_y}.
\end{align}
\end{lemma}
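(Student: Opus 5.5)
This is a standard elliptic estimate on the interval $[-1,1]$, and I plan to prove it mode by mode. Write $\phi_k=(\pa_y^2-k^2)^{-1}\omega_k$ with Dirichlet conditions $\phi_k(\pm1)=0$, so that $u_k=(\pa_y\phi_k,-ik\phi_k)$. The proof has two steps: first the $H^1_y$ bound from energy identities, then the $L^\infty_y$ bound from a one-dimensional Sobolev/Gagliardo--Nirenberg argument. Throughout I use that the Dirichlet Poincar\'e inequality on $[-1,1]$ holds with a constant uniform in $k$.

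\emph{Step 1: energy identity.} Testing $(\pa_y^2-k^2)\phi_k=\omega_k$ against $\bar\phi_k$ and integrating by parts (no boundary terms, since $\phi_k(\pm1)=0$) gives
\begin{align*}
\|\pa_y\phi_k\|_{L^2_y}^2+k^2\|\phi_k\|_{L^2_y}^2=-\langle\omega_k,\phi_k\rangle\le\|\omega_k\|_{L^2_y}\|\phi_k\|_{L^2_y}.
\end{align*}
Poincar\'e gives $\|\phi_k\|_{L^2_y}\lesssim\|\pa_y\phi_k\|_{L^2_y}$. Inserting this on the right-hand side yields $\|\pa_y\phi_k\|_{L^2_y}\lesssim\|\omega_k\|_{L^2_y}$, and then
\begin{align*}
\|(\pa_y,k)\phi_k\|_{L^2_y}\lesssim\|\omega_k\|_{L^2_y}\quad\text{and}\quad \|u_k\|_{L^2_y}\lesssim\|\omega_k\|_{L^2_y}.
\end{align*}

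\emph{Step 2: derivatives of $u_k$.} For the first component, $\pa_y u^{(1)}_k=\pa_y^2\phi_k=\omega_k+k^2\phi_k$. For the second, $\pa_y u^{(2)}_k=-ik\pa_y\phi_k$, which is the divergence-free relation $\pa_yu^{(2)}_k=-iku^{(1)}_k$. To bound $k^2\phi_k$ and $k\pa_y\phi_k$ uniformly in $k$, I would test the equation against $k^2\bar\phi_k$:
\begin{align*}
k^2\|\pa_y\phi_k\|_{L^2_y}^2+k^4\|\phi_k\|_{L^2_y}^2\le\|\omega_k\|_{L^2_y}\,k^2\|\phi_k\|_{L^2_y}.
\end{align*}
This gives $k^2\|\phi_k\|_{L^2_y}\le\|\omega_k\|_{L^2_y}$ and $|k|\|\pa_y\phi_k\|_{L^2_y}\le\|\omega_k\|_{L^2_y}$. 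Combining with Step 1, $\|\pa_yu_k\|_{L^2_y}\lesssim\|\omega_k\|_{L^2_y}$, hence $\|u_k\|_{H^1_y}\lesssim\|\omega_k\|_{L^2_y}$.

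\emph{Step 3: $L^\infty_y$ bound.} On the bounded interval $[-1,1]$, $\|f\|_{L^\infty_y}\lesssim\|f\|_{L^2_y}+\|f\|_{L^2_y}^{1/2}\|\pa_yf\|_{L^2_y}^{1/2}$. Applying this to each component of $u_k$ and using Step 2 gives $\|u_k\|_{L^\infty_y}\lesssim\|u_k\|_{H^1_y}\lesssim\|\omega_k\|_{L^2_y}$, which completes the estimate.

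I expect no step to be a real obstacle. The only point needing care is the uniformity in $k$: the large-$|k|$ regime is handled by the $k^2$-weighted test in Step 2, and the small-$|k|$ regime by the Poincar\'e inequality in Step 1.
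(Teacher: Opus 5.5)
Your proposal is correct and follows the paper's proof essentially step for step. You use the energy identity with the Dirichlet Poincar\'e inequality for the $L^2_y$ bound, then control $k^2\phi_k$ and $k\pa_y\phi_k$ to bound $\pa_y u_k$, and finish with the one-dimensional Sobolev embedding for $L^\infty_y$. The only cosmetic difference is that the paper gets the $k$-weighted bounds from the identity $\|\omega_k\|_{L^2_y}^2=\|\pa_y^2\phi_k\|_{L^2_y}^2+2k^2\|\pa_y\phi_k\|_{L^2_y}^2+k^4\|\phi_k\|_{L^2_y}^2$, whereas you multiply the energy identity by $k^2$, which yields the same estimates.
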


\begin{proof}
Define $\phi_k=(\pa_y^2 - k^2)^{-1}\omega_k$ with $\phi_k|_{\pm 1}=0$. Then, integrating by parts, we obtain
\begin{align}\label{py-phi-k-2-y-es}
\|\pa_y\phi_k\|_{L^2_y}^2 + k^2\|\phi_k\|_{L^2_y}^2 = \langle \omega_k, \phi_k \rangle \leq \|\omega_k\|_{L^2_y}\|\phi_k\|_{L^2_y}.
\end{align}
The Poincar\'e inequality $\|\phi_k\|_{L^2_y} \lesssim \|\pa_y\phi_k\|_{L^2_y}$ yields
\begin{align*}
\|\pa_y\phi_k\|_{L^2_y}^2 \leq \|\omega_k\|_{L^2_y}\|\pa_y\phi_k\|_{L^2_y}.
\end{align*}
Together with \eqref{py-phi-k-2-y-es}, it implies
\begin{align}\label{A-2-1}
\|u_k\|_{L^2_y} \lesssim \|\pa_y\phi_k\|_{L^2_y} + |k|\|\phi_k\|_{L^2_y} \lesssim \|\omega_k\|_{L^2_y}.
\end{align}
From the identity
\begin{align*}
\|\omega_k\|_{L^2_y}^2 = \|\pa_y^2\phi_k\|_{L^2_y}^2 + 2k^2\|\pa_y\phi_k\|_{L^2_y}^2 + k^4\|\phi_k\|_{L^2_y}^2,
\end{align*}
we deduce
\begin{align}\label{A-2-2}
|k|^2\|u_k\|_{L^2_y}^2 \lesssim |k|^2\|\pa_y\phi_k\|_{L^2_y}^2 + |k|^4\|\phi_k\|_{L^2_y}^2 \lesssim \|\omega_k\|_{L^2_y}^2.
\end{align}
Owing to $iku_k^{(1)}+\pa_yu_k^{(2)}=0$, we have
\begin{align}\label{A-2-3}
\|\pa_y u_k^{(1)}\|_{L^2_y} + \|\pa_y u_k^{(2)}\|_{L^2_y} \lesssim \|\omega_k\|_{L^2_y} + |k|\|u_k\|_{L^2_y} \lesssim \|\omega_k\|_{L^2_y}.
\end{align}
The  Sobolev embedding $H^1_y \hookrightarrow L^\infty_y$ yields
\begin{align}\label{A-2-4}
\|u_k\|_{L^\infty_y} \lesssim \|u_k\|_{H^1_y} \lesssim \|\omega_k\|_{L^2_y}.
\end{align}
Combining \eqref{A-2-1}--\eqref{A-2-4}, one completes the proof of Lemma \ref{lem-A.2}.
\end{proof}

\begin{lemma}\label{lem-A.1}
Assume that $u = (\pa_y, -\pa_x)\Delta^{-1} \omega$ with $\omega|_{\pm 1} = 0$. Then for $s \in [0,1]$, 
\begin{align}
&\| |\pa_x|^s u \|_{L^2_{x,y}} \lesssim \|\omega\|_{L^2_{x,y}}, \label{es-u-p-x-u-2}\\
&\| |\pa_x|^s u^{(2)} \|_{L^2_{x,y}} \lesssim \|\pa_x u\|_{L^2_{x,y}}. \label{es-u-2-p-x-u-2}
\end{align}
\end{lemma}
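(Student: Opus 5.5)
The plan is to pass to the horizontal Fourier side and prove the two bounds of Lemma~\ref{lem-A.1} frequency by frequency, using Lemma~\ref{lem-A.2} together with the elliptic identities already derived in its proof. By Plancherel in $x$,
\begin{align*}
\||\pa_x|^s u\|_{L^2_{x,y}}^2=\int_{\mathbb{R}}|k|^{2s}\|u_k\|_{L^2_y}^2\,dk,\qquad
\||\pa_x|^s u^{(2)}\|_{L^2_{x,y}}^2=\int_{\mathbb{R}}|k|^{2s}\|u^{(2)}_k\|_{L^2_y}^2\,dk .
\end{align*}
It therefore suffices to prove, for each fixed $k$ and each $s\in[0,1]$, the two pointwise bounds
\begin{align*}
|k|^s\|u_k\|_{L^2_y}\lesssim\|\omega_k\|_{L^2_y},\qquad |k|^s\|u^{(2)}_k\|_{L^2_y}\lesssim|k|\|u_k\|_{L^2_y},
\end{align*}
and then integrate in $k$.

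For \eqref{es-u-p-x-u-2}, I split into $|k|\le1$ and $|k|\ge1$.
\begin{itemize}
\item If $|k|\le 1$, then $|k|^s\le 1$, and \eqref{A-2-1} gives $\|u_k\|_{L^2_y}\lesssim\|\omega_k\|_{L^2_y}$.
\item If $|k|\ge 1$, then $|k|^s\le|k|$, and \eqref{A-2-2} gives $|k|\|u_k\|_{L^2_y}\lesssim\|\omega_k\|_{L^2_y}$.
\end{itemize}
Alternatively, one can interpolate between \eqref{A-2-1} and \eqref{A-2-2}, since $|k|^s\le 1+|k|$. Either way, integrating in $k$ yields \eqref{es-u-p-x-u-2}.

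For \eqref{es-u-2-p-x-u-2}, the key input is that $u^{(2)}_k=-ik\phi_k$ with $\phi_k|_{\pm1}=0$, so $u^{(2)}_k$ vanishes at $y=\pm1$. I again split into two regimes.
\begin{itemize}
\item If $|k|\ge1$, then $|k|^s\le|k|$, so $|k|^s\|u^{(2)}_k\|_{L^2_y}\le|k|\|u_k\|_{L^2_y}$ trivially.
\item If $|k|\le1$, I use the Poincar\'e inequality for $u^{(2)}_k$ and the divergence-free relation $\pa_y u^{(2)}_k=-iku^{(1)}_k$:
\begin{align*}
|k|^s\|u^{(2)}_k\|_{L^2_y}\le\|u^{(2)}_k\|_{L^2_y}\lesssim\|\pa_yu^{(2)}_k\|_{L^2_y}=|k|\|u^{(1)}_k\|_{L^2_y}\le|k|\|u_k\|_{L^2_y}.
\end{align*}
\end{itemize}
Squaring and integrating in $k$ gives $\||\pa_x|^su^{(2)}\|_{L^2}\lesssim\|\pa_xu\|_{L^2}$.

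I expect no real obstacle here. The only delicate point is the low-frequency range $|k|\le1$ in the second estimate, where $|k|^s\ge|k|$ prevents direct comparison. That gap is closed exactly by the Poincar\'e inequality, which is available because $u^{(2)}$ vanishes on the boundary, combined with incompressibility. The implicit constants are uniform in $s\in[0,1]$, since only the bounds $|k|^s\le1$ and $|k|^s\le|k|$ are used.
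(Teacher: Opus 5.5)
Your proof is correct and uses the same ingredients as the paper: the bounds \eqref{A-2-1}--\eqref{A-2-2}, the Poincar\'e inequality for $u^{(2)}$ (which vanishes at $y=\pm1$ because $\phi|_{\pm1}=0$), and the incompressibility relation $\pa_y u^{(2)}=-\pa_x u^{(1)}$. The only difference is cosmetic: the paper applies the interpolation inequality $\||\pa_x|^s f\|_{L^2}\lesssim\|f\|_{L^2}^{1-s}\|\pa_x f\|_{L^2}^{s}$ globally, whereas you split pointwise in $k$ at $|k|=1$ using $|k|^s\le\max\{1,|k|\}$; the two are equivalent here.
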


\begin{proof}
From Plancherel's theorem and \eqref{lem-A.2-es}, we obtain
\begin{align*}
\|u\|_{L^2_{x,y}} + \|\pa_x u\|_{L^2_{x,y}} \lesssim \|u_k\|_{L^2_k L^2_y} + \|k u_k\|_{L^2_k L^2_y}
 \lesssim \|\omega_k\|_{L^2_k L^2_y} \lesssim \|\omega\|_{L^2_{x,y}}.
\end{align*}
This inequality implies
\begin{align}\label{p-x-u-L-2-x-y}
\| |\pa_x|^s u \|_{L^2_{x,y}} \lesssim \|u\|_{L^2_{x,y}}^{1-s} \|\pa_x u\|_{L^2_{x,y}}^{s} \lesssim \|\omega\|_{L^2_{x,y}}.
\end{align}
Using the Poincar\'e inequality and $\div u=0$, we find
\begin{align}\label{p-x-u-2-L-2-x-y}
\| |\pa_x|^s u^{(2)} \|_{L^2_{x,y}} &\lesssim \|u^{(2)}\|_{L^2_{x,y}}^{1-s} \|\pa_x u^{(2)}\|_{L^2_{x,y}}^{s}
\lesssim \|\pa_y u^{(2)}\|_{L^2_{x,y}}^{1-s} \|\pa_x u^{(2)}\|_{L^2_{x,y}}^{s} \nonumber\\
&= \|\pa_x u^{(1)}\|_{L^2_{x,y}}^{1-s} \|\pa_x u^{(2)}\|_{L^2_{x,y}}^{s} \lesssim \|\pa_x u\|_{L^2_{x,y}}.
\end{align}
Combining \eqref{p-x-u-L-2-x-y} and \eqref{p-x-u-2-L-2-x-y}, we completes the proof of Lemma~\ref{lem-A.1}.
\end{proof}

\medskip
\noindent
\textbf{Acknowledgements.}  Q. Chen was partially supported by National Natural Science Foundation of China (No: 12471149). Z. Li was supported by National Natural Science Foundation of China (No: 12601421). C. Miao was supported
by National Natural Science Foundation of China (No: 12371095 and 12531005).

\noindent\textbf{Data Availability Statement:}
Data sharing is not applicable to this article as no datasets were generated or analysed during the current study.

\noindent\textbf{Conflict of Interest:}
The authors declare that they have no conflict of interest.


\begin{thebibliography}{99}

\bibitem{A-25} R. Arbon, {\it Enhanced dissipation, Taylor dispersion, and inviscid damping of Couette flow in the Boussinesq system on the plane}, Nonlinearity 38(2025), 105011.

\bibitem{BBCD-23} J. Bedrossian, R. Bianchini, M. Coti Zelati, and M. Dolce, {\it Nonlinear inviscid damping and shear-buoyancy instability in the two-dimensional Boussinesq equations}, Commun. Pure Appl. Math. 76 (2023), 3685-3768.

\bibitem{BGM-19} J. Bedrossian, P. Germain, and N. Masmoudi, {\it Stability of the Couette flow at high Reynolds numbers in two dimensions and three dimensions}, Bull. Amer. Math. Soc. 56 (2019), 373-414.

\bibitem{CLWZ-20} Q. Chen, T. Li, D. Wei, and Z. Zhang, {\it Transition threshold for the 2-D Couette flow in a finite channel}, Arch.
Ration. Mech. Anal. 238(2020), 125-183.

\bibitem{CWZ-24} Q. Chen, D. Wei, and Z. Zhang, {\it Transition threshold for the 3D Couette flow in a finite channel}, Mem. Amer. Math. Soc. 296(2024), 1478.

\bibitem{CLM-26} Q. Chen, Z. Li, and C. Miao, {\it The optimal transition threshold for the 2D Couette flow in the infinite channel}, Commun. Math. Phys. 407(2026), 182.

\bibitem{CWY-25} Y. Chen, W. Wang, and G. Yang, {\it Stability threshold of Couette flow for Boussinesq equations in $\mathbb{R}^2$},
arXiv:2508.11908.

\bibitem{DR-04} P. G. Drazin and W. H. Reid, {\it Hydrodynamic Stability}, Cambridge Mathematical Library. Cambridge University
Press, 2 edition, 2004.

\bibitem{DWZ-21} W. Deng, J. Wu, and P. Zhang, {\it Stability of Couette flow for 2D Boussinesq system with vertical dissipation}, J. Funct. Anal. 281(2021), 109255.

\bibitem{LWZ-25} T. Liang, J. Wu, and X. Zhai, {\it Stability threshold of the 2D Boussinesq system near Couette flow in an infinite
channel}, arXiv: 2512.01159.

\bibitem{LWZ-26} T. Liang, J. Wu, and X. Zhai, {\it Stability threshold analysis of the Boussinesq system with general viscosity near the Couette flow}, Sci. China Math. (2026).

\bibitem{MZZ-23} N. Masmoudi, C. Zhai, and W. Zhao, {\it Asymptotic stability for two-dimensional Boussinesq systems around the Couette flow in a finite channel}, J. Funct. Anal. 284(2023), 109736.

\bibitem{NZ-25} B. Niu and W. Zhao, {\it Improved stability threshold of the two-dimensional Couette flow for Navier-Stokes
Boussinesq systems via quasi-linearization}, Comm. Pure Appl. Anal. 31(2026), 38-64.

\bibitem{R-83} O. Reynolds, {\it An experimental investigation of the circumstances which determine whether the motion of water shall be direct or sinuous and of the law of resistance in parallel channels}, Phil. Trans. R. Soc. 174(1883), 935-982.

\bibitem{RW-26-1} X. Ren and D. Wei, {\it Transition threshold of Couette flow for 2D Boussinesq equations}, J. Funct. Anal. 290(2026), 111383.

\bibitem{RW-26-2} X. Ren and D. Wei, {\it On the stability threshold of Couette flow for 2D Boussinesq equations}, Nonlinear Anal. Real World Appl. 87(2026), 104421.

\bibitem{TTRD-93} L. Trefethen, A. Trefethen, S. Reddy, and T. Driscoll, {\it Hydrodynamic stability without eigenvalues}, Science, 261(1993), 578-584.

\bibitem{WW-25} G. Wang and L. Wang, {\it On asymptotic stability of Couette flow for 2-D Boussinesq system in whole space via
Green's function}, Discrete Contin. Dyn. Syst. Ser. 30(2025), 3014-3041.

\bibitem{WZ-21} D. Wei and Z. Zhang, {\it Transition Threshold for the 3D Couette Flow
in Sobolev Space}, Commun. Pure Appl. Math. 74(2021),  2398-2479.

\bibitem{WZ-26} D. Wei and Z. Zhang, {\it Asymptotic stability threshold of the 2D Couette flow in a finite channel},  Ann. Inst. H.
Poincar\'e C Anal. Non Lin\'eaire. 43(2026), 501-540.

\bibitem{ZZ-23} Z. Zhang and R. Zi, {\it Stability threshold of Couette flow for 2D Boussinesq equations in Sobolev spaces}, J. Math.
Pures Appl. 179(2023), 123-182.

\bibitem{ZZ-23-2} C. Zhai and W. Zhao, {\it Stability threshold of the Couette flow for Navier-Stokes Boussinesq system with large Richardson number $\gamma^2>\frac{1}{4}$}, SIAM J. Math. Anal. 55(2023), 1284-1318.

\bibitem{Z-23} C. Zillinger, {\it On echo chains in the linearized Boussinesq equations around traveling waves}, SIAM J. Math. Anal. 55 (2023), 5127-5188.


\end{thebibliography}
\end{document}